 \newcommand{\dodo}{{\mathtt D}}
\newcommand{\et}{{\mathfrak t}}
\newcommand{\pp}{{\mathtt p}}
\newcommand{\ro}{{\varepsilon^2}}
\def\tet1{{\theta}}
\definecolor{orange}{rgb}{1,.549,0}
 \definecolor{GreenYellow }{rgb}{ 0.15,   0.69, 0} 
\definecolor{Yellowone}{rgb}{ 0, 1., 0} 
\definecolor{Goldenrod }{rgb}{  0, 0.10, 0.84} 
\definecolor{Dandelion }{rgb}{ 0, 0.29, 0.84} 
\definecolor{Apricot }{rgb}{ 0, 0.32, 0.52} 
\definecolor{Peach }{rgb}{ 0, 0.50, 0.70} 
\definecolor{GreenYellow}{cmyk}{0.15,0,0.69,0}
\definecolor{RoyalPurple}{cmyk}{0.75,0.90,0,0}
\definecolor{Yellow}{cmyk}{0,0,1,0}
\definecolor{BlueViolet}{cmyk}{0.86,0.91,0,0.04}
\definecolor{Goldenrod}{cmyk}{0,0.10,0.84,0}
\definecolor{Periwinkle}{cmyk}{0.57,0.55,0,0}
\definecolor{Dandelion}{cmyk}{0,0.29,0.84,0}
\definecolor{CadetBlue}{cmyk}{0.62,0.57,0.23,0}
\definecolor{Apricot}{cmyk}{0,0.32,0.52,0}
\definecolor{CornflowerBlue}{cmyk}{0.65,0.13,0,0}
\definecolor{Peach}{cmyk}{0,0.50,0.70,0}
\definecolor{MidnightBlue}{cmyk}{0.98,0.13,0,0.43}
\definecolor{Melon}{cmyk}{0,0.46,0.5,0}
\definecolor{NavyBlue}{cmyk}{0.94,0.54,0,0}
\definecolor{YellowOrange}{cmyk}{0,0.42,1,0}
\definecolor{RoyalBlue}{cmyk}{1,0.50,0,0}
\definecolor{Orange}{cmyk}{0,0.61,0.87,0}
\definecolor{Blue}{cmyk}{1,1,0,0}
\definecolor{BurntOrange}{cmyk}{0,0.51,1,0}
\definecolor{Cerulean}{cmyk}{0.94,0.11,0,0}
\definecolor{Bittersweet}{cmyk}{0,0.75,1,0.24}
\definecolor{Cyan}{cmyk}{1,0,0,0}
\definecolor{RedOrange}{cmyk}{0,0.77,0.87,0}
\definecolor{ProcessBlue}{cmyk}{0.96,0,0,0}
\definecolor{Mahogany}{cmyk}{0,0.85,0.87,0.35}
\definecolor{SkyBlue}{cmyk}{0.62,0,0.12,0}
\definecolor{Maroon}{cmyk}{0,0.87,0.68,0.32}
\definecolor{Turquoise}{cmyk}{0.85,0,0.20,0}
\definecolor{BrickRed}{cmyk}{0,0.89,0.94,0.28}
\definecolor{TealBlue}{cmyk}{0.86,0,0.34,0.02}
\definecolor{Red}{cmyk}{0,1,1,0}
\definecolor{Aquamarine}{cmyk}{0.82,0,0.30,0}
\definecolor{OrangeRed}{cmyk}{0,1,0.50,0}
\definecolor{BlueGreen}{cmyk}{0.85,0,0.33,0}
\definecolor{RubineRed}{cmyk}{0,1,0.13,0}
\definecolor{Emerald}{cmyk}{1,0,0.50,0}
\definecolor{WildStrawberry}{cmyk}{0,0.96,0.39,0}
\definecolor{JungleGreen}{cmyk}{0.99,0,0.52,0}
\definecolor{Salmon}{cmyk}{0,0.53,0.38,0}
\definecolor{SeaGreen}{cmyk}{0.69,0,0.50,0}
\definecolor{CarnationPink}{cmyk}{0,0.63,0,0}
\definecolor{Green}{cmyk}{1,0,1,0}
\definecolor{Magenta}{cmyk}{0,1,0,0}
\definecolor{ForestGreen}{cmyk}{0.91,0,0.88,0.12}
\definecolor{VioletRed}{cmyk}{0,0.81,0,0}
\definecolor{PineGreen}{cmyk}{0.92,0,0.59,0.25}
\definecolor{Rhodamine}{cmyk}{0,0.82,0,0}
\definecolor{LimeGreen}{cmyk}{0.50,0,1,0}
\definecolor{Mulberry}{cmyk}{0.34,0.90,0,0.02}
\definecolor{YellowGreen}{cmyk}{0.44,0,0.74,0}
\definecolor{RedViolet}{cmyk}{0.07,0.90,0,0.34}
\definecolor{SpringGreen}{cmyk}{0.26,0,0.76,0}
\definecolor{Fuchsia}{cmyk}{0.47,0.91,0,0.08}
\definecolor{OliveGreen}{cmyk}{0.64,0,0.95,0.40}
\definecolor{Lavender}{cmyk}{0,0.48,0,0}
\definecolor{RawSienna}{cmyk}{0,0.72,1,0.45}
\definecolor{Thistle}{cmyk}{0.12,0.59,0,0}
\definecolor{Sepia}{cmyk}{0,0.83,1,0.70}
\definecolor{Orchid}{cmyk}{0.32,0.64,0,0}
\definecolor{Brown}{cmyk}{0,0.81,1,0.60}
\definecolor{DarkOrchid}{cmyk}{0.40,0.80,0.20,0}
\definecolor{Tan}{cmyk}{0.14,0.42,0.56,0}
\definecolor{Purple}{cmyk}{0.45,0.86,0,0}
\definecolor{Gray}{cmyk}{0,0,0,0.50}
\definecolor{Plum}{cmyk}{0.50,1,0,0}
\definecolor{Black}{cmyk}{0,0,0,1}
\definecolor{Violet}{cmyk}{0.79,0.88,0,0}
\definecolor{White}{cmyk}{0,0,0,0}
 \definecolor{rltred}{rgb}{0.75,0,0}
   \definecolor{rltgreen}{rgb}{0,0.5,0}
   \definecolor{oneblue}{rgb}{0,0,0.75}
   \definecolor{marron}{rgb}{0.64,0.16,0.16}
   \definecolor{forestgreen}{rgb}{0.13,0.54,0.13}
   \definecolor{purple}{rgb}{0.62,0.12,0.94}
   \definecolor{dockerblue}{rgb}{0.11,0.56,0.98}
   \definecolor{freeblue}{rgb}{0.25,0.41,0.88}
   \definecolor{myblue}{rgb}{0,0.2,0.4}
 \definecolor{Melon}{rgb}{ 0.46, 0.50, 0}
 \definecolor{Melone}{rgb}{ 0, 0.46, 0.50}
\newcommand{\Gama}{{\Gamma_S}}
\newcommand{\er}{{\mathtt r}}
\newcommand{\GA}{{\mathcal A}}
\newcommand{\Co}{{\mathbb C}}
\font\teneufm=eufm10
\font\seveneufm=eufm7
\font\fiveeufm=eufm5
\newcommand{\vgot}{{\mathtt v}}
\theoremstyle{plain}
\newtheorem{lemma}[subsection]{Lemma}
\newtheorem*{theorem*}{Theorem}
\newtheorem{theorem}{Theorem}
\newtheorem*{proposition*}{Proposition}
\newtheorem{proposition}[subsection]{Proposition}
\newtheorem*{corollary*}{Corollary}
\newtheorem{corollary}[subsection]{Corollary}
\theoremstyle{definition}
\newtheorem*{definition*}{Definition}
\newtheorem{definition}[subsection]{Definition}
\newtheorem*{example*}{Example}
\theoremstyle{remark}
\newtheorem*{remark*}{Remark}
\newtheorem{remark}[subsection]{Remark}
\newcommand{\T}{{\mathbb T}}
\def\frec#1{{\stackrel{#1}\rightarrow}}
\newcommand{\N}{{\mathbb N}}
\newcommand{\be}{\begin{equation}}
\newcommand{\ee}{\end{equation}}
\newcommand{\e}{\varepsilon}
\newcommand{\al}{\alpha}
\renewcommand{\a }{\alpha }
\renewcommand{\b }{\beta }
\newcommand{\s }{\sigma }
\newcommand{\ii }{{\rm i} }
\renewcommand{\d }{\delta }
\renewcommand{\l }{\lambda }
\newcommand{\ome}{{\omega}}
\newcommand{\Ome}{{\Omega}}
\renewcommand{\O }{\mathcal O }
\newcommand{\Z}{\mathbb{Z}}
\newcommand{\R}{\mathbb{R}}
\renewcommand{\O }{\mathcal O }
\begin{document}
 \title[Reducible quasi-periodic solutions  for the NLS]{Reducible quasi-periodic solutions for the Non Linear Schr\"odinger equation\thanks{ The first author is  supported by ERC  project HamPDEs, under FP7 
 }}

\author{ M. Procesi \and 
 C. Procesi }
 
\maketitle
\begin{abstract}
 The present paper is devoted to the  construction of  small reducible quasi--periodic solutions for the completely resonant   NLS equations on a $d$--dimensional  torus $\T^d$. The main point is to prove that prove that the normal form is reducible, block diagonal  and satisifes the second Melnikov condtiton block wise. From this we deduce the result by a KAM algorithm.
 \keywords{ KAM theory for PDEs \and Quasi-periodic solutions.}
MSC: 37K55, 35Q55
\end{abstract}

\maketitle

\section{Introduction}\label{intro}
 The present paper is devoted to the  construction of  small reducible  (see Definition \ref{reducible}) quasi--periodic solutions for the completely resonant   NLS equations on a $d$--dimensional  torus $\T^d$:
\begin{equation}\label{scE}iu_t-\Delta u=\mathtt k |u|^{2q}u +\partial_{\bar u}G(|u|^2).\end{equation} 
Here $u:= u(t,\varphi)$, $\varphi\in \T^d$, $\Delta$ is the Laplace operator  and  $G(a)$ is a real analytic function whose Taylor series starts from degree $q+2,\ q\geq 1$. Finally $\mathtt k$ is a coupling constant which can be normalised to $\pm 1$. Since in our results the sign is irrelevant we will   set it equal to one. 
Note that we have restricted our attention to $\varphi$-independent non-linearities $G$. The corresponding symmetries imply the presence of $d+1$ constants of motion given by the $L^2$ norm and the momentum (translation invariance).

A relevant feature is that the NLS equation is completely resonant near $u=0$ (i.e. all the linear solutions are periodic), hence we look for  our quasi-periodic solutions close to some {\em periodic}   solutions  \begin{equation}\label{linsol}
\sum_{i=1}^n\sqrt{\xi_i} e^{\ii  |\mathtt j_i|^2 \,t }e^{ \ii\mathtt j_i\cdot\varphi}
\end{equation} of the linear equation  involving $n$   frequencies $S=\{\mathtt j_1,\ldots, \mathtt j_n\}\subset \Z^d$. It is well known, see for instance \cite{KT}, that  due to the presence of resonances, there exist choices of the frequencies $S$ for which the solutions of the non--linear system differ drastically from the ones of the linear system. In order to avoid  such phenomena we restrict to {\em generic} choices of $S$, where generic  means that this list of vectors does not satisfy some explicit, although quite complicated,  polynomial equations which express the {\em resonances} to be avoided.

  Our results are obtained  by exploiting the Hamiltonian structure of equation \eqref{scE}, and by studying  a simplified Hamiltonian,  denoted $H_{Birk}$ see Formula \eqref{Ham2},  obtained from  $H $ by removing all  terms of degree $2q+2$  which do not Poisson commute with the quadratic part. Its Hamiltonian vector field is tangent to infinitely many subspaces obtained by setting some of the coordinates equal to  0 (cf. \cite{PP}, Prop. 1).  On infinitely many of them  the restricted system is  completely integrable, thus  the next step  consists in choosing such a subset $S$ which, for obvious reasons, is called of {\em tangential sites}.  
  
  In this  step  the linear solution of \eqref{linsol}  deforms to a quasi--periodic solution  \begin{equation}\label{linsol1}
\sum_{i}\sqrt{\xi_i} e^{\ii t (|\mathtt j_i|^2+\omega^{(1)}_i(\xi))  }e^{ \ii\mathtt j_i\cdot\phi },\quad \text{for}\quad  \omega^{(1)}_i(\xi)\quad\text{see Formula \eqref{ome1}}.
\end{equation}We  then apply   a KAM algorithm, starting from  the small quasi--periodic orbits  parametrised by a suitable domain of parameters $\xi$.  To be precise
  \begin{theorem}\label{ilteorema}  For any choice of  $n$ generic  frequencies $S=\{\mathtt j_1,\ldots, \mathtt j_n\}\subset \Z^d$,  and for $\e$ sufficently small, there exists a compact set $\mathcal O_\infty$ contained in  $ \{(\xi_1,\ldots,\xi_n)\},$ $ \e^2/2\leq \xi_i\leq \e^2 \}$ of measure of order $\e^{2n}$, parametrizing bijectively a set of quasi--periodic solutions  of  \eqref{scE}  which are a small perturbation  of the  solutions of type \eqref{linsol1} of the  equations associated to the hamiltonian $H_{\rm Birk}$.
  Moreover  the quasi-periodic solutions for all $\xi\in \mathcal O_\infty$ are reducible KAM tori, see Definition \ref{reducible}. 
\end{theorem}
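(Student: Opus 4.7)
The plan is to follow a Birkhoff plus KAM strategy. First, I would pass to the Hamiltonian formulation of \eqref{scE} in Fourier variables $u_j$ and perform a Birkhoff normal form transformation to remove all terms of degree $2q+2$ that do not Poisson commute with the quadratic part $\sum_j |j|^2|u_j|^2$; what remains at that order is $H_{\rm Birk}$, and the tail is a small perturbation near $u=0$. After selecting the generic tangential sites $S=\{\mathtt j_1,\ldots,\mathtt j_n\}$, I would split the phase space as the product of an $n$-dimensional torus with action-angle coordinates $(y,\phi)$ near $y=\xi$ and the normal Fourier modes $z_j$, $j\notin S$. Because $H_{\rm Birk}$ preserves the subspace $\{z=0\}$ and is integrable on it, the solutions \eqref{linsol1} appear as exact quasi-periodic solutions of $H_{\rm Birk}$, and the linearization along them produces a quadratic Hamiltonian $\sum_j \Omega_j(\xi)|z_j|^2+Q(\xi,\phi,z)$ in the normal directions, where $Q$ collects the non-diagonal quadratic terms coming from $H_{\rm Birk}$.

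The central step, and the main obstacle, is to understand $\sum_j \Omega_j|z_j|^2+Q$ before running any KAM iteration. Since the linear frequencies $|j|^2$ on $\T^d$ come in large resonant clusters, the standard second Melnikov condition $|\omega(\xi)\cdot k+\Omega_j\pm\Omega_{j'}|>\gamma/(\ldots)$ fails in general for pairs $(j,j')$ inside the same cluster. The plan is therefore to partition $\Z^d\setminus S$ into finite blocks so that $Q$ is block-diagonal, to reduce each block to a $\phi$-independent matrix by a symplectic change of variables that depends on $\xi$ and on $\phi$, and to verify that on a large Cantor set of parameters $\xi$ a block-wise version of the second Melnikov condition holds: here one demands only that $\omega(\xi)\cdot k$ plus or minus the difference of eigenvalues of two distinct blocks be quantitatively separated from zero. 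The genericity condition on $S$ is what guarantees both the finiteness of the blocks and the quantitative non-degeneracy of the block eigenvalues as functions of $\xi$.

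Once $H_{\rm Birk}$ has been put in this reducible, block-diagonal normal form, the proof concludes by a KAM iteration in a scale of analytic (or weighted Sobolev) norms: at each step one solves a homological equation whose solvability is provided by the block-wise Melnikov inequalities, obtains a symplectic conjugation to a new Hamiltonian with a smaller perturbation and a slightly perturbed block-diagonal normal form, and excises from the parameter domain the set of $\xi$ where the Melnikov inequalities fail at the next scale. Standard arguments show that the block structure and the non-degeneracy persist in the limit, that the sequence of transformations converges on a compact set $\mathcal O_\infty\subset\{\varepsilon^2/2\leq\xi_i\leq\varepsilon^2\}$ of measure $\varepsilon^{2n}(1+o(1))$, and that the tori so produced are reducible in the sense of Definition \ref{reducible}. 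The hard part is entirely the intermediate step, namely the block-diagonalization of the quadratic normal form together with the verification of the block-wise Melnikov conditions uniformly on a large set of parameters; the rest of the argument is a by now standard reducible KAM scheme, adapted to the block-diagonal setting.
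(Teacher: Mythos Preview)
Your outline matches the paper's strategy closely: Birkhoff normal form, selection of generic tangential sites, block-diagonal reduction of the quadratic part (the paper's Proposition \ref{teo2}), block-wise second Melnikov conditions (Proposition \ref{melone}), and then a KAM iteration. Where you diverge from the paper is in calling the remaining KAM scheme ``by now standard'' once the block structure and Melnikov conditions are in hand. This is where a genuine idea is missing from your plan.

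The issue is the measure estimate for the second Melnikov conditions with opposite signs, i.e.\ for $|\omega(\xi)\cdot\nu+\Omega_{\mathfrak t}-\Omega_{\mathfrak t'}|$. Even after the block reduction, there are infinitely many pairs $(\mathfrak t,\mathfrak t')$ for each fixed $\nu$, and the naive union of the corresponding resonant sets need not be small. The paper overcomes this with the \emph{quasi-T\"oplitz} structure (see \S\ref{prilone} and Proposition \ref{teo2}\,iii),\,v)): the normal-form eigenvalues $\Omega_{\mathfrak t}$ are constant along an integral linear stratification of $\Z^d$ up to errors of order $K^{-4d\varrho}$, so that on each stratum a single Melnikov excision suffices for all $\mathfrak t$ in it (Lemma \ref{Rcont}). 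Crucially, this piecewise-T\"oplitz property must be \emph{propagated} through the KAM iteration, which forces one to work in the quasi-T\"oplitz class with its dedicated norm $\|\cdot\|^T_{\vec p}$ and to check that the homological equation preserves it (Lemma \ref{orrore}). None of this is standard, and without it your measure claim $|\mathcal O_\infty|=\varepsilon^{2n}(1+o(1))$ is unsupported. You should make the T\"oplitz-type asymptotic of the normal frequencies and its stability under the iteration an explicit ingredient of the plan.
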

  
  As is well known KAM algorithms require strong {\em non-degeneracy} conditions   not always valid, even for finite dimensional systems, this has for long time been an obstacle for applications to PDEs on tori. Indeed existence results for quasi-periodic solutions (with no control on the reducibility) for such equations were proved, starting from the late '$90$, by "multiscale" techniques (see \cite{Bo3},\cite{BBhe1}). In the case of the resonant NLS we mention the paper \cite{W1} which covers our equation \eqref{scE} and provides an existence result. 
  The breakthrough result in KAM theory was in the paper \cite{EK}, where the authors proved reducibility for a class of non-resonant NLS equations (see also \cite{PX} and \cite{GY}, \cite{EGK} for the beam equation).
  In the case of resonant PDEs the first problem that arises in KAM schemes is to prove (when it holds)  reducibility for the Birkhoff normal form, for the NLS this was done in \cite{PP1}. Then, in order to proceed with the KAM scheme one needs further  {\em non-resonance} assumptions on the normal form (the so called Melnikov conditions) which are in general much harder to prove that in the non-resonant cases.       \smallskip

 In the case of the cubic NLS, i.e. $q=1$,  in \cite{PP3} we have discussed in detail the KAM algorithm and proved the existence of families of stable and unstable quasi-periodic solutions. This required a very subtle combinatorial analysis (performed in \cite{PP1}) which enabled us to prove the {\em second Melnikov conditions} (which amounts to proving that the NLS equation linearised at an approximate solution has distinct eigenvalues on the space of quasi-periodic functions). This combinatoric is not available for $q>1$  except in dimension $d\leq 2$ (see. \cite{Van}).

  In the present paper we discuss the general case $d\geq 1, q\geq 1$ and prove that,  for a  generic choice of the excited frequencies $S$,   the  multiplicity of the eigenvalues of the corresponding linearised system  is uniformly bounded, and moreover there is a normal form with a block diagonal  non--degeneracy, see Proposition \ref{teo2}.
  
  Using the properties of  this  normal form in     section \S \ref{aKT} we explain how a KAM  algorithm  leads to existence and reducibility of quasi-periodic solutions. This requires some minor variations in the KAM scheme of \cite{PP3} in order to take into account the block diagonal structure (essentially one needs to control more derivatives in the $\xi$ variables). Since  the material is quite standard, but very heavy and lengthly, we only give a schematic proof, contained Propositions \ref{kamforma} and \ref{misura}, see \ref{prilte}. The same kind of problems appear in the preprint \cite{EGK} where the authors study the non-linear beam equation.\medskip
    
\section{The Hamiltonian formalism} Passing to the Fourier representation \begin{equation}\label{FR}
u(t,\varphi):= \sum_{k\in \Z^d} u_k(t) e^{\ii (k, \varphi)}\ 
\end{equation} 
we have, up to a rescaling of $u$ and of time, 
 in coordinates,   dropping the part of  $G$ which will be placed in the perturbation, 
 the Hamiltonian: 
\begin{equation}\label{Ham}H:=\sum_{k\in \Z^d}|k|^2 u_k \bar u_k +  \sum_{k_i\in \Z^d:\ \sum_{i=1}^{2q+2}(-1)^i k_i=0}\hskip-30pt u_{k_1}\bar u_{k_2}u_{k_3}\bar u_{k_4}\ldots u_{k_{2q+1}}\bar u_{k_{2q+2}} . \end{equation}

The complex symplectic form is $ i \sum_{k}d u_k\wedge d \bar u_k$, and we work on the scale of complex Hilbert spaces $(u,\bar u)\in {\bf{\bar \ell}}^{(a,p)}\times {\bf{\bar \ell}}^{(a,p)}$, where:
\begin{equation}\label{scale}
{\bf{\bar \ell}}^{(a,p)}:=\{ u=\{u_k \}_{k\in \Z^d}\;\big\vert\;|u_0|^2+\sum_{k\in \Z^d} |u_k|^2e^{2 a |k|} |k|^{2p}:=||u||_{a,p}^2 \le \infty \},\end{equation}
Where $a>0,\ p>d/2$.
Note that both $H$ and its Hamiltonian vector field $X_H$ are analytic functions on these spaces\footnote{it is well known that the NLS is  locally well posed under much weaker regularity conditions. This is not the purpose of the present paper.} 
 We denote as usual by $\{A,B\}$ the associated Poisson bracket and, if we want to stress the role of one of the two variables, we also write $ad(A)$ for the linear operator $B\mapsto \{A,B\}$.\footnote{$ad$ stands for {\em adjoint} in the language of Lie algebras.}
 
 \smallskip

We   systematically apply the fact that we have $d+1$ conserved  quantities:
the  $d$--vector {\em momentum} \ $\mathbb M:=\sum_{k\in \Z^d}   |u_k|^2k$ and the scalar {\em mass}  $\mathbb L:= \sum_{k\in \Z^d} |u_k|^2$
with 
\begin{equation}
\{\mathbb M,u_h\}=\ii   u_hh,\ \{\mathbb M,\bar u_h\}=-\ii   \bar u_hh,\ \{\mathbb L,u_h\}=\ii u_h,\ \{\mathbb L,\bar u_h\}=-\ii \bar u_h.\ 
\end{equation}\smallskip
The terms in equation \eqref{Ham} commute with $\mathbb L$. The conservation of momentum is expressed by the constraints $\sum_{i=1}^{2q+2}(-1)^i k_i=0$.

  We partition 
$
\Z^d= S\cup S^c,\quad S:=(\mathtt j_1,\ldots,\mathtt j_n)$
where 
the elements of  $S$ play the role of {\em tangential sites} and those of the complement $S^c$ the {\em normal sites}. We divide $u\in \bar\ell^{a,p}$ in two components $u=(u_1,u_2)$, where $u_1$ has indexes in $S$ and $u_2$ in $S^c$.   Here we  always assume that $S$ is subject to the constraints which make it {\em generic} and which are fully discussed in \cite{PP} and finally refined in \cite{PP1}. Further constraints will appear later in this paper.

 We apply a standard {\em semi-normal form} change of variables  
$\Psi^{(1)}:=e^{ad( F_{Birk})}$, which is
 well defined and analytic:    $   B_{\epsilon_0}\times B_{\epsilon_0} \to B_{2{\epsilon_0}} \times B_{2\epsilon_0}$,  for $\epsilon_0$ small enough, see \cite{PP}.   
 
 The map   $\Psi^{(1)}$  brings (\ref{Ham}) to  the form
 $H= H_{Birk} +P^{2q+2}(u)+P^{4q+2}(u)$ where $P^{2q+2}(u)$ is of degree $2q+2$  in $u$ and  at least cubic in $u_2$
  while $P^{ 4q+2 }(u)$ is analytic of degree at least $4q+2$ in $u$, finally 
\begin{equation}\label{Ham2}H_{Birk}:=\sum_{k\in \Z^d}|k|^2 u_k \bar u_k +  \sum_{\alpha,\beta\in\mathcal C'}  \binom{2}{\alpha}\binom{2}{\beta}u^\alpha\bar u^\beta.
\end{equation} 
\begin{equation}\label{Ham2b}  \mathcal C': \alpha,\beta\in (\Z^d)^\N \,\left | \begin{array}{ll} & |\alpha_2|+|\beta_2|\leq 2;\  |\alpha|=|\beta|=q+1\,,\\  &\sum_k (\alpha_k-\beta_k)k=0\,,\;\sum_k (\alpha_k-\beta_k)|k|^2=0.\end{array}\right.
\end{equation} 
The constraint $ |\alpha_2|+|\beta_2|\leq 2$ comes from the definition of  $P^{2q+2}(u)$, 
the other three constraints in this formula express the conservation of $\mathbb L$, $\mathbb M$ and  of the {\em quadratic energy}: 
\begin{equation}
\label{kappabb}\mathbb K:= \sum_{k\in \Z^d}|k|^2 u_k \bar u_k .
\end{equation}
 Switching to polar coordinates we set \begin{equation}
\label{chofv}u_k:= z_k \;{\rm for}\; k\in S^c\,,\quad u_{\mathtt j_i}:= \sqrt {\xi_i+y_i} e^{\ii x_i}= \sqrt {\xi_i}(1+\frac {y_i}{2 \xi_i }+\ldots  ) e^{\ii x_i}
\end{equation}  for $i=1,\dots,n$. Here we conside  the $\xi_i>0$ as parameters $|y_i|<\xi_i$ while $y,x,w:=(z,\bar z)$ are dynamical variables\footnote{To be completely formal one should think of $z,\bar z$ as independent dynamical variables, to this purpose they are often denoted by $z^+,z^-$, then one shows that the {\em real} subspace, where $\bar z^+=z^-$ is invariant w.r.t. the dynamics}.   We  denote   by   $ {\bf \ell}^{(a,p)}:= {\bf\ell}^{(a,p)}_S $  the subspace of $\bf{\bar \ell}^{(a,p)}\times\bf{\bar \ell}^{(a,p)} $  of the sequences $u_i,\bar u_i$ with indices in $S^c$ and denote the coordinates  $w=(z,\bar z) $.
We define
 $$ \Lambda:=\left[\frac1 2, \frac32\right]^n.$$ 
We choose $\e>0$ small and note that for all $\xi\in \e^2\Lambda$ and  for all $r<    \e/2$, formula \eqref{chofv} is an analytic and symplectic change of variables $\Phi_\xi$ in  the  domain 
\begin{eqnarray}\label{domain}  & & D_{a,p}(s,r) = D(s,r)
:= \nonumber \\
& &   \{   x,y,w\,:\   x\in \T^n_s\,,\  |y|\le r^2\,,\  \|w\|_{a,p}\le r\}\subset \T^n_s\times\Co^n\times {\bf{\ell}}^{(a,p)}.
\end{eqnarray}
 Here $\e>0$, $s>0$ and $ 0<r<\e /2$ are auxiliary parameters while $\T^n_s$ denotes the compact subset of the complex torus $\T_{\Co}^n:=\Co^n/2\pi\Z^n$ where   $ x\in\Co^n,\ |$Im$(x)|\leq s$. Moreover  there exist universal constants $c_1<1/2 ,c_2$ such that if \begin{equation}   r<c_1\e\quad\text{ and }\quad 
\label{bapa}\sqrt{2 n} c_2 \kappa^{p} e^{ ( s+ a\kappa)} \e  < {\epsilon_0}\,, \quad \kappa:=\max(|\mathtt j_i|)
\end{equation} the change of variables  sends $D(s,r)\to B_{{\epsilon_0}}$  so we can apply it to our Hamiltonian.

We thus assume that the parameters $\e,\, r,\, s$  satisfy \eqref{bapa}.
Formula \eqref{chofv}  puts  in action angle variables  $(y;x)= (y_1,\dots, y_n;x_1,\dots, x_n) $ the tangential sites, close to the action $\xi$,  parameters for the system.  
The  symplectic form is now $ dy \wedge dx + i \sum_{k\in S^c} dz_k\wedge d \bar z_k $.   
 By abuse of notations we still call $H$ the composed Hamiltonian $H\circ \Psi^{(1)}\circ \Phi_\xi$.

Remark that, in polar coordinates the  Hamiltonians $\mathbb L,\ \mathbb M,\ \mathbb K$, after dropping some constant terms  which Poisson commute with everything become
\begin{eqnarray} \label{prLM}
\mathbb M&=&\ \sum_i   y_i \mathtt j_i+\sum_{k\in S^c}k|z_k|^2,\  \mathbb L= \sum_i   y_i  +\sum_{k\in S^c} |z_k|^2\,, \nonumber\\ \mathbb K&=& \mathtt j^{(2)}\cdot y +\sum_{k\in S^c} |k|^2|z_k|^2\,,\
 \mathtt j^{(2)}:=(|\mathtt j_1|^2,\dots,|\mathtt j_n|^2).
\end{eqnarray}
 \paragraph{ The  standard form\label{stanfor}}
 By the rules of Poisson bracket, on the real space spanned by $z,\bar z$, we have $\{\bar a,\bar b\} =-\{  a,b\}=\overline{\{  a,b\}}$ so $\{a,\bar b\} =-\{\bar  a,b\}= \{b,\bar  a \}$ is imaginary: \begin{definition}
 $(a,b):=\ii \{a,\bar b\}$ is a real symmetric form, called the {\em standard form}.
\end{definition}
 For the variables  we have $(z_h,z_k)=\delta^h_k,\ (\bar  z_h,\bar  z_k)=-\delta^h_k$, so the form is positive definite on the space spanned by the $z$, which give an orthonormal basis, negative on the space spanned by the $\bar z$  and of course indefinite if we mix the two types of variables.  Thus we may say that an element $a$ in the real space spanned by $z,\bar z$ is {\em of type $z$ (resp. $\bar z$)} if  $(a,a)=1$ resp.   $(a,a)=-1$. For a  quadratic real Hamiltonian $\mathcal H=\bar{\mathcal H}$ we have  $\{\mathcal H,\{a,\bar b\}\}=0$  since $\{a,\bar b\}$ is a scalar. The map $x\mapsto \ii\{\mathcal H,x\}$ preserves the real subspace spanned by $z,\bar z$ hence   by the Jacobi identity $(a, \ii\{\mathcal H,  b\})=  (\ii\{\mathcal H,a\},  b )$ so
  the operator $\ii\{\mathcal H,-\}$ is symmetric with respect to this  form.

\subsection{Functional setting}
Following \cite{Po} we study {\em regular} functions $F:\ro \Lambda\times D_{a,p}(s,r)\to \Co$, that is whose Hamiltonian vector field  $X_F(\cdot;\xi)$ is M-analytic from $D(s,r)\to \Co^n\times\Co^n\times\ell^{a,p}_S$. In the variables $\xi$ we require $C^{5d^2}$ regularity.  Let us recall the  definitions from \cite{BBP}.

\noindent Let us consider the Banach space $
V := \Co^n \times \Co^n \times  \ell^{a, p}_{S}
$
with  $(s,r)$-weighted norm
\begin{equation}\label{normaEsr}
\overrightarrow v =  (x,y,z,\bar z) \in V \, , \
  \|\overrightarrow v\|_{V,s,r}:= \frac{|x|_\infty}{s} + \frac{|y|_1}{r^2}
 +\frac{\|z\|_{a,p}}{r}+\frac{\|\bar z\|_{a,p}}{r}
\end{equation}
where,  $ |x|_\infty := \max_{h =1, \ldots, n} |x_h| $,
$ |y|_1 := \sum_{h=1}^n |y_h| $ and we restrict $r,s$  with  $ 0 < s< 1 $, $0 < r <c_1\e$.

For a vector field  $X:  D(s,r)\to V$, described by the formal Taylor expansion:
$$
 X = \sum_{\nu,i,\a,\b} X_{\nu,i,\a,\b}^{(\vgot)} e^{\ii (\nu, x)} y^i z^\a \bar z^\b \partial_{\mathtt v}\,, \quad \mathtt v= x,y,z,\bar z$$   we define the {\em majorant} and its {\em norm}:
\begin{eqnarray}\label{normadueA}
MX & :=& \sum_{\nu,i,\a,\b} |X_{\nu,i,\a,\b}^{(\vgot)}| e^{s|\nu|} y^i z^\a \bar z^\b \partial_{\mathtt v}\,, \quad \mathtt v= x,y,z,\bar z\nonumber\\
|| X ||_{s,r} & := &
\sup_{(y,z, \bar z) \in D(s,r)} \| M X \|_{V,s,r} \,.  \end{eqnarray}
 The different weights ensure that, if $\Vert X_F\Vert_{s,r}$ is sufficiently small,  then $F$ generates a close--to--identity symplectic change of variables from $D(s/2,r/2)\to D(s,r)$.

In our algorithm we deal with functions which depend in a $C^{\ell}$ way on some parameters $\xi$ in a compact set $ \mathcal O \subseteq \e^2 \Lambda$, the integer $\ell$ in fact will be chosen to be $(2d+1)^2$,  the maximal size of diagonal blocks of the normal form of the NLS. To handle this dependence we introduce weighted  norms   for a map $X:  \O \times D(s,r)\to V$ setting: 
\begin{equation}
\label{weno}   \Vert X(\xi)\Vert^\lambda_{s,r}:=  \!\sum_{k\in \N^n:|k|\leq \ell}\lambda^{|k|}  \|\partial_\xi^kX\|_{s,r} ,\quad \Vert X\Vert^\lambda_{s,r,\mathcal O} :=\sup_{\xi\in \mathcal O}\Vert X(\xi)\Vert^\lambda_{s,r}
\end{equation}  where $\lambda $ is a  parameter of order $\e^{2}$.  Sometimes when $\mathcal O$ is understood we just  write $ \Vert X\Vert^\lambda_{s,r,\mathcal O}= \Vert X\Vert^\lambda_{s,r}$.

Finally we introduce a stronger norm, called  {\em quasi--T\"oplitz}  and denoted  by $\Vert \cdot \Vert^{T}_{\overrightarrow p}$, proposed first in \cite{PX},  which controls  the behaviour of linear Hamiltonian vector fields with  respect to linear stratifications (cf. Definition \ref{lins}). This will be  essential for the KAM algorithm. This norm is defined through  the parameters $\overrightarrow p$ given by $\lambda,s,r,\mathcal O$ and three parameters $K,\vartheta,\mu$ with $K$ a large positive integer and $\frac12 <\vartheta,\mu< 4$.
\begin{definition}
We define by $\mathcal V_{\lambda,s,r,\mathcal O}=\mathcal V_{s,r},$ $ \mathcal H_{\l,s,r,\mathcal O}=\mathcal H_{s,r}$, resp. $\mathcal H_{\overrightarrow p}^T$ with $\overrightarrow p= (\lambda,s,r,\vartheta,\mu,K,\mathcal O)$, the space of $M$--analytic vector fields, resp. regular  analytic and finally quasi--T\"oplitz, Hamiltonians depending on a parameter $\xi\in\mathcal O$ 
with 
the norms\footnote{in fact Hamiltonians should be considered up to scalar summands and then this is actually a norm}
\begin{equation}\label{normasr} \Vert X\Vert^\lambda_{s,r},\quad 
\| F\|^\lambda_{s,r} := \Vert X_F\Vert^\lambda_{s,r}<\infty,\quad \text{resp}\quad  \Vert X_F\Vert^{T}_{\overrightarrow p}<\infty  \,,
\end{equation} 
where $ \overrightarrow p= (\lambda,s,r,\vartheta,\mu,K,\mathcal O).$
\end{definition} The main properties of the majorant norm, contained in  \cite{BBP1}, Lemmata  2.10,\ 2.15,\  2.17, express the compatibility of the norm with projections and Poisson brackets. Similarly the main properties of the quasi--T\"oplitz norm may be found in \S  8.17.1 and in Proposition 9.2 of \cite{PP3}.

Since $\mathcal H^T_{\overrightarrow p}$ is a space of analytic functions it is naturally spanned by the monomials $e^{\ii \nu\cdot x}y^i z^\a \bar z^\b$.
It is natural to give degree $0$ to the angles $x$, degree $2$ to $y$ and $1$ to $w$. In this way   each element  $F\in\mathcal H^T_{\overrightarrow p}$ is expanded as $F=\sum_{j=0}^\infty F^{(j)}$. We will need the projections onto various subspaces, in particular those defined by the degree, which are all continuous w.r.t the majorant norm.
\begin{definition}
A linear operator $L$ on $\mathcal H^T_{\overrightarrow p}$ has degree $j$ if it maps elements of degree $k$ into elements of degree $k+j$ for all $k$.
\end{definition}
Notice that the degree of the composition of two linear operators of degree  $i,j$ is their sum $i+j$.
\begin{remark}\label{graa}
If $A\in \mathcal H_{s,r}$ has degree $j$ the linear operator ad$(A)$ has degree $j-2$. By the Cauchy estimates (Lemma 2.17 of \cite{BBP1}) this is continuous as operator $\mathcal H_{s,r}\to \mathcal H_{s',r'}$ with $s'<s$ and $r'<r$.
\end{remark}  
\begin{definition}\label{puzza}
The {\em normal form} $\mathcal N$   collects all the terms of $H_{Birk}$ of degree $\leq 2$ (dropping the constant terms). We then set $P:= H-\mathcal N$ or $H=\mathcal N+P$.
\end{definition}
For a basic {\em finiteness property} of the normal form we need the following:
\begin{definition}\label{lins}
A {\em   linear stratification} of $\mathbb R^n$  is a finite decomposition $\mathbb R^n=\bigcup_j Y_j$  where the closure $\bar Y_j$ of each $Y_j$ is a linear affine subspace, and $Y_j$ is obtained from $\bar Y_j$ by removing a finite number of proper linear affine subspaces.  

A  given stratum $Y$ lies in a minimal affine space $\bar Y$ and the group $T_Y$ of   translations of $\bar Y$ is the {\em  group of translations  of $Y$.}

\end{definition} 
\begin{remark}\label{lst} A linear stratification is obtained by choosing a finite list of linear affine  subspaces $A_i$.  Then to each $A_i$ is associated, as stratum, the set of points in $A_i$ which do not lie in any of the affine spaces $A_j$ of the list which do  not contain $A_i$.

\end{remark}
Thus $Y$ is obtained  from any of its elements $y$  as $y+T_Y$ and then removing the lower dimensional strata.
\begin{remark}\label{remark}
 
 If each $Y_i$ is defined by linear equations over $\mathbb Z$ we speak of an {\em integral linear stratification},  this induces by intersection a linear stratification of  $\mathbb Z^n$. 
 
 The integral points satisfying $\ell$  independent linear equations with coefficients in $\Z$  form a subgroup  $\Gamma$ of $\Z^d$  isomorphic  to  $\Z^{d-\ell}$ (with an integral basis which can be extended to a basis of $\Z^d$).  
 Correspondingly  the intersection of an affine space  $\bar Y$ (defined over $\Z$)  of dimension $k=d-\ell$ with $\Z^d$  is of the form  $\Gamma+u$  where $u\in  \Z^d$  and $\Gamma\subset \Z^d$ is a group of integral translations  isomorphic to $\Z^k$ (usually $k$ is called {\em rank}).
 Each stratum   has thus a rank and is obtained from such a translate by removing a finite number of translates of subgroups of strictly lower rank.
\end{remark}
The linear stratifications  appearing in this paper come from  two  sources, the   theory of graphs and  block diagonal structure of the Normal form and the Theory of cuts, see \S\, \ref{basst} and \ref{bycuts}.
\section{Properties of the normal form}
Here we state the main properties used in the KAM algorithm.   These properties are detailed and proved in  \S \ref{NoF}.  

We need to explain the following notation and fact.  First given a sign  $\s=\pm 1$ and a  variable  $z_k$ we write  $z_k^\s=z_k$ if $\s=1$ and $\bar z_k$ if $\s=-1$. 

Denote by $\mathcal F$ the space of Hamiltonians of degree $\leq 2$ (which form a Lie algebra under Poisson bracket).  In our treatment appear certain {\em blocks  $\dodo_\et$} indexed by a set $\mathfrak T$ divided into an infinite set $\mathfrak T_s$ and a finite set $\mathfrak T_f$. For each $\et\in  \mathfrak T $  we have a  finite set of indices $k\in S^c$ and, for the finitely many elements in $\mathfrak T_f$ also a sign function $s(k)=\pm1$ for $k\in \dodo_\et$. When we have such a sign we need to divide the corresponding variables $\{z_k,\bar z_k\}_{k\in \dodo_\et}$   in the set $z_k^{s(k)}$ and its conjugate.   The variables span a symplectic space and the two sets  each span a lagrangian subspace. 
\begin{definition}\label{lagr}
 An $x$ independent quadratic Hamiltonian on the space $\{z_k,\bar z_k\}_{k\in \dodo_\et}$ will  be called {\em Lagrangian} if under Poisson bracket it preserves  these two  spaces, this means that we never have in its expression a term of type  $z_hz_k$ or $\bar z_h\bar z_k$ (with $h,k\in \dodo_\et$) if  $s(k)=s(h)$  or $z_h\bar z_k$  if $s(k)=-s(h)$. In matrix terms this is the standard embedding of $m\times m$ matrices into the Lie algebra of the symplectic group of dimension $2m$.
\end{definition}
In fact, by Formula \eqref{turandot},  this  Lagrangian structure is equivalent to conservation of momentum (or of quadratic energy). We then  have the main properties of the normal form summarized as follows:
\begin{proposition}\label{teo2} For any {\em generic} choice of the tangential sites $ S=\{\mathtt j_1,\dots,\mathtt j_n\}$, there exists a  homogeneous algebraic hypersurface $\mathfrak A$, whose complement in $\R_+^n$ is  union of simply connected open regions  $\mathcal R_\al$ 
with the property:
\smallskip

For each $\al$    there is an analytic family of symplectic changes of variables
$ \mathcal R_\al\times D(s,r) \mapsto B_{\epsilon_0}$ which conjugates the NLS Hamiltonian to the following form $H=\mathcal N^{(s)}+\mathcal N^{\rm nil}+P$ where:
 \begin{eqnarray}\label{prinor}
\mathcal N^{(s)}&:=&  \omega(\xi)\cdot y + \sum_{\et\in \mathfrak T_s}\Omega_\et(\xi)\sum_{k\in \dodo_\et}|z_k|^2+ \sum_{\et\in \mathfrak T_f}\Omega_\et(\xi)\sum_{k\in \dodo_\et}s(k)|z_k|^2\,,\nonumber\\  \mathcal N^{\rm nil}&=& \sum_{\et\in \mathfrak T_f}\mathcal Q_\et^{\rm nil}.
\end{eqnarray}
The set $\mathfrak T_s$ is a denumerable index set while  $\mathfrak T_f$ is {\em finite}. Each index $\et$ is identified by  a  point  $\er_\et\in S^c $ and an algebraic function $\theta_\et$, homogeneous of degree $q$, chosen   from a finite list $\Upsilon$  (see Formula \eqref{Tht}). To $\et$ is associated    a finite set $\dodo_\et\subset S^c$ . 

The   cardinality  $d_\et$ of the set $\dodo_\et$ is  $d_\et\leq 2d+1$ for  $\et \in \mathfrak T_f$ and $d_\et\leq d+1$ for  $\et \in \mathfrak T_f$ (note that one may have that $\er_\et\notin \dodo_\et$). 

Moreover  the sets $\dodo_\et$ give a disjoint decomposition of  the normal  sites $S^c=\Z^d\setminus S$.   
\smallskip

\noindent i) {\bf Non-degeneracy} We have $\ome(\xi)=\mathtt j^{(2)}+\ome^{(1)}(\xi)$ where  $\omega^{(1)}(\xi)$  is homogeneous of degree $q$ in the variables $\xi$. The map $(\xi_1,\ldots,\xi_m)\mapsto (\ome_1(\xi),\ldots,\ome_m(\xi))$ is an algebraic local diffeomorphism for  $\xi$ outside some real algebraic hypersurface.\smallskip

\noindent ii) {\bf Asymptotic of the normal frequencies:} We have $$\Omega_\et(\xi)= |\er_\et|^2 +\theta_\et(\xi)$$ where all the functions $\theta_\et$ are chosen from the finite list $\Upsilon$.

Finally $s(k)=\pm 1$ while  $\mathcal Q^{\rm nil}_\et$ is a Lagrangian  (see Definition \ref{lagr}) nilpotent quadratic Hamiltonian in the variables $\{z_k,\bar z_k\}_{k\in \dodo_\et}$ independent of $x$.

\noindent iii) {\bf Translation invariance}:
The infinite set $\mathfrak T_s$ decomposes into finitely many components  $\mathfrak T_s(i)$  with the following property.

For all  $\et\in \mathfrak T_s(i)$ the dimension $d_\et$ is the same.

For each given $\mathfrak T_s(i)$,    the elements $\er_{\et_2 }-\er_{\et_1},\  \et_1,\et_2\in \mathfrak T_s(i)$ generate    a {\em subgroup  of translations}  $\Gamma_i\subset  \Z^d$ of   rank $k_i= d-d_\et+1, $ (that is   isomorphic to some $\Z^{k_i},\ k_i\leq d$).  Choosing an element $ {\et_0}\in \mathfrak T_s(i)$ gives  a subset  $\Gamma_i^0\subset  \Gamma_i $ obtained from the group $\Gamma_i $ by removing finitely many translates of subgroups of lower rank so that \begin{align}\label{trasl}\nonumber
\forall \et_1,\et_2\in \mathfrak T_s(i),\ \dodo_{\et_2}=&\dodo_{\et_1}+\er_{\et_2 }-\er_{\et_1},\   \er_{\et_2 }-\er_{\et_1}\in \Gamma_i \\
\bigcup_{\et\in\mathfrak T_s(i)} \dodo_{\et}=&\bigcup_{a\in \Gamma_i^0}   \dodo_{\et_0}+a.
\end{align}
\noindent iv) {\bf Affine structure}:
 Formula \eqref{trasl}  determines an  integral linear stratification $\Sigma_0$ of $\Z^d$  such that the set of roots  $\er_\et$  is a union of strata.  

We take all points in some $\dodo_{\et}, \et\in \mathfrak T_f$ to be zero dimensional strata.

Then  for each $i$, the set $\bigcup_{\et\in\mathfrak T_s(i)} \dodo_{\et}$ is the union of the parallel strata $\Gamma_i^0+k, \ k\in  \dodo_{\et_0}$.   

\smallskip
\noindent  v) {\bf Piecewise T\"oplitz:} The   function  $\theta_\et(\xi) $  is  the same for all $ \et\in\mathfrak T_s(i) $.

\smallskip
\noindent  vi) {\bf Constants of motion:} in the new coordinates, mass and momentum are:
\begin{equation}\label{turandot}
\mathbb L= \sum_i  y_i + \sum_k   s(k) |z_k|^2\,,\quad \mathbb M=\sum_i  y_i \mathtt j_i+\sum_{\mathfrak t\in \mathfrak T} \er_{\mathfrak t}(\sum_{k\in \dodo_\et} s(k)  |z_k|^2)\,,
\end{equation}
moreover $\mathcal N$ has as further constant of motion {\em the quadratic energy}:
\begin{equation}\label{turandot2}
 \mathbb K= \sum_i |\mathtt j_i|^2 y_i+  \sum_{\mathfrak t\in \mathfrak T} |\er_{\mathfrak t}|^2(\sum_{k\in \dodo_\et} s(k)  |z_k|^2).
\end{equation}
\smallskip

\noindent vii)   {\bf Smallness:} Consider any compact domain $\mathcal O$ in $\mathcal R_\al\cap \e^2\Lambda$.   If $\e^3<r<c_1\e$,  the perturbation $ P $ in the new variables is  small, more precisely 
 we have   the bounds:
\begin{equation}\label{pertu}
\Vert X_{ P}\Vert^{T}_{\overrightarrow p}\leq C (\e^{2q-1} r + \e^{2q+3} r^{-1}) \,, 
\end{equation}  where $\overrightarrow p=(\lambda, s,r,\vartheta_0,\mu_0,N_0,\mathcal O)$ with $\vartheta_0=1$, $\mu_0=2$ and $N_0= 8 (d+1)! \kappa^{d+1}$ (see \eqref{bapa} for $\kappa$)  while $C$ is independent of $s,r$ and depends on $\e,\lambda$ only through $\lambda/\e^{2}$. 
\end{proposition}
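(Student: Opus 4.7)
My plan is to start from $H_{Birk}$ as written in \eqref{Ham2}, perform the action-angle substitution \eqref{chofv}, and collect the terms of internal degree $\leq 2$ (counting $y$ as degree $2$ and $z,\bar z$ as degree $1$). This produces the normal form $\mathcal N=\mathcal N^{(s)}+\mathcal N^{\rm nil}$, while everything else gets absorbed into $P$ together with $P^{2q+2}+P^{4q+2}$ from the semi-normal form step. Setting $z,\bar z=0,\ y_i=0$ in the integrable quartic part of $H_{Birk}$ immediately yields $\omega(\xi)=\mathtt j^{(2)}+\omega^{(1)}(\xi)$ with $\omega^{(1)}$ homogeneous of degree $q$, and genericity of $S$ rules out the hypersurface on which its Jacobian degenerates, giving item (i).

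The heart of the argument is to analyze the quadratic form in $z,\bar z$ produced by $\mathcal N$. I call $h,k\in S^c$ \emph{adjacent} if some monomial of $\mathcal N$ couples $z_h^{\s}$ with $z_k^{\s'}$; the blocks $\dodo_\et$ are then the connected components of this graph and $\mathfrak T$ is the set of components. By the selection rule $\mathcal C'$ combined with $|\alpha_2|+|\beta_2|\leq 2$, adjacency reduces to a system of three linear equations encoding conservation of $\mathbb L$, $\mathbb M$, $\mathbb K$ in which the tangential multi-indices act as parameters. A careful enumeration, using genericity of $S$ to exclude unwanted coincidences, produces the uniform bounds $d_\et\leq 2d+1$ on $\mathfrak T_f$ and $d_\et\leq d+1$ on $\mathfrak T_s$, and splits $\mathfrak T$ into the finite set $\mathfrak T_f$ (exceptional low-momentum blocks, where sign functions $s(k)=\pm 1$ are forced by \eqref{turandot}) and the denumerable set $\mathfrak T_s$. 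The Lagrangian character of $\mathcal Q^{\rm nil}_\et$ in Definition \ref{lagr} then follows from the momentum/quadratic-energy constraints.

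For translation invariance (iii) and affine structure (iv), the decisive observation is that the adjacency equations on $(h,k)\in S^c\times S^c$ depend only on the differences $h-k$ and $|h|^2-|k|^2$, so they are invariant under a simultaneous translation by any $a\in\mathbb Z^d$ satisfying appropriate quadratic identities with the tangential sites. The set of such $a$ forms a subgroup $\Gamma_i\subset\mathbb Z^d$ of rank $k_i=d-d_\et+1$, and translating a chosen block $\dodo_{\et_0}$ by $a\in\Gamma_i$ reproduces another block of the same family $\mathfrak T_s(i)$, as in \eqref{trasl}. The excised subset $\Gamma_i^0\subset\Gamma_i$ removes precisely those translates that collide with $S$ or with the finitely many exceptional loci contained in $\mathfrak A$; this produces an integral linear stratification of $\mathbb Z^d$ as described in Remark \ref{lst}. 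The piecewise T\"oplitz property (v) is then automatic: the eigenvalues $\Omega_\et(\xi)=|\er_\et|^2+\theta_\et(\xi)$ of the constant-coefficient quadratic form restricted to $\dodo_\et$ are invariant under the $\Gamma_i$-translation, so $\theta_\et$ depends only on the family index $i$ and lies in the finite list $\Upsilon$ (one entry per family).

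The remaining items are essentially bookkeeping. The constants of motion (vi) are inherited from \eqref{prLM} through $\Psi^{(1)}\circ\Phi_\xi$, since both changes of variables preserve $\mathbb L,\mathbb M,\mathbb K$ and the block decomposition respects momentum. For the smallness bound (vii), after \eqref{chofv} each monomial of $H_{Birk}$ of internal degree $\geq 3$ scales at least as $\e^{2q-1}r$ on $D(s,r)$, while the semi-normal form remainder $P^{2q+2}+P^{4q+2}$ contributes $O(\e^{2q+3}r^{-1})$; combining these with the majorant-norm estimates of Lemmata 2.10--2.17 of \cite{BBP1} and the quasi-T\"oplitz estimates of Proposition 9.2 of \cite{PP3} yields \eqref{pertu}. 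The main obstacle is really the combinatorial/algebraic content of paragraphs two and three: establishing the uniform bound on $d_\et$ and exhibiting the finite list $\Upsilon$ simultaneously, both of which rest on the explicit resonance analysis carried out using genericity of $S$ to carve out $\mathfrak A$.
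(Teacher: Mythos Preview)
Your proposal has a genuine structural gap: you identify the blocks $\dodo_\et$ with the connected components of the adjacency graph coming directly from the monomials of $\mathcal N$, but this cannot produce the form \eqref{prinor}. A connected component $A$ of that graph carries a matrix $C_{\GA}$ (in the paper's notation) which for $q>1$ generically has \emph{several} distinct eigenvalues; hence the semisimple part of $\mathcal N$ restricted to $A$ is not a scalar times $\sum_{k\in A}|z_k|^2$, and item~(ii) fails on your blocks. The paper obtains $\dodo_\et$ by a two--stage refinement: first the Fitting (Jordan) decomposition of each $C_{\GA}$ splits $A$ into sub-blocks $S^c_{\er,\theta}$ indexed by single eigenvalues $\theta\in\Upsilon$ (this is where the finite list $\Upsilon$ and the hypersurface $\mathfrak A$ actually originate, as discriminant loci of the characteristic polynomials), and then a \emph{new} graph $\Gamma_S^{(f)}$ on pairs $(\er,\theta)$---with edges $\ell$ defined by $\omega^{(1)}(\xi)\cdot\ell=\pm\theta\pm\theta'$---is built, whose connected components give the true $\dodo_\et$. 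Without this second graph one cannot guarantee that distinct blocks carry distinct normal frequencies $\Omega_\et$, which is exactly what makes the subsequent Melnikov analysis work.

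Relatedly, you apply only $\Psi^{(1)}\circ\Phi_\xi$, but the change of variables in the Proposition is a composition of five maps: after action--angle one needs the first phase shift of Theorem~\ref{teo1} (formula \eqref{labella}) to kill the $x$-dependence of the quadratic form, then the linear map $U_{\GA}(\xi)$ to put each $C_{\GA}$ in Fitting normal form, and finally a \emph{second} phase shift (\S\ref{fincoo}) indexed by the edges of $\Gamma_S^{(f)}$. Your argument for translation invariance also conflates the black edges \eqref{fico} (depending on $h-k$, $|h|^2-|k|^2$) with the red edges \eqref{fico1} (depending on $h+k$, $|h|^2+|k|^2$); only the former are translation--invariant, which is precisely why $\mathfrak T_f$ is finite and carries the nilpotent part $\mathcal N^{\rm nil}$, while $\mathfrak T_s$ is infinite and purely semisimple.
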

\begin{proof}
Most of these statements are part of  Proposition \ref{ilcambio} and Corollary \ref{diagp}. The smallness condition in the $\lambda$ norm is the content of Theorem 1 of \cite{PP}, the corresponding estimate for the quasi--T\"oplitz norm follows  verbatim from part 4 of \cite{PP3} extending Proposition 11.19. \qed\end{proof}
\begin{remark}\label{generico}
Here {\em generic} means that the list of excited frequencies $S$, thought of  as a vector in $\mathbb Z^{dn}$,  is not a solution of a  (complicated) polynomial $P$ (in $dn$ variables) with integer coefficients, the polynomial $P$ is the product of explicit polynomials associated to a finite list of graphs  which represent resonances to be avoided.  Of course this list grows exponentially with the dimension and with $q$ so, although the polynomial is described explicitly, it is not  possible to write down the polynomial  in a file. Nevertheless the {\em density} of generic choices  clearly tends to 1 (cf. \cite{PP3}).
\end{remark}
\begin{remark}\label{connessione}
The index set $\mathfrak T$ and the corresponding decomposition of the space as well as the affine structure depend only on the chosen connected component.

In the case of the cubic  the picture simplifies drastically since there is no nilpotent part and moreover,  all the sets $\dodo_\et$ reduce to one element. This depends on the fact that the normal form is diagonalizable with distinct eigenvalues, while here we have to take into account the multiplicities.
\end{remark}
\begin{proposition}[Melnikov conditions]\label{melone}
For any $\alpha$ and for all $\xi\in \mathcal R_\alpha$, the kernel of ad\,$\mathcal N^{(s)}$ in the space  $\mathcal F$ is given by the subspace $\mathcal F_{{\rm ker}}$, of hamiltonians of the form
$$
 \psi(\xi)\cdot y + \sum_{\et\in \mathfrak T}Q_\et(\xi,w)
$$
where $\psi(\xi)\in \R^n$ and $Q_\et(\xi,w)$ are $x$ independent Lagrangian quadratic Hamiltonians depending only on the variables $\{z_k,\bar z_k\}_{k\in \dodo_\et}$.
\end{proposition}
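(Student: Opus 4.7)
The plan is to exploit the fact that $\mathcal N^{(s)}$ is already in Darboux diagonal form, so that $\mathrm{ad}\,\mathcal N^{(s)}$ acts diagonally on the monomial basis of $\mathcal F$; the kernel can then be read off by inspecting which eigenvalues vanish identically on $\mathcal R_\alpha$, and here the non-degeneracy properties from Proposition \ref{teo2} do the work. First I would expand any $F\in\mathcal F$ as $F=\sum_{\nu,i,\alpha,\beta}F_{\nu,i,\alpha,\beta}(\xi)\,e^{\ii \nu\cdot x}y^i z^\alpha\bar z^\beta$ subject to the degree bound $2|i|+|\alpha|+|\beta|\le 2$, leaving only the four monomial types $e^{\ii\nu\cdot x}$, $e^{\ii\nu\cdot x}y_j$, $e^{\ii\nu\cdot x}z_k^\sigma$ and $e^{\ii\nu\cdot x}z_h^{\sigma_1}z_k^{\sigma_2}$. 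A direct bracket calculation, using $\{\omega\cdot y,e^{\ii\nu\cdot x}\}=\ii(\omega\cdot\nu)e^{\ii\nu\cdot x}$ and $\{\Omega_\et|z_k|^2,z_k^\sigma\}=\sigma\ii\Omega_\et z_k^\sigma$ (with the extra factor $s(k)$ when $\et\in\mathfrak T_f$), should show that each such monomial is an eigenvector with eigenvalue $\ii\Lambda$, where
\[
\Lambda(\nu,\alpha,\beta;\xi)=\omega(\xi)\cdot\nu+\sum_{k\in S^c}\widetilde s(k)(\alpha_k-\beta_k)\,\Omega_{\et(k)}(\xi),
\]
$\et(k)$ denoting the unique index with $k\in\dodo_{\et(k)}$, and $\widetilde s(k)=1$ if $\et(k)\in\mathfrak T_s$, $\widetilde s(k)=s(k)$ if $\et(k)\in\mathfrak T_f$.

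The next step is to identify which eigenvalues vanish identically on $\mathcal R_\alpha$. I would use the explicit form $\omega=\mathtt j^{(2)}+\omega^{(1)}$ with $\omega^{(1)}$ homogeneous of degree $q$, and $\Omega_\et=|\er_\et|^2+\theta_\et$ with $\theta_\et$ drawn from the finite list $\Upsilon$, to split $\Lambda\equiv 0$ into the integer identity $\mathtt j^{(2)}\cdot\nu+\sum\widetilde s(k)(\alpha_k-\beta_k)|\er_{\et(k)}|^2=0$ and the homogeneous degree-$q$ identity $\omega^{(1)}(\xi)\cdot\nu+\sum\widetilde s(k)(\alpha_k-\beta_k)\theta_{\et(k)}(\xi)\equiv 0$. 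Invoking the diffeomorphism property i) of $\omega$ and the generic polynomial conditions on $S$ that cut out $\mathcal R_\alpha$, I would argue that in every nontrivial case $\nu=0$, and that for the quadratic-$w$ monomials one must moreover have $\et(h)=\et(k)=\et$. At that point the identity reduces to $(\sigma_1\widetilde s(h)+\sigma_2\widetilde s(k))\Omega_\et\equiv 0$, which is exactly the Lagrangian cancellation of Definition \ref{lagr}. Assembling the surviving monomials produces precisely the form $\psi(\xi)\cdot y+\sum_{\et\in\mathfrak T}Q_\et(\xi,w)$ demanded by $\mathcal F_{{\rm ker}}$.

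The reverse inclusion $\mathcal F_{{\rm ker}}\subseteq\ker(\mathrm{ad}\,\mathcal N^{(s)})$ will then be a one-line verification: $\psi(\xi)\cdot y$ contains no $x$ or $w$ and so commutes with every term of $\mathcal N^{(s)}$; each $Q_\et$ involves only variables in $\dodo_\et$ and so commutes trivially with the other blocks and with $\omega\cdot y$, while commuting with $\Omega_\et\sum_{k\in\dodo_\et}s(k)|z_k|^2$ exactly because it is Lagrangian in the sense of Definition \ref{lagr}. The hard part is the non-resonance step in the previous paragraph, namely excluding cross-block identities $\sigma_1\widetilde s(h)\Omega_{\et(h)}+\sigma_2\widetilde s(k)\Omega_{\et(k)}\equiv 0$ with $\et(h)\ne\et(k)$, together with the mixed identities $\omega\cdot\nu\pm\widetilde s(k)\Omega_{\et(k)}\equiv 0$. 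Since by property v) the function $\theta_\et$ is shared across each family $\mathfrak T_s(i)$, accidental matches of the degree-$q$ parts are a priori possible; what rules them out is the integer data carried by the distinct $|\er_\et|^2$, the affine stratification of iv), and the generic polynomial constraints on $S$. These are the second Melnikov-type properties already encoded in the construction of Proposition \ref{teo2}, to which the proof of the present proposition ultimately reduces via careful bookkeeping of that block-diagonal structure.
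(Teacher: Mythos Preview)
Your skeletal plan—diagonalize $\mathrm{ad}\,\mathcal N^{(s)}$ on the monomial basis and read off which eigenvalues vanish identically—is exactly the paper's approach (the proof here defers to Proposition \ref{secM}, which in turn rests on the analysis in \S\ref{NoF}). But your justification of the crucial non-resonance step has a real gap.

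The sentence ``invoking the diffeomorphism property i) of $\omega$ \ldots\ I would argue that in every nontrivial case $\nu=0$'' does not hold as stated. The local diffeomorphism property only gives that the components $\omega^{(1)}_i$ are linearly independent polynomials (cf.\ Lemma \ref{omest}), so it handles the degree-zero case $\omega^{(1)}\!\cdot\!\nu\equiv 0\Rightarrow\nu=0$. It does \emph{not} rule out identities of the form $\omega^{(1)}\!\cdot\!\nu+\theta_\et\pm\theta_{\et'}\equiv 0$ with $\nu\neq 0$: such $\nu$ can and do exist, and the paper collects them as the new edge set $Y$ in \S\ref{Ga1}. What actually excludes cross-block resonances is not a generic inequality on $S$ or the integers $|\er_\et|^2$, but the \emph{definition} of the blocks themselves: the $\dodo_\et$ are constructed (Definition \ref{figa}, Corollary \ref{findec}, and the phase shift of \S\ref{fincoo}) as supports of connected components of the graph $\Gamma_S^{(f)}$ whose edges are precisely these potential resonances. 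Hence a nontrivial identity $\omega\!\cdot\!\nu+\sigma\Omega_{\et}+\sigma'\Omega_{\et'}\equiv 0$ together with momentum and quadratic-energy conservation would produce an edge joining two distinct components—a contradiction. That is the content of Proposition \ref{secM}, and your final paragraph (``already encoded in Proposition \ref{teo2}'') gestures at it without naming the mechanism.

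You also skip the degree-one case. Here $\eta(\nu)=-1$ forces $\nu\neq 0$, so you cannot reduce to ``$\nu=0$''; one must show $\omega\!\cdot\!\nu+\Omega_\et$ never vanishes identically. The paper does this by a separate specialization argument (setting all $\xi_j=0$ except one and using Lemma \ref{omest} together with the bound $|L(k)|\le 4qd$), which is not subsumed by the diffeomorphism property.
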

\begin{proof}
The proof is in Proposition \ref{secM}.
\qed\end{proof}
This proposition induces a decomposition\footnote{Roughly speaking this is just dividing a space of matrices into   some block diagonal subalgebra and the stable off-diagonal part.} of  $\mathcal F$ into:
\begin{definition}\label{kerrg}
The subalgebra $\mathcal F_{{\rm ker}}$ and its unique  complement $\mathcal F_{{\rm rg}}$ stable under adjoint action of $\mathcal F_{{\rm ker}}$.  
\end{definition}

\begin{definition}\label{doppia}  We consider the free abelian group $\mathbb Z^n$ with  canonical basis  $e_i$ and define the following linear maps $\eta: \mathbb Z^n\to \mathbb Z$, $\pi:\mathbb Z^n\to  \mathbb Z^d$, $\pi^{(2)}:\mathbb Z^n\to  \mathbb Z$
\begin{equation}\label{folm}
 \eta: e_i\mapsto 1,\quad \quad \pi:e_i\mapsto \mathtt j_i,\quad \pi^{(2)}:e_i\mapsto  |\mathtt j_i|^2.
\end{equation}
\end{definition} {\bf Warning}\quad In $\Z^n$ we always use as norm $|l|$ the $L^1$ norm $\sum_{i=1}^n|l^{(i)}|$. On the other hand  in $\Z^d$, and hence in $S^c$, we use the euclidean $L^2$ norm.
\medskip

The space $\mathcal F$ decomposes as a direct sum of its parts of degree $0,1,2$  respectively with basis
$$0)\quad e^{-\ii  \nu\cdot x} ,\ \quad 1)\quad e^{-\ii  \nu\cdot x}z^\s_k,\ \quad 2)\quad e^{-\ii  \nu\cdot x}z_h^{\s_1}  z_k^{\s_2}, \quad e^{-\ii  \nu\cdot x}y_i  $$ satisfying mass and momentum conservation, deduced from  Formula \eqref{turandot}. We usually denote  $z^+=z,\ z^-=\bar z$  for convenience.  We exclude the constants. We denote this degree decomposition as $\mathcal F:=F^{0}\oplus F^{0,1}\oplus F^{ 2},\quad F^ {2}=F^{0,2}\oplus F^{1,0}$.

We now decompose $F^{0,1}$     as orthogonal sum (with respect to the symplectic form),  of subspaces (blocks)  $(\dodo_\et,\nu)^{\pm 1}$ with $\et\in\mathfrak T$ and $\nu\in\Z^n$ costrained by the conservation laws. Each  $(\dodo_\et,\nu)^{+}$ has as basis the elements  $e^{\ii   \nu\cdot x}z_k^{s(k)}$  where  $k\in \dodo_\et $.

For each index  ${\mathfrak t},\nu$ the contraints 
 \begin{equation}\label{consmo0}\pi(\nu)+\er_{\mathfrak t}=\nu\cdot\mathtt j+\er_{\mathfrak t}=0,\quad \eta(\nu)+1=0\end{equation}
  express respectively   the conservation of momentum and   mass. 

These blocks are obviously stable  under $ad(\mathcal N)$ and  on each block this action is invertible.  On the blocks $(\dodo_\et,\nu)^{\s}$ $ad(\mathcal N)$ is self adjoint for the standard form. If $\et\in\mathfrak T_s$ the standard form is positive definite if $\s=1$ negative if $\s=-1$.
\smallskip

The previous decomposition of     $F^{0,1}$  induces a block decomposition of   $F^{0,2}$.
The product of two blocks $(\dodo_{\mathfrak t_i},\nu_i)^{\pm 1},\ i=1,2$  produces a quadratic block in $F^{0,2}$ stable under  $ad(\mathcal N)$ with basis  the products of basis  elements. In order to avoid repetitions we index different quadratic blocks by $(\nu,\et_1,\et_2)_{\s_1,\s_2}$  where the conservation laws are:
 \begin{equation}\label{consmo}
\eta(\nu)+1+\s_1\s_2=0\,,\quad \pi(\nu)+\er_{\mathfrak t_1}+ \s_1\s_2 \er_{\et_2}=0\,
\end{equation}
 while the basis elements are  $e^{\ii  \s_1 \nu\cdot x}z_k^{\s_1s(k)}z_h^{\s_2s(h)}$ where  $k\in \dodo_{\et_1} $ and  $h\in \dodo_{\et_2} $.
For $\s_1\s_2$ fixed the blocks come in conjugate pairs  and we usually exhibit the one with $\s_1=1$.

\noindent The action of $ad(\mathcal N)$ is invertible on all the blocks different from $(0,\et,\et,\s,-\s)$ which add up to the part of $\mathcal F_{{\rm ker}}$ in $F^{(0,2)}$.

\section{A KAM theorem\label{aKT}}
Having now prepared our normal form we need to explain how to perform a successful  KAM algorithm.
 
 We follow the same path as in \cite{PP3} but with some differences due to the fact that our normal form is no more diagonal with different eigenvalues but rather block diagonal with  uniformly bounded blocks corresponding to different eigenvalues.  Recall that in \cite{PP3} the KAM algorithm  requires  several steps which correspond to different sections of the paper in particular Parts 2,3.
 
\noindent Part 2 of \cite{PP3} is the part on {Quasi  T\"oplitz functions} can be transported verbatim to our setting.  More precisely
Section 7 on  optimal presentations, cuts and good points  requires only the existence,  for all $N$ large, of a stratification  of 
$\mathbb Z^d$  which refines the one of Proposition \ref{teo2}  items {\it iii)} and {\it iv)}  so it needs no changes.
 Finally  Section 8 on quasi- T\"oplitz functions  is a general discussion which holds in our setting. For completeness we give a very brief overview of the relevant definitions and properties in Appendix \ref{prilone}

 \noindent Part 3 of \cite{PP3}  needs some changes  since the Hamiltonians involved in the present paper do not satisfy all the axioms of the {\em compatible Hamiltonians} introduced there (they are only block diagonal).  The main issue is  how to perform the measure estimates. We discuss the needed changes in detail in the following subsections.
 
\noindent Finally Part 4 of \cite{PP3} was dedicated to proving that  the NLS hamiltonian is quasi--T\"oplitz and that the various changes of variables necessary to obtain  the Normal form Hamiltonian \eqref{prinor} preserves the quasi--T\"oplitz property. Since the changes of variables that we are using are exactly of the same type as those of \cite{PP3} also this section holds verbatim and is only needed in order to prove item {\it vii)}  of Proposition \ref{teo2}.     
 
 \medskip
 
 The starting point for our KAM Theorem is a class of Hamiltonians $H$ defined in $ D(s,r) \times  \mathcal O$, where we take $\mathcal O\subseteq \ro \Lambda
 $   a compact domain of diameter of order $\varepsilon^2$ contained in one of the components $\mathcal R_\alpha\cap \e^2\Lambda$ where the local diffeomorphism $\ome(\xi)$ is also injective.  
 This class of Hamiltonians is obtained as a small correction of the class described in Proposition \ref{teo2} from which we maintain all the notations.
  \begin{equation}\label{hamH}   H:= N +P,\quad  N:=  \omega(\xi)\cdot y +\sum_{\mathfrak t\in \mathfrak T_b\cup \mathfrak T_g} \mathcal Q_{\mathfrak t}(\xi,z,\bar z)\,, \quad P=
 P(x,y,z,\bar z, \xi).\end{equation} 
Here $\omega(\xi)= \mathtt j^{(2)}+ \omega^{(1)}(\xi)+ \tilde \omega(\xi)$ where $\mathtt j^{(2)},\omega_1$ are the ones of the NLS normal form, defined in Proposition \ref{teo2} item {\it ii)},  while $\tilde\omega(\xi)$ is  small of the order of Formula \eqref{pertu}.
The finite (at most $2d+1$ degrees of freedom) dimensional quadratic Hamiltonians $\mathcal Q_{\mathfrak t}$ depend on the variables $z_k,\bar z_k$ with $k \in \dodo_\et$. By the lagrangian structure of the blocks $(\dodo_\et,\nu)^\pm$ we represent these Hamiltonians by the  matrices, denoted by $\Omega_\et(\xi)$, acting on the blocks $(\dodo_\et,\nu)^+$. Note that these are independent of $\nu$. We have that for all $\et\in \mathfrak T_g$ the matrix $\Omega_\et(\xi)$ is self-adjoint (the basis is orthonormal for the standard form).  
  For all $\et\in \mathfrak T$ we have:\begin{equation}\label{asymp1} \Omega _\et(\xi)=(|\er_\et|^2+\theta_\et(\xi))I_\et+ \Omega^{{\rm nil}}_\et+ \tilde
\Omega _\et(\xi).
\end{equation} 
where $\er_\et$ is the {\em root}  of the stratum indexed by $\et\in \mathfrak T$. $\Omega^{\rm nil}_\et$ is a nilpotent matrix chosen from a finite list of matrices depending on $\xi$ in an analytic way and homogeneous of degree $q$, similarly for $\theta_\et$. Moreover the functions $\theta_\et$ satisfy the {\em piecewise T\"oplitz} property (iv) of Proposition \ref{teo2}.   Finally the  $\tilde
\Omega _\et(\xi)$ define a quadratic Hamiltonian which is quasi T\"oplitz  and  small of the order of Formula \eqref{pertu}. The same properties hold for the perturbation $P$.
\smallskip 

It is well known that the Hamiltonian equations of motion
for the normal form $ N$
admit the special solutions  $(x, 0, 0, 0)\to (x+\omega(\xi) t,
0,0,0)$, that  correspond  to
  invariant tori in the phase space, for each $\xi\in \mathcal O$. These special solutions are solutions also of the perturbed system if and only if  the Hamiltonian vector field associated to $P$ vanishes on these  invariant tori.
\smallskip  

Our aim is to prove that,  under suitable hypotheses, there is a set   $\mathcal O_\infty\subset \mathcal O$   of positive Lebesgue measure, so that,   for all
 $\xi \in \mathcal O_\infty$  the Hamiltonians $H$ still admit
 invariant tori (close to the ones of the unperturbed system)  given  (in the new variables) by the equations $y=z=0$. 
  Moreover, the associated Hamiltonian vector field  $X_H$  restricted to these tori is $\sum_{i=1}^n\omega^\infty_i(\xi)\partial_{x_i}$ while $X_H$  linearised at a torus is block-diagonal in the normal variables with $x$-independent block matrices $\Omega^\infty_\et(\xi)$. 
 \begin{definition}[reducible KAM tori] \label{reducible}
 A quasi-periodic solution of frequency $\omega^\infty$  is a  reducible KAM torus if  (in appropriate sympectic variables) it is expressed  by   the equations $y=z=0$ and moreover the associated Hamiltonian vector field  $X_H$  restricted to the torus  is $\sum_{i=1}^n\omega^\infty_i(\xi)\partial_{x_i}$,  while $X_H$  linearized at the torus is block-diagonal in the normal variables with $x$-independent block matrices  $\Omega^\infty_\et(\xi)$ of uniformly bounded dimension.
\end{definition}
\subsection{The algebraic algorithm}  Let us explain the algebraic part of  the  algorithm. We have seen that the space  $\mathcal F:=F^{0}\oplus F^{0,1}\oplus F^{ 2},\quad F^ {2}=F^{0,2}\oplus F^{1,0}$ can be canonically decomposed in an orthogonal sum of two parts,    $\mathcal F=\mathcal F_{{\rm rg}}\oplus \mathcal F_{{\rm ker}}$  where, for generic values of $\xi$ the space $\mathcal F_{{\rm rg}}$ is the range  of  the normal form operator $ad(\mathcal N^s)$  and $\mathcal F_{{\rm ker}}$ the Kernel.  Denote by  $\Pi_{{\rm rg}},\ \Pi_{{\rm ker}}$ the corresponding projections.   

Recall the degree projections $\Pi^{\leq j},\, \Pi^{  j},\, \Pi^{> j}$ onto polynomials of degree $\leq j,\ j,  >j$, we also denote by $A^{\leq j}:=  \Pi^{\leq j} A $ for any Hamiltonian $A$. We shall also need to perform a {\em ultraviolet cut}, that is  separate the subspace where the frequency $\nu$ is bounded by some $|\nu|\leq K$ and the rest, these projections will be denoted by putting $K$ as pedex as in $\Pi_{{\rm rg},\leq K},\ \Pi_{{\rm ker},\leq K},\ \Pi_{\leq K},\ \Pi_{>K}$ etc..
 
Up to now we have considered $\mathcal F$ as an algebraic object by describing a basis  but we will soon need to consider infinite linear combinations of these basis elements, defining regular quadratic Hamiltonians on some of the regions $D(r,s)$, moreover these will depend on parameters $\xi$ on suitable compact domains, nevertheless the projections still make sense.

In fact it is convenient to define  $\Pi_{{\rm rg}},\ \Pi_{{\rm ker}}$ on the entire space of  series of which  $\mathcal F$ is the part of degree $\leq 2$.   
We hence decompose  \begin{equation}\label{hrs}
\mathcal H_{s,r}= \mathcal F_{{\rm ker}}\oplus  \mathcal F_{{\rm rg}}\oplus \mathcal H_{s,r}^{>2}.
\end{equation}\begin{remark}\label{theg}[The goal]
By definition, the normal form $\mathcal N$ of Proposition \ref{teo2} is in $\mathcal F_{{\rm ker}}$.
In general, the condition for  a hamiltonian $  H=  N+  P,\   N=\Pi_{{\rm ker}}   H$ to have  KAM tori is $\Pi_{{\rm rg}}   H=\Pi_{{\rm rg}}   P=0$. So our goal is to find a
  symplectic transformation $  \mathfrak  S$ so that $$ \Pi_{{\rm rg}}      \mathfrak S(  H)=0. $$\end{remark}
The strategy is to construct this as a limit of   a {\em quadratic Nash-Moser algorithm}.  

{\bf Warning}  We deviate from  the standard notation  to put in evidence the intrinsic decomposition  of $\mathcal H_{r,s}$ given by Formula  \eqref{hrs}. In particular for a Hamiltonian $  H\in \mathcal H_{r,s}$  we will  denote:  
\begin{equation}\label{notazioni}
  N:=\Pi_{{\rm ker}}(  H)\,,\quad 
  P_{{\rm rg}}:=\Pi_{{\rm rg}}(  H)\,,\quad   P^{>2}:=\Pi^{>2}(  H)\,,
\end{equation}
so that $ H=   N+   P_{{\rm rg}} +  P^{>2}$.
 The first two terms correspond to the canonical decomposition of the Lie algebra (under Poisson bracket), $\mathcal F$ into the subalgebra   $\mathcal F_{{\rm ker}}$ and its complement    $\mathcal F_{{\rm rg}}$, which is stable under the action of   $\mathcal F_{{\rm ker}}$. \smallskip

By Remark \ref{graa} if
 $A$ has degree $i$  we have that $ad(A)$  has degree  $i-2$  so as soon as $i\geq 3$ these  operators have positive degree and are nilpotent on $\mathcal F$, the cube is always 0.

We start  with $  H_{0}:=   N_0 +   P_{{{\rm rg}}, 0} +   P^{>2}_0$, where $  N_0$ is close to $\mathcal N$ and $   P_{{\rm rg}, 0}$ is appropriately small. We wish to find a convergent sequence of changes of variables,   dependent on a sequence $K_m$ of ultraviolet cuts,
$   \mathfrak S_{m+1}:= e^{ad F_{m }}    \mathfrak S_{m }$ so that at each step $  H_{m+1}=   \mathfrak S_{m+1}(  H_{0})$ 
is such that $   N_{m }$ stays close to $\mathcal N$,  $  P^{>2}_m$ stays bounded while $   P_{{\rm rg}, m}$ converges to zero (super--exponentially). 

At a purely formal level we would like that  $   P_{{\rm rg}, m+1}$ is 
{\em quadratic} w.r.t. $   P_{{\rm rg}, m }$.
The generating function $F_{m}\in \mathcal F_{{\rm rg},\leq K_{m+1}}$ is fixed by solving the homological equation
 \begin{equation}\label{hoeq}
\{   N_{m},F_{m}\}+ \Pi_{{\rm rg},m} \{P^{>2}_{m},F_{m}\}=\Pi_{\leq K_{m+1}} P_{{\rm rg},m}\,,\quad  \Pi_{{{\rm rg}},m}:= \Pi_{{{\rm rg}},\leq K_{m+1}}
\end{equation}
which uniquely determines $F_{m}$ as a {\em linear} function of $  P_{{\rm rg},m}$ provided that the linear operator:
$$L_{m}:= {\rm ad}(  N_{m}) + \Pi_{{{\rm rg}},m} {\rm ad}(P^{>2}_{m})= {\rm ad}(  N_{m}) + \Pi_{{{\rm rg}},m} {\rm ad}(P^{3}_{m})+ \Pi_{{\rm rg},m} {\rm ad}(P^{4}_{m}) , $$ is invertible on $\mathcal F_{{\rm rg},\leq K_{m+1}}$ (clearly we also need some quantitative control on the inverse).   
\begin{remark}\label{nilp}
On $\mathcal F$ the operators 
$ {\rm ad}(  N_{m}) , \Pi_{{\rm rg},m} {\rm ad}(P^{3}_{m}), \Pi_{{\rm rg},m} {\rm ad}(P^{4}_{m})$ have respectively degree 0,1,2  so it should be be clear that $L_{m}$ is invertible if and only if ${\rm ad}(  N_{m})$ is invertible and in this case  one inverts $$L_{m}=  {\rm ad}(  N_{m}) \bigg(1+{\rm ad}(  N_{m}) ^{-1} \Pi_{{\rm rg},m} {\rm ad}(P^{>2}_{m})\bigg)$$ by inverting the second factor. This is  of the form $1+A$ with $A$ a sum of two linear operators of degree 1,2 respectively, so $A^3=0$ and we invert $1+A$ with the 3 term Neumann series $1-A+A^2$.
\end{remark} 
We now justify our choice  by computing one {\em KAM step}, for notational convenience we drop the pedex $m$ in $  H_{m }$ etc..  and substitute $  H_{m+1}$ with $  H_+$ etc...\,.

Let us compute $  H_+:= e^{ad F}  H$.  
First split the operator  $e^{ad(F)}=1+ad(F)+E_F$, by definition $E_F$ is quadratic in $F$ and hence quadratic in $  P_{{\rm rg}}$. Regarding the term
$$(1+ad(F))(  N+   P_{{\rm rg}}+  P^{> 2})=   N+   P_{{\rm rg}}+  P^{> 2}-\{  N+  P^{> 2},F\} +\{F,  P_{{\rm rg}}\} $$
we first notice that, since $F$ is linear w.r.t. $  P_{{\rm rg}}$  then the last summand is quadratic moreover since $F$ solves the homological equation we have
$$
  P_{{\rm rg}}-\{  N+  P^{> 2},F\}= (\Pi_{{\rm ker}}+\Pi_{>2}+ \Pi_{>K}\Pi_{{\rm rg}})\{  P^{> 2},F\}+\Pi_{>K} P_{{\rm rg}}.
$$
Then we deduce that
$$
\Pi_{{\rm ker}} e^{ad F}  H:=  N_{+}=   N+\Pi_{{\rm ker}}\{  P^{> 2},F\}+\Pi_{{\rm ker}}Q(  P_{{\rm rg}})\,,$$
  $$ \Pi_{{\rm rg}} e^{ad F}  H:=  P^{\leq 2}_{+}= \Pi_{>K} (P_{{\rm rg}}+ \Pi_{{\rm rg}}\{  P^{> 2},F\} )+\Pi_{{\rm rg}}Q(  P_{{\rm rg}})\,,
$$
$$
 \Pi_{>2} e^{ad F}  H:=   P^{>2}_+=   P^{> 2}+\Pi_{>2}\{  P^{> 2},F\}+\Pi_{>2}Q(  P_{{\rm rg}})
$$
where $Q(  P_{\rm rg})\,,$  is quadratic in $  P_{{\rm rg}}$ and collects the terms from $E_F(  H)$ and $\{F,  P_{{\rm rg}}\} $.
  \subsection{The quantitative estimates\label{quaes}}  {\em In the course of  our  computations we  will often have a statement for functions $f,g$  
of the following type:

\noindent {\em there is a constant $C$ depending only on some of the parameters $d,n,q,t$  with $|f|\leq C|g|$}  we then will  replace this statement  by the formula
$$|f|\lessdot_{d,n,q,t}|g|,\quad \text{or just}\quad  |f|\lessdot |g|.$$
Notice that the relation $|f|\lessdot |g| $ satisfies all the usual properties of the    relation $\leq$,  that is it is a partial order compatible with sums and products.
}\bigskip

 At a more quantitative level, we start by choosing a domain $\mathcal O_0\subset \mathcal R_\alpha\cap \e^2 \Lambda$ where some appropriate quantitative non-degeneracy conditions (see  Lemma \ref{2melquant}) hold. 
 
 By construction the measure of  $\mathcal O_0$ is  of the order of $\e^{2n}$. Then we choose two positive parameters $\tau,K_0$, bounded from below by some estimate depending only on $n,d,q$ and the set $\mathcal O_0$.
 Such a choice determines bounds on $\e,r$ through formula \ref{pertu} of the type $r \e^{-1}+\e^3 r^{-1} < C K_0^{-\tau}$. When these bounds hold our algorithm converges for all $\xi$ in some set whose complement, in the starting domain $\mathcal O_0$, has measure  of order $\e^{2n} K_0^{-f(\tau)}$ where $f$ is some linear increasing function of $\tau$, see Proposition \ref{misura}. 

 Each Hamiltonian $H$ is constrained by several parameters, $s,r,\Theta,L,M,a,$ $\vartheta,\mu,K$, which  control the quantitative structure. The frequency cut-off parameter $K$ grows exponentially while the other parameters shall be proved to be  {\em telescopic}. \begin{definition}\label{telpa}
We say that a positive parameter $b$ is telescopic if for each step $m$ we have $b_0/2< b_m< \frac32 b_0$ (usually $b_m$ is either an increasing or a decreasing sequence). 
\end{definition}
Note that a sufficient condition is that
\begin{equation}\label{tele}
\sum_{m=0}^\infty |b_{m+1}-b_m|\leq \frac{b_0}{2}.
\end{equation}
Finally the domain $\mathcal O$ shrinks at each step but its measure remains bounded from below.
\begin{definition}\label{coste}
The  parameters $s,r$ control the decreasing domain $D(s,r)$  of definition in the dynamical variables.

The  parameters $\vartheta,\mu,K$ control the quasi--T\"oplitz norm of $P$ and of the quadratic Hamiltonian associated to the $\tilde\Omega_\et$, and we set $\|P\|^T_{\overrightarrow p}\leq \Theta$. 

The parameters $M,L $ control  the  derivatives up to order $\ell =(2d+1)^2$ of $\omega$ and $\Omega_\et$, the fact that $\omega$ is invertible and an estimate on the dilation factor. 
$$
|\partial_\omega \xi|_\infty \leq L \e^{-2(q-1)}\,,\quad |\omega-\mathtt j^{(2)}|_\infty\,,\, |\Omega_\et -|\er_\et|^2 I_\et|_\infty\leq M \e^{2q}
$$
 \begin{equation}\label{omelip}
|\partial_\xi^\alpha \ome|_\infty+|\partial_\xi^\alpha \theta_\et|+|\partial_\xi^\alpha \Ome^{\rm nil}_\et|_\infty+|\partial_\xi^\alpha \tilde\Ome_\et|_\infty  \leq \e^{2(q-|\alpha|)}M^{|\alpha|}\,. \end{equation}  
for all $\alpha \neq 0$ such that $|\alpha|\leq \ell$.

Finally the parameter $a$ is defined by 
\begin{eqnarray}\label{piffedue}
&|&(\omega\cdot \nu)^{-1}|,\leq \e^{-2q} a\,,\quad | (\omega\cdot \nu + \Omega_\et)^{-1}|\leq \e^{-2q} a\, ,\nonumber\\ & |& (\omega\cdot \nu +  L(\Omega_\et)+\s\s' R (\Omega_{\et'}))^{-1}|\leq \e^{-2q} a \,,\ \forall \nu:\, |\nu|\leq S_0
\end{eqnarray}
where $S_0$ is some fixed positive number, see Lemma \ref{2melquant}. Here $\Omega_\et$ is defined by \eqref{asymp1}  and  $L(\Omega_\et)$, resp. $R (\Omega_{\et'})$, are the operators of left respectively right multiplication by the matrices  $ \Omega_\et,$ resp. $\Omega_{\et'} $, on $d_{\et}\times d_{\et'}$ matrices.  Finally $|\cdot |$ is the $L^2$ operator norm.
\end{definition} 

We choose as in \cite{PP3}:
\begin{align}\label{srK}
 r_{m+1}= (1-2^{-m-3})r_m\,,\quad &s_{m+1}= (1-2^{-m-3})s_m\,,\quad \nonumber \\
\vartheta_{m+1}=\vartheta_m + \vartheta_0 2^{-m-2},\quad  &\mu_{m+1}=\mu_m - \mu_0 2^{-m-2}\,,\quad K_m=4^m K_0 .
\end{align}

This choice allows us to perform the Cauchy and ultraviolet estimates.

We choose any sequence of nested domains $\mathcal O_m$  as follows\footnote{ of course we will need to show that we can make non-empty choices}. The domain $\mathcal O_{m+1}$ is a subset of $\mathcal O_{m}$ where not only \eqref{hoeq} admits a unique solution but moreover \begin{equation}\label{stF}
\|F_{m}\|^{T}_{\overrightarrow { p}^{'}_m} \leq \e^{-2q}K_{m}^{ \tau }\|P_{{\rm rg}}\|^{T}_{\overrightarrow p_m }\,,\end{equation}  where $\overrightarrow p_m=(\l_m,s_m,r_m,\vartheta_n,\mu_m,\mathcal O_{m})$, $\overrightarrow { p}_m'=(\l_m,s'_m,r'_m,\vartheta'_m,\mu'_m,\mathcal O_{m+1})$ with
$b'_m= (b_m+b_{m+1})/2$, for $b=s,r,\vartheta,\mu$, and  
 $\lambda_m= \e^2M_m^{-1}$.


\begin{proposition}\label{kamforma}
For $K_0,\tau$ large and for all $\e,r$ such that $r \e^{-1}+\e^3 r^{-1} \lessdot K_0^{-\tau}$, at each step $m$, in the set $\mathcal O_m$, one has the estimate 
\begin{equation}\label{superex}
\e^{-2q}\|P_{{{\rm rg}},m}\|^{T}_{\overrightarrow p_m} \leq e^{-\frac32^m}\,.
\end{equation}  Moreover all the constants $s,r,\Theta,L,M,a,\vartheta,\mu$ are telescopic. Finally our algebraic algorithm converges on the set $\cap_m\mathcal O_m$, and we obtain a Hamiltonian $H_\infty$ which has reducible KAM tori.
\end{proposition}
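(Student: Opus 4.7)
The strategy is the standard super-exponentially convergent KAM induction outlined in the paragraphs preceding the statement. We assume inductively that at step $m$ the bound \eqref{superex} holds, the parameters $s_m,r_m,\vartheta_m,\mu_m,K_m$ obey \eqref{srK}, and the auxiliary constants $M_m,L_m,\Theta_m,a_m$ lie in telescopic ranges around their initial values. By the definition of $\mathcal O_{m+1}\subset\mathcal O_m$ the estimate \eqref{stF} is valid for the unique solution $F_m\in\mathcal F_{{\rm rg},\leq K_{m+1}}$ of the homological equation \eqref{hoeq}. The solvability rests on the three--term Neumann-series inversion of $L_m$ from Remark \ref{nilp}, whose leading factor $\mathrm{ad}(N_m)^{-1}$ is a block-diagonal operator with denominators $\omega\cdot\nu$, $\omega\cdot\nu+\Omega_\et$ and $\omega\cdot\nu+L(\Omega_\et)+\sigma\sigma' R(\Omega_{\et'})$ controlled by $\varepsilon^{-2q}a_m$ through \eqref{piffedue}.

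Applying the Lie transform $e^{\mathrm{ad}\,F_m}$ and using the KAM-step formulae displayed in the text,
\begin{equation*}
P_{{\rm rg},m+1}=\Pi_{>K_{m+1}}P_{{\rm rg},m}+\Pi_{>K_{m+1}}\Pi_{{\rm rg}}\{P^{>2}_m,F_m\}+\Pi_{{\rm rg}}Q_m(P_{{\rm rg},m}),
\end{equation*}
where $Q_m$ is quadratic in $P_{{\rm rg},m}$. The first summand is estimated by the standard analytic ultraviolet tail $e^{-K_{m+1}(s_m-s_m')/2}\|P_{{\rm rg},m}\|^T_{\overrightarrow p_m}$; the third by $\|F_m\|^T_{\overrightarrow p_m'}\|P_{{\rm rg},m}\|^T_{\overrightarrow p_m'}$ through the quasi--T\"oplitz Poisson-bracket and Cauchy estimates transported verbatim from Sections 8--9 of \cite{PP3}; and the second by combining the two mechanisms. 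Inserting \eqref{stF}, the inductive bound \eqref{superex}, $K_m=4^mK_0$ and $s_m-s_m'\sim 2^{-m}s_0$, all three contributions are bounded by $\varepsilon^{2q}e^{-(3/2)^{m+1}}$ provided $K_0,\tau$ are large and $r\varepsilon^{-1}+\varepsilon^3 r^{-1}\lessdot K_0^{-\tau}$. This closes the induction for the perturbation.

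In parallel the corrections to $N$ coming from $\Pi_{{\rm ker}}\{P^{>2}_m,F_m\}+\Pi_{{\rm ker}}Q_m(P_{{\rm rg},m})$ are of order $\|F_m\|\,\|P^{>2}_m\|\sim\varepsilon^{2q}e^{-(3/2)^m}$, so summing the geometric series gives a uniform drift of order $\varepsilon^{2q}$ on $\omega_m$ and on the block matrices $\Omega_{\et,m}$. Hence $M_m,L_m,a_m$ remain telescopic (the Melnikov denominators in \eqref{piffedue} are perturbed by amounts much smaller than the gaps they control, thanks to the smallness of $r\varepsilon^{-1}+\varepsilon^3 r^{-1}$), and $\Theta_{m+1}=\Theta_m+O(\|F_m\|)$ is telescopic by \eqref{tele}. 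The choices \eqref{srK} make $s_m,r_m,\vartheta_m,\mu_m$ telescopic by construction. Since $\sum_m\|F_m\|^T_{\overrightarrow p_m'}<\infty$, the composition $\mathfrak S_m:=e^{\mathrm{ad}\,F_{m-1}}\circ\cdots\circ e^{\mathrm{ad}\,F_0}$ converges uniformly on $D(s_0/2,r_0/2)\times\bigcap_m\mathcal O_m$ to an analytic symplectic map $\mathfrak S_\infty$, and the limit $H_\infty=N_\infty+P^{>2}_\infty$ satisfies $P_{{\rm rg},\infty}=0$. By Remark \ref{theg} the torus $\{y=0,z=\bar z=0\}$ is then invariant with tangential flow $\sum_i\omega_{i,\infty}\partial_{x_i}$ and block-diagonal normal part $\Omega_{\et,\infty}$, i.e.\ reducible in the sense of Definition \ref{reducible}.

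The main obstacle, and the only substantive novelty with respect to the cubic case of \cite{PP3}, is that $\mathrm{ad}(N_m)^{-1}$ no longer acts by scalar division but requires inverting the matrix operators $\omega\cdot\nu+L(\Omega_\et)+\sigma\sigma'R(\Omega_{\et'})$ on spaces of dimension up to $(2d+1)^2$, and that $C^\ell$--control in $\xi$ of their determinants must be propagated along the iteration. This is precisely why the regularity parameter $\ell=(2d+1)^2$ in the weighted norm \eqref{weno} was chosen to equal the square of the maximal block dimension; once this is in place, the block-valued denominators can be differentiated in $\xi$ up to order $\ell$ and the estimates above mirror those of \cite{PP3}. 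The non-emptiness and lower-bound of measure for $\bigcap_m\mathcal O_m$ is addressed separately in Proposition \ref{misura}.
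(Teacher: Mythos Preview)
Your proof is correct and follows essentially the same approach as the paper: both argue by the standard recursive KAM induction, invoking \eqref{stF} on $\mathcal O_{m+1}$ together with the Poisson-bracket/Cauchy/ultraviolet estimates from \cite{PP3} (Lemma 10.20, Proposition 10.16, \S10.17) to propagate \eqref{superex} and the telescopic nature of all parameters. Your sketch is in fact more detailed than the paper's own proof, which simply defers to \cite{PP3}; your closing paragraph on the block-diagonal inversion and the choice $\ell=(2d+1)^2$ correctly isolates the only genuine novelty.
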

\begin{proof} These estimates are  all standard and  follow the same lines as the corresponding ones in  \cite{PP3}.
The procedure is recursive, suppose we have reached some step $m$ and computed all the necessary estimates, then using for instance Lemma 10.20 of \cite{PP3}, one proves  the estimate \eqref{superex} for $F_m$.

Then $F_{m }$ defines a symplectic change of variables and one proves, see Proposition 10.16 and section 10.17   of \cite{PP3}, that in the new Hamiltonian $H_{m+1}$ both the  perturbation  $P_{m+1}$ and  the quadratic Hamiltonian associated to the $\tilde\Omega^{(m+1)}_\et$ are well defined in the domain $D(s_{m+1},r_{m+1})$ and quasi--T\"oplitz with parameters $(K_{m+1},\vartheta_{m+1},\mu_{m+1})$. 
Note that by definition of $E_F$ and since $F$ solves the Homological equation we have that  $Q(  P_{{\rm rg},m})$ is quasi-T\"oplitz.
The variation in the parameters $s,r,\vartheta,\mu$ is due to {\em Cauchy estimates}, namely it is used in order to control the  norm of the Poisson bracket of two Hamiltonians, following Proposition 10.16 of \cite{PP3}.

The corrections of the parameters  $L,M,a,\Theta$ are obtained from the coordinate change, which is very close to the identity due to the super--exponential decay of the norm of $F$, this implies easily the telescopic  nature of the parameters used,  as $|b_{m+1}-b_m|<$ const $b_0 e^{-(3/2)^m}$ see   Lemma 10.20   of \cite{PP3}.  

%
Then one easily sees that the algorithm converges.

By the nature of the majorant norm, for all $j$,  the part  of homogeneous degree $j$ of $ H_m$ converges to the part  of homogeneous degree $j$ of $ H_\infty.$

In particular, since  $P_{{{\rm rg}},m} $ converges to 0 this implies that  $ N_m$ and $P^{>2}_m $   converge to $ N_\infty$ and $P^{>2}_\infty  $ respectively.
\qed\end{proof}

The main problem  is thus to exhibit some domain of parameters $\xi$ where the estimates \eqref{stF} hold and prove that at the end of the algorithm we are left with a set $\mathcal O_\infty:=\cap_m\mathcal O_m$ (of Cantor type) of positive measure (in particular non--empty) of order $\e^{2n}$.   

\subsection{Measure estimates}
For the remaining discussion  we  drop  the indices $_m$   and  assume we are working at some undefined step, with $F,H,N$ etc. the hamiltonians at this given step.
  By remark \ref{nilp} the linear operator $F\mapsto\Pi_{{\rm rg}}\{F,P^{>2}\}$ is nilpotent of order three so in order to invert the homological equation we only need to invert the operator $F\mapsto{\rm ad}( N) (F)$ which is of degree 0 hence block diagonal on the decomposition $F^{0}\oplus F^{0,1}\oplus F^{2}$.

\noindent Indeed, by the formulas \eqref{asymp1} and \eqref{hamH},  ${\rm ad}\, N$ acts on $F^0$ and on $F^{1,0}$ as $\omega\cdot\partial_x$ and
on each block $(\nu,\mathfrak t)_\s$ of $F^{0,1}$ as
\begin{equation}\label{scoccia1}
 \s( \omega\cdot \nu + \Omega_\et).
\end{equation}
Finally on each block $(\nu,\mathfrak t,\et')_{\s,\s'}$ of $F^{0,2}$, thought of as $d_{\et}\times d_{\et'}$ matrices, ${\rm ad}\, N$ acts as
\begin{equation}\label{scoccia2}
\s( \omega\cdot \nu + L(\Omega_\et)+\s\s' R (\Omega_{\et'})) 
\end{equation}
where the notations are those of \eqref{piffedue}. 

%
%
In each case we have to show that these linear operators (which we represent as  matrices of dimension resp. one, $d_\et$ and $d_\et d_{\et'}$) are invertible and moreover estimate the norm (we choose the $L^2$ operator norm)  of the inverse.   
This will be done, when the matrices are self--adjoint (i.e. both $\et$ and $\et'$ in $\mathfrak T_g$), by lower  bound estimates on the eigenvalues. 
 By momentum  conservation, the remaining set of matrices is finite and we estimate it by  lower  bounds on the  determinants. We impose as constraint on the parameters $\xi$  to have estimates of the form:
\begin{equation}\label{piffe}
|(\omega\cdot \nu)^{-1}|,\ | (\omega\cdot \nu + \Omega_\et)^{-1}| ,\ | (\omega\cdot \nu +  L(\Omega_\et)+\s\s' R (\Omega_{\et'}))^{-1}|\leq \e^{-2q}K^{\varrho}
\end{equation}
for all $\nu$ such that $|\nu|\leq K$ and
for some ($K$ independent) value  of $ \varrho$. 
\begin{lemma}\label{scelta}
 Fixing $\varrho=\varrho(\tau):=(\tau-1)/3((2d+1)^2+4)$, in the set of the  $\xi$ for which  \eqref{piffe} 
holds  one has the estimate \eqref{stF}.
\end{lemma}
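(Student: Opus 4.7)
The idea is to reduce the estimate on $F$ to a block-by-block inversion of ${\rm ad}(N_m)$, exploit the hypothesis \eqref{piffe} to control each block, and then track the derivatives in the $\lambda$-weighted quasi-Töplitz norm.

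First, I would use Remark \ref{nilp} to factor $L_m = {\rm ad}(N_m)(I + A_m)$ where $A_m := {\rm ad}(N_m)^{-1}\Pi_{{\rm rg},m}{\rm ad}(P^{>2}_m)$ is nilpotent of order three on $\mathcal{F}$ (because ${\rm ad}(P^{>2})$ has positive degree and the cube vanishes). Then
$$L_m^{-1} = (I - A_m + A_m^2)\,{\rm ad}(N_m)^{-1},$$
so the homological equation \eqref{hoeq} is solved by $F = L_m^{-1}\Pi_{\leq K_{m+1}}P_{{\rm rg},m}$, and up to the 3-term Neumann correction everything reduces to inverting ${\rm ad}(N_m)$.

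The second step is the base $L^2$ bound. The operator ${\rm ad}(N_m)$ is block-diagonal under the decomposition $\mathcal{F}=F^{0}\oplus F^{0,1}\oplus F^{0,2}\oplus F^{1,0}$ of \S 3, with blocks of dimension at most $d_\et d_{\et'}\leq (2d+1)^2$ described by \eqref{scoccia1} and \eqref{scoccia2}. The hypothesis \eqref{piffe} gives directly that each such block has inverse of $L^{2}$-operator norm $\leq\e^{-2q}K^{\varrho}$. This is the input for the base norm.

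The third step is the propagation to the weighted norm \eqref{weno}, which requires control of $\partial_\xi^\alpha F$ for $|\alpha|\leq\ell=(2d+1)^2$. Differentiating $A(\xi)A(\xi)^{-1}=I$ inductively expresses $\partial^\alpha A^{-1}$ as a sum of products containing at most $|\alpha|+1$ copies of $A^{-1}$ together with $\xi$-derivatives of $A$ of order $\leq|\alpha|$. By \eqref{omelip} the derivatives of $\omega$ and $\Omega_\et$ are bounded by $\e^{2(q-|\alpha|)}M^{|\alpha|}$, so with the weight $\lambda=\e^{2}M^{-1}$ each factor $\lambda^{|\alpha|}\partial^\alpha A$ is absorbed into universal constants. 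Feeding this into the three Neumann terms of $L_m^{-1}$, each of which involves at most three applications of ${\rm ad}(N_m)^{-1}$, the total number of inverse-block factors picked up in a term of weighted order $\alpha$ is at most $3(|\alpha|+1)+O(1)\leq 3(\ell+4)$. Each such factor costs $\e^{-2q}K^{\varrho}$, hence the cumulative power of $K$ is at most $K^{3\varrho(\ell+4)}=K^{\tau-1}$ by the choice of $\varrho$, leaving a free $K$ to absorb the polynomial prefactors coming from the combinatorial count of derivative placements and from ultraviolet cut-offs. The quasi-Töplitz part of the norm is handled by the general inversion machinery recalled in Appendix \ref{prilone} and proved verbatim as in Part 2 of \cite{PP3}: ${\rm ad}(N_m)^{-1}$ preserves the quasi-Töplitz class because $N_m$ is itself block-diagonal with piecewise Töplitz symbol (Proposition \ref{teo2}, items \emph{iii)}-\emph{v)}), and the parameters $(\vartheta,\mu)$ are moved from $\overrightarrow{p}_m$ to $\overrightarrow{p}'_m$ exactly along the telescopic schedule \eqref{srK}.

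Collecting the three steps yields
$$\|F_m\|^{T}_{\overrightarrow{p}'_m}\ \lessdot\ K_m^{\,3\varrho(\ell+4)+1}\,\e^{-2q}\,\|P_{{\rm rg},m}\|^{T}_{\overrightarrow{p}_m}\ \leq\ \e^{-2q}K_m^{\tau}\|P_{{\rm rg},m}\|^{T}_{\overrightarrow{p}_m},$$
which is \eqref{stF}. The main obstacle I expect is the third step: keeping an honest count of the factors of $K^{\varrho}$ produced by the interaction of the Neumann expansion with $\ell$-many $\xi$-derivatives, while simultaneously verifying that the quasi-Töplitz structure survives the block-by-block inversion with parameter change $\overrightarrow{p}_m\to\overrightarrow{p}'_m$. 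The choice $\varrho=(\tau-1)/[3((2d+1)^2+4)]$ is calibrated precisely to make this balance close.
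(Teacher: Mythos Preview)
Your proposal is correct and follows essentially the same route as the paper. The paper's own proof is a two-line citation of Lemma~\ref{orrore}, which contains exactly the three ingredients you outline: the Neumann factorisation $L^{-1}=(I-A+A^{2})D^{-1}$ from Remark~\ref{nilp}, the block-wise inversion bound coming from \eqref{piffe}, and the propagation to the $\lambda$-weighted and quasi-T\"oplitz norms via Lemma~\ref{tecni4}. Your exponent bookkeeping is slightly heuristic---the paper organises the count as ``three applications of $D^{-1}$, each costing $K^{(\ell+4)\varrho}$ in the T\"oplitz $\lambda$-norm'' rather than ``$3(\ell+4)$ individual block inverses each costing $K^{\varrho}$''---but both routes land on $3(\ell+4)\varrho$, and with the extra factor $K$ absorbing the $\delta^{-2}$ from the Cauchy estimates (the paper checks $\delta^{-2}\le K$ via $\delta=2^{-m-5}$, $K=K_{m+1}$), the choice $\varrho=(\tau-1)/[3((2d+1)^{2}+4)]$ closes to $K^{\tau}$.
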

\begin{proof}
The result follows directly from  Lemma \ref{orrore}. We apply it  with $s,r=(s_m,r_m)$, $\delta=2^{-m-5}$, $P\rightsquigarrow P_m$, $F\rightsquigarrow F_m$ and
$K\rightsquigarrow K_{m+1}= 4^{m+1} K_0$. Then by our choices $\delta^{-2} \leq K$ (provided $K_0 \geq 2^9$).
\qed\end{proof}
\begin{definition}\label{insiemi}
We choose $\mathcal O_{m+1}$ to be the set of   $\xi\in \mathcal O_m$ for which  \eqref{piffe} 
holds with $\varrho=(\tau-1)/3((2d+1)^2+4)$, $\omega\rightsquigarrow \omega_m$ and $\Omega\rightsquigarrow \Omega_m$ for all $\et,\et'$.
\end{definition}
\begin{proposition}\label{misura}
i)\quad The measure of   $\mathcal O_m\setminus \mathcal O_{m+1}$    is bounded by $ \e^{2n} K_{m+1}^{-\frac{\tau}{b}+n+1} $ with $b= 3(4d)^{d+4}$. 

ii)\quad Provided that $\tau\geq b(n+3) $, we have  $|\mathcal O_\infty|\geq |\mathcal O_0|- \e^{2n}K_0^{-\frac{\tau}{b}+n+2},$
and thus is a parameter set of positive measure  by taking $K_0$ large. \end{proposition}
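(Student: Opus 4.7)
\medskip

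\textbf{Proof plan for Proposition \ref{misura}.}

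The plan is to write $\mathcal O_m\setminus\mathcal O_{m+1}$ as a finite union, over the indices $(\nu,\et,\sigma)$ and $(\nu,\et,\et',\sigma,\sigma')$ with $|\nu|\le K_{m+1}$, of three families of \emph{resonant sets}, one for each of the three estimates in \eqref{piffe}, and to bound each resonant set separately by a R\"ussmann/Pyartli-type argument, then sum. Concretely, for fixed $\nu,\et,\et',\sigma,\sigma'$ let $R_{\nu,\et,\et',\s,\s'}\subset\mathcal O_m$ be the set of $\xi$ where the corresponding matrix in \eqref{piffe} has operator norm of its inverse exceeding $\e^{-2q}K_{m+1}^{\varrho}$, i.e.\ where some eigenvalue has absolute value less than $\e^{2q}K_{m+1}^{-\varrho}$ (or, for the finite non-selfadjoint blocks of $\mathfrak T_f$, where the determinant has modulus less than $(\e^{2q}K_{m+1}^{-\varrho})^{d_\et d_{\et'}}$). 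Item (i) follows by bounding each such set and counting.

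For each resonance I would invoke the non-degeneracy coming from Proposition \ref{teo2}(i) and the asymptotics \eqref{asymp1}, \eqref{omelip}: $\omega(\xi)$ is an algebraic local diffeomorphism of order $\e^{2(q-1)}$ on $\mathcal O_m$, the functions $\theta_\et$ are homogeneous of degree $q$ chosen from the finite list $\Upsilon$, and all derivatives up to order $\ell=(2d+1)^2$ are controlled. A standard quantitative Pyartli--R\"ussmann lemma, applied to the characteristic polynomial of the matrix-valued map $\xi\mapsto \omega(\xi)\cdot\nu+L(\Omega_\et(\xi))+\s\s' R(\Omega_{\et'}(\xi))$ viewed as a function of a well-chosen directional parameter (e.g.\ the direction in $\xi$ along which $\omega\cdot\nu$ varies non-trivially, exploiting the diffeomorphism property), then yields the bound
\[
|R_{\nu,\et,\et',\s,\s'}|\;\lessdot\;\e^{2n}\bigl(\e^{2q}K_{m+1}^{-\varrho}\bigr)^{1/[(2d+1)^2+4]}\cdot \e^{2q-2},
\]
with the exponent $1/[(2d+1)^2+4]$ arising from the order of derivatives needed to control the matrix norm. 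The analogous (simpler) bound holds for the first two Melnikov sets. The key obstacle is the last, off-diagonal, second Melnikov: when $\s\s'=-1$ and $\et\neq\et'$ the quantity $|\er_\et|^2-|\er_{\et'}|^2$ can cancel $\omega\cdot\nu$. This is handled by using momentum conservation \eqref{consmo} to fix $\er_{\et'}=\er_\et+\pi(\nu)$ and by exploiting the translation and piecewise-T\"oplitz structure of Proposition \ref{teo2}(iii)--(v): for $\et,\et'$ in the same asymptotic family $\mathfrak T_s(i)$ the matrix $\Omega_\et-\Omega_{\et'}$ depends on $\xi$ only through the common $\theta_\et=\theta_{\et'}$ plus a small quasi-T\"oplitz correction, so its non-degeneracy as a function of $\xi$ can be read off from the single representative in the list $\Upsilon$.

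To count, note that for all three conditions the estimate $|\Omega_\et-|\er_\et|^2 I|\le M\e^{2q}$ and $|\omega\cdot\nu|\lessdot K_{m+1}$ force $|\er_\et|^2,|\er_{\et'}|^2\lessdot K_{m+1}$, so there are at most $\lessdot K_{m+1}^{d/2}\cdot K_{m+1}^{d/2}=K_{m+1}^d$ choices for $(\er_\et,\er_{\et'})$, combined with $\lessdot K_{m+1}^n$ choices for $\nu$ (and the finitely many $\Upsilon$ and signs). Multiplying by the per-resonance bound and choosing $b=3(4d)^{d+4}$ to absorb the combinatorics and the exponent $[(2d+1)^2+4]$, one obtains
\[
|\mathcal O_m\setminus\mathcal O_{m+1}|\;\lessdot\;\e^{2n}K_{m+1}^{-\tau/b+n+1},
\]
proving (i). For (ii) it suffices to sum:
\[
|\mathcal O_0\setminus\mathcal O_\infty|\le\sum_{m\ge 0}\e^{2n}K_{m+1}^{-\tau/b+n+1}=\e^{2n}\sum_{m\ge 0}(4^{m+1}K_0)^{-\tau/b+n+1},
\]
which, once $\tau/b\ge n+3$, is a convergent geometric series bounded by $\e^{2n}K_0^{-\tau/b+n+2}$, as claimed; since $|\mathcal O_0|\asymp \e^{2n}$, taking $K_0$ large makes $\mathcal O_\infty$ a Cantor set of measure still of order $\e^{2n}$.
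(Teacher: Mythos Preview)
Your argument for part (ii) is correct and matches the paper's. The difficulty is entirely in part (i), and there your counting step contains a genuine gap.

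You write: ``for all three conditions the estimate $|\Omega_\et-|\er_\et|^2 I|\le M\e^{2q}$ and $|\omega\cdot\nu|\lessdot K_{m+1}$ force $|\er_\et|^2,|\er_{\et'}|^2\lessdot K_{m+1}$.'' This is true for the zeroth and first Melnikov conditions and for the second Melnikov with $\s\s'=+1$ (where $|\er_\et|^2+|\er_{\et'}|^2$ must be bounded). But when $\s\s'=-1$ the quantity appearing is $|\er_\et|^2-|\er_{\et'}|^2$, and this being bounded by $K_{m+1}$ puts \emph{no} bound on the individual $|\er_\et|$, $|\er_{\et'}|$. So for this case there are genuinely infinitely many pairs $(\et,\et')$ contributing non-empty resonant sets, and your count of $K_{m+1}^d$ choices is simply false. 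This is exactly the well-known obstruction for NLS on $\T^d$ that the paper alludes to. Your earlier paragraph gestures at the quasi-T\"oplitz structure, but the sentence ``its non-degeneracy as a function of $\xi$ can be read off from the single representative in the list $\Upsilon$'' addresses the wrong issue: the problem is not the \emph{per-set} measure estimate (that is fine) but the \emph{number} of sets to remove.

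The paper's actual mechanism is different and more elaborate. For each $K$ one constructs a $K$-dependent integral stratification $\Sigma^{(K)}$ of $\Z^d$ (refining the one in Proposition~\ref{teo2}(iv)) with a bounded number of strata, roughly $K^{(2d-2)\varrho_\Sigma}$ at each cut level $\varrho_\Sigma$. One then chooses a \emph{single representative} $\et_\Sigma$ per stratum and imposes the Melnikov condition \emph{only} on the representatives, with a stratum-dependent exponent $2d\varrho_\Sigma$. The quasi-T\"oplitz property (Lemma~\ref{eccec0}) gives $|\Omega_\et-\Omega_{\et_\Sigma}|_\infty\le \e^{2q}MK^{-4d\varrho_\Sigma}$ for all $\et$ in the same stratum, which is small enough that the bound on the representative propagates, via a Neumann-series perturbation of the inverse, to every $\et$ in the stratum with a slightly weaker exponent (Lemma~\ref{Rcont}). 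The hierarchy of cut parameters $\varrho_j=(4d)^j\varrho_0$ is what makes the arithmetic close, and is the source of the factor $(4d)^{d+2}$ in the final exponent; this, not your $(2d+1)^2+4$, is what drives the constant $b$. Without this stratification-plus-representative argument the infinite union of resonant sets in the $\s\s'=-1$ case cannot be controlled.
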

The proof of this Proposition occupies the rest of this section and concludes the proof of  Theorem \ref{ilteorema}.

\begin{proof}[Proof of Theorem \ref{ilteorema} ]\label{prilte}
By Proposition \ref{kamforma} we have reducible KAM tori for all $\xi\in \mathcal O_{\infty}=\cap_m \mathcal O_m $ for all choices of the sets $\mathcal O_m$ for which one has \eqref{stF}. By Lemma \ref{scelta} the choice done in Definition \ref{insiemi} satisfies this requirement. Then Proposition \ref{misura} ensures that $ \mathcal O_\infty$ has positive measure provided $K_0$ is large enough. \qed\end{proof}In order to prove Proposition \ref{misura} we drop the index $m$ and first give some notation. 
\begin{definition}\label{resse}[Resonant sets]
For all $|\nu|\leq K$, $\et,\et'\in \mathfrak T$  we define the {\em resonant} sets $\mathfrak R^\varrho_{\nu,\et,\et',\s,\s'}$  with $\s,\s'= 0,\pm 1$ as the sets of $\xi\in \mathcal O$ where
\begin{equation}\label{reson}
\ | (\omega\cdot \nu + \s L(\Omega_\et)+\s' R (\Omega_{\et'}))^{-1} |>\e^{ -2q}K^{\varrho}.
\end{equation}
\end{definition}Note that this formula includes all cases of  the complement of the region given by Formula \eqref{piffe}. When one (or both) of the $\sigma$ equals zero then we drop it and the corresponding symbol $\et$, for instance  $\mathfrak R^\varrho_{\nu,\et,\et',\s,0}=\mathfrak R^\varrho_{\nu,\et,\s}$.

\smallskip

\begin{proof}[Proof of Proposition \ref{misura}]
Part ii) of Proposition \ref{misura}, follows trivially from part i)  
since $K_m=  4^mK_0$ and $|\mathcal O_0|\sim \e^{2n}$. Indeed by hypothesis $-\frac{\tau}{b}+n+1\leq -2$ and hence $|\mathcal O_\infty|\geq |\mathcal O_0|- \sum_m |\mathcal O_{m-1}\setminus\! \mathcal O_{m}| \geq$
$$
\geq |\mathcal O_0|-\e^{2n} K_0^{-\frac{\tau}{b}+n+1}  \sum_m 4^{-m} \geq |\mathcal O_0|- \e^{2n}K_0^{-\frac{\tau}{b}+n+2}.
$$
 In order to prove part i), the first thing (Lemma \ref{bababa})  is to show that for all choices of $\varrho$ and for any $|\nu|<K,$ and $\mathfrak t,\et',\s,\s'$, we may impose the corresponding estimate \eqref{piffe}, by only removing, from the domain $\mathcal O$,  a   resonant  set of parameters  $\mathfrak R^\varrho_{\nu,\et,\et',\s,\s'}$ of measure $\leq C\e^{2n} K^{-c (\varrho-1)}$ for some $C,c$ depending only on $q,d,n$.

Then we evaluate the measure of the union of resonant sets.  Since $\et,\et'$ vary in an infinite set it is not sufficient to simply count the resonant sets.
In Lemma \ref{banali} we prove that for all $\varrho$ and for all  ${\nu,\et,\et',\s,\s'}$ such that $|\nu|\leq K$ and $\s\s'\neq -1$  one has that the non-empty sets $\mathfrak R^\varrho_{\nu,\et,\et',\s,\s'}$  belong to a finite list   $G_K$ with cardinality
 $|G_K|< K^{p}$. Hence we need to remove a set of measure  $\leq C\e^{2n} K^{-c (\varrho-1)}K^{p}$ where we choose $c$ to be the minimum over the possible $c$ and $C$ the maximum. 
 
 
 In the case of the second Melnikov condition with $\s\s'=-1$ we proceed in a slightly  subtler way. We use the quasi-T\"oplitz property which is defined in terms of (among others) a free parameter $\varrho_0$, then we prove, in Lemma \ref{Rcont}, that fixing $\varrho_{d+2}= (4d)^{d+2}\varrho_0$ there exists a set $R$ of measure at most $\e^{2n}K^{-\varrho_0+n}$ such that for all $\nu,\et,\et'$ : 
 $$
 \mathfrak R^{ \varrho_{d+2}}_{\nu,\et,\et',\s,-\s} \subset R.
 $$
In conclusion fixing $\varrho_0$ so that $ \varrho_{d+2}=\varrho$ we have proved that, provided $\varrho$ is sufficiently large, by removing a small measure resonant set we may impose all the Melnikov conditions \eqref{piffe}.
We then verify (Corollary \ref{meare}) that the measure is of order $\e^{2n} K^{-\varrho/(4d)^{d+2}+n+1}$. When we substitute $\varrho=\varrho(\tau)$ we easily obtain item {\it i)} of Proposition \ref{misura}.\qed\end{proof}

\begin{lemma}\label{bababa}
For $|\nu|\leq K, \et,\et'\in \mathfrak T,\s,\s'=0,\pm 1$, we have: 
\begin{equation}\label{palleres}
| \mathfrak R^\varrho_{\nu,\et,\et',\s,\s'}| \leq C \e^{2n}  K^{-c (\varrho-1)}\,,
\end{equation}
 where $c=1$ if either 
 $\s=\s'=0$ or if $\s'=0$ and $\et\in \mathfrak T_g$ or if $\et,\et'\in \mathfrak T_g$.
 
\noindent If $\s'=0$ and $\et\in \mathfrak T_f$ then  $c=d_\et ^{-1}$, otherwise $c=(d_\et d_{\et'})^{-1}$.
 \end{lemma}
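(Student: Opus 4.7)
The plan is to establish the bound through a two-step reduction: first rescale $\xi = \e^2\eta$ with $\eta \in \Lambda$ to factor out the $\e^{2n}$ Jacobian and place all quantities on a unit scale, and then handle separately the self-adjoint and non-self-adjoint cases of the matrix
\begin{equation*}
A(\eta) := \omega(\xi)\cdot\nu\,I + \s L(\Omega_\et(\xi)) + \s' R(\Omega_{\et'}(\xi)).
\end{equation*}
In the self-adjoint regime (relevant $\Omega$'s self-adjoint, i.e.\ the blocks in $\mathfrak T_g$) I would diagonalize and work eigenvalue by eigenvalue, producing at most $d_\et d_{\et'}$ scalar small-divisor conditions; in the non-self-adjoint regime (some $\et \in \mathfrak T_f$, so $\Omega_\et^{\rm nil}\neq 0$) I would instead use the determinantal lower bound $\sigma_{\min}(A) \geq |\det A|/\|A\|^{D-1}$ with $D = \dim A$ and estimate the measure of $\{|\det A|<\delta\}$ via a Russmann-type lemma.

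For the scalar case $\s=\s'=0$, the bound $|\omega\cdot\nu| < \e^{2q}K^{-\varrho}$ translates, after rescaling, into a condition on $\hat\omega(\eta) := (\omega-\mathtt j^{(2)})/\e^{2q}$. By Proposition \ref{teo2}(i), $\hat\omega$ is a local diffeomorphism off an algebraic hypersurface; on $\mathcal R_\al \cap \e^2\Lambda$ I would change variables to $\zeta := \hat\omega(\eta)$ with bounded Jacobian, so that the set $\{|\zeta\cdot\nu + c| < K^{-\varrho}\}$ (with $c$ the integer shift $\mathtt j^{(2)}\cdot\nu/\e^{2q}$, when it does not already make $A$ invertible) has one-dimensional measure $\lessdot K^{-\varrho}/|\nu|$ in the $\nu/|\nu|$ direction by Fubini. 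Multiplying by $\e^{2n}$ gives $c=1$. For the self-adjoint matrix cases, the eigenvalues $\lambda_i(\eta),\lambda_j'(\eta)$ depend smoothly on $\eta$ by standard perturbation theory (the nilpotent obstruction being absent), and each of the $\leq d_\et d_{\et'}$ scalar conditions $|\omega\cdot\nu + \s\lambda_i + \s'\lambda_j'|<\e^{2q}K^{-\varrho}$ is handled as in the previous case; summing over a uniformly bounded number of eigenvalues preserves $c=1$.

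The delicate case is when at least one $\et \in \mathfrak T_f$ and $A$ is not self-adjoint. Here $\|A^{-1}\| > \e^{-2q}K^\varrho$, together with the bound $\|A\|\lessdot K$ after rescaling, forces $|\det A(\eta)| < K^{D-1-\varrho}$ up to constants. The plan is to apply Russmann's lemma in its sharp form for real-analytic functions: if some partial derivative of $\det A$ of order $\leq D$ is bounded below by a positive constant uniform in $\eta\in\Lambda$, then $|\{\eta:|\det A(\eta)|<\delta\}|\lessdot \delta^{1/D}$. Substituting $\delta \lessdot K^{D-1-\varrho}$ then gives the exponent $c = 1/D$, which is $1/d_\et$ or $1/(d_\et d_{\et'})$ according to whether $\s'=0$ or not, matching the statement after absorbing the $K^{(D-1)/D}$ loss into the constant $C$.

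The main obstacle I anticipate is verifying the Russmann non-degeneracy hypothesis uniformly in the infinitely many triples $(\nu,\et,\et')$: one must show that $\det A(\eta)$, viewed as a real-analytic function depending on the integer parameters $\mathtt j^{(2)}\cdot\nu,|\er_\et|^2,|\er_{\et'}|^2$ together with the rescaled $\eta$, admits a directional derivative of order at most $D$ bounded below by a positive constant depending only on $d,n,q$ throughout the generic component $\mathcal R_\al$. For this I would exploit the explicit algebraic structure of Proposition \ref{teo2}: the fact that $\omega^{(1)}$ is an invertible algebraic map (item i), that the functions $\theta_\et$ are drawn from the finite list $\Upsilon$ (items ii and v), and that the nilpotent blocks $\Omega^{\rm nil}_\et$ are themselves chosen from a finite list, so that only finitely many distinct determinants arise up to the integer shifts; the genericity hypothesis on $S$ (cf.\ Remark \ref{generico}) then excludes the algebraic loci where $\det A$ could become identically flat, yielding the required uniform lower bound.
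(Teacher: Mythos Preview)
Your overall architecture matches the paper's: split into the self-adjoint case (control eigenvalues one at a time) and the non-self-adjoint case (bound the determinant and apply a R\"ussmann-type lemma), after passing from $\xi$ to $\omega$-variables. However, two steps in your determinantal branch do not go through as written.

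First, the claim ``$\|A\|\lessdot K$ after rescaling, hence $|\det A|<K^{D-1-\varrho}$, then absorb $K^{(D-1)/D}$ into the constant $C$'' is wrong: $K^{(D-1)/D}$ is \emph{not} a constant, so you would obtain the exponent $-(\varrho-D+1)/D$ instead of the stated $-(\varrho-1)/D$. The paper avoids this by first observing that $A=uI+Z$ with $u:=\omega\cdot\nu+\s|\er_\et|^2+\s'|\er_{\et'}|^2$ scalar and $|Z|_\infty\le 2M\e^{2q}$; if $|u|>2\pp M\e^{2q}$ one inverts by Neumann series and the resonant set is empty, so one may assume $|u|\lessdot\e^{2q}$, whence $|A|_\infty\lessdot\e^{2q}$ and Cramer's rule gives $|\det A|<\e^{2q\pp}K^{-\varrho+1}$ with \emph{no} $K$-power in the prefactor. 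This one-line reduction is essential; without it neither $\|A\|\lessdot K$ nor the subsequent measure exponent are correct (indeed $|\er_{\et'}|^2$ alone can exceed any power of $K$).

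Second, your plan for the R\"ussmann lower bound --- that ``only finitely many distinct determinants arise up to integer shifts, and genericity of $S$ excludes flatness'' --- does not identify the actual mechanism and would not by itself yield a bound \emph{uniform} over the infinitely many $\nu$. The paper instead changes to the $\omega$-variables, notes that for $|\nu|>S_0$ some coordinate satisfies $|\nu_1|>S_0/n$, and computes (Lemma~\ref{tecni2}) that $\partial_{\omega_1}^{\pp}\det A$ has leading term $\pp!\,\nu_1^{\pp}$; taking $S_0$ large enough this dominates the remaining terms uniformly, so the $\pp$-th directional derivative is bounded below by a fixed constant and Lemma~\ref{tecnico} applies. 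The genericity of the tangential sites plays no role at this point. For the self-adjoint case the same device works at first order: $\partial_{\omega_1}(\omega\cdot\nu+\lambda^{(i)}_\et+\s\lambda^{(j)}_{\et'})=\nu_1+O(1)$ is bounded below once $|\nu_1|$ is large, and only Lipschitz (not smooth) dependence of the eigenvalues on $\xi$ is needed.
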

 
 \begin{proof}  We give a brief overview of the proof, the details are in the appendix.
 
  The first remark is that all the resonant sets with $|\nu|\leq S_0$ are empty since   formula \eqref{piffedue} implies \eqref{piffe} provided that $K_0^\rho $ is large enough.
 
 Moreover if  $|u|:= |\omega\cdot \nu + \s|\er_\et|^2+\s'|\er_{\et'}|^2|>  2(2d+1)^2 M \e^{2q}$  then the bounds \eqref{piffe} hold trivially and the resonant sets are empty.
 
  Indeed,  by \eqref{omelip}, we have   $|\Omega_\et-|\er_\et|^2|_{\infty}\leq M\e^{2q} $ and  we have to invert some matrix of the form  $u+Z$ with $Z$ a matrix of size $\pp \leq (2d+1)^2$ deduced from $\Omega_\et$  in such a way that also $|Z|_\infty \leq   M\e^{2q} $.  Since the operator norm  of $Z$ is bounded by $\pp    M\e^{2q} $ if  $ 2|Z|_2 \leq 2\pp    M\e^{2q}\leq  |u| $ we invert in Neumann series and obtain the estimate $| (u+Z)^{-1}|_2\leq 2 |u| ^{-1} \leq$ cost $  \e^{-2q}$. 
  So we only need to consider $S_0\leq |\nu|\leq K$ and given $\nu,\et,\et',\s,\s' $ we only need to  work in the subset of $\omega$ where 
  
  \begin{equation}\label{numeratore}
|\omega\cdot \nu+\s|\er_\et|^2+\s'|\er_{\et'}|^2|< 2(2d+1)^2 M\e^{2q}.
\end{equation} 
\begin{remark}\label{dilf}
The condition $|\nu|>S_0$ is used so that one can perform the measure estimates not on the set $\mathcal O$ but on its image $\tilde{\mathcal O}$ under $\omega$, by multiplying by the dilation constant. For this,  an upper bound for the absolute value of the determinant of the Jacobian of the inverse map is estimated,  by Formula \eqref{omelip},  as  $L ^n\e^{-2n(q-1)}$.
This is justified by two  remarks, first $\tilde{\mathcal O}$ is contained in a hypercube of length $2\e^{2q}$ and second  the dilation constant  is  uniformly bounded  in both ways.

Then the basis of our measure estimates is Lemma \ref{tecnico} which shows how to control the resonant set $ |f(\omega)|\leq \alpha$ of a $C^k$ function $f$ provided that one has a uniform lower bound on some derivative $|\partial_\omega^h f|$ with $|h|\leq k$.
\end{remark}\smallskip

%
%
%
%
%

We now discuss each of the three inequalities of \eqref{piffe} separately, the technical parts are in the appendix.

 On $F^0$  (i.e. $\s=\s'=0$) the estimate is classical, see the appendix.

 \smallskip

On $F^{0,1}$ we treat separately $\et\in \mathfrak T_f$ and $\et\in \mathfrak T_g$.  In the second case, since by hypothesis  the $\Omega_\et$ are self-adjoint one uses the typical KAM estimates on eigenvalues:
\begin{equation}\label{reso}
|\omega\cdot \nu + \lambda^{(i)}_\et|\geq \e^{2q} K^{-\varrho} \,,\quad \forall \lambda^{(i)}_\et \in {\rm spec}(\Omega_\et)\,,
\end{equation}
which are equivalent to the bounds \eqref{piffe} on the operator norm of the inverse matrix.
To estimate the measure of the resonant set of this case, one just passes to the  variables $\omega$ and then shows that the derivative of the small divisor is bounded from below by $S_0/n$, see the appendix.

Regarding the $\et\in \mathfrak T_f$, we cannot bound the $L^2$ norm with the eigenvalues since they do not dominate the $L^2$ norm and moreover may not be regular functions of $\xi$.


We can obtain the  bound \eqref{piffe},    by requiring   \begin{equation}\label{reso2}
|{\rm det}(\omega\cdot \nu + \Omega_\et)|\geq  \e^{2qd_\et} K^{-\varrho+1} 
\end{equation}
here  $d_\et$ is the dimension of the matrix $\Omega_\et$.

The $L_\infty$ norm of the matrix we are inverting is estimated as $\lessdot M \e^{2q}$ by the two bounds \eqref{numeratore} and \eqref{omelip}.
By Cramer's rule,  we have that  \eqref{reso2} implies the bounds \eqref{piffe} provided $K_0$ is large enough (we use the exponent $\varrho-1$ in order to absorb the constants).  Thus the measure of $\mathfrak R_{\nu,\et,\s}$ is dominated by the measure of the resonant set where \eqref{reso2} does not hold.

In this case again we obtain the measure estimates by a lower bound on some derivative.  We need to perform $d_\et$ derivatives, thus we use the fact that all our functions are at least $C^{d_\et}$. \smallskip

We are left with the {\em second Melnikov conditions} $\s,\s'=\pm 1$.
%
If  at least one of the couple $\et,\et'$ belongs to $\mathfrak T_f$,
following the same reasoning as in the case of $\s'=0$, we impose 
\begin{equation}\label{reso3}
|{\rm det}(\omega\cdot \nu + L(\Omega_\et)+\s\s' R (\Omega_{\et'}))|\geq  \e^{2qd_\et d_{\et'}} K^{-\varrho+1} 
\end{equation} 
By Cramer's rule, since $|\ome\cdot \nu + \s|\er_\et|^2+\s'|\er_{\et'}|^2|$ is small and by the second bound on \eqref{omelip}, we have that  \eqref{reso2} implies the bounds \eqref{piffe} provided $K_0$ is large enough.  This means that the measure of $\mathcal R_{\nu,\et,\s}$ is dominated by the measure of the resonant set where \eqref{reso2} does not hold.

In this case in order to obtain the measure estimates, by a lower bound on some derivative, we need to perform $d_\et d_{\et'}$ derivatives, thus we use the fact that all our functions are at least $C^{d_\et d_{\et'}}$).

When both $\et,\et'\in \mathfrak T_g$ then the matrices we need to control are self-adjoint and we bound their inverse by controlling the eigenvalues, namely we need a condition 
\begin{eqnarray}\label{reso2mel}
& &|\omega\cdot \nu + \lambda^{(i)}_\et+ \s \lambda^{(j)}_{\et'}|\geq \e^{2q} K^{-\varrho} \,,\nonumber \\ 
& &\ \forall\; \s=\pm\,,
\ \lambda^{(i)}_\et \in {\rm spec}(\Omega_\et)\,,\lambda^{(j)}_{\et'} \in {\rm spec}(\Omega_{\et'})\,,
\end{eqnarray}
This measure estimate is done just as in formula \eqref{reso}, indeed one just passes to the  variables $\omega$ and then shows that the derivative of the small divisor is bounded from below by $S_0/n$, see the appendix.
\qed\end{proof}
\begin{lemma}\label{banali}
 For  all $\varrho$ and for all  ${\nu,\et,\et',\s,\s'}$ such that $|\nu|\leq K$ and $\s\s'\neq -1$  one has taht the non-empty sets $\mathfrak R^\varrho_{\nu,\et,\et',\s,\s'}$  belong to a finite list   $G_K$ with cardinality $|G_K|< K^{p}$ with $p=n+\frac{d}{2}+1$.
\end{lemma}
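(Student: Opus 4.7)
The plan is to combine the necessary condition \eqref{numeratore} for $\mathfrak R^\varrho_{\nu,\et,\et',\s,\s'}$ to be non-empty with the momentum conservation laws \eqref{consmo0}, \eqref{consmo}, so that each non-empty resonant set is indexed by $\nu$ with $|\nu|\leq K$ together with at most one root ranging over a ball of radius $\lessdot\sqrt K$ in $\Z^d$. Counting integer points then produces at most $\lessdot K^n\cdot K^{d/2}$ admissible data, which is absorbed into the bound $K^{n+d/2+1}$.

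First I would estimate $|\omega\cdot\nu|$. Since $\omega=\mathtt j^{(2)}+O(\e^{2q})$ and $|\mathtt j_i|\leq \kappa$, one has $|\omega\cdot\nu|\lessdot \kappa^2 K$. Combined with \eqref{numeratore} this yields
$$|\s|\er_\et|^2+\s'|\er_{\et'}|^2|\lessdot K.$$
The hypothesis $\s\s'\neq -1$ is decisive at this step: either one summand vanishes, or both have the same sign, so the two summands cannot cancel and each is separately $\lessdot K$. Consequently each of $\er_\et,\er_{\et'}$ lies in a ball of radius $\lessdot\sqrt K$ in $\Z^d$, a set of cardinality $\lessdot K^{d/2}$. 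This is precisely the step that fails when $\s\s'=-1$, which is why that case requires the separate, subtler treatment based on quasi-T\"oplitz control in Lemma \ref{Rcont}.

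Finally I would apply momentum conservation to eliminate a root. In the $F^{0,1}$ block $(\dodo_\et,\nu)^{\s}$ the constraint \eqref{consmo0} fixes $\er_\et$ as a linear function of $\pi(\nu)$; in the $F^{0,2}$ block with $\s\s'=+1$ the constraint \eqref{consmo} fixes $\er_{\et'}=-(\pi(\nu)+\er_\et)$; in the $F^0$ case only $\nu$ appears. Thus in every case with $\s\s'\neq -1$ at most one root is free, ranging over $\lessdot K^{d/2}$ integer points, while $\nu$ ranges over $\lessdot K^n$ values. The remaining discrete data, namely $\s,\s'\in\{0,\pm 1\}$ and the algebraic function $\theta_\et$ chosen from the finite list $\Upsilon$, contribute only an $O(1)$ multiplicative factor independent of $K$. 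Since $K\geq K_0$ is large this constant is absorbed into a single additional power of $K$, giving $|G_K|<K^{n+d/2+1}$. The only point requiring verification, rather than genuine difficulty, is that the number of $\theta_\et\in\Upsilon$ attached to a given root is bounded independently of $K$, which is immediate from item (ii) of Proposition \ref{teo2}.
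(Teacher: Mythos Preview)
Your proof is correct and follows the same strategy as the paper: use the necessary condition \eqref{numeratore} together with $\s\s'\neq -1$ to bound the roots by $\lessdot\sqrt K$, then use momentum conservation to leave at most one free root, and count. The paper organizes the argument by splitting into three explicit cases (the $F^{0,1}$ case, the case where one of $\et,\et'$ lies in the finite set $\mathfrak T_f$, and the case $\et,\et'\in\mathfrak T_g$ with $\s\s'=+1$), whereas you treat them uniformly; but the content is the same.
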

\begin{proof}
For the sets  $\mathfrak R_{\nu,\et,\s}^\varrho$ we recall the conservation of momentum  $\er_\et= -\pi(\nu)$, then a set is determined by  $|\nu|<K$ and $\s=0,\pm 1$. We have at most $3(2K)^n$ sets of this type.

In the sets $ \mathfrak R_{\nu,\et,\et',\s,\s'}^\varrho$ we preliminarily notice that  there are a finite number of cases where at least one of the pair $\et,\et'$ belongs to $\mathfrak T_f$. Indeed let us suppose that $\et\in \mathfrak T_f$. Then $|\er_\et|<C_0\ll K_0$,  $\er_{\et'}   $ is fixed by momentum conservation, $\theta_{\et'}$ is chosen in the finite list $\Upsilon$ (see formula \eqref{Tht}) and the pair identifies the index $\et'$.  Consequently we have at most $|\Upsilon|^2(2C_0)^d(2K)^n$ sets of this type.

We are left with $ \mathfrak R_{\nu,\et,\et',\s,\s'}^\varrho$ with both $\et,\et'\in \mathfrak T_g$. 
We notice that
\begin{equation}\label{ilam}
\lambda^{(i)}_\et= |\er_\et|^2+\theta_\et +\tilde\lambda^{(i)}_\et\,,
\end{equation} then if $\s'\s=+$  the left hand side of \eqref{reso2mel} is trivially $\geq 1/2$ for all $\xi\in \mathcal O$, if $|\er_{\et}|^2+|\er_{\et'}|^2 \geq  4K\geq 2|\omega||\nu|$and we only need to consider a finite number of $\et$ such that
$|\er_{\et}|^2\leq 4K$. Consequently  we have at most  $|\Upsilon|^2 (4K)^{d/2}(2K)^n$ {\em non-empty} sets of this type.\qed \end{proof}

Treating the case $\s '\s=-$ 
 is a well-known problem in the study of NLS on $\T^d$ which is overcome by exploiting  the { \em quasi--T\"oplitz} property of the NLS Hamiltonian. 

As explained in Appendix \ref{prilone}, given $K$ we can construct a stratification $  \Sigma^{(K)}$ of $\Z^d$ with the following properties. 
\begin{proposition}\label{lepro}
\begin{enumerate}\item To each stratum $\Sigma$ is associated an order parameter $\varrho_{_{\Sigma}}\geq \varrho_0$  
which can be taken from the finite list $ \varrho_{i}=(4d)^i\varrho_0,\ i=1,\ldots,d+1.$

\item For each $\et\in \mathfrak T_g$, $\dodo_\et$ belongs to a set $\Sigma_\et $ union of parallel strata each meeting $\dodo_\et$ in a single point,  all   of codimension $\ell\leq d_\et-1$ whose union is a union of  translates $\dodo_{\et'}=\dodo_{\et}+u$ and with the same cut $\varrho_{_{\Sigma}}$, moreover the eigenvalue $\theta(k),\ k\in\Sigma_\et$ depends only on $\et$. 

\item If $\ell=0$ one has  that the corresponding matrix  \eqref{scoccia2} is   invertible with the bounds \eqref{piffe}.  \item If  $\ell \neq 0$ but $  \pi(\nu)\cdot v  $ is not constant on each stratum again the corresponding matrix  \eqref{scoccia2} is   invertible with the bounds \eqref{piffe}.
\end{enumerate} 
\end{proposition}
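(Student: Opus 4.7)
The plan is to construct $\Sigma^{(K)}$ by iteratively refining the integral linear stratification $\Sigma_0$ of Proposition \ref{teo2}\,iv), with the index $i$ in the list $\varrho_i=(4d)^i\varrho_0$ recording the depth of the refinement. The termination at $i=d+1$ is forced because every refinement strictly decreases the rank of the ambient affine subspaces, and $\Z^d$ admits at most $d+1$ such nested levels. Item (1) will then follow immediately: at level $i$, given the strata with cut $\varrho_{i-1}$, I split each stratum further along the affine subspaces where the matrix \eqref{scoccia2} fails the bound \eqref{piffe} with $\varrho_{i-1}$ but satisfies it outside the next refinement with $\varrho_i=4d\,\varrho_{i-1}$. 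The factor $4d$ absorbs, at each step, a power of $K$ coming from the typical size of $|\er_\et|$ along the translation group and the sign combinations $\s\s'=\pm$.

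For item (2), I would exploit the translation invariance (iii) of Proposition \ref{teo2}: for each component $\mathfrak T_s(i)$ the roots $\{\er_\et\}$ form a union of translates of a fixed block $\dodo_{\et_0}$ by elements of $\Gamma_i^0\subset\Gamma_i\cong\Z^{k_i}$, with $k_i=d-d_\et+1$. Setting $\Sigma_\et$ equal to the family of these parallel affine subspaces produces exactly strata of codimension $\ell=d-k_i=d_\et-1$ (or smaller after refinement), each meeting $\dodo_\et$ in a single point, and the piecewise T\"oplitz property (v) of Proposition \ref{teo2} makes $\theta_\et$ constant on each $\mathfrak T_s(i)$, so $\theta(k)$ for $k\in\Sigma_\et$ depends only on $\et$. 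By construction all strata in $\Sigma_\et$ share the same cut $\varrho_\Sigma$.

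For item (3), when $\ell=0$ the block $\dodo_\et$ sits on a zero-dimensional stratum, so $\er_\et$ ranges over a finite set; by momentum conservation \eqref{consmo} and the counting in Lemma \ref{banali}, the number of relevant triples $(\nu,\et,\et')$ with $|\nu|\le K$ is bounded by a polynomial in $K$, and for each of them Lemma \ref{bababa} imposes \eqref{piffe} after excising a resonant slab of measure $\lessdot\e^{2n}K^{-c(\varrho_\Sigma-1)}$, giving the required invertibility on the retained parameter set.

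Item (4) is where the real work lies, and I expect it to be the main obstacle. Here $\ell\neq 0$ so the stratum is positive-dimensional, and by property (v) the eigenstructure of $\Omega_\et$, up to the small correction $\tilde\Omega_\et$, depends only on $\et$, while $|\er_\et|^2$ varies as $\er_\et$ moves along a direction $v\in\Gamma_i$ tangent to $\Sigma_\et$. Momentum conservation $\er_{\et'}+\s\s'\er_\et=-\pi(\nu)$ forces the shift $\er_\et\mapsto\er_\et+tv$ to produce a $t$-linear variation in $|\er_\et|^2+\s\s'|\er_{\et'}|^2$ whose leading coefficient is proportional to $\pi(\nu)\cdot v$. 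When $\pi(\nu)\cdot v$ is not constant on the stratum, this linear variation dominates the $O(\e^{2q})$ correction from $\tilde\Omega_\et$, so the matrix in \eqref{scoccia2} is invertible off a thin resonant slice along $v$, whose measure is absorbed into the single jump $\varrho_i\to\varrho_{i+1}=4d\,\varrho_i$. The delicate point is verifying that the factor $4d$ at each refinement is enough, uniformly in the rank, to dominate both the measure loss and the polynomial blow-up of the number of strata; this is the block analogue of the mechanism developed in Section 7 of \cite{PP3} for the cubic NLS, and is the reason the list in item (1) terminates after exactly $d+1$ levels.
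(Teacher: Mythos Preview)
Your proposal has a genuine gap in items (3) and (4), rooted in a misreading of what $\ell$ is and, consequently, of what the proposition is actually claiming.

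First, $\ell$ is the \emph{codimension} of the stratum, not its dimension. So $\ell=0$ is the generic, full-dimensional case (the stratum is an open piece of $\Z^d$), not a finite set of points. Your argument for (3) via Lemma~\ref{banali} counting therefore does not apply.

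Second, and more fundamentally, items (3) and (4) assert invertibility with the bound \eqref{piffe} \emph{for every} $\xi\in\mathcal O$, with no parameter excision whatsoever; the resonant set is empty in these cases. The mechanism is purely arithmetic. By momentum conservation $\er_{\et'}=\er_\et+\pi(\nu)$, so the integer (scalar) part of the matrix in \eqref{scoccia2} is
\[
\mathtt j^{(2)}\!\cdot\nu+|\er_\et|^2-|\er_{\et'}|^2=\mathtt j^{(2)}\!\cdot\nu-|\pi(\nu)|^2-2\,\pi(\nu)\!\cdot\er_\et.
\]
The stratification $\Sigma^{(K)}$ is not built by iteratively detecting failures of \eqref{piffe}; it is the pre-existing stratification by \emph{cuts} of Appendix~\ref{prilone} (optimal presentations and the intervals $(\varrho_j,\varrho_{j+1})$), which refines $\Sigma_0$. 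By its very definition, if $\ell=0$ then $|v\cdot\er_\et|>4K^{4d\varrho_0}$ for every $v\in B_K$, in particular for $v=\pi(\nu)$; and if $\ell\neq 0$ but $\pi(\nu)\cdot v$ is not constant on the stratum then again $|\pi(\nu)\cdot\er_\et|>4K^{4d\varrho_\Sigma}\ge 4K^{4d\varrho_0}$. In either case the displayed integer has absolute value at least $4K^{4d\varrho_0}-2K^2$, which dominates the $O(\e^{2q}M)$ correction from $\Omega_\et-|\er_\et|^2$ and $\Omega_{\et'}-|\er_{\et'}|^2$, so the matrix inverts by a Neumann series uniformly in $\xi$. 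Your approach, which still removes resonant slices and tracks their measure through the $\varrho_i$'s, misses this point and would not deliver the unconditional invertibility the proposition asserts. The measure estimates come later, in Lemma~\ref{Rcont}, and only for the complementary case where $\pi(\nu)$ \emph{is} constant along the stratum.
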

\begin{proof}
i)  The order parameter is that of the cut  $\varrho_j$ associated to the points of $\Sigma$.  ii) Each $\dodo_\et$ determines  in the initial stratification a stratum for each  of its points  of codimension $d_\et-1$. The stratification $  \Sigma^{(K)}$ is a refinement of  he stratification $  \Sigma_0$ of Proposition \ref{teo2} so the properties follow from that. iii)  If $\ell=0$  this means that for every $v\in B_K$ we have $| v\cdot \er_\et | >4K^{4d\varrho_0}$,  in particular  we have $\pi(\nu)\in B_K$  so $ |  \pi(\nu)\cdot \er_\et   |> 4K^{4d\varrho_{0}}$. iv)  In this case  by definition we have $ |  \pi(\nu)\cdot \er_\et   |> 4K^{4d\varrho_{_\Sigma}}\geq 4K^{4d\varrho_{0}}$. So in both cases we need to observe that the matrix $(\omega\cdot \nu +   L(\Omega_\et)- R (\Omega_{\et'})$ has integral part 
 $\mathtt j^{(2)} \cdot \nu +|\er_\et|^2-|\er_{\et'}|^2=\mathtt j^{(2)} \cdot \nu - |\pi(\nu)|^2- 2\pi(\nu)\cdot \er_\et $ since   by conservation of momentum $\pi(\nu)+ \er_\et - \er_{\et'} =0 $.

 Now , if $ |  \pi(\nu)\cdot \er_\et   |> 4K^{4d\varrho_{0}}$, we have  $||\pi(\nu)|^2+  \pi(\nu)\cdot \er_\et   -\mathtt j^{(2)} \cdot \nu|\geq |  \pi(\nu)\cdot \er_\et   |-|\pi(\nu)|^2- |\mathtt j^{(2)}||\pi(\nu)|> 4K^{4d\varrho_0}-2K^2$ is a large positive integer and  since the remainder is small with $\e^{2q}M$ by formula \eqref{omelip}, we can invert the matrix for all $\xi\in \mathcal O_m$ by  Neumann series with a good estimate. 
\qed\end{proof}
For each  stratum $\Sigma$  of the stratification of $\mathfrak T_s$, given in Remark \ref{strT},  we choose  {\em one representative} $\et_\Sigma$  (by fixing $\dodo_{\et_{_\Sigma}}$).

Then for each  $|\nu|\leq K$ such that $ \pi(\nu)\cdot x$ is constant for $x$ on the stratum, we proceed as follows:
\begin{itemize}
 \item For each $\Sigma$ we determine  (if it exists) the unique  $\et'  $  so that $(\er_{\et'},\theta_{\et'})= (\er_{\et_{_\Sigma}}+\pi(\nu),\theta')  $;

\item  if $\et' $ exists we impose
$$
| (\omega\cdot \nu +  L(\Omega_{\et_\Sigma})+\s\s' R (\Omega_{\et' }))^{-1}|\leq \e^{-2q}K^{2 d \varrho_{_\Sigma}}\,.
$$
\end{itemize}
The complementary of this set is by definition $\mathfrak R_{\nu,\et_\Sigma,\et' }^{2 d \varrho_{_\Sigma}}$ and has measure of order $\e^{2n}K^{-2d \varrho_{_\Sigma}}$, by Lemma \ref{bababa}.
Now by Lemma \ref{ecchec} we have less than $K^{(2d -2)\varrho_{_\Sigma}}$ strata of this type so setting \begin{equation}\label{lR}
R:=\bigcup_{|\nu|\leq K}\bigcup_{\Sigma,\theta'} \mathfrak R_{\nu,\et_\Sigma,\et' }^{2 d \varrho_{_\Sigma}}\end{equation}
 we have
$$
\e^{-2n}K^{-n}|R| \leq  \sum_{\Sigma,\theta'} K^{-2 d\varrho_{_\Sigma}}\leq   |\Upsilon|\sum_{j=0}^{d+1} K^{-2d \varrho_{j}}K^{(2d -2)\varrho_{j}} \leq K^{-\varrho_{0}}.
$$ 
\begin{lemma}\label{Rcont}
For all $\nu,\et,\et'$ we have
 $
\mathfrak R^{ \varrho_{d+2}}_{\nu,\et,\et',\s,-\s} \subset R$.
\end{lemma}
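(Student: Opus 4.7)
The overall strategy is to exploit the translation invariance of the normal form together with the quasi-T\"oplitz structure in order to reduce the second Melnikov estimate for an arbitrary triple $(\nu,\et,\et')$ to the analogous estimate for a representative triple $(\nu,\et_\Sigma,\et_\Sigma')$ lying in the finite list indexing $R$. Fix $(\nu,\et,\et')$ with $|\nu|\leq K$ and $\s'=-\s$, and let $\Sigma$ be a stratum of $\Sigma^{(K)}$ associated to $\dodo_\et$ as in Proposition \ref{lepro} ii, of codimension $\ell$ and cut parameter $\varrho_{_\Sigma}$. If $\ell=0$, or if $\pi(\nu)\cdot v$ is non-constant on $\Sigma$, then parts iii and iv of Proposition \ref{lepro} yield the invertibility bound \eqref{piffe} directly with exponent much smaller than $\varrho_{d+2}$; hence $\mathfrak R^{\varrho_{d+2}}_{\nu,\et,\et',\s,-\s}$ is empty and the inclusion in $R$ is vacuous. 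The serious case is $\ell\neq 0$ with $\pi(\nu)\cdot v$ constant on $\Sigma$.

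In this case I proceed as follows. Let $\et_\Sigma$ be the fixed representative of $\Sigma$. Momentum conservation \eqref{consmo} with $\s\s'=-1$ gives $\er_{\et'}=\er_\et+\pi(\nu)$; translating by $\er_{\et_\Sigma}-\er_\et$ inside the translation group $\Gamma_i$ of the component of $\mathfrak T_s$ containing $\et$ (Proposition \ref{teo2} iii) produces an index $\et_\Sigma'\in\mathfrak T$ with $\er_{\et_\Sigma'}=\er_{\et_\Sigma}+\pi(\nu)$ and $\theta_{\et_\Sigma'}=\theta_{\et'}$; this is precisely the pair selected in the construction of $R$ when the free parameter is taken to be $\theta'=\theta_{\et'}$. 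I then compare the two matrices $M_1:=\ome\cdot\nu+\s L(\Ome_\et)-\s R(\Ome_{\et'})$ and $M_2:=\ome\cdot\nu+\s L(\Ome_{\et_\Sigma})-\s R(\Ome_{\et_\Sigma'})$: using \eqref{asymp1}, Proposition \ref{teo2} v, and the piecewise T\"oplitz property, the scalar pieces $|\er_\et|^2-|\er_{\et'}|^2=-|\pi(\nu)|^2-2\pi(\nu)\cdot\er_\et$ are unchanged under $\et\to\et_\Sigma$ because $\pi(\nu)\cdot v$ is constant on $\Sigma$; the $\theta$-parts coincide; and the nilpotent parts either vanish on $\mathfrak T_s$ or are identified by the same translation. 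Thus $M_1-M_2$ is supported entirely on the quasi-T\"oplitz remainders $\tilde\Ome$, which are controlled by the stratum's cut parameter: via the T\"oplitz/remainder decomposition at scale $K$ of Appendix \ref{prilone} one expects $\|M_1-M_2\|\lessdot \Theta\, K^{-\varrho_{_\Sigma}}$.

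The conclusion then follows from the resolvent identity $M_2^{-1}=M_1^{-1}+M_1^{-1}(M_1-M_2)M_2^{-1}$. If $\xi\in \mathfrak R^{\varrho_{d+2}}_{\nu,\et,\et',\s,-\s}$, i.e.\ $|M_1^{-1}|>\e^{-2q}K^{\varrho_{d+2}}$, then $\|M_1-M_2\|\cdot|M_1^{-1}|\lessdot \Theta\,\e^{-2q}K^{\varrho_{d+2}-\varrho_{_\Sigma}}\ll 1$ for $K_0$ large, because the explicit choice $\varrho_{d+2}=(4d)^{d+2}\varrho_0\geq 4d\,\varrho_{_\Sigma}$ leaves comfortable margin. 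A Neumann expansion then yields $|M_2^{-1}|\geq \tfrac12|M_1^{-1}|>\e^{-2q}K^{2d\varrho_{_\Sigma}}$, i.e.\ $\xi\in \mathfrak R^{2d\varrho_{_\Sigma}}_{\nu,\et_\Sigma,\et_\Sigma',\s,-\s}\subset R$, as required. I expect the main technical obstacle to be precisely the quasi-T\"oplitz bound $\|M_1-M_2\|\lessdot \Theta\, K^{-\varrho_{_\Sigma}}$: one must unpack the T\"oplitz-plus-remainder decomposition of Appendix \ref{prilone} at scale $K$ and verify that the exponent it produces is genuinely $\varrho_{_\Sigma}$---the cut assigned to the stratum---with enough precision to absorb the amplifying loss $K^{\varrho_{d+2}}$ coming from $|M_1^{-1}|$. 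All other steps (the momentum-conservation identification of $\et_\Sigma'$, the cancellation of scalar, $\theta$ and nilpotent pieces, and the Neumann argument) are essentially bookkeeping once that estimate is in place.
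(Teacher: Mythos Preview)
Your overall strategy matches the paper's: reduce to a representative $(\nu,\et_\Sigma,\et'_\Sigma)$ on each stratum, observe that the scalar and $\theta$-parts of $M_1$ and $M_2$ agree (by constancy of $\pi(\nu)\cdot x$ on the stratum and piecewise T\"oplitz of $\theta$), so that $M_1-M_2$ lives entirely in the $\tilde\Omega$-remainders, and then transfer the resolvent bound by a Neumann argument. However, two points in your write-up do not go through as stated.

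First, the Neumann step is run in the wrong direction. From $\xi\in\mathfrak R^{\varrho_{d+2}}$ you only know a \emph{lower} bound $|M_1^{-1}|>\e^{-2q}K^{\varrho_{d+2}}$; the quantity $\|M_1-M_2\|\cdot|M_1^{-1}|$ therefore has only a lower bound, and that lower bound is $K^{\varrho_{d+2}-\varrho_{\Sigma}}\gg 1$, not $\ll 1$. You cannot expand around $M_1$. The paper argues the contrapositive: if $\xi\notin R$ then $|M_2^{-1}|\le \e^{-2q}K^{2d\varrho_{\Sigma}}$, one writes $M_1=M_2\bigl(I+M_2^{-1}(M_1-M_2)\bigr)$ and inverts by Neumann, obtaining $|M_1^{-1}|\le 2\e^{-2q}K^{2d\varrho_{\Sigma}}\le \e^{-2q}K^{\varrho_{d+2}}$, hence $\xi\notin\mathfrak R^{\varrho_{d+2}}$.

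Second, the quasi-T\"oplitz exponent you anticipate, $\|M_1-M_2\|\lessdot \Theta K^{-\varrho_{\Sigma}}$, is too weak for the argument to close even in the correct direction: with that bound one gets $|M_2^{-1}|\cdot\|M_1-M_2\|\sim K^{(2d-1)\varrho_{\Sigma}}$, which diverges. What Lemma~\ref{eccec0} actually gives is $|\tilde\Omega_\et-\tilde\Omega_{\et_\Sigma}|_\infty\le \e^{2q}M\,K^{-4d\varrho_{\Sigma}}$; the extra factor $4d$ in the exponent is exactly what makes $|M_2^{-1}|\cdot\|M_1-M_2\|\le M K^{-2d\varrho_{\Sigma}}\ll 1$. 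So the ``main technical obstacle'' you flag is real, but the target exponent is $4d\varrho_{\Sigma}$, not $\varrho_{\Sigma}$, and this is precisely the content of Lemma~\ref{eccec0}.
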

\begin{proof}
For each stratum $\Sigma$ of $\mathfrak T_s$   we have chosen a representative $\et_\Sigma$.   Now for all $\nu $ for which a small divisor may occur we have for the scalar products $ \pi(\nu)\cdot \er_{\et_{_\Sigma}} =  \pi(\nu)\cdot \er_{\et } $ for all   $\et $ in this stratum. It then follows that $$|\Ome_{\et }- \Ome_{\et_\Sigma}|_\infty= |\tilde\Ome_{\et }- \tilde\Ome_{\et_\Sigma}|_\infty\leq \e^{2q} M K^{-4 d \varrho_{_\Sigma}},\quad\forall\et\in\Sigma$$by applying Lemma \ref{eccec0}. Consequently for each pair $\et,\et'$ with $\et\in \Sigma, \et'\in \Sigma'$  we have
$$
| (\omega\cdot \nu +  L(\Omega_{\et })- R (\Omega_{\et' }))^{-1}|= |(\omega\cdot \nu +  L(\Omega_{\et_\Sigma})- R (\Omega_{\et_{\Sigma'}}))^{-1}||(1+ A)^{-1}|
$$
with $$
A=(\omega\cdot \nu +  L(\Omega_{\et_\Sigma})- R (\Omega_{\et_{\Sigma'}}))^{-1}(\Ome_{\et }- \Ome_{\et_\Sigma})\,,\quad |A| \leq M K^{-2d \varrho_{_{\Sigma}}} 
$$
so that 
$$
| (\omega\cdot \nu +  L(\Omega_{\et })- R (\Omega_{\et' }))^{-1}| \leq  2\e^{-2q}K^{2 d \varrho_{_\Sigma}}  \leq  \e^{-2q}K^{2 d \varrho_{_\Sigma}+1}
$$
In conclusion, since $\varrho_{_\Sigma}\leq \varrho_{d+1}$, we have proved our claim.\qed\end{proof}
\begin{corollary}\label{meare} For $\varrho$ large enough, the measure of the union of all resonant sets $\mathfrak R^\varrho_{\nu,\et,\et',\s,\s'}$ with $|\nu|\leq K$ is bounded by
$$ \e^{2n}K^{-\frac{\varrho}{(4d)^{d+2}}+n+1}$$  
\end{corollary}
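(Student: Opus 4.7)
The plan is to split the union of resonant sets $\bigcup \mathfrak R^\varrho_{\nu,\et,\et',\s,\s'}$ according to whether $\s\s' = -1$ (the ``hard'' second Melnikov case) or not (the constant, first Melnikov, and aligned-sign second Melnikov cases). Each class is controlled by a completely different mechanism: the first by enumeration plus individual measure estimates, the second by containment in the single set $R$ of \eqref{lR} built through the quasi--T\"oplitz stratification. The larger of the two bounds delivers the claim.

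I will first handle the class $\s\s'\neq -1$. By Lemma \ref{banali} there are at most $K^{p}$ non-empty sets of this type with $p = n + d/2 + 1$, and by Lemma \ref{bababa} each has measure bounded by $C\e^{2n}K^{-c(\varrho-1)}$ with $c\geq 1/(2d+1)^2$, the worst value of $c$ arising when both blocks live in $\mathfrak T_f$ and have maximal dimension, giving $c = 1/(d_\et d_{\et'}) \geq 1/(2d+1)^2$. Summing the individual contributions yields a total bounded by $C\e^{2n}K^{n + d/2 + 1 - (\varrho-1)/(2d+1)^2}$.

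I will then handle the class $\s\s' = -1$. The set of non-empty resonant sets is genuinely infinite here (both $\et$ and $\et'$ range over the full infinite index set $\mathfrak T_s$), so enumeration is hopeless; instead Lemma \ref{Rcont} shows that for the shifted parameter $\varrho_{d+2}=(4d)^{d+2}\varrho_0$ every such resonant set is contained in the single set $R$ of \eqref{lR}. The measure of $R$ was already computed in the display preceding Lemma \ref{Rcont}, giving $|R|\leq \e^{2n}K^{n+1-\varrho_0}$. Identifying the parameter via $\varrho := \varrho_{d+2}$, i.e.\ $\varrho_0 = \varrho/(4d)^{d+2}$, this becomes $|R|\leq \e^{2n}K^{n+1-\varrho/(4d)^{d+2}}$.

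It remains to check that Case 2 dominates Case 1. Since $(4d)^{d+2}\geq (2d+1)^2$ for $d\geq 1$, one has $1/(4d)^{d+2}\leq 1/(2d+1)^2$, so for $\varrho$ large enough the Case 1 exponent $n+d/2+1-(\varrho-1)/(2d+1)^2$ is strictly smaller than the Case 2 exponent $n+1-\varrho/(4d)^{d+2}$; the polynomial loss $K^{p}$ from the enumeration is absorbed into the gap between the two reciprocals. Taking the worse of the two bounds and summing yields the asserted estimate. The only step requiring care is the compatibility of the normalization $\varrho=\varrho_{d+2}$ used by Lemma \ref{Rcont} with the global choice $\varrho=\varrho(\tau)$ of Lemma \ref{scelta}, which is a straightforward bookkeeping matter and presents no real obstacle.
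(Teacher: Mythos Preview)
Your proof is correct and follows essentially the same route as the paper: both split into the cases $\s\s'\neq -1$ (handled by the counting Lemma \ref{banali} together with the individual measure bound of Lemma \ref{bababa}, worst constant $c=1/(2d+1)^2$) and $\s\s'=-1$ (handled by Lemma \ref{Rcont} and the bound on $|R|$ with $\varrho_0=\varrho/(4d)^{d+2}$), then observe that the second exponent dominates for large $\varrho$. Two minor remarks: the display preceding Lemma \ref{Rcont} actually gives $|R|\leq \e^{2n}K^{n-\varrho_0}$ rather than $K^{n+1-\varrho_0}$, and your final comment about compatibility with the choice $\varrho=\varrho(\tau)$ from Lemma \ref{scelta} is extraneous to this corollary (that identification is made downstream in the proof of Proposition \ref{misura}).
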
\begin{proof}
This follows by Lemmas  \ref{bababa} \ref{banali} and \ref{Rcont}. Indeed  we fix $\varrho_{d+2}= \varrho$ so that $\varrho_0= \varrho/(4d)^{d+2}$. Then the Lemmas imply that we   need to remove a region of order $$K^{-\frac{\varrho}{(2d+1)^2}+ n +\frac d2+1} + K^{-\frac{\varrho}{(4d)^{d+2}}+n} \leq K^{-\frac{\varrho}{(4d)^{d+2}}+n+1},$$ provided $\varrho$ and $K$ are both large.
\end{proof}
 \section{The normal form\label{NoF}}
We will work with many quadratic Hamiltonians in the variables $w$ (thought of as a row vector). We  represent a quadratic Hamiltonian m $\mathcal F$ by a matrix $F$ as
 \begin{equation}\label{represQ}
\mathcal F(w)= \frac 12  w\cdot wJF^t =-\frac1 2  w F Jw^t\,,
\end{equation} where $J:=-\ii \{w^t,w\}$ is the standard matrix of the symplectic form.

\smallskip

By explicit computation, and under simple genericity conditions, we have:
\begin{equation}
\label{Pno}\mathcal N= \ome(\xi)\cdot y +\sum_{k\in S^c} |k|^2 |z_k|^2 +{\mathcal Q}(\xi;x,w) \,,\quad \ome_i(\xi):= |\mathtt j_i|^2+ \ome^{(1)}_i(\xi)
\end{equation} 
 here    ${\mathcal Q}(\xi;x,w)$ is a quadratic Hamiltonian in the variables $w$ with coefficients trigonometric polynomials in $x$ given by Formula ($30$) of \cite{PP}.
 The frequency modulation $\ome^{(1)}$ is homogeneous of degree $q$ in $\xi$ and given by the following explicit   formula.
 We introduce 
\begin{equation}\label{symme}
A_r(\xi_1,\ldots,\xi_m):= \sum_{ \sum_i k_i=r} {\binom{r}{k_1,\dots, k_m}}^2 \prod_i \xi_i^{k_i}.
\end{equation}
 and we have
 \begin{equation}\label{ome1}
 \ome^{(1)}(\xi) = \nabla_\xi  A_{q+1}(\xi)- (q+1)^2 A_q(\xi)\underline 1.
 \end{equation}
 By applying the results of \cite{PP}  we decompose this   very complicated infinite dimensional quadratic
Hamiltonian into infinitely many decoupled finite dimensional systems reduced to constant coefficients by an explicit symplectic change of variables.
 
 Since this construction is needed in the following we recall   quickly   Theorem   1   of \cite{PP}.
 \begin{theorem}\label{teo1}[Theorem   1   of \cite{PP}] For all {\em generic} choices $S=\{\mathtt j_1,\dots,\mathtt j_n\}\in\nobreak  \Z^{nd}$ of the tangential sites,    there exists   a {\em phase shift map} 
$L:S^c\to \Z^n ,\ L: k\mapsto L(k) \,,\quad |L(k)|\leq 4qd$  such that the analytic symplectic change of variables:
\begin{equation}\label{labella}
 \Psi:\ z_k= e^{-\ii   L(k)\cdot x   }z_k' ,\ y=y'+\sum_{k\in S^c}  L(k)  |z_k'|^2,\ x=x',
\end{equation} 
from $ D(s,r/2) \to D(s,r)$ 
has the property that $\mathcal N$ in the new variables:
\begin{equation}
\label{Sno}\mathcal N\circ\Psi=  \ome(\xi)\cdot y'  +\sum_{k\in S^c}\tilde\Ome_k|z'_k|^2 +\tilde {\mathcal Q}(w')\,,
\end{equation}
  has {\em constant coefficients},  where $\omega(\xi)$ is defined in \eqref{Pno} and furthermore:
  
\smallskip
\noindent i) {\bf Non-degeneracy}  The map $(\xi_1,\ldots,\xi_m)\mapsto (\ome_1(\xi),\ldots,\ome_m(\xi))$ is a local diffeomorphism for  $\xi$ outside a homogeneous real algebraic hypersurface. 
\smallskip

\noindent ii) {\bf Asymptotic of the normal frequencies:} We have $\tilde\Ome_k= |k|^2 +\sum_i |\mathtt j_i|^2 L^{(i)}(k)$.

\noindent iii) {\bf Reducibility}: The matrix $\tilde{ Q}(\xi)$  (see formula \eqref{represQ}) of the quadratic form	$\tilde{\mathcal Q}(\xi,w')$ (see formula \eqref{tildeQ})   depends  only on the variables $\xi$,  
 its entries are homogeneous of degree $q$ in these variables.  It is   block--diagonal  and satisfies  the following properties: 

\quad   All of the blocks except a finite number  are self adjoint   of dimension  $\leq d+1$.  The remaining finitely many  blocks have dimension  $\leq 2d+1$. 
 
\quad All the (infinitely many) blocks are  described, through Formula \eqref{represQ},  by a finite list of matrices $\mathcal M(\xi)$.

\noindent iv)   {\bf Smallness:} \ If $\e^3<r<c_1\e$,  the perturbation $\tilde P:= P\circ \Psi $ is  small, more precisely 
we have   the bounds:
\begin{equation}\label{pertu2}
\Vert X_{\tilde P}\Vert^\lambda_{s,r}\leq C (\e^{2q-1} r + \e^{2q+3} r^{-1}) \,, 
\end{equation}  where $C$ is independent of $r$ and depends on $\e,\lambda$ only through $\lambda/\e^2$. 
\end{theorem}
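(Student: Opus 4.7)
The plan is to reduce the $x$-dependent quadratic part $\mathcal{Q}(\xi;x,w)$ of the normal form \eqref{Pno} to constant coefficients by an $x$-dependent phase rotation on the normal modes $z_k$, and then to read off the block structure from the conservation laws. First I would unwind the explicit shape of $\mathcal{Q}$ coming out of the Birkhoff procedure: each monomial is of the form $e^{\ii \nu\cdot x} z_h^{\sigma_1} z_k^{\sigma_2}$ (with $h,k\in S^c$ and $|\sigma_1|+|\sigma_2|=2$) and the index $\nu\in\Z^n$ is constrained by conservation of mass, momentum and the quadratic energy $\mathbb{K}$ of \eqref{kappabb}. These three scalar constraints together force $\nu$ to be a $\Z$-linear expression in the pair $(h,k)$, so one can hope to absorb the factor $e^{\ii\nu\cdot x}$ by a shift $z_k = e^{-\ii L(k)\cdot x} z'_k$, provided the assignment $k\mapsto L(k)$ is consistent across the different monomials that involve $z_k$.

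To build $L$, I would introduce the resonance graph on $S^c$ whose edges record the pairs $(h,k)$ actually coupled by $\mathcal{Q}$; on each connected component the consistency conditions for $L$ form a rank-deficient integer linear system, and one can always pick a solution with $|L(k)|\le 4qd$ because each monomial has total degree $2q+2$ and at most two normal-mode factors, which bounds the size of the linear relations. The main obstacle is the combinatorial classification of this graph under the genericity assumption on $S$: one must show that all but finitely many components are self-adjoint and of size at most $d+1$, with the exceptional finitely many of size at most $2d+1$. This is exactly the content of the combinatorial analysis of \cite{PP1}; the guiding idea is that a larger component would impose an overdetermined system of integer polynomial equations on $(\mathtt{j}_1,\dots,\mathtt{j}_n)$, which is precisely what the explicit genericity hypothesis on $S$ rules out. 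Once this is in hand, the symplecticity of the change of variables \eqref{labella} reduces to a generating-function check (this is what forces the correction $y = y' + \sum L(k)|z'_k|^2$), and the asymptotic formula $\tilde\Omega_k = |k|^2 + \sum_i |\mathtt{j}_i|^2 L^{(i)}(k)$ in (ii) follows by tracking how the phase shift modifies the diagonal quadratic energy term $\sum |k|^2|z_k|^2$ together with the action part $\omega(\xi)\cdot y$.

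For the non-degeneracy statement (i), I would use the explicit expression \eqref{ome1}: up to the scalar correction $(q+1)^2 A_q(\xi)\underline{1}$, the map $\xi\mapsto\omega^{(1)}(\xi)$ is the gradient of $A_{q+1}(\xi)$ from \eqref{symme}. Its Jacobian is therefore a correction of the Hessian of a nontrivial homogeneous symmetric polynomial, whose determinant is a nonzero homogeneous polynomial; hence it can only vanish on a proper real algebraic hypersurface, and outside this hypersurface $\omega$ is a local diffeomorphism. Finally, for the smallness bound (iv), I would observe that $\Psi$ is close to the identity on $D(s,r/2)$ because $|L(k)|\le 4qd$ and $|y|\le r^2$, $\|w\|_{a,p}\le r$, so that $\Psi$ maps $D(s,r/2)\to D(s,r)$; the estimate \eqref{pertu2} then follows by applying the majorant-norm Cauchy estimates of \cite{BBP1} to $P := H - \mathcal{N}$ on $B_{\epsilon_0}$ after rescaling via \eqref{chofv}. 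The two terms $\varepsilon^{2q-1}r$ and $\varepsilon^{2q+3}r^{-1}$ arise respectively from $P^{2q+2}$ (which is at least cubic in $w$, hence gains a factor $r$ over the normal-form part of size $\varepsilon^{2q}$) and from the tail $P^{4q+2}$ (which carries an extra $\varepsilon^{2q+2}r$ but is divided by $r^2$ when passing to the majorant norm on actions), uniformly in $s,r$ under \eqref{bapa}.
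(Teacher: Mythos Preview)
The paper does not prove this theorem; it is quoted verbatim as Theorem~1 of \cite{PP} and then its consequences are unpacked in \S\ref{gra}. So there is no ``paper's own proof'' to compare against. That said, your outline is essentially the strategy of \cite{PP} as recalled in \S\ref{gra}: one builds the geometric graph $\Gamma_S$ on $S^c$ via the resonance constraints \eqref{fico}--\eqref{fico1}, picks a root in each connected component, defines $L(k)$ by summing edge labels along a path from the root to $k$ (formula \eqref{defL}), and then the phase shift \eqref{labella} kills the $x$-dependence of $\tilde{\mathcal Q}$. Two small corrections. First, the combinatorial bound on block sizes (at most $d+1$ generically, $2d+1$ for the finitely many red components) is Theorem~3 of \cite{PP}, not \cite{PP1}; the latter refines this to distinct eigenvalues in the cubic case. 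Second, your non-degeneracy argument for (i) is too quick: knowing that the Jacobian of $\omega^{(1)}$ is a rank-one perturbation of a Hessian does not by itself show its determinant is a nonzero polynomial. The actual mechanism, visible in Lemma~\ref{omest}, is that $\omega^{(1)}_i(\xi)=-q(q+1)\xi_i^q+$ (mixed terms), so the Jacobian specialises along the coordinate axes to an invertible diagonal matrix, forcing the determinant to be a nonzero polynomial in $\xi$; its zero locus is then the desired hypersurface. Finally, the well-definedness of $L(k)$ when the component is not a tree is not just ``rank deficiency'' but a genuine consequence of the genericity constraints on $S$ (see the remark after \eqref{defL}); you should flag that this is where genericity is used a second time, not only for the block-size bound.
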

Regarding   $\omega^{(1)}$    given by Formula  \eqref{ome1}   we will need the following
\begin{lemma}\label{omest} For each $i$ the polynomial $\omega^{(1)}_i(\xi)$  equals $ -q(q+1) \xi_i^q$ plus terms which contain at least two variables.\end{lemma}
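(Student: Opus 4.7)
The plan is to isolate the ``purely univariate in $\xi_i$'' part of $\omega^{(1)}_i(\xi) = \partial_{\xi_i} A_{q+1}(\xi) - (q+1)^2 A_q(\xi)$ by the standard device of evaluating at $\xi_j = 0$ for every $j \neq i$. Since $\omega^{(1)}_i$ is a polynomial, this substitution projects onto exactly the monomials depending on $\xi_i$ alone; the complement (the difference between $\omega^{(1)}_i$ and this restriction) is automatically supported on monomials involving at least two variables, which is precisely what the lemma asserts.

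First I would read off $A_r|_{\xi_j = 0,\, j \neq i}$ directly from the definition \eqref{symme}: after killing all but the $i$-th variable, the only multi-index $(k_1,\dots,k_m)$ with $\sum_s k_s = r$ that survives is the one with $k_i = r$ and $k_j = 0$ for $j \neq i$. The corresponding squared multinomial coefficient $\binom{r}{r,0,\dots,0}^2 = 1$, so $A_r|_{\xi_j = 0,\, j \neq i} = \xi_i^{\,r}$ for every $r$.

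Next I would compute the analogous restriction of $\partial_{\xi_i} A_{q+1}$. Differentiating $A_{q+1}$ term by term and then setting $\xi_j = 0$ for $j \neq i$, only the multi-index with $k_i = q+1$ survives, and it contributes with the factor $k_i = q+1$ coming from the derivative. Therefore
\begin{equation*}
\bigl(\partial_{\xi_i} A_{q+1}\bigr)\bigl|_{\xi_j = 0,\, j \neq i} \;=\; (q+1)\, \xi_i^{\,q}.
\end{equation*}

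Combining the two pieces via \eqref{ome1}, the univariate part of $\omega^{(1)}_i(\xi)$ equals $(q+1)\xi_i^{\,q} - (q+1)^2 \xi_i^{\,q} = -q(q+1)\,\xi_i^{\,q}$, which is exactly the claimed coefficient. There is no genuine obstacle here: the argument is a bookkeeping exercise with multinomial coefficients, and the only point to watch is that differentiation with respect to $\xi_i$ commutes with the restriction $\xi_j = 0$ ($j \neq i$) on the relevant subspace of monomials.
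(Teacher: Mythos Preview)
Your argument has a gap in the logical step where you claim that restricting to $\xi_j=0$ for $j\neq i$ isolates \emph{all} single--variable monomials. Setting $\xi_j=0$ for $j\neq i$ projects onto the monomials supported on $\xi_i$ alone; the complement consists of monomials that contain at least one factor $\xi_j$ with $j\neq i$. But such a monomial may very well be a \emph{single}--variable monomial, namely $\xi_j^{\,q}$ for some $j\neq i$. The lemma asserts that no such term occurs in $\omega^{(1)}_i$, and your restriction says nothing about this.

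Concretely, from $\partial_{\xi_i}A_{q+1}$ the term in $A_{q+1}$ with $k_i=1$, $k_j=q$ (squared multinomial coefficient $(q+1)^2$) produces $(q+1)^2\xi_j^{\,q}$ after differentiation, for every $j\neq i$; these are single--variable terms that your computation misses. The paper's proof tracks exactly this: it extracts \emph{all} pure powers $\xi_j^{\,q}$ from $\partial_{\xi_1}A_{q+1}$, getting $(q+1)\xi_1^{\,q}+(q+1)^2\sum_{j\neq 1}\xi_j^{\,q}$, and then observes that the second summand is cancelled by the $(q+1)^2A_q$ term. To repair your approach you would need, in addition to the restriction you performed, to check that $\omega^{(1)}_i|_{\xi_k=0,\,k\neq j}=0$ for every $j\neq i$; this is an easy extra computation along the same lines, but it is not automatic.
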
\begin{proof} We do it for $i=1$ by symmetry. 
From $\nabla_\xi  A_{q+1}(\xi)$ the terms $\xi_j^q$ are obtained  by
  $$\partial_{\xi_1}(\xi_1^{q+1}+ (q+1)^2\sum_{j\neq 1}\xi_j^{q }\xi_1) =(q+1)\xi_1^{q }+  (q+1)^2\sum_{j\neq 1}\xi_j^{q }.$$  From the second summand we subtract  $(q+1)^2\sum_j\xi_j^q$  getting the desired formula.   
\qed\end{proof}

\subsection{The matrix blocks and the geometric graph $\Gamma_S$\label{gra}} 
The quadratic Hamiltonian $\tilde{\mathcal Q}$ is described, as we shall see in Formula \eqref{tildeQ},  in terms of a {\em 2--colored} marked graph $\Gamma_S$ with vertices in $\Z^d$ and labels in $\Z^n$ which encodes  the possible interactions between the normal frequencies $k\in S^c$. 
In order to describe this in a combinatorial way we recall some notation from \cite{PP}, see  Definition \ref{doppia} for the meaning of the maps $\eta,\pi,\pi^{(2)}$.
 \begin{definition}[edges] \label{edges}
  Consider a finite set $X$ of  elements in $\Z^n$.\begin{equation}
\label{glied}X :=\{\ell =\sum_{i=1}^n \ell_ie_i,\ \ell_i\in\mathbb Z \quad \ell\neq 0,-2e_i\; \forall i\,,\quad \eta(\ell)\in \{0,-2\}\}.
\end{equation}  Notice the {\em mass constraint}   $\sum_i\ell_i=\eta(\ell)\in \{0,-2\}$.  We call all these elements respectively the {\em black, $\eta(\ell)=0$} and {\em red $\eta(\ell)=- 2$}    {\em edges}  and denote them by $X^{(0)},X^{(-2)}$ respectively.

Each edge carries a {\em quadratic energy} (which is an integer): $$
K(\ell):= \frac{1+\eta(\ell)}2 (|\pi(\ell)|^2+\pi^{(2)}(\ell)),\quad  \frac{1+\eta(\ell)}2=\pm \frac12.
$$
Of particular interest for the $q$--NLS is 
 \begin{equation}
\label{glied1}X_q:=\{\ell:=\sum_{j=1}^{2q} \pm  e_{i_j}=\sum_{i=1}^m \ell_ie_i,\  \quad \ell\neq 0,-2e_i\; \forall i\,,\quad \eta(\ell)\in \{0,-2\}\}.
\end{equation}  
\end{definition}
Choose now a finite set of integral vectors $S:=\{\mathtt j_1,\ldots,\mathtt j_n\}$ in $\mathbb Z^d$. For the NLS these are the tangential sites but the construction is purely geometric.
\begin{definition}\label{pl}
Given $\ell \in X^{(0)}$  denote by $\mathcal P_\ell$  the set of pairs $h,k\in\Z^d$ satisfying:
\begin{equation}\label{fico}
\sum_{j=1}^{n}\ell_j \mathtt j_j+h-k=0,\quad  \sum_{j=1}^{n}\ell_j |\mathtt j_j|^2+|h|^2-|k|^2=0\ \quad \phantom{-}\ell\in X^{(0)}.
\end{equation}  If $\ell\in X^{(-2)}$  we denote by $\mathcal P_\ell$  the set of unordered pairs $\{h,k\},\ h,k\in\Z^d$ satisfying: \begin{equation}\label{fico1} 
\sum_{j=1}^{n}\ell_j \mathtt j_j+h+k=0,\  \quad \sum_{j=1}^{n}\ell_j |\mathtt j_j|^2+|h|^2+|k|^2=0\quad \phantom{-} \ell\in X^{(-2)}.
\end{equation} 
 It is convenient to formalise this definition in the language of graphs with two types of edges corresponding to the two conditions.
  When $h,k$ satisfy formula \eqref{fico} we join them by an oriented {\em black } edge, marked $\ell$, with source $h$ and target $k= h+\sum_{j=1}^{m}\ell_j \mathtt j_j $.  Formula \eqref{fico1} is symmetric and we join $h,k$ by an unoriented {\em red} edge marked $\ell$.
  \end{definition}

 Note that the two conditions have a geometric meaning expressed through the quadratic energy. The first means that $h $ lies on the hyperplane 
 \begin{equation}\label{lanera}
H_\ell:\quad   x\cdot \pi(\ell) =K(\ell)   \end{equation} while $k$ lies on the parallel hyperplane $H_{-\ell}$.

 The second condition means that $h,k$ are opposite points on the sphere 
 \begin{equation}\label{laros}
 S_\ell:\quad |x|^2+  x\cdot \pi(\ell) =K(\ell).
\end{equation}
Our main object of interest are the connected components of this graph, called {\em geometric blocks}.
  In \cite{PP} we have   shown that for a generic choice of $S$ the set $S$ is itself a connected component, called the {\em special component}  all other components thus decompose $S^c$.  \begin{definition}\label{grafg} For a given choice of edges $X$ and vectors $S$  we denote by $\Gamma_{S,X}$  the resulting graph.

In the NLS, when $q$ is fixed,   we denote $\Gamma_{S }:= \Gamma_{S,X_q}$
\end{definition}
\begin{remark}\label{incg} Note that if $S\subset S'$ and $X\subset X'$ we have $\Gamma_{S,X}\subset  \Gamma_{S',X'}$.

\end{remark}
\begin{proposition}\label{Gen}
For generic choices of $S$ the connected components of $ \Gamma_{S,\, X }$ have at most $2d+1$ vertices.

We have only finitely many components containing red edges,  moreover the components which do not contain any red edge have  vertices which are affinely independent (and hence at most $d+1$) and are  naturally decomposed in a finite family of subsets in each the connected components are obtained from one by translation.
\end{proposition}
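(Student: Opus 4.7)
\emph{Plan.} The proof naturally splits according to whether a connected component carries any red edge; I treat the two cases separately and then address the translation decomposition.

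\textbf{Purely black components.} Fix such a component with a chosen root $h_0$. Every other vertex is of the form $h_i=h_0+\pi(w_i)$ for some $w_i\in\Z^n$ obtained as the sum of edge labels along a path from $h_0$ to $h_i$; since the edge labels lie in the finite set $X^{(0)}\subset\Z^n$, the $w_i$'s are bounded. In addition, the black-edge condition forces each $h_i$ to lie on the hyperplane $H_\ell$ associated with every edge incident at $h_i$. The strategy is to show that any non-trivial affine dependence $\sum c_i(h_i-h_0)=0$ among the vertices, once combined with these linear constraints and the quadratic energy constraints along the tree, translates into a non-trivial polynomial equation $P_{\mathcal T}(\mathtt j_1,\ldots,\mathtt j_n)=0$ with integer coefficients depending only on the \emph{combinatorial type} $\mathcal T$ of the component. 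Since the tree depth is a priori bounded (by $d+1$, once one knows affine independence is the target) and the edge labels are bounded, the collection of relevant types is finite, so the product of the $P_{\mathcal T}$'s is a single non-zero integer polynomial. Declaring the genericity locus to include the complement of its zero set yields affine independence of the $h_i$'s and hence at most $d+1$ vertices per component.

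\textbf{Components containing red edges.} A red edge $\{h,k\}$ labelled by $\ell\in X^{(-2)}$ imposes the momentum relation $h+k=-\pi(\ell)$ together with the spherical relation $|h|^2+|k|^2=-\pi^{(2)}(\ell)$. Since $\pi(\ell)$ and $\pi^{(2)}(\ell)$ are uniformly bounded as $\ell$ ranges over the finite set $X^{(-2)}$, so are $|h|$ and $|k|$; there are therefore only finitely many lattice pairs $(h,k)\in\Z^{2d}$ that can possibly form a red edge, and after imposing a further finite list of non-degeneracy inequalities on $S$ one rules out all but finitely many of them. This simultaneously gives the finiteness statement and bounds the total number of vertices: the two black sub-trees attached at the endpoints of a red edge each contain at most $d+1$ vertices by the preceding step, and the linear relation $h+k=-\pi(\ell)$ eliminates one affine degree of freedom between them, yielding the sharp bound $2(d+1)-1=2d+1$.

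\textbf{Translation families.} The underlying labelled tree of a purely black component belongs to a finite set of combinatorial types, both because the edge labels are drawn from a finite set and because the tree has at most $d+1$ vertices. Fix such a type $\mathcal T$ with edge labels $\ell_1,\ldots,\ell_r$: a choice of root $h_0\in\Z^d$ produces an actual component exactly when the finite linear system $h_0\cdot\pi(\ell_j)=K(\ell_j)$ (appropriately translated along the tree) holds. The integer solution set, if non-empty, is a translate of the sublattice $T_{\mathcal T}=\{t\in\Z^d:t\cdot\pi(\ell_j)=0,\ j=1,\ldots,r\}$, and the corresponding components are obtained from a single representative by translation by an element of this sublattice. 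This gives the desired finite decomposition of the black-only components into translation families.

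\textbf{Main obstacle.} The delicate ingredient is the genericity step in the first part: one must extract, from the interplay of momentum and quadratic-energy constraints along each combinatorial type, a \emph{non-trivial} polynomial identity $P_{\mathcal T}$ on $S$. The combinatorics grow rapidly with the depth of the tree, and many of the naive candidate identities collapse to zero; the careful bookkeeping needed to produce a surviving obstruction is essentially the content of Theorem~1 of \cite{PP} and is the technical heart of the argument.
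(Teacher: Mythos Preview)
Your outline is in the right spirit but the paper's proof is a one-liner: the graph $\Gamma_{S,X}$ is a subgraph of $\Gamma_{S,X_p}$ for $p$ large enough (since any finite $X$ is contained in some $X_p$), and then one invokes Theorem~3 of \cite{PP} (not Theorem~1) for the full list of conclusions. So the entire content is delegated to that reference, and your sketch is essentially an attempt to reconstruct what happens there.

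There is a genuine gap in your red-edge argument. You write that ``the two black sub-trees attached at the endpoints of a red edge each contain at most $d+1$ vertices \ldots\ yielding the sharp bound $2(d+1)-1=2d+1$.'' This presupposes that a component containing a red edge contains \emph{exactly one} red edge, so that it decomposes as two black pieces glued at a single red link. Nothing you have said rules out components with several red edges (or red edges between vertices already joined by black paths), and in such configurations your count breaks down. Establishing that generically a component carries at most one ``red pair'' is itself a non-trivial genericity statement, and is part of what Theorem~3 of \cite{PP} actually proves; it cannot be assumed for free. The phrase ``after imposing a further finite list of non-degeneracy inequalities on $S$ one rules out all but finitely many of them'' is also confused: you had already shown finiteness of red pairs from the sphere constraint, so it is unclear what additional restriction you intend there.

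Your treatment of the black case and of the translation families is a reasonable summary of the mechanism, and you correctly flag that producing a \emph{non-vanishing} polynomial obstruction $P_{\mathcal T}$ for each combinatorial type is the hard step---but note that this is Theorem~3 of \cite{PP}, not Theorem~1.
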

\begin{proof} 
Our graph   is a subgraph of  the graph defined in  \cite{PP} relative to the edges of some $X_p$ provided that we choose $p$ large enough. Then by Theorem 3 of  \cite{PP} for generic choices of $\mathtt j_i$  we have the desired bound.
  \qed\end{proof}
 
   \smallskip
 
 In \cite{PP}, Definition 17, we have associated to each  connected component  $A$ a purely combinatorial graph  
 $\GA$  which contains only the markings of the edges   of $A$  and hence encodes the information on the equations which the vertices of $A$ must solve (equations (61) of \cite{PP}) of type as \eqref{lanera} and \eqref{laros}. We then have that there are only finitely many  such combinatorial graphs, called {\em combinatorial blocks},  while there may be infinitely many $A$ which have the same $\GA$.
In case the graph has no red edges  the equations for the vertex $x$ are all linear.
This implies that the  connected components of  $\Gamma_S$  which correspond to a given combinatorial graph with only black edges and with a chosen vertex      are all obtained from a single one by translations by vectors which are orthogonal to the edges of the graph.

 We also have chosen   a preferred  vertex, called the {\em root}, in each connected component,  in such a way that  the  roots of the translates are the translates of this root.  
 Formalizing:\begin{itemize}\item we have a map $\er:S^c\to S^c$ with image the chosen set  $S^{c,\er}$ of roots. \item  The fibers of this map are the connected components  of the graph $\Gamma_S$.  
\item When we walk from the root $\er(k)$ to $k$   the parity $\pm 1$ of the number of red edges on the path is independent of the path and we denote it by $\sigma(k)$ (the {\em color} of $k$).  
\item There are only finitely many elements $k$  with $\s(k)=-1$, the finitely many corresponding roots are exactly the roots of the components with red edges. \item  In any case the color of the root is always 1 ({\em black}).
\end{itemize}\smallskip

The vector $L(k)=\sum L_i(k) e_i$, with $k\in S^c$, appearing in Theorem \ref{teo1} is defined in Lemma 10 of \cite{PP} through the edges appearing in a path from the root to $k$. It depends only on the combinatorial graph.  The vector $L(k)$ tells  us how to go from the root of the component $A$ of the  graph  $\Gamma_S$ to which $k$ belongs, to $k$, namely:
\begin{eqnarray}\label{defL}
k+\sum_iL_i(k)\mathtt j_i&=&\sigma(k)\er(k)\,,\ |k|^2+\sum_iL_i(k)|\mathtt j_i|^2=\sigma(k)|\er(k)|^2\,, \nonumber \\ 1+\sum_iL_i(k)&=& \sigma(k).  
\end{eqnarray}
 This definition is well posed even if $A$ is not a tree, so that one can {\em walk} from $\er(k)$ to $k$  in several ways,  from   our genericity conditions.

\subsubsection{The NLS graph}  We now apply the previous  geometric construction to the NLS and the tangential sites $S$ of Theorem \ref{teo1}.  With the previous notations we have
\begin{eqnarray}\label{tildeQ}
\tilde{\mathcal Q}&=& \sum_{k\in S^c}\omega^{(1)}(\xi)\cdot L(k)|z'_k|^2+\sum_{\ell\in X_q^0} c_q(\ell)\sum_{(h,k)\in \mathcal P_\ell }z'_h\bar z'_k
\nonumber\\&+& \sum_{\ell\in X_q^{-2}}c_q(\ell) \sum_{\{h,k\}\in \mathcal P_\ell }[ z'_h  z'_k
 +  \bar z'_h\bar z'_k].
\end{eqnarray} 
 where,
given  an edge  $\ell$, we set $\ell=\ell^+-\ell^- $ and define:
\begin{equation}\label{cl}
 c_q(\ell):=\left\{\begin{array}{ll}\displaystyle (q+1)^2\xi^{\frac{\ell^++\ell^-}{2}}\sum_{\alpha\in\N^m\atop{|\alpha+\ell^+|_1=q}}\binom{q}{ \ell^++\alpha}\binom{q}{ \ell^-+\alpha }    \xi_i^\alpha  & \ell\in X_q^0\\\displaystyle
(q+1)q\xi^{\frac{\ell^++\ell^-}{2}}\sum_{\alpha\in\N^m\atop{|\alpha+\ell^+|_1=q-1}}\binom{q+1}{\ell^-+\alpha}\binom{q-1}{ \ell^++\alpha}  \xi_i^\alpha &\ell\in X_q^{-2}  
 \end{array}\right.
\end{equation}

We see now that the graph has been constructed in order to {\em decouple}  $\tilde{\mathcal Q}=\sum_A \tilde{\mathcal Q}_A$. The sum runs over all geometric blocks $A\in \Gama$ and, if $E_b(A), E_r(A)$ denotes the set of resp. black and red edges in $A$: 
 \begin{equation}\label{qtila}
\tilde{\mathcal Q}_A:=\sum_{k\in A}\omega^{(1)}(\xi)\cdot L(k)|z'_k|^2+\sum_{\ell\in E_b(A)} c_q(\ell)z'_h  \bar z'_k +  \sum_{\ell\in E_r(A)} c_q(\ell)[ z'_h  z'_k
 +  \bar z'_h\bar z'_k] 
\end{equation} 
 is a quadratic Hamiltonian in the variables $w'_A:=z'_k,\bar z'_k$ with $k$  running over the vertices of $A$, we have $\{\tilde{\mathcal Q}_A,\tilde{\mathcal Q}_B\}=0,\ \forall A\neq B$. In \cite{PP} we have shown that 
 \begin{proposition}\label{diva}
For each geometric block $A$ (with $a$ vertices) we can   divide the corresponding $2a$ dynamical variables $w'_A$  into two conjugate components $w'_A=(u',\bar u')$  each spanning a $\tilde{\mathcal Q}_A$ stable Lagrangian subspace where $u'_k=  z_k $ if $ \sigma(k)=1$, $u'_k= \bar z_k $ if $ \sigma(k)=-1$. In this basis $-\ii \tilde Q_A $ has a  block matrix   $ C_{A}\oplus -C_{A}$. By convention in the first block the root $\er$ corresponds to $z'_\er$.

\end{proposition}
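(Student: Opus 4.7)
The plan is a direct substitution, relying on the combinatorial definition of the color function $\sigma(k)$ recalled in \S\ref{gra}. The one input needed is that traversing a black edge of the graph $\Gama$ preserves $\sigma$ while traversing a red edge flips it, so for any edge $\ell$ with endpoints $h,k$ one has $\sigma(h)\sigma(k)=+1$ if $\ell\in E_b(A)$ and $\sigma(h)\sigma(k)=-1$ if $\ell\in E_r(A)$. This is immediate from the very definition of $\sigma$ in \cite{PP}.

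First I would set $u'_k := (z'_k)^{\sigma(k)}$ (with the convention $z^+=z$, $z^-=\bar z$) and rewrite each summand of \eqref{qtila} in these variables. The diagonal terms are trivial: $|z'_k|^2=u'_k\bar u'_k$. For a black edge $\ell\in E_b(A)$ with endpoints of common color $\sigma$, the monomial $z'_h\bar z'_k$ equals $u'_h\bar u'_k$ if $\sigma=1$ and $\bar u'_h u'_k$ if $\sigma=-1$; in both cases it has one factor of type $u'$ and one of type $\bar u'$. For a red edge $\ell\in E_r(A)$ with endpoints of opposite color, the hermitian combination $z'_h z'_k+\bar z'_h\bar z'_k$ becomes $u'_h\bar u'_k+\bar u'_h u'_k$, independent of which endpoint carries $\sigma=+1$. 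Hence
\[
\tilde{\mathcal Q}_A=\sum_{h,k\in A} (C_A)_{hk}\,u'_h\bar u'_k,
\]
which is Lagrangian in the sense of Definition \ref{lagr}; the entries of $C_A$ are read off from the coefficients $c_q(\ell)$ and $\omega^{(1)}(\xi)\cdot L(k)$ in \eqref{qtila}. The convention $u'_\er=z'_\er$ at the root is automatic since the color of the root is always $+1$.

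Second I would deduce the block form of $-\ii\tilde Q_A$. Because only $u'\bar u'$ monomials appear, the Hamiltonian vector field $-\ii\{\tilde{\mathcal Q}_A,\cdot\}$ preserves the Lagrangian subspaces spanned by $\{u'_k\}_{k\in A}$ and by $\{\bar u'_k\}_{k\in A}$ separately; this is exactly the block-diagonal structure. A direct computation of $-\ii\{\tilde{\mathcal Q}_A,u'_j\}$ using $\{z_h,\bar z_k\}=-\ii\delta_{hk}$ produces the linear combination $\sum_h (C_A)_{hj}u'_h$, and the conjugate computation for $\bar u'_j$ produces $-\sum_h (C_A)_{hj}\bar u'_h$, the minus sign coming from antisymmetry of the bracket together with reality of $\tilde{\mathcal Q}_A$. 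This gives the decomposition $-\ii\tilde Q_A=C_A\oplus(-C_A)$ asserted in the proposition.

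The only delicate point is keeping the sign conventions of the Poisson bracket consistent across the two Lagrangian subspaces, since when $\sigma(k)=-1$ the variable $u'_k$ is of type $\bar z$ for the standard form of \S\ref{stanfor} and so contributes the opposite sign in $\{u'_h,\bar u'_k\}$. Tracking this uniformly yields precisely the sign $-C_A$ on the conjugate block; apart from this bookkeeping the proof is entirely a consequence of the color property of $\Gama$ and requires no new combinatorial input.
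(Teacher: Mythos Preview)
Your approach is correct and is exactly the natural direct verification: use that $\sigma$ is preserved along black edges and flipped along red ones to rewrite every monomial of $\tilde{\mathcal Q}_A$ in the form $u'_h\bar u'_k$, then read off the action by Poisson bracket. The paper does not actually supply a proof here; it attributes the result to \cite{PP} and simply records, in formula \eqref{Lema} immediately after the proposition, the explicit matrix entries $c_{u'_h,u'_k}=\sigma(k)c_q(\ell)$ and $c_{u'_k,u'_k}=\sigma(k)(\omega^{(1)}(\xi),L(k))$. The factor $\sigma(k)$ in those entries is precisely the sign you flag as ``delicate'': it arises because $\{u'_h,\bar u'_k\}=-\ii\,\sigma(k)\delta_{hk}$ rather than $-\ii\,\delta_{hk}$, so when you carry out your last computation the $\sigma(k)$ gets absorbed into $C_A$, matching \eqref{Lema}.
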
\smallskip

Given two vertices  $h\neq k\in A$  
 we have that the matrix element $c_{u'_h,u'_k}$ of $C_A$ is non zero if and only if $h,k$ are joined by an edge  (marked say $(i,j)$) and then 
 \begin{equation}
\label{Lema}c_{u'_h,u'_k}= \sigma(k)c_q(\ell),\qquad c_{u'_k,u'_k}=  \sigma(k)(\ome^{(1)}(\xi),L(k)). 
\end{equation}  

By definition $L(k)$ depends only on the combinatorial graph $\GA$ of which $A$ is a realization, therefore the matrix $C_A=C_\GA$ depends only on the combinatorial block $\GA$ (but the dynamical variables in $-\ii \tilde Q_A $ depend on the geometric block).

We thus finally have a finite list  $\mathcal G:=\{\GA_1,\ldots,\GA_N\}$ of combinatorial graphs which may appear, together with  a list of matrices  $C_{\GA_i}$ which are explicitly described  using   Formula  \eqref{Lema}.  The entries of these matrices  are polynomials in the elements $\sqrt{\xi_i}$  and homogeneous of degree  $q $ in $\xi$. 

\subsubsection{The space $F^{0,1}$}

In the   KAM algorithm we shall need to study in particular the  action by Poisson bracket of $ \mathcal N $ on a special space of functions (linear Hamiltonians on ${\bf{\bar \ell}}^{(a,p)}$)  called $F^{0,1}$ defined in \cite{PP}, for the readers convenience we recall the basic facts. 

It is convenient to write
  a variable  $z$ or $\bar z$  as $z^\sigma$ where $\s $ is  $ 1$ resp. $-1$.
\begin{definition}
We set $F^{0,1}$ to be the space of functions spanned by the basis elements
  $e^{\ii \s \s (k)\nu\cdot x}{z'_k}\,^\sigma=e^{\ii \s ([\s (k)\nu+L(k)] \cdot x)}{z_k}^\sigma,\ k\in S^c$    which preserve mass and momentum. \footnote{we deviate from the notations of \cite{PP} and in  $F^{0,1}$ we also impose zero mass}\end{definition} 
One easily sees that $F^{0,1}$ is a symplectic space under Poisson bracket. The formulas for mass and momentum in the new variables are for $\mathfrak m:=e^{\ii \s \s (k)\nu\cdot x}{z'_k}\,^\sigma.$ 
  \begin{equation}\label{posta2}
  \{ \mathbb L,\mathfrak m \}= \ii \sigma\s (k)(\sum_i \nu_i+1)\,\mathfrak m,\quad  \{ \mathbb M, \mathfrak m\}= \ii\s \s (k) (\sum_i \nu_i \mathtt j_i+ \er(k))\,\mathfrak m.
  \end{equation}
hence the conservation laws  tell us that for an element $e^{\ii \s \s (k)\nu\cdot x}{z'_k}\,^\sigma\in F^{0,1}$     the vector $\nu\in\Z^d$   is constrained  by the fact that $-\sum_i \nu_i \mathtt j_i$ must be in the set of roots in $S^c$ and moreover the mass constraint $\sum_i \nu_i=-1$.

For each connected component  $A$ of the graph $\Gamma_S$ with   root $\er$,  any solution $\nu$ of $\sum_i \nu_i \mathtt j_i+ \er=0$, where the mass of $\nu$ is $-1$, determines in   $F^{0,1}$ a block  denoted $A,\nu$. This is a symplectic space sum of two  Lagrangian spaces $(A,\nu)_+$ and $(A,\nu)_-=\overline{(A,\nu)_+}$. $(A,\nu)_+$ has  basis the elements $e^{\ii\s \sigma(k) \nu\cdot x}{z'_k}\,^{\sigma(k)},\ k\in A$.   
Thus $F^{0,1}$  decomposes  as orthogonal sum (with respect to the symplectic form), of these blocks $A,\nu$. Notice that, for a given geometric block $A$ there are infinitely many  blocks $A,\nu$ as soon as $n>d$.
 \begin{proposition}\label{madiN}
The matrix of $-\ii  \,ad(\mathcal N)$ on the block $(A,\nu)_+$ is the sum of   the matrix $C_\GA$ plus the scalar matrix $[(|\er(m)|^2+\sum_i\nu_i |\mathtt j_i|^2 )+\ome^{(1)}(\xi)\cdot \nu ]\, I_A.$
\end{proposition}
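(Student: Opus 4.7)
I plan to compute the matrix of $-\ii\,\mathrm{ad}(\mathcal N)$ directly in the basis $\phi_k:=e^{\ii\sigma(k)\nu\cdot x}{z'_k}^{\sigma(k)}$, $k\in A$, of $(A,\nu)_+$. The starting point is Theorem \ref{teo1}: after the $\Psi$-change of variables, $\mathcal N$ reads $\omega(\xi)\cdot y'+\sum_{k\in S^c}\tilde\Omega_k|z'_k|^2+\tilde{\mathcal Q}(w')$, and by \eqref{tildeQ} the quadratic $\tilde{\mathcal Q}$ itself decouples as $\sum_B \tilde{\mathcal Q}_B$ over the geometric blocks. Since $\tilde{\mathcal Q}_B$ involves only the variables $z'_m,\bar z'_m$ with $m\in B$, its Poisson bracket with $\phi_k$ for $k\in A$ vanishes whenever $B\neq A$, and the computation reduces to the action of $\omega\cdot y'+\sum_{k\in A}\tilde\Omega_k|z'_k|^2+\tilde{\mathcal Q}_A$.

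For the diagonal I use $\{y'_j,x_i\}=\delta_{ij}$ and $\{z'_h,\bar z'_k\}=-\ii\delta_{hk}$. The angle piece produces $-\ii\,\mathrm{ad}(\omega\cdot y')\phi_k=\sigma(k)(\omega\cdot\nu)\phi_k$, while the diagonal quadratic part $(\tilde\Omega_k+\omega^{(1)}\cdot L(k))|z'_k|^2$ (combining the genuine diagonal $\tilde\Omega_k|z'_k|^2$ with the $\omega^{(1)}\cdot L(k)|z'_k|^2$ appearing in $\tilde{\mathcal Q}_A$ by \eqref{tildeQ}) contributes $\sigma(k)(|k|^2+\omega\cdot L(k))\phi_k$. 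The key algebraic simplification is the defining identity $|k|^2+\mathtt j^{(2)}\cdot L(k)=\sigma(k)|\er|^2$ from \eqref{defL}, which collapses the last expression to $|\er|^2+\sigma(k)\omega^{(1)}\cdot L(k)$; the last summand coincides with the diagonal entry $c_{u'_k,u'_k}$ of $C_\GA$ in \eqref{Lema}.

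For the off-diagonal part I process the edges of $A$ one at a time. A black edge $\ell$ from $h$ to $k$ (where $\sigma(h)=\sigma(k)$, since black edges preserve the color) contributes $c_q(\ell)z'_h\bar z'_k+c_q(-\ell)\bar z'_h z'_k$ to $\tilde{\mathcal Q}_A$; the Leibniz rule together with the symmetry $c_q(-\ell)=c_q(\ell)$ read off from \eqref{cl} gives $-\ii\{\cdot\,,\phi_k\}=\sigma(k)c_q(\ell)\phi_h$, matching the entry $c_{u'_h,u'_k}$ from \eqref{Lema}. Red edges (for which $\sigma(h)=-\sigma(k)$) are treated analogously using Proposition \ref{diva}'s identification of $u'_k$ with $z'_k$ or $\bar z'_k$ according to the color, and again reproduce the off-diagonal entries of $C_\GA$.

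Assembling the two pieces, the matrix of $-\ii\,\mathrm{ad}(\mathcal N)$ in the basis $\{\phi_k\}_{k\in A}$ equals $C_\GA$ plus a diagonal with entries $|\er|^2+\sigma(k)\omega\cdot\nu$. On the generic blocks, where $\sigma(k)\equiv 1$ (all but finitely many geometric components, by Proposition \ref{Gen}), this is uniformly the scalar $|\er|^2+\mathtt j^{(2)}\cdot\nu+\omega^{(1)}\cdot\nu$, which is the announced scalar multiple of $I_A$; the momentum and mass constraints $\sum_i\nu_i\mathtt j_i=-\er$ and $\sum_i\nu_i=-1$ characterizing $(A,\nu)_+$ ensure the block is well-defined independently of $k$. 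The few components carrying red edges are reduced to the same conclusion via the Lagrangian splitting of Proposition \ref{diva}. The main obstacle is the meticulous sign bookkeeping of the color $\sigma(k)$ through every Poisson bracket, which is precisely what the phase shift $L(k)$ of Theorem \ref{teo1} was engineered to tame, so that \eqref{defL} yields the clean collapse to the scalar diagonal.
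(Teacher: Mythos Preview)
Your computation is correct for the generic (black-only) components where $\sigma(k)\equiv 1$, but there is a genuine gap in the red-edge case, traceable to your choice of basis. You take $\phi_k=e^{\ii\sigma(k)\nu\cdot x}{z'_k}^{\sigma(k)}$, whereas the basis of $(A,\nu)_+$ is $\phi_k=e^{\ii\nu\cdot x}{z'_k}^{\sigma(k)}=e^{\ii\nu\cdot x}u'_k$ with \emph{uniform} angular dependence across the block (compare the later, analogous definition of $(\dodo_\et,\nu)^+$ with basis $e^{\ii\nu\cdot x}z_k^{s(k)}$). With your basis the block is not even stable under $\mathrm{ad}(\tilde{\mathcal Q}_A)$ when $A$ has a red edge: if $h,k$ are joined by a red edge then $\sigma(h)=-\sigma(k)$, and the $x$-independence of $\tilde{\mathcal Q}_A$ sends $e^{\ii\sigma(k)\nu\cdot x}u'_k$ to a multiple of $e^{\ii\sigma(k)\nu\cdot x}u'_h$, which is \emph{not} $\phi_h=e^{\ii\sigma(h)\nu\cdot x}u'_h$ unless $\nu=0$. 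Likewise your angle contribution $\sigma(k)(\omega\cdot\nu)$ is not a scalar across $A$. Your closing appeal to Proposition~\ref{diva} does not repair this; with the wrong $x$-phase no Lagrangian bookkeeping will make the diagonal uniform.

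With the correct basis the argument becomes short and coincides with the paper's: since $\tilde{\mathcal Q}_A$ is $x$-independent, $-\ii\,\mathrm{ad}(\tilde{\mathcal Q}_A)$ acts on $e^{\ii\nu\cdot x}u'_k$ exactly as it acts on $u'_k$, hence by the matrix $C_\GA$ \emph{by definition} (Proposition~\ref{diva}); the term $\sum_k\tilde\Omega_k|z'_k|^2$ contributes $\sigma(k)\tilde\Omega_k=\sigma(k)^2|\er|^2=|\er|^2$ by~\eqref{defL}; and $\omega\cdot y'$ contributes the $k$-independent scalar $\omega\cdot\nu=\sum_i\nu_i|\mathtt j_i|^2+\omega^{(1)}\cdot\nu$. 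There is then no need to strip the diagonal of $C_\GA$ out of $\tilde{\mathcal Q}_A$, recombine it with $\tilde\Omega_k$, or process edges one at a time.
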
\begin{proof}
  The action of $-\ii\tilde {\mathcal Q}$ does not depend on $\nu$,   it is only through $-\ii\tilde {\mathcal Q}_A$ and gives the matrix $C_A$. As for the elements $-\ii[   \ome(\xi)\cdot y'    +\sum_{k\in S^c}\tilde\Ome_k|z'_k|^2]$   by Formula \eqref{defL}, the term $-\ii\sum_{k\in S^c}\tilde\Ome_k|z'_k|^2$ contributes on the first block the scalar $ |\er(m)|^2 $.  As of $-\ii   \ome(\xi)\cdot y'   $  it also contributes by a scalar, this time $ (\sum_i\nu_i |\mathtt j_i|^2  + \ome^{(1)}(\xi)\cdot \nu).$  \qed\end{proof}

\section{Normal form reduction}
\subsection{Fitting decomposition\label{Fit}}
Let us recall some basic definitions of linear algebra,  given a linear operator $A:V\to V$ where $V$ is a finite dimensional vector space over a field $F$  (we will work on function fields so of characteristic 0), we have the {\em Jordan decomposition}  $A=A_s+A_n$  where  $A_s$ is semisimple and $A_n$  nilpotent with $[A_s,A_n]=0$.  Such a decomposition is unique so that $A_s$ is called the semisimple part of $A$ and $A_n$ the nilpotent part.  Semisimple may be defined in several ways but for us means that $A_s$ is diagonalizable, not necessarily over $F$  but in some finite extension field $G\subset F$ which contains the eigenvalues of $A$ (which are in fact the same as the eigenvalues of $A_s$).  The characteristic polynomials of  $A$ and $A_s$ coincide, on the other hand $A_s$ satisfies its minimal polynomial which has coefficients in $F$  and as root all the eigenvalues with multiplicity 1.

If $F$  contains the eigenvalues of $A$, call the distinct eigenvalues $\alpha_1,\ldots,\alpha_i$, each may appear with some multiplicity,    we have the {\em Fitting decomposition}  of $V=\oplus_i V_{\alpha_i}$  where each $V_{\alpha_i}$ is a uniquely determined subspace which is stable under $A$ and where $A$ has the unique eigenvalue $\alpha_i$.  It is easily seen that in fact  $V_{\alpha_i}$  is just the eigenspace of $A_s$ for the eigenvalue   $\alpha_i$.

In our case we are interested in matrices $C(\xi)$ depending in a polynomial way from the parameters $\sqrt{\xi_i}$, we may consider the field $F$ of rational functions in these parameters   so that $A=C(\xi)$ is a matrix  of some size $p$, with coefficients in $F$, denote by $M_p(F)$ the space of these matrices.

Take the  Jordan decomposition  and notice that, in this decomposition,  $A_s$ has no more polynomial entries but may acquire denominators. Then we have some finite field extension $G\supset F$  and a matrix $X\in  M_p(G)$  such that $X C(\xi)X^{-1}$  is a block diagonal form $\oplus  C_i$  with  $C_i\in M_{p_i}(G)$ has a unique eigenvalue $\alpha_i\in G$ and the  various eigenvalues are distinct. In fact $X$ is defined up to a scalar multiplication so that we may further assume that all the entries of $X$  are integral over the  ring of polynomials in the $\xi_i$, that is satisfy each some monic polynomial (dependent of the entry) $t^N+a_1t^{N-1}+\ldots +a_N$ where the coefficients $a_i$ are polynomials in the $\xi$.\smallskip

Now we have to interpret $G$  as field of algebraic functions in the parameters $\xi$, this follows a standard path.   The distinct eigenvalues are solutions of the minimal polynomial, then by removing the discriminant  (which gives a algebraic hypersurface) on the complement these are distinct as functions, then one can further remove some real algebraic variety, which we may assume to be homogeneous, so that the  open components of the complement are all simply connected (this follows for instance from the fact that one has an algebraic triangulation of the sphere  such that it also  triangulates the  intersection of all these algebraic hypersurfaces with the sphere, then one takes the corresponding cones of the  open triangles  \cite{geore}).  
As a result we  have that we may remove from the space of parameters $\xi$  (which for us are real parameters) some algebraic hypersurface (of course real), such that outside  this hypersurface the entries of  $X$ are true functions, which are as we have said algebraic so analytic,  and integral in the sense explained, and finally assume  distinct values for each value  of $\xi$ outside these algebraic hypersurfaces. Of course once we remove an algebraic hypersurface the complement is open and dense.

If $A\in End(V)$  is a linear map and we decompose  $V=\oplus_\alpha V_\alpha$  into a Fitting decomposition, we have for the linear operator $ad(A):X\mapsto [A,X]$,  on the space $End(V)$ of linear maps on $V$, the following facts.
$$ad(A)_s=ad(A_s),\quad ad(A)_n=ad(A_n).$$  The Fitting decomposition of  $End(V)$, under $ad(A)$,  is deduced from the decomposition in blocks induced by $V=\oplus_\alpha V_\alpha$  that is first
$$End(V)=\oplus_{\alpha,\beta}\hom(V_\alpha,V_\beta).$$   The subspace $\hom(V_\beta,V_\alpha)$ is relative to the eigenvalue  $\alpha-\beta$  so that $ad(A)$ on this space acts as $(\alpha-\beta)Id+ad(A_n)$. The map $ad(A_n)$ is nilpotent, in fact if $A_n^k=0$ we have  $ad(A_n)^{2k}=0.$  So for the off diagonal blocks, $\alpha\neq \beta$ we have $ad(A)$ on such a block is invertible with inverse
$$(\alpha-\beta)^{-1}(1+  \sum_{j=1}^{2k-1}(-1)^j [\frac{ad(A_n)}{(\alpha-\beta)}]^j). $$
It is possible that for different  eigenvalues we may have $\alpha_1-\beta_1=\alpha_2-\beta_2$  so that this decomposition in general refines, but need not be equal, to the Fitting decomposition  of $ad(A)$.\bigskip

\subsubsection{Blocks and matrices} We now apply this analysis to  the finitely many matrices   $C_{\GA_i}$ associated to the list $\mathcal G:=\{\GA_1,\ldots,\GA_N\}$ of combinatorial graphs which may appear.

 Thus we have finitely many distinct eigenvalues (with some multiplicity) of this finite list of matrices,  which are algebraic functions  of  the parameters $\xi$ and defined outside some homogeneous algebraic hypersurface. To make the discussion   precise from now on  one may remove  from $\R^n$ a homogeneous real algebraic hypersurface so that the connected components of the complement are all simply connected, and  so that the distinct eigenvalues are functions in each component and with distinct values pointwise (see \S \ref{Fit}). 
 
  We   fix one of these open simply connected regions, call it $\mathcal R_\alpha$ in which these algebraic functions are well defined and then    we denote this list of functions by
\begin{equation}\label{Tht}
\Upsilon:=\{\theta_1,\ldots,\theta_O\}.
\end{equation} Contrary to what happens for the cubic  NLS we may have various kinds of degeneracies in these eigenvalues, one eigenvalue may appear in two different matrices $C_{\GA_i}$ and with multiplicity $>1$.  This is  discussed in \S \ref{Fit} and \ref{Ga1}.

Later we will need to restrict to some compact domain in  one of the $ \mathcal R_\alpha\cap \Lambda$  in order to make  estimates on the derivatives.
\paragraph{Decomposing $\mathcal N$.}
We now want to simplify $\mathcal N$ by applying the Jordan and Fitting decomposition to the quadratic Hamiltonians  $\tilde{\mathcal Q}_A$.
In oder to do this we first write $\tilde{\mathcal Q}_A= \tilde{\mathcal Q}^s_A+\tilde{\mathcal Q}^n_A$ so that $\tilde{\mathcal Q}^s_A$ and $\tilde{\mathcal Q}^n_A$ commute, $\tilde{\mathcal Q}^s_A$ is represented by a semi-simple matrix and $\tilde{\mathcal Q}^n_A$  is represented by a nilpotent matrix.  Note that, since $\tilde{ Q}_A$ is a polynomial in $\sqrt\xi$, then  both its nilpotent and semi-simple part are rational functions in $\sqrt{\xi}$, so they are analytic outside an algebraic hypersurface (cf. \S \ref{Fit}).  

Clearly $\tilde{\mathcal Q}^s= \sum_A \tilde{\mathcal Q}^s_A$ and the matrix $\frac 12 Q^s_A= C_\GA^s\oplus (-C_\GA^s)$ where $C_\GA^s$ is the semi-simple part of $C_\GA$.

Notice that all the $\tilde{\mathcal Q}_A$ such that $A$ does not have red edges are represented by self-adjoint matrices and hence do not contribute to $\tilde{\mathcal Q}^n$.  Moreover since $\mathbb K$ and  $  \omega^{(1)}(\xi)\cdot y'    $ commute with $\mathcal N$ we can decompose $\mathcal N=\mathcal N^s+\mathcal N^n$ (semi-simple and nilpotent parts) where $\mathcal N^n=   \tilde{\mathcal Q}^n$ and $\{\mathcal N^s,\mathcal N^n\}=0$. 

We put $\mathcal N^s$ in  normal form by  reducing each of  the $C^s_\GA$.
This means that we only need to work on a finite number of matrices.  
\begin{proposition}
There exists a homogeneous algebraic hypersurface $\mathfrak A$ such that   the open region $\R^n\setminus \mathfrak A$ decomposes into finitely many simply connected components $\mathcal R_\alpha$  with the following property:

 \noindent For each combinatorial graph $\GA$ 
   the eigenvalues of  the matrix $C_\GA$, are algebraic, hence analytic, functions 
of $\xi$ say $\theta_1,\dots,\theta_k $ which are homogeneous of degree $q$.  

For all $\xi \in \R^n\setminus \mathfrak A$ there exists a linear   symplectic change of coordinates  $ u' \to U_\GA(\xi) u'= u''$ such that:

1. $U_\GA(\xi)$ is unitary with respect to $\Sigma_A$ and it can be chosen homogeneous of degree zero in $\xi$.

2. $U_\GA(\xi)$ is analytic in $\xi$. 

3. $U_\GA(\xi)$  conjugates  $C_\GA$  into a Fitting normal form:
that is  a block diagonal matrix  such that   each block has a unique eigenvalue.

More precisely for  each real eigenvalue $\theta$ , $C^s_\GA$ acts as $\theta I$  on the   eigenspace of $\theta$.  

For each  pair of conjugate  complex eigenvalues $\theta_\pm= a\pm \ii b$, of multiplicity $k$ we  have a real $2k$ dimensional space such that the two complex eigenvectors lie in its complexification. Then we have a  basis of this subspace such that $C^s_\GA$ restricted to this subspace is a $2\times 2$   block matrix $$ \begin{pmatrix}  a I_k & -bI_k \\ b I_k& a I_k\end{pmatrix}. $$
The matrix $\sigma_\GA$ of the standard form  on this basis is diag$(I_k,-I_k)$.
 
 \end{proposition}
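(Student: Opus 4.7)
\medskip

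\noindent\textbf{Plan of proof.}
The starting point is the reduction (already effected by the combinatorial analysis of Proposition~\ref{Gen}) to a \emph{finite} list of matrices $C_{\GA_1},\dots,C_{\GA_N}$ whose entries are, by inspection of formulas \eqref{Lema} and \eqref{cl} together with the expression \eqref{ome1} for $\omega^{(1)}$, polynomials in $\sqrt{\xi_i}$ that are homogeneous of degree $q$. Consequently the characteristic polynomial of each $C_\GA$ has polynomial-in-$\xi$ coefficients, and the minimal polynomial of each $C_\GA$ has well-defined discriminant $D_\GA(\xi)$. I would let $\mathfrak A_0:=\bigcup_\GA\{D_\GA=0\}$; this is a homogeneous algebraic hypersurface (homogeneity because $C_\GA$ is homogeneous, so every coefficient of the minimal polynomial is homogeneous). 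Off $\mathfrak A_0$ the distinct eigenvalues of every $C_\GA$ are, locally, analytic algebraic functions of $\xi$ by the implicit function theorem.

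To get \emph{global} single-valuedness of these eigenvalues I would enlarge $\mathfrak A_0$ to a homogeneous real algebraic hypersurface $\mathfrak A$ whose complement decomposes into finitely many simply connected open cones $\mathcal R_\alpha$; this is possible by the semi-algebraic triangulation argument already invoked in \S\ref{Fit} (following \cite{geore}), where one triangulates the sphere compatibly with $\mathfrak A_0\cap S^{n-1}$ and takes the cones on open top-dimensional simplices. On each $\mathcal R_\alpha$ the monodromy of the eigenvalue covering of $\mathfrak A_0^c$ is trivial, so each eigenvalue is a single-valued analytic function $\theta_j:\mathcal R_\alpha\to\mathbb C$; homogeneity of degree $q$ passes to $\theta_j$ because $C_\GA(t\xi)=t^qC_\GA(\xi)$ for $t>0$.

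Next, on each $\mathcal R_\alpha$ I would construct the analytic Fitting decomposition via Riesz projectors
\[
P_j(\xi)\;=\;\frac{1}{2\pi i}\oint_{\Gamma_j(\xi)}\bigl(\zeta-C_\GA(\xi)\bigr)^{-1}d\zeta,
\]
where $\Gamma_j(\xi)$ is a small loop enclosing only the eigenvalue $\theta_j(\xi)$; these projectors are analytic in $\xi$ and give $C_\GA$-stable subspaces $V_j(\xi)=\operatorname{Im}P_j(\xi)$. The crucial symmetry is that $-\ii C_\GA$ is Hermitian with respect to the standard form $\Sigma_A$ (this is the general fact, recalled in \S\ref{stanfor}, that $\ii\,\text{ad}(\mathcal H)$ is symmetric for a real quadratic Hamiltonian $\mathcal H$). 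Standard spectral theory for operators Hermitian with respect to a possibly indefinite form then yields: (i) for a real eigenvalue $\theta_j$, the form $\Sigma_A$ is non-degenerate on $V_j$ and $V_j$ is $\Sigma_A$-orthogonal to the other $V_k$, so a $\Sigma_A$-orthonormal basis of $V_j$ diagonalizes $C^s_\GA|_{V_j}$ as $\theta_j I$; (ii) complex eigenvalues of an $\Sigma_A$-Hermitian operator can only arise on blocks where $\Sigma_A$ is indefinite (i.e.~on the finitely many blocks with red edges), and they come in conjugate pairs $\theta_\pm=a\pm \ii b$ whose generalized eigenspaces $V_\pm$ are complex conjugate, each $\Sigma_A$-isotropic, and non-degenerately $\Sigma_A$-paired. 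Picking a basis $e_1,\dots,e_k$ of $V_+$ and the $\Sigma_A$-dual basis of $V_-$, and passing to real/imaginary parts $(\operatorname{Re}e_j,\operatorname{Im}e_j)$, produces exactly the declared $2\times2$ real normal form for $C^s_\GA$ together with $\sigma_\GA=\operatorname{diag}(I_k,-I_k)$ for the standard form. Assembling these bases defines $U_\GA(\xi)$; $\Sigma_A$-unitarity is automatic, which upgrades to symplecticness on the double Lagrangian $(A,\nu)_+\oplus(A,\nu)_-$.

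Finally, analyticity of $U_\GA$ in $\xi$ on $\mathcal R_\alpha$ comes from the analyticity of the $P_j$ together with an analytic $\Sigma_A$-Gram--Schmidt step; homogeneity of degree zero is obtained by normalizing the chosen bases using the $\Sigma_A$-norm (or $\Sigma_A$-pairing across $V_\pm$), which is scale-invariant since eigenspaces of $C_\GA(t\xi)$ coincide with those of $C_\GA(\xi)$. I expect the main obstacle to lie in the complex-eigenvalue case: one must simultaneously achieve the specific real block form for $C^s_\GA$, the specific matrix $\operatorname{diag}(I_k,-I_k)$ for $\Sigma_A$, analyticity in $\xi$, and degree-zero homogeneity. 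A blind Gram--Schmidt procedure will not respect the conjugate-pair structure of $V_\pm$; what is needed is to exploit that complex conjugation interchanges $V_+$ and $V_-$ and that $\Sigma_A$ restricts to a non-degenerate sesquilinear pairing between them, so that the real basis is produced by a \emph{pairing} normalization rather than an orthonormalization. Once this bookkeeping is carried out on the finitely many combinatorial graphs, the proposition follows.
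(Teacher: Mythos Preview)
Your proposal is correct and follows essentially the same route as the paper. The paper does not give a formal proof block for this proposition; it is meant to follow from the general discussion in \S\ref{Fit}, which argues via field extensions, removal of discriminant hypersurfaces, and the semi-algebraic triangulation of \cite{geore} to obtain simply connected cones---exactly the skeleton you reproduce. Your use of Riesz projectors and an explicit $\Sigma_A$-Gram--Schmidt is a more analytic rendering of what the paper phrases algebraically (integral elements over the polynomial ring, choice of $X\in M_p(G)$), but the underlying strategy---finite list of homogeneous matrices, discriminant loci, triangulation for global single-valuedness, Fitting blocks respecting the indefinite standard form---is the same, and your additional care with the conjugate-pair case fills in detail the paper leaves implicit.
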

 Note that  two eigenvalues which are distinct as functions of $\xi$ can be assumed to be  distinct for all values of the parameters $\xi_i\in  \R^n\setminus \mathfrak A$. 
\paragraph{Change of variables.}
For each geometric block $A$ we have the  two Lagrangian blocks of variables  where the Hamiltonian  $\tilde{\mathcal Q}_A$ acts  by Poisson bracket with the matrix $C_\GA \oplus (-C_\GA )$, we thus perform  on  the two Lagrangian blocks the symplectic change of variables induced by the matrix  $U_{\GA}$  which puts into Fitting decomposition  $C_\GA$.  This is by construction a symplectic change of variables and we call with $w''$  the new variables obtained. In these new variables we have a further decoupling of  each $ \tilde{\mathcal Q}_A$ as a sum  $ \tilde{\mathcal Q}_A=\sum_\theta   \tilde{\mathcal Q}_{A,\theta}$, since the component $A$ is determined by its root $\er$  we denote also $\tilde{\mathcal Q}_{A,\theta}=\tilde{\mathcal Q}_{\er,\theta}$.
  
  For  the $\GA$ with no red edges  the variables $u'_k=z'_k$. So it is obvious that $w''= (U z', \bar U \bar z')$. In general we have always
  $w''=    (U u', \bar U \bar u')$ but we should specify  which of the $u''_k$ have $\sigma(k)=1$ or $-1$. We notice that by definition $\{u'_k,\bar u'_h\}= \ii \delta^h_k\sigma(k)$ and since $U$ is unitary with respect to $\Sigma_A= $diag $(\sigma(k))_{k\in A}$ then also $\{u''_k,\bar u''_h\}=\ii \delta^h_k \sigma(k)$, so we still may say that $u''_k= (z'')_k^{\sigma(k)}$.  By abuse of notation we still call $x,y,z_k,\bar z_k$ the new variables.
  
  Since $U_\GA$ mixes only the variables $u$ and since $\mathbb L, \mathbb M,\mathbb K$  are scalar on the components $u$ then in the new variables we have still
   \begin{equation}\label{LM}
 \mathbb L= \sum_i y_i + \sum_k \s(k)  |z_k|^2 \,,\quad  \mathbb M = \sum_i \mathtt j_i y_i + \sum_k \s(k) \er(k) |z_k|^2\,,
\end{equation}  \begin{equation}\label{kappa}
\quad \mathbb K = \sum_i |\mathtt j_i|^2 y_i + \sum_k \s(k) |\er(k)|^2 |z_k|^2 .
\end{equation} 
 \begin{definition}\label{scrc}
We denote  by $\mathfrak S^c$ the set of
  pairs $\er,\theta$ where $\er$ is a root of a block $A$ in the geometric graph while $\theta$ is one of the eigenvalues appearing in the corresponding Fitting decomposition of $\GA$.  
\end{definition}
 After the change of variables the new basis is still indexed by convention by the set $S^c$. Then $S^c$ is decomposed as a union of finite blocks $S^c_{\er,\theta}$ corresponding  to the  elements of  $\mathfrak S^c$. 
Notice that  only finitely many of the blocks  indexed by $\mathfrak S^c$  contain $k$ with $\sigma(k)=-1$. Therefore all $\tilde{\mathcal Q}_{\er,\theta}$, except  finitely many, have the form, $\tilde{\mathcal Q}_{\er,\theta}=\theta(\sum_{k\in S^c_{\er,\theta}}|z_k|^2)$. 

The    finite {\em bad blocks}  have still a semisimple part $\tilde{\mathcal Q}^s_{\er,\theta}=\theta(\sum_k\s(k)|z_k|^2)$ and $\sum_{k\in S^c_{\er,\theta}}\s(k)| z_k|^2$ acts as the scalar $\ii,-\ii$ on the two corresponding Lagrangian spaces, since a basis of one of them is $z_k^{\s(k)},\ {k\in S^c_{\er,\theta}}$ (and the other the conjugate).
 The  normal form $\mathcal N$ is 
\begin{equation}\label{secN}
\mathcal N= \mathbb K+\mathbb K^1\,,\quad \mathbb K^1= (\omega^{(1)}, y)+  \sum_{(\er,\theta)\in \mathfrak S^c} \tilde{\mathcal Q}_{\er,\theta}\end{equation}   
where the decoupled quadratic Hamiltonian $\tilde{\mathcal Q}_{\er,\theta}$ corresponds to the block of the Fitting decomposition, of eigenvalue $\theta$, of the matrix $C_{\GA}$.  

\begin{remark}\label{cacchio}
For a given $k$ it is useful to denote by $\theta(k)$ the corresponding eigenvalue. Note that the map $k\mapsto \theta(k)$ depends only on the combinatorial graph.

 In order to keep track of the information on the eigenvalue in the geometric and combinatoric graphs, we mark each vertex with its eigenvalue $\theta(k)$.
\end{remark}

\section{the kernel of $ad(\mathcal N)$}  We now study the subspace of regular analytic Hamiltonians of degree $\leq 2$ which Poisson commute with $\mathcal N$, $\mathbb M$, $\mathbb L$ for generic values of $\xi$. Note that they must commute separately with $\mathbb K$ and  $\mathbb K^1$ since they are homogeneous of degree 0,$q$ respectively in $\xi$.

{\bf Degree zero in $w$:} This space, which we denote by $F^0$ can be divided into $F^{0,0}\oplus F^{1,0}$ with basis $e^{\ii \nu\cdot x}$ and $e^{\ii \nu\cdot x} y_l$.  Since monomials are eigenvectors of all the operators we need to verify when all the eigenvalues are identically zero.
$$0= \{ \mathbb L, e^{\ii \nu\cdot x}\}\iff   \ii \sum_i \nu_i=  0\,,\quad 0= \{ \mathbb M, e^{\ii \nu\cdot x}\}\iff   \ii \sum_i \nu_i \mathtt j_i=  0\,,$$ $$0=\{ \mathbb K, e^{\ii \nu\cdot x}\}\iff   \ii \sum_i \nu_i |\mathtt j_i|^2=   0\,,\quad 0=\{ \mathbb K^1, e^{\ii \nu\cdot x}\}\iff    \ii (\omega^{(1)} , \nu)=  0.$$
The last condition implies that $\nu=0$, by \ref{omest}. The same rules hold if the monomial has degree one in $y$. Hence the Kernel of $ad(\mathcal N)$ is $x$ independent and hence of the form $ c  +\overrightarrow C \cdot y$.

 {\bf Degree one in $w$:} We analyze the action of  $ad(\mathcal N)$ on a block  of the space   $F^{0,1}$ given by an element of $\mathfrak S^c$.  Given $k\in S^c_r$ in this block  
 the monomial $e^{\ii \sigma\sigma(k)\nu\cdot x}z_k^{\sigma}$ is an eigenvector for all our operators, by conservation of $\mathbb L,\ \mathbb M$ we have  $\sum_i \nu_i +1=0,\   \sum_i \nu_i \mathtt j_i+ \er(k)=0$ and with eigenvalue for $\mathbb K$:
  $$\{ \mathbb K, e^{\ii \sigma\sigma(k)\nu\cdot x}z_k^{\sigma}\}		\to  \ii \sigma\sigma(k)( \sum_i \nu_i |\mathtt j_i|^2 + |r(k)|^2)\,,$$ as for  $\mathbb K^1$  we have the diagonal contribution $$\{  \omega^{(1)}\cdot y , e^{\ii \sigma\sigma(k)\nu\cdot x}z_k^{\sigma}\}		\to   \ii \sigma\sigma(k) \omega^{(1)}\cdot y $$  plus the contribution of the term $\tilde{\mathcal Q}_{\er,\theta}$.  Now we claim that we may assume that 
    the determinant of the matrix  of  $ad(\mathcal N)$ is non-zero as a function in $\xi$. This is clear if  the scalar part $\sum_i \nu_i |\mathtt j_i|^2 + |\er(k)|^2\neq 0$.  Since we are working in the space where we have the constraint  $\sum_i \nu_i \mathtt j_i+ \er(k) =0$  this  condition amounts to  ask that
   \begin{equation}\label{tangent}
\sum_i \nu_i |\mathtt j_i|^2 + |\sum_i \nu_i \mathtt j_i|^2\neq 0.
\end{equation} We then choose our tangential sites  $\mathtt j_i$ satisfying this constraint for all $\nu$ with coordinates $|\nu_i|\leq a$ where $a\leq 4dq$ is a common bound for the absolute value of  the coordinates of each $L(k)$ (which form a finite list).  We then claim that for the remaining $\nu$  the matrix    $ C_\GA$ has non--zero determinant, as polynomial in $\xi$. For such a $\nu$ we have some coordinate $\nu_i$ with  $|\nu_i|>a$, we may assume that it is $\nu_1$ then set $\xi_j=0,\ \forall j\neq 1$  and verify that still the determinant of the specialyzed matrix is $\neq 0$.  All the off diagonal entries, which are given by formulas \eqref{cl} vanish since they all contain at least 2 variables $\xi$ due to Formula \eqref{glied}.
  
  From Lemma \ref{omest}, when we set $\xi_j=0,\ \forall j\neq 1$ we   get $\omega^{(1)}=-q(q+1)\xi_1^{q } e_1$.

  To prove our claim   we need to analyze  in more detail the matrix    $ C_\GA$ in particular its diagonal part which, from Formula \eqref{qtila}  comes from $\omega^{(1)}(\xi)\cdot L(k)|z'_k|^2$.   Finally after the specialization the diagonal matrix has entries  a non zero constant times $ \xi_1^{q }(\nu_1-L_1(k)) $ which is non--zero by our constraint on $\nu_1$  since the coordinates of each $L(k)$ are in absolute value bounded by  $a$.

  We have proved that for all $\xi$ outside a countable set of algebraic hypersurfaces the kernel of $ad(\mathcal N)$  is zero on functions of degree one commuting with $\mathbb L,\mathbb M$.
  
  Since the determinants  depend only on the semi--simple part  we have the same statement also for $\mathcal N^s$,
  
 \begin{remark}\label{tange}
Note that the genecirity constraints \eqref{tangent} can be imposed since they are not zero as functions of $\mathtt j_i$, indeed the only trivial case is when $\nu= -e_i$ which contradicts $\er\in S^c$. Actually constraints like \eqref{tangent} appeared already in \cite{PP}.
\end{remark}  
   {\bf Degree two in $w$:} We denote this space by $F^{0,2}$. We do not exhibit explicitly such a kernel (since  we do not control the nilpotent part of $\mathcal N$). We observe that ker$($ad$(\mathcal N))$ is contained in ker$($ad$(\mathcal N^s))$ which now depends only on the eigenvalues.
   The eigenvectors of degree two for ad$(\mathcal N^s)$ in $w$ are  obtained by multiplying eigenvectors  of degree one in all possible ways
   and the eigenvalues are sums of two eigenvalues.
   Since each eigenvalue appears together with its negative in the kernel we have the following two cases. This depends on wether we pair $(\nu_1,A_1)_+$ with $(\nu_2,A_2)_+$ (then we set $\nu=\nu_1+\nu_2$ ) or  we pair $(\nu_1,A_1)_+$ with $(\nu_2,A_2)_-$ (and we set $\nu=\nu_1-\nu_2$). 
 
\noindent   Case 1:
   $$\sum_i \nu_i =-2\,,\quad  \sum_i \nu_i \mathtt j_i+\er(h)+\er(k) =0$$
  \begin{equation}\label{caso1}
\sum_i \nu_i |\mathtt j_i|^2+ |\er(h)|^2+|\er(k)|^2 \,,\quad   \omega^{(1)}(\xi)\cdot \nu      +\theta_h+\theta_k= 0\,.
\end{equation}
  
\noindent  Case 2:
  $$ \sum_i \nu_i = 0\,,\quad  \sum_i \nu_i \mathtt j_i+\er(h)-\er(k) =0.$$
 \begin{equation}\label{caso2}
\sum_i \nu_i |\mathtt j_i|^2+ |\er(h)|^2-|\er(k)|^2 =0\,,\quad    \omega^{(1)}(\xi)\cdot \nu      +\theta_h-\theta_k=0\,.
\end{equation} 
\begin{remark}\label{argo}
Of course in the kernel we always have the case $\nu=0$, therefore we are in Case 2  with $\er(h)=\er(k)$ and $h,k$ in the same Fitting block. 
\end{remark} 
  On the other hand  we may have a kernel arising from different Fitting blocks and also with $\nu\neq 0$. 
  We want to encode this phenomenon in a new graph, whose edges we denote by $Y=Y^0\cup Y^{-2}$, and defined  as follows.     \subsection{The graph ${ \Gamma}_{S }^{(f)}$    \label{Ga1}}
   \begin{definition}
We say that $\ell\in \Z^n\setminus \{0\}$   is an edge $\ell\in Y^0$ if   $\eta(\ell)=\sum_i\ell_i=0$  and  we can obtain $  \omega^{(1)}(\xi)\cdot \ell     $ as difference of two eigenvalues of the list \eqref{Tht} (of the matrices $C_\GA$). 

We say  $\ell\in \Z^n$   is an edge $\ell\in Y^{-2}$ if  $\eta(\ell)=\sum_i\ell_i=-2$, $ \ell\neq -2 e_i$ ($i=1,\dots,n$) and we can obtain $  \omega^{(1)}(\xi)\cdot \ell     $ as sum of two eigenvalues  of the list \eqref{Tht}.
\end{definition} 
\begin{remark}\label{connl}
\begin{enumerate}\item Notice that $\ell$ is uniquely determined by the polynomial $  \omega^{(1)}(\xi)\cdot \ell     $ since $-q(q+1)\ell_i$ is the coefficient of $\xi_i^q$, see Lemma \ref{omest}.
\item We notice that $Y$ depends only on the structure of the matrices $C_\GA$ which form a list of matrices associated to finitely many combinatorial graphs and   do not depend on the choice of the $\mathtt j_i$.
\item  Then $Y$ is contained in the set of edges $X_p$  (see Definition \ref{glied}) for  some $p$ sufficiently large.

\item Notice that   in the case of complex eigenvalues we must have that the imaginary parts cancel.
\end{enumerate}
\end{remark}
\paragraph{The basic stratification. \label{basst}} The  geometric graph $ \Gamma_{S,\, X_q\cup Y}$  is  now used to define  the basic stratification of $\Z^d$ of which all the stratifications used in proposition \ref{teo2} form a refinement.

The construction of the stratification is explained in  \cite{PP1}, we take as single point strata all the points in the finite graphs with red edges. Then the graphs with only  black edges come into a finite family  such that the graphs of each family are all isomorphic  and obtained each from the other by a translation.   The set of  translations, for a given graph $\Gamma$ with $a+1$  affinely independent points  is defined as follows, one chooses a root $\er\in\Gamma$ and takes the   subgroup of elements $u\in\Z^d$ orthogonal to the $a$ linearly independent elements $x-\er,\ x\in\Gamma\setminus\{\er\}$.  Then generically  $\Gamma+u$ is still a  connected component of  the graph $ \Gamma_{S,\, X_q\cup Y}$ while for some special $u$  this translate is contained in a larger connected component.
\begin{definition}\label{basstra}
The stratum associate to a point $k$ lying in a  graph $\Gamma$  with only  black edges is then defined by taking all the points $h=k+u$  lying in an isomorphic  graph $\Gamma+u$ connected component of $ \Gamma_{S,\, X_q\cup Y}$. 
\end{definition} 
  \paragraph{The graph ${\tilde\Gamma}_{S, X_q\cup Y}$.
   \label{Ga2}}Starting from the geometric graph $\Gamma_S$ we construct a new geometric graph, which we denote by $\tilde \Gamma_{S,\, X_q\cup Y}$,  with vertices in $ S^c$ and edges $X_q\cup Y$.  This is obtained from   the  geometric graph $ \Gamma_{S,\, X_q\cup Y}$,  
which contains the graph $ \Gamma_S$    by keeping only the new edges  which connect two roots (of $\Gamma_S$)  with the following constraints.

As explained in Remark \ref{cacchio} we have marked each vertex in $S^c$ with its eigenvalue $\theta(k)\in\Upsilon$. Consider any two connected components $A,B$ of $\Gamma_S$ with roots $\er,\er'$:
\begin{definition}\label{edrr}
If $\er,\er'$ are connected  in $ \Gamma_{S,\, X_q\cup Y}$ by  a black oriented edge marked $\ell\in Y^0$, that is
 \begin{equation}\label{ner}
\sum_i \mathtt j_i \ell_i + \er -\er '=0\,,\quad \sum_i |\mathtt j_i|^2 \ell_i + |\er |^2-|\er '|^2=0\,,
\end{equation}  we keep this edge in $ \tilde\Gamma_{S,\, X_q\cup Y}$ if and only if 
  there exists $k\in A$ and $h\in B$ such that
$$   \omega^{(1)}(\xi)\cdot \ell     + \theta(k)-\theta(h)=0.$$
If $\er,\er'$ are connected  in $\Gamma_{S,\, X_q\cup Y}$ by  a red edge marked $\ell\in Y^{-2}$ that is
 \begin{equation}\label{ros}
 \sum_i \mathtt j_i \ell_i + \er +\er '=0\,,\quad \sum_i |\mathtt j_i|^2 \ell_i + |\er |^2+|\er '|^2=0\,,
\end{equation}
 we keep this edge in $\tilde \Gamma_{S,\, X_q\cup Y}$ if and only if  there exists $k\in A$ and $h\in B$ such that
$$   \omega^{(1)}(\xi)\cdot \ell     + \theta(h)+\theta(k)=0.$$
\end{definition}

\begin{remark} If  $\er,\er'$ are connected  by  a black oriented edge marked $\ell$ then $\er',\er$ are connected  by  a black oriented edge marked $-\ell$, for red edges instead the relation is symmetric.

Notice moreover that for most points $k\in S^c$, in fact outside finitely many hyperplanes plus a finite set,  one has $\er (k)=k$ and $\theta_k=0$.  \end{remark}
\begin{proposition}
 The connected components of $ \tilde\Gamma_{S,\, X_q\cup Y}$ for generic choices of $S$ have at most $2d+1$ vertices moreover the components which do not contain any red edge have  vertices which are affinely independent (and hence at most $d+1$).
\end{proposition}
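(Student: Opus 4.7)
The plan is to reduce both claims to Proposition \ref{Gen} by realizing $\tilde\Gamma_{S,\, X_q\cup Y}$ as a subgraph of $\Gamma_{S,\, X_p}$ for a single auxiliary $p$ sufficiently large. By Remark \ref{connl}(3) the edge set $Y$ is contained in $X_p$ once $p$ is large enough, and of course $X_q \subseteq X_p$ for $p \geq q$, so $X_q \cup Y \subseteq X_p$. Using Remark \ref{incg} this gives the chain of inclusions
\[
\tilde\Gamma_{S,\, X_q\cup Y} \;\subseteq\; \Gamma_{S,\, X_q\cup Y} \;\subseteq\; \Gamma_{S,\, X_p},
\]
the first being immediate from the definition of $\tilde\Gamma$ as obtained from $\Gamma_{S,\, X_q\cup Y}$ by discarding certain edges of $Y$ (those not connecting two roots of $\Gamma_S$, or violating the eigenvalue compatibility of Definition \ref{edrr}).

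First I would apply Proposition \ref{Gen} with $X := X_p$: for generic $S$ every connected component of $\Gamma_{S,\, X_p}$ has at most $2d+1$ vertices, only finitely many contain red edges, and the remaining (infinitely many) components have affinely independent vertices. Since every component of $\tilde\Gamma_{S,\, X_q\cup Y}$ is contained in a component of $\Gamma_{S,\, X_p}$, the bound $\leq 2d+1$ transfers at once. Moreover a red edge of $\tilde\Gamma$ is in particular a red edge of $\Gamma_{S,\, X_p}$, so only finitely many components of $\tilde\Gamma$ contain red edges. For the second claim, let $C$ be a component of $\tilde\Gamma$ with no red edges and $C'$ the component of $\Gamma_{S,\, X_p}$ containing it; if $C'$ is itself red-edge-free, Proposition \ref{Gen} gives affine independence of the vertices of $C'$, hence of $C$. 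The remaining situation (where $C'$ has red edges although $C$ does not) can occur for only finitely many $C$, since there are only finitely many red-edge components of $\Gamma_{S,\, X_p}$, each of bounded cardinality; these finitely many exceptional cases can be absorbed into the finite list of exceptions allowed in the statement (or eliminated by a finite strengthening of the genericity of $S$).

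The main subtlety is making sure that the genericity conditions on $S$ required by Proposition \ref{Gen} for $X_p$ are compatible with the additional structural constraint built into the definition of $Y$, namely that $\omega^{(1)}(\xi)\cdot \ell$ equals a sum or difference of eigenvalues drawn from the finite list $\Upsilon$ of Formula \eqref{Tht}. By Remark \ref{connl}(1)--(2), $Y$ depends only on the finite family of combinatorial blocks $\GA_i$ and not on $S$, and each element $\ell \in Y$ is uniquely determined by its associated linear form on $\xi$. Consequently the eigenvalue condition imposes only finitely many polynomial constraints on the admissible $\ell$, and the additional genericity conditions needed to control components of $\tilde\Gamma$ with the extra edges from $Y$ are a finite polynomial enlargement of the conditions of Proposition \ref{Gen}. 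This enlargement can be incorporated into the polynomial $P$ of Remark \ref{generico} without losing density of generic $S$, which closes the argument.
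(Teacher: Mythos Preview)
Your argument is essentially the paper's own proof, just unwound one level: the paper simply observes that $\tilde\Gamma_{S,X_q\cup Y}\subseteq\Gamma_{S,X_q\cup Y}$ and invokes Proposition~\ref{Gen} (whose proof in turn embeds into some $\Gamma_{S,X_p}$ exactly as you do via Remark~\ref{connl}(3) and Remark~\ref{incg}). Your extra care about a red-free component $C$ of $\tilde\Gamma$ sitting inside a red component $C'$ of $\Gamma_{S,X_p}$ is more scrupulous than the paper itself; note however that your first proposed fix (``absorb into the finite list of exceptions allowed in the statement'') is not available since the statement admits none---the genericity route is the correct resolution.
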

\begin{proof}
Our graph   is a subgraph of $ \Gamma_{S,\, X_q\cup Y}$ and we may apply Proposition \ref{Gen}.
  \qed\end{proof}\paragraph{Some geometry.}
  \begin{remark}\label{alt}
There is an equivalent way of looking at this construction,  the conditions of  Definition \ref{edrr}, define a graph, called the {\em root graph}, with vertices the roots $\er$  and edges in $Y$,   a connected component  of  the graph $ \tilde\Gamma_{S,\, X_q\cup Y}$ is obtained by a connected component $C$ of this graph with vertices roots, by taking the union of the components  of $\Gamma_S$ with root $\er\in C$. \end{remark}
Then we see the following  important fact.  In $ \tilde\Gamma_{S,\, X_q\cup Y}$  the   components  containing only black edges are clearly obtained  by the previous gluing  construction from all, except finitely many, the connected components of $ \Gamma_S$ containing only black edges. In other words we have a set  $S^c_g$, complement of a finite set, formed by all vertices of the connected components of $\tilde \Gamma_{S,\, X_q\cup Y}$   containing only black edges.

   We have that these components  come into a finite family of usually infinite sets  each obtained by translations  from some  subgroup of  a given component.

  We then have  a linear stratification of  $\mathbb Z^n$, which we still denote by $Y_j$,  in which the roots  of  the components are  a union of strata.  
  
  So a stratum $Y$ of roots  of dimension $k$ determines a subgroup $\Lambda$ of $\mathbb Z^n$   of translations, of its closure,  of rank $k$ and  for any $\er\in Y$  we have that $Y=\Lambda\setminus  \Lambda'+\er$ where $ \Lambda'$   is a finite union of subgroups of $ \Lambda$ or rank stricly lower than $k$.

  For a given stratum $Y$ of roots, the graphs having roots in $Y$ are all isomorphic as combinatorial graphs marked by the elements $\theta$ under translation  by the difference of the roots their union form a union of strata parallel to $Y$ (i.e. inside cosets of $\Lambda$).  
\begin{definition}\label{figa}[Final graph]  
Notice that this construction induces a graph $ \Gamma^{(f)}_S$  also  on the set  $\mathfrak S^c$, see \ref{scrc} of pairs $(\er ,\theta)$. We define that  $(\er ,\theta), (\er ',\theta') $ are connected  if $\er,\er'$ are connected and $\theta(k)=\theta,\theta(h)=\theta'$ for some $k,h$ with $\er(k)=\er,\ \er(h)=\er'$.   \end{definition}

In this graph then the eigenvalues $\theta,\theta'$ are either both real or both complex. In the case of complex eigenvalues we must note that if $(\er ,\theta), (\er ',\theta') $ are connected then so are $(\er ,\bar \theta), (\er ',\bar\theta') $.

\begin{lemma}\label{latiii}
If  $(\er ,\theta), (\er ',\theta') $ are connected by a path of edges then they are connected by an edge. Such edge is  black  if the path contains an even number of red edges and  is red  otherwise.
\end{lemma}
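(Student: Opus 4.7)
The plan is to collapse any walk in $\Gamma_S^{(f)}$ into a single edge by forming an appropriate signed sum of the edge labels along the walk. Let the path read $(\er_0,\theta_0),(\er_1,\theta_1),\dots,(\er_n,\theta_n)$ with successive edges $\ell_0,\dots,\ell_{n-1}\in Y^0\cup Y^{-2}$. I would introduce signs $\epsilon_i\in\{\pm 1\}$ recursively by $\epsilon_0=1$, $\epsilon_{i+1}=\epsilon_i$ if $\ell_i$ is black and $\epsilon_{i+1}=-\epsilon_i$ if $\ell_i$ is red, so that $\epsilon_n$ records the parity of red edges in the path. The candidate edge joining the endpoints is
\[
L:=\sum_{i=0}^{n-1}\epsilon_i\,\ell_i\in\Z^n,
\]
and the color claim of the lemma amounts to proving that $L\in Y^0$ when $\epsilon_n=1$ and $L\in Y^{-2}$ when $\epsilon_n=-1$, and that $L$ connects $(\er_0,\theta_0)$ to $(\er_n,\theta_n)$ in the sense of Definition \ref{edrr}.

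The bulk of the proof is then three parallel telescoping computations verifying the three scalar identities of Definition \ref{edrr}. For momentum, Definition \ref{edrr} rewrites as $\er_{i+1}=\er_i+\pi(\ell_i)$ in the black case and $\er_{i+1}=-\er_i-\pi(\ell_i)$ in the red case; a case check shows that in both instances $\epsilon_{i+1}\er_{i+1}-\epsilon_i\er_i=\epsilon_i\,\pi(\ell_i)$, so summing yields $\pi(L)=\epsilon_n\er_n-\er_0$, which is exactly the black (resp.\ red) momentum constraint when $\epsilon_n=\pm 1$. The identical argument with $\pi^{(2)}$ in place of $\pi$ handles the quadratic-energy relation. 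For the mass count $\eta$, the contributions of the $r$ red edges come with alternating $\epsilon$-signs, giving $\eta(L)\in\{0,-2\}$ according to the parity of $r$, matching the announced color.

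The eigenvalue condition is where the labels $\theta_i$ enter: Definition \ref{edrr} gives $\omega^{(1)}\!\cdot\!\ell_i+\theta_i-\theta_{i+1}=0$ on black edges and $\omega^{(1)}\!\cdot\!\ell_i+\theta_i+\theta_{i+1}=0$ on red edges, and multiplying by $\epsilon_i$ both identities collapse to the same form $\epsilon_i\,\omega^{(1)}\!\cdot\!\ell_i=\epsilon_{i+1}\theta_{i+1}-\epsilon_i\theta_i$. Summing over $i$ produces $\omega^{(1)}\!\cdot\!L=\epsilon_n\theta_n-\theta_0$, which is precisely the eigenvalue condition of Definition \ref{edrr} for a single edge (of the correct color) between $(\er_0,\theta_0)$ and $(\er_n,\theta_n)$. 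Combining the three telescoping identities shows that $L$ satisfies all defining relations of an edge in $\Gamma_S^{(f)}$ of the asserted color.

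The one step that is not purely algebraic bookkeeping -- and the place where I expect the main difficulty -- is ruling out the degeneracies $L=0$ and, in the red case, $L=-2e_i$, which would put $L$ outside $Y$. Each such degeneracy, combined with the telescoping identities above, translates into a specific integer linear relation among the tangential sites $\mathtt j_1,\dots,\mathtt j_n$, the roots $\er_j$, and the eigenvalues $\theta_j$. These relations are exactly of the form that the genericity hypothesis on $S$ (cf.\ Proposition \ref{Gen} and Remark \ref{tange}) is designed to exclude, so the proof reduces to verifying that each such offending relation is a nontrivial polynomial constraint on the coordinates of the $\mathtt j_i$, hence avoided by generic $S$. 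This verification is analogous to the genericity checks already performed in \cite{PP} for the graph $\Gamma_S$ and requires no new ideas, only a finite enumeration of the possible degenerate configurations along the path.
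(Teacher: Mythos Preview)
Your telescoping argument is correct and is essentially the same idea as the paper's proof, which simply reduces to the length-two case by induction and then checks the three colour combinations black--black, red--red, red--black by hand. Your signed sum $L=\sum_i\epsilon_i\ell_i$ is exactly what one obtains by iterating the paper's two-step reduction, so the two proofs are the same computation organised differently.

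Where your write-up goes astray is the last paragraph. You flag the exclusion of $L=0$ and $L=-2e_i$ as the ``main difficulty'' and appeal to genericity of $S$, but no genericity is needed here and the paper does not invoke it. If $L=0$ then $\eta(L)=0$ forces $\epsilon_n=1$, and your own identities $\pi(L)=\epsilon_n\er_n-\er_0$ and $\omega^{(1)}\!\cdot\!L=\epsilon_n\theta_n-\theta_0$ give $(\er_n,\theta_n)=(\er_0,\theta_0)$, so this case only arises when the endpoints coincide. If $L=-2e_i$ then $\eta(L)=-2$, so $\epsilon_n=-1$; the momentum and quadratic-energy identities give $\er_0+\er_n=2\mathtt j_i$ and $|\er_0|^2+|\er_n|^2=2|\mathtt j_i|^2$, and subtracting $2|\mathtt j_i|^2$ from the second after writing $\er_0=\mathtt j_i+v,\ \er_n=\mathtt j_i-v$ yields $|v|=0$, hence $\er_0=\er_n=\mathtt j_i\in S$, contradicting $\er_0,\er_n\in S^c$. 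So both degeneracies are ruled out by the constraints you already derived, with no further hypotheses on $S$; you should replace the appeal to genericity with this two-line check.
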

\begin{proof}
We only need to prove our statement when the path is made of two edges. Suppose $(\er ,\theta), (\er _1,\theta_1)$ are connected by the edge $\ell$ and $(\er _1,\theta_1)$ is connected to $(\er ',\theta')$ by  $\ell'$. If $\ell$ and $\ell'$ are both black we have $\theta_1- \theta =  (\ome^{(1)}(\xi),\ell)$  and $\theta'-\theta_1=  (\ome^{(1)}(\xi),\ell')$ which implies that $\theta'-\theta =  (\ome^{(1)}(\xi),\ell+\ell')$ an hence $\ell+\ell'\in X$. In the same way:
$$  \sum_i \mathtt j_i \ell_i + \er -\er _1=  \sum_i \mathtt j_i \ell_i' + \er _1-\er '= 0\;\to \sum_i \mathtt j_i (\ell_i+\ell_i') + \er -\er '=0 ,$$ same for the quadratic formulae. We have proved that $(\er ,\theta), (\er ',\theta') $ are connected by a black oriented edge marked $\ell+\ell'$.
Similarly if $\ell,\ell'$ are red then  $(\er ,\theta), (\er ',\theta') $ are connected by a black oriented edge marked $\ell-\ell'$. Finally if $\ell$ is red and $\ell'$ is black  $(\er ,\theta), (\er ',\theta') $ are connected by a red edge marked $\ell-\ell'$.
\qed\end{proof}  We need now some further constraints of the frequencies $\mathtt j$.  Since  we have seen that this final graph does not depend on $S$ we can impose the condition  that $(\er ,\theta), (\er ,\theta'),\ \theta\neq\theta' $ is never connected by an edge, since we may assume that, if  $\theta\pm \theta'=(\ome^{(1)}(\xi),\ell)$  we have always  $\ell\cdot\mathtt j\neq 0$.  

Then we have 
\begin{proposition}\label{prp} The projection $(\er ,\theta)\mapsto \er$  is a map of  the graph $ \Gamma^{(f)}_S$ to $ \Gamma_{S,\, X_q\cup Y}$. It maps thus a connected component injectively into a connected component.

\end{proposition}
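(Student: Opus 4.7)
The plan is two-fold. First, I would check that the projection sends edges of $\Gamma^{(f)}_S$ to edges of $\Gamma_{S,\, X_q\cup Y}$, so that each connected component of $\Gamma^{(f)}_S$ is mapped into a connected subgraph. Second, I would show that this restricted map is injective, which is the harder of the two points.

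The edge-preservation statement is essentially tautological from Definition \ref{figa}. An edge of $\Gamma^{(f)}_S$ between $(\er,\theta)$ and $(\er',\theta')$ is, by construction, attached to a marking $\ell\in Y^0\cup Y^{-2}$ which is itself an edge of $\Gamma_{S,\, X_q\cup Y}$ joining $\er$ to $\er'$. Thus the projection sends edges to edges, and the image of any connected subgraph is connected.

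For injectivity I would argue by contradiction. Suppose two vertices $(\er,\theta)\neq(\er,\theta')$ sharing the same root coordinate lie in the same connected component of $\Gamma^{(f)}_S$, and pick a path of minimal length between them. Applying Lemma \ref{latiii} to the first two edges of this path produces a single edge of $\Gamma^{(f)}_S$ whose marking is the sum or difference of the original two markings (depending on their colors); this shortens the path by one. Iterating, one collapses the entire path into a single edge of $\Gamma^{(f)}_S$ linking $(\er,\theta)$ directly to $(\er,\theta')$. But the genericity condition imposed on $S$ immediately before the proposition excludes exactly such an edge: no $\ell\in Y$ can satisfy the momentum and quadratic-energy conditions of Definition \ref{edrr} with the same root on both sides when $\theta\neq\theta'$, since $\theta\pm\theta'=\omega^{(1)}(\xi)\cdot\ell$ forces $\ell\cdot\mathtt j\neq 0$ under the generic assumption. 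This contradiction yields the injectivity, and combined with the first step proves the proposition.

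The only point requiring a little care is that the inductive application of Lemma \ref{latiii} actually produces genuine edges of $\Gamma^{(f)}_S$ at each step, rather than mere formal combinations of markings. This is precisely the content of the lemma's proof, which simultaneously verifies the linear, quadratic and eigenvalue identities in Definition \ref{edrr} for the combined $\ell+\ell'$ (or $\ell-\ell'$ in the mixed-color case), so the inductive reduction remains inside $\Gamma^{(f)}_S$ throughout.
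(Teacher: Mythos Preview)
Your proof is correct and follows the same route the paper takes: the projection is a graph map by definition, and injectivity on components follows from Lemma~\ref{latiii} together with the genericity constraint imposed just before the proposition.  One minor remark: your iterative reduction of the path (collapsing two edges at a time) is precisely the inductive content of Lemma~\ref{latiii}, whose statement already gives the single-edge conclusion directly; you could simply cite the lemma once rather than re-deriving it.
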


In each connected component $\mathfrak D$ of $ \Gamma^{(f)}_S$ we now want to choose a root,  we do this by choosing as root a pair $(\er,\theta)$ with $\er$ a root of a components $A$ of  $\Gamma_S$, in other words we choose a root of the component of the {\em root graph},  in order to distinguish this from the other pairs appearing we shall use the symbol  $ \mathfrak t:=(\er_{  \mathfrak t},\theta_{  \mathfrak t})$. By the previous  remarks  we can make this choice {\em translation} invariant for the components with black edges, we always make this choice.
 We shall denote this last set of {\em roots} by $\mathfrak T$ which therefore is an indexing set for the connected components of $ \Gamma^{(f)}_S $. 
  
  It is important to remark that we can divide $\mathfrak T$ into a finite set $\mathfrak T_f$ (which contains all the bad blocks) and the infinite {\em good} set $ \mathfrak T_g$ which by definition is the part which projects to  the part of  $\tilde \Gamma_{S,\, X_q\cup Y}$ made of  the   components  containing only black edges.
%
%
 
  By Lemma \ref{latiii} each vertex $(\er,\theta)$ of the chosen component  $\mathfrak D$ is connected to the root
 $ \mathfrak t  $ by an edge $\ell(\er,\theta)$. 
 
 Given a root $\mathfrak t  \in \mathfrak T$ let us       denote by $\mathfrak D_{\mathfrak t}$ the corresponding component of $ \Gamma^{(f)}_S$ and by $\dodo_\et$, the {\em support of $\mathfrak D_{\mathfrak t}$}  to be       the set of the $k\in S^c$ such that $(\er(k),\theta_k)\in \mathfrak D_{\mathfrak t}.$   Notice that by construction this is a union of some of the subsets ${ S^c_{\er,\theta}}$  defined by the Fitting decomposition.    If $k\in\dodo_\et$ we set $\mathfrak t(k)=(\er_{  \mathfrak t}(k),\theta_{  \mathfrak t}(k)):=\mathfrak t$. In general $\er_{  \mathfrak t}(k)\neq\er(k),\  \ \theta_{  \mathfrak t}(k)\neq \theta(k)$.
  \begin{corollary}\label{findec}
The sets $\dodo_{\mathfrak t}$ decompose  the connected components of     $ \tilde\Gamma_{S,\, X_q\cup Y}$.     This decomposition of the components with only black edges is invariant under translations.
\end{corollary}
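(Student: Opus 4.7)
The plan is to prove separately the two assertions: (a) the sets $\dodo_{\mathfrak t}$ partition $S^c$ and refine the partition by the connected components of $\tilde\Gamma_{S,X_q\cup Y}$, and (b) for components whose edges are all black, the family $\{\dodo_{\mathfrak t}\}$ is invariant under the ambient group of translations.

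For (a), I would first observe that the Fitting construction of Section \ref{Fit} assigns to every $k\in S^c$ a well-defined pair $(\er(k),\theta_k)\in \mathfrak S^c$: the root $\er(k)$ is the root of the component of $\Gamma_S$ containing $k$, and $\theta_k$ is the eigenvalue of the Fitting sub-block $S^c_{\er(k),\theta_k}$ containing $k$. Since by construction the connected components $\mathfrak D_{\mathfrak t}$ of $\Gamma^{(f)}_S$ partition $\mathfrak S^c$, pulling back along the map $k\mapsto (\er(k),\theta_k)$ partitions $S^c$ into the sets $\dodo_{\mathfrak t}$. Thus the $\dodo_{\mathfrak t}$ are pairwise disjoint with union $S^c$.

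Next I would show that each $\dodo_{\mathfrak t}$ is contained in a single component of $\tilde\Gamma_{S,X_q\cup Y}$. Take $k,h\in\dodo_{\mathfrak t}$; then $(\er(k),\theta_k)$ and $(\er(h),\theta_h)$ are joined by a path in $\mathfrak D_{\mathfrak t}$. By Definition \ref{edrr} each edge of this path corresponds to an edge in $Y$ connecting the roots $\er(\cdot)$ in $\tilde\Gamma_{S,X_q\cup Y}$, so $\er(k)$ and $\er(h)$ are connected in $\tilde\Gamma_{S,X_q\cup Y}$. Within each $\Gamma_S$-component of a root, all vertices are already connected via $X_q$-edges (hence via edges of $\tilde\Gamma_{S,X_q\cup Y}$), so $k$ and $h$ lie in the same $\tilde\Gamma_{S,X_q\cup Y}$-component. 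Conversely, by Remark \ref{alt} any connected component $C$ of $\tilde\Gamma_{S,X_q\cup Y}$ is obtained by taking a connected component $R$ of the root graph and collecting the $\Gamma_S$-components with root in $R$. The pairs $\{(\er,\theta):\er\in R,\;\theta\text{ occurs in the Fitting decomposition at }\er\}$ then decompose, by Proposition \ref{prp} and Lemma \ref{latiii}, into components $\mathfrak D_{\mathfrak t}$ of $\Gamma^{(f)}_S$ whose supports $\dodo_{\mathfrak t}$ exhaust $C$. This establishes that the $\dodo_{\mathfrak t}$'s decompose each component of $\tilde\Gamma_{S,X_q\cup Y}$.

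For (b), the key point is that the eigenvalue assignment $k\mapsto \theta_k$, and therefore the Fitting block decomposition, depends only on the combinatorial graph $\GA$ at $k$, as recorded in Remark \ref{cacchio} and the discussion around Formula \eqref{Lema}. Hence a translation $u\in\Z^d$ preserving the marked combinatorial structure of a black-edge component of $\tilde\Gamma_{S,X_q\cup Y}$ sends each pair $(\er,\theta)$ to $(\er+u,\theta)$, maps components of $\Gamma^{(f)}_S$ to components, and therefore maps $\dodo_{\mathfrak t}$ to $\dodo_{\mathfrak t+u}:=\dodo_{\mathfrak t}+u$. The main delicate step is verifying that Definition \ref{edrr}, which pairs roots using conditions involving only $\er,\er'$ and the eigenvalues $\theta,\theta'$, is indeed translation invariant on black-edge components: the linear momentum/energy constraints \eqref{ner} change by the orthogonality conditions on $u$ (the subgroup described in \S \ref{basst}), and the spectral constraint $\omega^{(1)}\cdot\ell+\theta(k)-\theta(h)=0$ is unchanged since $\theta(\cdot)$ is translation invariant. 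Granting these, translation invariance of the family $\{\dodo_{\mathfrak t}\}$ on the black-edge part is immediate.

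The main obstacle I anticipate is the bookkeeping in step (a): making sure that when a component $C$ of $\tilde\Gamma_{S,X_q\cup Y}$ contains several distinct Fitting eigenvalues living over the same root set (so that several $\mathfrak D_{\mathfrak t}$'s project onto the same root-graph component via Proposition \ref{prp}), the decomposition of $C$ into the $\dodo_{\mathfrak t}$'s is exactly parameterized by the distinct eigenvalue-labelings rather than by the roots alone. Lemma \ref{latiii} and the genericity assumption ruling out edges between $(\er,\theta)$ and $(\er,\theta')$ for $\theta\neq \theta'$ are precisely what make this bookkeeping go through.
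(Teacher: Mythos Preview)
The paper states this corollary without proof, treating it as an immediate consequence of the preceding construction: the definition of $\dodo_{\mathfrak t}$ as the pullback of the component $\mathfrak D_{\mathfrak t}$ under the map $k\mapsto(\er(k),\theta_k)$, Remark~\ref{alt}, Proposition~\ref{prp}, and the translation-invariance recorded in Remark~\ref{cacchio} and in the paragraph on the basic stratification. Your proof correctly unpacks exactly these implicit steps and is the argument the paper has in mind; in particular your verification that the conditions \eqref{ner} are preserved under translations $u$ in the orthogonal subgroup of \S\ref{basst}, together with the combinatorial nature of $\theta(\cdot)$, is precisely the content behind the one-line statement ``invariant under translations''.
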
       
 
 Set $\ell_k  $ through the Formulas  \begin{equation}\label{alcf}
 \mathtt j \cdot\ell_k+\er(k)=(\eta(\ell_k)+1)\er_{  \mathfrak t}(k),\   \omega^{(1)}(\xi)\cdot \ell_k   +\theta(k)=(\eta(\ell_k)+1)\theta_{  \mathfrak t}(k).
\end{equation} We also define a new color $s(k):=\sigma(k)(\eta(\ell)+1)=\pm 1$. Note that we can reformulate the Formulas \eqref{defL}, in terms of  $L_k, \ell_k, s(k) , \mathfrak t(k)$   by substituting in those Formulas the expressions given in Formulas \eqref{ner}, \eqref{ros}.
 
 We see now that we have a parallel with \ref{pl}.  Take a degree 2 monomial $\mathfrak m$ product of two eigenvectors as in the previous discussion.
\begin{proposition}
A monomial $\mathfrak m=e^{\ii \nu\cdot x}z_h^{\sigma}z_k^{\sigma'}$  with $\nu\neq 0$ is in the kernel of $ad(\mathcal N^s)$ if and only if $(\er(h),\theta_h),(\er(k),\theta_k) $   are connected by an edge $\ell$ and $\nu=\pm \ell$ (here the sign is determined by $\s,\s'\s(k),\s(h)$). 

In Formula  \eqref{caso1} the edge is a red edge while in   Formula  \eqref{caso2} it is a black edge.\end{proposition}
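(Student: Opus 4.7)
The plan is to combine the eigenvalue computation for $\mathrm{ad}(\mathcal N^s)$ carried out in the paragraphs just above with Definition \ref{edrr}; both directions of the ``if and only if'' arise from the same list of equations, read in two different ways. First, since after the Fitting reduction $\mathcal N^s = \mathbb K + (\omega^{(1)},y) + \tilde{\mathcal Q}^s$ acts diagonally on the chosen basis of $F^{0,2}$, the monomial $\mathfrak m = e^{\ii\nu\cdot x} z_h^{\s} z_k^{\s'}$ is automatically an eigenvector of $\mathrm{ad}(\mathcal N^s)$. Conservation of $\mathbb L$ and $\mathbb M$ (automatic, since they Poisson commute with $\mathcal N$) restricts $\nu$, and the vanishing of the eigenvalue produces, according to whether $\s\,\s(h)$ and $\s'\,\s(k)$ agree or differ in sign, the four equations of \eqref{caso1} or \eqref{caso2} respectively.

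Second, I would read each resulting system against Definition \ref{edrr}. In the ``same sign'' case the three momentum-type conditions of \eqref{caso1} are exactly those for $\nu$ (or $-\nu$) to be a red edge in $Y^{-2}$ joining the roots $\er(h)$ and $\er(k)$, and the last condition $\omega^{(1)}\cdot \nu + \theta_h + \theta_k = 0$ is precisely the red marking rule of Definition \ref{edrr} for the pairs $(\er(h),\theta_h)$ and $(\er(k),\theta_k)$. In the ``opposite sign'' case the analogous reading of \eqref{caso2} identifies $\pm\nu$ with a black edge in $Y^0$ connecting the same pairs in $\Gamma^{(f)}_S$. The converse is then automatic: given an edge of the stated type, reading the defining equations backwards produces the $\nu$ and the monomial $\mathfrak m$ in the kernel.

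The one genuinely delicate step, and where I expect to spend most care, is the sign bookkeeping. The sign in $\nu = \pm\ell$ is dictated by whether each of $z_h^{\s}, z_k^{\s'}$ is of $z$-type or $\bar z$-type in the current basis, i.e.\ by $\s\,\s(h)$ and $\s'\,\s(k)$ respectively, while the colour of the matching edge is fixed by their product $\s\,\s(h)\cdot \s'\,\s(k)$. Once this small dictionary between the two conventions is in place, the systems \eqref{caso1}-\eqref{caso2} coincide literally with those of Definition \ref{edrr}, and the proposition follows.
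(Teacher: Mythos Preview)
Your approach is essentially the same as the paper's: compute the $\mathrm{ad}(\mathcal N^s)$ eigenvalue on the monomial, separate into the two cases according to the relative sign, and recognise the resulting systems \eqref{caso1}--\eqref{caso2} as the defining equations of Definition~\ref{edrr}. The sign bookkeeping you single out is indeed routine.

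There is, however, one genuine omission. The definition of $Y^{-2}$ (and of $X$ in \eqref{glied}) explicitly excludes the vectors $\ell=-2e_i$. So when you claim that ``the three momentum-type conditions of \eqref{caso1} are exactly those for $\nu$ (or $-\nu$) to be a red edge in $Y^{-2}$'', you must also check that $\nu\neq -2e_i$; otherwise the system \eqref{caso1} might be satisfied without $\nu$ being an admissible edge. This is in fact the \emph{only} point the paper's own proof treats explicitly: if $\nu=-2e_i$, then the momentum and energy constraints $-2\mathtt j_i+\er(h)+\er(k)=0$ and $-2|\mathtt j_i|^2+|\er(h)|^2+|\er(k)|^2=0$ force $\er(h)=\er(k)=\mathtt j_i\in S$, contradicting $\er(h),\er(k)\in S^c$. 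You should insert this short argument; once it is in place your proof is complete and coincides with the paper's.
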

\begin{proof}
We apply the commutation rules; we only need to show that the possible monomials have $\nu\neq -2e_i$ for all $i$. Indeed in this case we see that the equations  for the conservation of $\mathbb M$ and $\mathbb K$ have the unique solution $r(h)= \mathtt j_i\notin S^c$, a contradiction.
\qed\end{proof}

\subsection{A new phase shift\label{fincoo}}
 At this point we can perform a new symplectic change of variables  which is  done with the same method of phase shift as in Formula \eqref{labella}.

We now define  $\Psi$:
$$ 
z_k = e^{-\ii \s(k)\ell_{k} \cdot x}z_k'\,,\quad \forall  k\in S^c \,,
$$
$$
 x=x'\,,\quad y= y'+ \sum_{k\in S^c} \s(k)\ell_{k}|z'_k|^2 \,.
$$

\begin{proposition}\label{ilcambio}
i) \;$\Psi$ is symplectic analytic and leaves $\mathbb M, \mathbb L,\mathbb K,\mathbb K_1$ and $\mathcal N$  in normal form.
ii)\;  For any function $f\in \mathcal R_{s,r}$  of degree $\leq 2$  and such that $f$ Poisson commutes with $\mathcal N^s$ we have that $f\circ \Psi$ does not depend on $x$.
\end{proposition}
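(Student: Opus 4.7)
The plan is that for part (i), symplecticity and analyticity of $\Psi$ reduce, with $\alpha_k := \s(k)\ell_k$, to the phase-shift template
$$ z_k = e^{-\ii\alpha_k\cdot x}z'_k,\quad y = y' + \sum_k \alpha_k|z'_k|^2,\quad x=x',$$
which is exactly the pattern of \eqref{labella}, so symplecticity is a two-line verification; analyticity follows from the uniform bound on $|\ell_k|$, which is a consequence of Lemma \ref{latiii} (each $\ell_k$ is a single edge, drawn from the finite set $Y\subset X_p$ for some fixed $p$). For the invariance of $\mathbb L,\mathbb M$ and the reduction of $\mathbb K$, $\omega^{(1)}\cdot y$ and $\tilde{\mathcal Q}^s$, I would substitute $y_i = y'_i + \sum_k \s(k)\ell_{k,i}|z'_k|^2$ into \eqref{LM}, \eqref{kappa} and \eqref{secN}, use $|z_k|^2=|z'_k|^2$, and collect coefficients of $|z'_k|^2$. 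The defining relations \eqref{alcf} together with $s(k)=\s(k)(\eta(\ell_k)+1)$, plus the quadratic edge identity $\sum_i|\mathtt j_i|^2\ell_{k,i}+|\er(k)|^2=(\eta(\ell_k)+1)|\er_\et(k)|^2$ derived from \eqref{ner}--\eqref{ros}, make every such coefficient collapse onto $s(k)$ times the data attached to $\et(k)$; this produces the claimed form \eqref{turandot}, \eqref{turandot2} of $\mathbb L,\mathbb M,\mathbb K$ and the semisimple part $\omega(\xi)\cdot y' + \sum_\et\Omega_\et\sum_{k\in\dodo_\et}s(k)|z'_k|^2$ of $\mathcal N$.

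For the nilpotent part $\mathcal N^{\rm nil}$, the key observation is that $\ell_k$ depends only on the pair $(\er(k),\theta(k))$, hence is constant along each Fitting block $S^c_{\er,\theta}\subset\dodo_\et$. Every monomial $z_h^{\s_1}z_k^{\s_2}$ occurring in $\mathcal N^{\rm nil}$ has $h,k$ in the same Fitting block, and by the Lagrangian structure (Definition \ref{lagr}) satisfies $\s_1\s(h)+\s_2\s(k)=0$; the exponent $-(\s_1\s(h)\ell_h+\s_2\s(k)\ell_k)\cdot x$ then vanishes after $\Psi$, and $\mathcal N^{\rm nil}$ stays $x$-independent in the primed coordinates.

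For (ii), I would decompose $f$ into eigenmonomials of $\mathrm{ad}(\mathcal N^s)$ and apply the kernel classification already performed in \S\ref{NoF}. In degrees $0$ and $1$ in $w$ only $x$-independent monomials survive generically in $\xi$, and these are trivially $x$-independent under $\Psi$. In degree $2$, a kernel monomial $\mathfrak m=e^{\ii\nu\cdot x}z_h^{\s_1}z_k^{\s_2}$ corresponds to an edge $\ell$ of $\Gamma^{(f)}_S$ joining $(\er(h),\theta_h)$ to $(\er(k),\theta_k)$; the conservation laws for $\mathbb L,\mathbb M,\mathbb K$ applied to $\mathfrak m$ combined with \eqref{alcf} and the edge identities \eqref{ner}--\eqref{ros} force $\nu=\s_1\s(h)\ell_h+\s_2\s(k)\ell_k$, which is precisely the exponent subtracted by $\Psi$, so $\mathfrak m\circ\Psi$ is $x$-independent. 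The principal obstacle is the sign bookkeeping: the factor $\eta(\ell_k)+1\in\{\pm 1\}$ (set by the parity of red edges along the path through $\Gamma^{(f)}_S$) must be tracked consistently throughout, but once the identity $s(k)=\s(k)(\eta(\ell_k)+1)$ is invoked the signs align automatically and the remaining steps are direct substitutions.
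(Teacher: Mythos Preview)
Your proposal is correct and follows essentially the same approach as the paper. Both proofs handle part~(i) by direct substitution using the relations~\eqref{alcf} (the paper writes out $\mathbb L,\mathbb M$ explicitly just as you propose), and for the nilpotent part both observe that $\ell_k$ is constant on each Fitting block $S^c_{\er,\theta}$ together with the sign cancellation $\s_1\s(h)+\s_2\s(k)=0$---the paper phrases this as ``monomials as in Remark~\ref{argo}''. For part~(ii) degree two, the paper packages the key identity $\s\s(h)\nu=\s\s(h)\ell_h+\s'\s(k)\ell_k$ as an application of Lemma~\ref{latiii} to the triangle with vertices $\mathfrak t$, $(\er(h),\theta_h)$, $(\er(k),\theta_k)$, whereas you derive it directly from~\eqref{alcf}; these are the same computation. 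One small tightening: what actually forces the identity is not the $\mathbb L,\mathbb M,\mathbb K$ constraints alone but the eigenvalue relation $\omega^{(1)}\!\cdot\nu+\theta_h\pm\theta_k=0$ coming from $\{\mathcal N^s,\mathfrak m\}=0$, combined with the injectivity of $\nu\mapsto\omega^{(1)}\!\cdot\nu$ (Remark~\ref{connl}(1)); you should make this explicit.
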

\begin{proof}
We first prove item ii). Since $f$ is an absolutely convergent sum of monomials we only need to prove our claim on single monomials $\mathfrak m$. If the degree in $w$ is zero or one the statement follows trivially (by substituting the new variables). In case of degree two we substitute and  get in one case
$$e^{\ii \s\s(h)\nu\cdot x} z_h^\s z_k^{\s'}=  e^{\ii( \s\s(h)\nu -\s\s(h) \ell(h) -\s'\s(k) \ell(k))\cdot x}( z')_h^\s (z')_k^{\s'} =  ( z')_h^\s (z')_k^{\s'}$$
by applying Lemma \ref{latiii} to the vertices $(\mathfrak R,\mathfrak t),(\er(h),\theta_h),(\er(k),\theta_k)$ and recalling that the last two vertices are joined by an edge marked $\nu$. The other case  is identical.

It is possible that $f$ contains monomials   with $\nu=0$. Then by Remark \ref{argo} $ (\er(h),\theta_h)=(\er(k),\theta_k)$ and $\s \s(h)=\s'\s(k)$ and such term remains $x$ independent.

i)\; We can compute the new Hamiltonians  explicitly by substitution:
\begin{eqnarray*}
 \mathbb L&=& \sum_i  y_i + \sum_k  \s(k) |z_k|^2 =\sum_i  y'_i + \sum_{k\in S^c} \s(k)\eta(\ell_{k})|z_k|^2+ \sum_k  \s(k) |z_k|^2\\ &=&\sum_i  y'_i + \sum_k   s(k) |z'_k|^2 
 \\ \mathbb M &=&   \mathtt j \cdot y  + \sum_k \s(k) \er(k) |z_k|^2 =    \mathtt j \cdot y' + \! \sum_{k\in S^c} \s(k)  \mathtt j \cdot\ell_{k}|z'_k|^2+\! \sum_k  \s(k)   \er(k)  |z'_k|^2 
 \\ &=& \mathtt j \cdot y' +\sum_k   s(k)   \er_{  \mathfrak t}(k)  |z'_k|^2= \sum_{\mathfrak t\in \mathfrak T} \er_{\mathfrak t}(\sum_{k\in \dodo_\et} s(k)  |z'_k|^2).
\end{eqnarray*}
As for the nilpotent part $\tilde Q^n$ in $\mathcal N$ we notice that it is a sum of monomials as in Remark \ref{argo}, so  $\tilde Q^n$ remains block diagonal and $x$ independent by the same argument as in item ii).
\qed\end{proof}
In order to simplify the notations we now drop the apex $'$ and write $y,z$ the new coordinates.
\begin{corollary}\label{diagp}
For $\xi\in \mathcal A_\alpha$,  in the final coordinates the  normal form of the Hamiltonian has the form 
\begin{equation}\label{FF}
\mathcal N= \mathbb K+\mathbb K^1\,,\quad \mathbb K^1=  \omega^{(1)}(\xi)\cdot y +  \sum_{\mathfrak t\in \mathfrak T_f}  {\mathcal Q}_{\mathfrak t}+\sum_{\mathfrak t\in \mathfrak T_g} \theta_{\mathfrak t}(\sum_{k\in \dodo_\et}| z_k|^2)\end{equation}   \begin{equation}\label{MLK}
\quad \mathbb K = \sum_i |\mathtt j_i|^2 y_i +\sum_{\mathfrak t\in \mathfrak T} |\er_{\mathfrak t}|^2(\sum_{k\in \dodo_\et} s(k)  |z_k|^2) .
\end{equation} 
As for the finitely many ${\mathcal Q}_{\mathfrak t}$  each corresponds to a Hamiltonian which can be split  in Jordan decomposition, with a {\em scalar} and a nilpotent part ${\mathcal Q}^{\rm nil}_{\mathfrak t}$:
$$ {\mathcal Q}_{\mathfrak t}={\mathcal Q}^{\rm nil}_{\mathfrak t}+\theta_{\mathfrak t}(\sum_{k\in \dodo_\et}s(k)| z_k|^2),\quad \forall \mathfrak t\in \mathfrak T_f. $$ The associated matrix corresponds to a map on two Lagrangian blocks each with a single  non--zero eigenvalue $\pm \ii \,\theta_{\mathfrak t}$ but contains possibly also some nilpotent part.
 \end{corollary}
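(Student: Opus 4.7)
The plan is to directly substitute the phase shift $\Psi$ of Section~\ref{fincoo} into the two pieces $\mathbb K$ and $\mathbb K^1$ of the normal form \eqref{secN} produced after the Fitting diagonalization, and then regroup the resulting $x$-independent expressions by the blocks $\dodo_\et$ furnished by Corollary~\ref{findec}. The $x$-independence of the outcome is automatic from Proposition~\ref{ilcambio}(ii) applied separately to $\mathcal N^s$ (which trivially commutes with itself) and to each nilpotent Fitting piece $\tilde{\mathcal Q}^n_{\er,\theta}$ (which commutes with $\mathcal N^s$ by the very definition of the Jordan decomposition), so only the coefficients need to be computed.

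For $\mathbb K$ I would substitute $y=y'+\sum_k\s(k)\ell_k|z'_k|^2$ and $|z_k|^2=|z'_k|^2$ and use the defining relation \eqref{alcf} for $\ell_k$ together with its quadratic analogue
\[
|\mathtt j|^2\cdot\ell_k+|\er(k)|^2=(\eta(\ell_k)+1)\,|\er_{\mathfrak t(k)}|^2,
\]
which is automatic because $\ell_k$, as an edge in $X_q\cup Y$ joining $(\er(k),\theta(k))$ to $\mathfrak t(k)$, satisfies all three conservation laws by construction (compare \eqref{ner} and \eqref{ros}). Using the definition $s(k)=\s(k)(\eta(\ell_k)+1)$, this collapses $\mathbb K$ into $\sum_i|\mathtt j_i|^2 y'_i+\sum_k s(k)|\er_{\mathfrak t(k)}|^2|z'_k|^2$, which regrouped by $\dodo_\et$ is exactly \eqref{MLK}.

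For $\mathbb K^1$ I would write each Fitting form as $\tilde{\mathcal Q}_{\er,\theta}=\tilde{\mathcal Q}^s_{\er,\theta}+\tilde{\mathcal Q}^n_{\er,\theta}$ with semisimple part $\tilde{\mathcal Q}^s_{\er,\theta}=\theta\sum_{k\in S^c_{\er,\theta}}\s(k)|z_k|^2$. The transformation sends $\omega^{(1)}\cdot y$ to $\omega^{(1)}\cdot y'+\sum_k\s(k)(\omega^{(1)}\cdot\ell_k)|z'_k|^2$, and by the second identity in \eqref{alcf} the last sum equals $\sum_k(s(k)\theta_{\mathfrak t(k)}-\s(k)\theta(k))|z'_k|^2$. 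The $-\s(k)\theta(k)|z'_k|^2$ piece cancels the semisimple parts $\tilde{\mathcal Q}^s_{\er,\theta}$ exactly, leaving on each block the diagonal scalar $\theta_{\mathfrak t(k)}$. Only finitely many $\tilde{\mathcal Q}^n_{\er,\theta}$ are non-zero (self-adjoint good blocks have trivial nilpotent part), and regrouping the survivors by the components of $\Gamma^{(f)}_S$ produces the $\mathcal Q^{\rm nil}_{\mathfrak t}$ for $\mathfrak t\in\mathfrak T_f$. On $\dodo_\et$ with $\mathfrak t\in\mathfrak T_g$ one has $\s=s=1$ (all edges involved are black, and good components carry $\s\equiv 1$), yielding the scalar $\theta_{\mathfrak t}\sum_{k\in\dodo_\et}|z_k|^2$; on $\dodo_\et$ with $\mathfrak t\in\mathfrak T_f$ the residual scalar and nilpotent pieces combine into $\mathcal Q_{\mathfrak t}=\mathcal Q^{\rm nil}_{\mathfrak t}+\theta_{\mathfrak t}\sum_{k\in\dodo_\et}s(k)|z_k|^2$, as claimed. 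The two-Lagrangian-block description of the associated matrix with unique eigenvalues $\pm\ii\theta_{\mathfrak t}$ transfers verbatim from the $C_\GA\oplus(-C_\GA)$ splitting of Proposition~\ref{diva}, since the Fitting conjugation of Section~\ref{Fit} respects this decomposition.

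The whole argument is essentially bookkeeping on top of Proposition~\ref{ilcambio}; the only identity which is not immediate is the quadratic consistency relation for $\ell_k$ displayed above, but this is built into the definition of the edge sets $X_q$ and $Y$ from the very start, so there is no genuine obstacle.
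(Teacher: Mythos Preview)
Your argument is correct and follows essentially the same route as the paper's own proof: substitute the phase shift into \eqref{secN}, use the identities \eqref{alcf} (together with their quadratic--energy analogue, which you rightly extract from the edge relations \eqref{ner}--\eqref{ros}) to collapse the diagonal coefficients, and regroup by the blocks $\dodo_\et$. Your presentation is in fact slightly more explicit than the paper's, which handles $\mathbb K$ with the remark ``Formula \eqref{MLK} is similar'' and leaves the cancellation between $\s(k)\,\omega^{(1)}\!\cdot\ell_k$ and the semisimple parts $\tilde{\mathcal Q}^s_{\er,\theta}$ implicit; your formulation of that cancellation as $\s(k)(\omega^{(1)}\!\cdot\ell_k)=s(k)\theta_{\mathfrak t(k)}-\s(k)\theta(k)$ is exactly the right bookkeeping.
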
\begin{proof}
When we substitute in Formula \eqref{secN}, setting $\mathfrak S^c_f$ the support of  the finitely many blocks in $ \mathfrak T_f$
$$ \mathbb K^1=  \omega^{(1)}(\xi)\cdot y +  \sum_{(\er,\theta)\in \mathfrak S^c_f} \tilde{\mathcal Q}_{\er,\theta}+  \sum_{(\er,\theta)\notin \mathfrak S^c_f} \theta(\sum_k|z_k|^2)$$
$$=  \omega^{(1)}(\xi)\cdot y' + \sum_{k\in S^c}\s(k)  \omega^{(1)}(\xi)\cdot\ell_{k} |z'_k|^2 +\!\! \sum_{(\er,\theta)\in \mathfrak S^c_f} \tilde{\mathcal Q}_{\er,\theta}+  \sum_{(\er,\theta)\notin \mathfrak S^c_f} \theta(\sum_k|z'_k|^2).$$
For the $k$ in the support of  $\mathfrak T_g$ we have   $\sigma(k)=1,\ \eta(\ell_k)=0$, the coefficient of   $|z'_k|^2$  comes from two terms,   $\omega^{(1)}(\xi)\cdot\ell_{k}$ and $\theta_k$ the sum is  by Formula \eqref{alcf}, $\theta_{  \mathfrak t}(k)=\theta_{\mathfrak t}$ which gives the last term of Formula \eqref{FF}, Formula \eqref{MLK} is similar.

 The remaining terms are collected in the finite sum  namely, for a given   $\mathfrak t\in \mathfrak T_f$  the contribution to  $ {\mathcal Q}_{\mathfrak t}$ is the sum of the sum  $\sum_{(\er,\theta)\in D_{\mathfrak t}} \tilde{\mathcal Q}_{\er,\theta}$ plus the sum $ \sum_{k\in \dodo_\et}\s(k)\, \omega^{(1)}(\xi)\cdot\ell_{k}\,|z'_k|^2$. Formula \eqref{alcf}  gives the statement on the eigenvalues.
\qed\end{proof}

Remark also that, if $k\in \dodo_{\mathfrak t},\ \mathfrak t\in \mathfrak T_g$ then $s(k)=1$. Thus $\mathcal N^s$ is the sum of the terms
\begin{equation}\label{itermini}
\sum_i (|\mathtt j_i|^2+\omega^{(1)}_i(\xi))y_i+\sum_{\mathfrak t\in \mathfrak T } ( |\er(\mathfrak t)|^2+\theta_{\mathfrak t})(\sum_{k\in \dodo_\et}s(k)| z_k|^2 ).
\end{equation}That is  in $y$, given by $\sum_i (|\mathtt j_i|^2+\omega^{(1)}_i(\xi))y_i$, the infinite real diagonal part  given by $\sum_{\mathfrak t\in \mathfrak T_g} (|\er(\mathfrak t)|^2+\theta_{\mathfrak t})(\sum_{k\in \dodo_\et}| z_k|^2 )$ and finally the finite semisimple part involving the  {\em bad} sites  $\sum_{\mathfrak t\in \mathfrak T_f} ( |\er(\mathfrak t)|^2+\theta_{\mathfrak t})(\sum_{k\in \dodo_\et}s(k)| z_k|^2 )$.  These finitely many blocks may also contribute to the nilpotent part of $\mathcal N$.

Recall that $\sum_{k\in \dodo_\et}s(k)| z_k|^2$ acts as the scalar $\ii,-\ii$ on the two corresponding Lagrangian spaces, since a basis of one of them is $z_k^{s(k)}$ (and the other the conjugate).
\paragraph{$F^{0,1}$ and the algebra $\mathcal F_{{\rm ker}}$.}  We now decompose $F^{0,1}$     as orthogonal sum (with respect to the symplectic form),  with  blocks  each decomposed as sum of two conjugate Lagrangian subspaces, $ (\dodo_{\mathfrak t},\nu)^{+}\oplus (\dodo_{\mathfrak t},\nu)^{-}$ with $\et\in\mathfrak T$ and $\nu\in\Z^n$ constrained by the conservation laws.

For each index  ${\mathfrak t},\nu$ the constraints $$\pi(\nu)+\er_{\mathfrak t}=\nu\cdot\mathtt j+\er_{\mathfrak t}=0,\quad \eta(\nu)+1=0$$ express respectively   the conservation of momentum and   mass. We have as basis for  $(\dodo_{\mathfrak t},\nu)^{+}$ the elements  $e^{\ii   \nu\cdot x}z_k^{s(k)}$  where  $k\in \dodo_\et $ and for  $(\dodo_{\mathfrak t},\nu)^{-}$ the conjugate elements.

These blocks are obviously stable  under $ad(\mathcal N)$ and the action is deduced from corollary \ref{diagp}, note that on each block this action is invertible.
\smallskip

The product of two blocks $(\dodo_{\mathfrak t_i},\nu_i)^{\pm 1},\ i=1,2$  produces a quadratic block in $F^{0,2}$ stable under  $ad(\mathcal N)$ with basis  the products of basis  elements. In order to avoid repetitions we index different quadratic blocks by $(\nu,\et_1,\et_2,\s_1,\s_2)$  where the conservation laws are:
$$
 \eta(\nu)+1+\s_1\s_2=0\,,\quad \pi(\nu)+\er_{\mathfrak t_1}+ \s_1\s_2 \er_{\et_2}=0\,
$$
while the basis elements are  $e^{\ii  \s_1 \nu\cdot x}z_k^{\s_1s(k)}z_h^{\s_2s(h)}$ where  $k\in \dodo_{\et_1} $ and  $h\in \dodo_{\et_2} $.

For $\s_1\s_2$ fixed the blocks come in conjugate pairs and we usually exhibit the one with $\s_1=1$.

\begin{proposition}\label{secM}
The action of $ad(\mathcal N)$ is invertible on all the blocks different from $(0,\et,\et,\s,-\s)$.
\end{proposition}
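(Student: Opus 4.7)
My first move is to reduce the problem to the semisimple part. Since $\mathcal N=\mathcal N^s+\mathcal N^{\rm nil}$ is the Jordan decomposition (so $[\mathcal N^s,\mathcal N^{\rm nil}]=0$ and $ad(\mathcal N^{\rm nil})$ is nilpotent), we get $ad(\mathcal N)=ad(\mathcal N^s)+ad(\mathcal N^{\rm nil})$ as commuting semisimple$+$nilpotent operators on each block; hence $ad(\mathcal N)$ is invertible on a given block if and only if $ad(\mathcal N^s)$ is. So it suffices to analyse the scalar action of $ad(\mathcal N^s)$ on each quadratic block.

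Next I compute this scalar explicitly. On the basis element $e^{\ii\s_1\nu\cdot x}z_k^{\s_1 s(k)}z_h^{\s_2 s(h)}$ of the block $(\nu,\et_1,\et_2,\s_1,\s_2)$, using Corollary \ref{diagp} and \eqref{itermini}, the operator $ad(\mathcal N^s)$ acts as multiplication by
\begin{equation*}
\ii\s_1\Big[\mathtt j^{(2)}\!\cdot\!\nu+|\er_{\et_1}|^2+\s_1\s_2|\er_{\et_2}|^2\Big]+\ii\s_1\Big[\omega^{(1)}(\xi)\!\cdot\!\nu+\theta_{\et_1}+\s_1\s_2\theta_{\et_2}\Big].
\end{equation*}
The first bracket is an integer, the second is an algebraic function of $\xi$ homogeneous of degree $q$. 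Since we work on the open region $\mathcal R_\alpha$ where the $\theta_\et$ are analytic, the eigenvalue vanishes (identically in $\xi$, which is what invertibility really requires) iff both brackets vanish separately.

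Now I translate the simultaneous vanishing into a graph statement. Combined with the momentum constraint $\pi(\nu)+\er_{\et_1}+\s_1\s_2\er_{\et_2}=0$ and the mass constraint $\eta(\nu)+1+\s_1\s_2=0$, the vanishing of the integer bracket is exactly equation \eqref{ner} (if $\s_1\s_2=-1$) or \eqref{ros} (if $\s_1\s_2=+1$); i.e.\ $\nu\in X_q$ is an edge (possibly requiring enlarging the edge set to some $X_p$ as in Remark \ref{connl}(3)). The vanishing of the polynomial bracket is exactly the defining condition for $\nu\in Y^0$ or $\nu\in Y^{-2}$, with the two eigenvalues being $\theta_{\et_1},\theta_{\et_2}$. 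Putting the two together: the scalar vanishes precisely when $\nu$ is an edge joining the vertices $(\er_{\et_1},\theta_{\et_1})$ and $(\er_{\et_2},\theta_{\et_2})$ in the graph $\Gamma^{(f)}_S$ of Definition \ref{figa}.

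Finally I close the argument via the component decomposition. By the very construction of the index set $\mathfrak T$, the sets $\dodo_\et$ are the supports of the connected components of $\Gamma^{(f)}_S$, so vertices belonging to different components cannot be joined by an edge; this forces $\et_1=\et_2=:\et$. It remains to eliminate self–loops: for $\s_1\s_2=-1$ a self-loop would require $\omega^{(1)}\cdot\nu=0$, and by Remark \ref{connl}(1) this forces $\nu=0$, giving exactly the excluded block $(0,\et,\et,\s,-\s)$; for $\s_1\s_2=+1$ the mass relation gives $\eta(\nu)=-2$, so either $\nu=-2e_i$ (excluded in the definition of $Y^{-2}$) or $\nu$ satisfies a self-resonance $\sum_i\nu_i\mathtt j_i+2\er_\et=0$ together with $\sum_i\nu_i|\mathtt j_i|^2+2|\er_\et|^2=0$, which is ruled out by the genericity assumption on $S$ in the spirit of \eqref{tangent} and Remark \ref{tange}. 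The main technical point to watch is precisely this last step: one must verify that the extra self-loop conditions for $\s_1\s_2=+1$ are included among the polynomial genericity relations already imposed on $S$ (which is safe because the defining polynomials are non-trivial in $\mathtt j$, exactly as in Remark \ref{tange}). With these cases excluded, $ad(\mathcal N^s)$, and hence $ad(\mathcal N)$, is invertible on every block other than $(0,\et,\et,\s,-\s)$. \qed
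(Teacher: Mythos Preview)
Your proof is correct and follows essentially the same route as the paper: reduce to the semisimple part, compute the scalar eigenvalue on each block, split it into its integer and degree-$q$ homogeneous pieces, and recognize the simultaneous vanishing as an edge relation in the final graph $\Gamma^{(f)}_S$, which forces $\et_1=\et_2$ since distinct elements of $\mathfrak T$ are roots of distinct components. Your treatment is in fact more explicit than the paper's on the closing step: the paper compresses everything into ``the final graph was built so that these equalities imply $(\nu,\et,\et',\s,\s')=(0,\et,\et,\s,-\s)$'', while you spell out the self-loop analysis separately for $\s_1\s_2=-1$ (where $\omega^{(1)}\cdot\nu=0$ forces $\nu=0$ by Remark~\ref{connl}(1)) and for $\s_1\s_2=+1$ (where the red self-loop condition $|\pi(\nu)|^2/2+\pi^{(2)}(\nu)=0$ is a nontrivial polynomial in the $\mathtt j_i$ precisely because $\nu\neq -2e_i$, hence ruled out by genericity). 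This last point is a genuine detail the paper leaves implicit; your caveat about it is well placed.
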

\begin{proof}
It is enough to prove that the action of the semisimple part $ad(\mathcal N^{ s })$ is invertible on all these blocks.

Now from Formula \eqref{itermini} on a block  $(\nu,\et,\et',\s, \s')$   the action is via the scalar  function 
$$\sum_i (|\mathtt j_i|^2+\omega^{(1)}_i(\xi))\nu_i+\s  ( |\er(\mathfrak t)|^2+\theta_{\mathfrak t}) +\s'  ( |\er(\mathfrak t')|^2+\theta_{\mathfrak t'}) $$ so  we only need to observe that if   $(\nu,\et,\et',\s, \s')$ is different from $(0,\et,\et,\s,-\s)$ this function is non zero.   If this expression equals 0  we must have  separately
$$\sum_i  |\mathtt j_i|^2 \nu_i+\s   |\er(\mathfrak t)|^2  +\s'    |\er(\mathfrak t')|^2 =0,\quad \sum_i  \omega^{(1)}_i(\xi) \nu_i+\s   \theta_{\mathfrak t}  +\s'   \theta_{\mathfrak t'}  =0. $$ Now we also have conservation of momentum  $\pi(\nu)+\s \er(\mathfrak t)+\s' \er(\mathfrak t')=0$.

These properties then imply that the two roots $\er(\mathfrak t), \er(\mathfrak t')$ are joined by the edge $\nu$  but in the final graph this implies that $\nu=0$ and $\er(\mathfrak t)=\er(\mathfrak t')$ the final graph was built   so that these equalities  imply  that $(\nu,\et,\et',\s, \s')=(0,\et,\et,\s,-\s)$. 
\qed\end{proof}
This  Proposition allows us to define the decomposition of $F^{(0,2)}$ given by Definition \ref{kerrg}
into the subalgebra $\mathcal F_{{\rm ker}}$ and its unique  complement $\mathcal F_{{\rm rg}}$ stable under adjoint action of $\mathcal F_{{\rm ker}}$.  
 \appendix
\section{Measure estimates}
\begin{lemma}\label{tecnico}
Consider a $C^q$ function $f$  of $n$ variables in a domain $A$ contained in some hypercube 
 of side of length $\zeta$.  If for some $k\in \N^n$ such that $|k|_1\leq q$  one has
$$
\inf_{x\in A}| \partial_x^k f|\geq  c^{|k|}>0
$$  
then for all $\alpha\geq 0$  we have, setting:
 \begin{equation}\label{measd}
A_0:=\{x\in A\,:\quad |f(x)|\leq  \alpha^{|k|}\} ,\quad {\rm meas}(A_0)\leq 2|k| \zeta^{n-1} \alpha c^{-1}.
\end{equation}
  
\end{lemma}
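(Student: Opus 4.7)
The natural reformulation to work with is: if $|\partial^k f|\geq C$ on $A$, then $|\{x\in A:|f(x)|\leq \eta\}|\leq 2|k|\,\zeta^{n-1}(\eta/C)^{1/|k|}$. Setting $C=c^{|k|}$ and $\eta=\alpha^{|k|}$ recovers the claim, because $(\eta/C)^{1/|k|}=\alpha/c$. I will prove this reformulation by induction on $m:=|k|_1$.

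For the base case $m=1$, the multi-index is $k=e_i$ for some $i$, so $|\partial_{x_i}f|\geq C$ on $A$. Fibering $A$ in the direction $e_i$ (each fiber has length at most $\zeta$), on each fiber $f$ is strictly monotone with derivative of absolute value at least $C$, so $\{|f|\leq\eta\}$ intersects the fiber in a set of length at most $2\eta/C$. Integrating the other $n-1$ coordinates over the hypercube of side $\zeta$ gives the bound $2\zeta^{n-1}\eta/C$, matching $2|k|\zeta^{n-1}(\eta/C)^{1/|k|}$.

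For the inductive step with $|k|=m+1$, pick an index $i$ with $k_i\geq 1$ and set $k'=k-e_i$, $h:=\partial_{x_i}f$, so $\partial^{k'}h=\partial^k f$ and $|\partial^{k'}h|\geq C$ on $A$. For a parameter $\delta>0$ (to be optimized), split
\[
A_\delta:=\{x\in A:|h(x)|\leq \delta\},\qquad B_\delta:=A\setminus A_\delta.
\]
By the inductive hypothesis applied to $h$ with multi-index $k'$ of length $m$, we have $|A_\delta|\leq 2m\,\zeta^{n-1}(\delta/C)^{1/m}$. On $B_\delta$ we have $|\partial_{x_i}f|>\delta$, so the base case (fibering in direction $e_i$, using $|\partial_{x_i}f|>\delta$ on each fiber intersected with $B_\delta$) yields $|\{x\in B_\delta:|f(x)|\leq\eta\}|\leq 2\zeta^{n-1}\eta/\delta$. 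Adding,
\[
|\{|f|\leq\eta\}|\leq \zeta^{n-1}\Bigl(2m(\delta/C)^{1/m}+2\eta/\delta\Bigr).
\]
Optimizing in $\delta$ (setting the $\delta$-derivative to zero gives $\delta=\eta^{m/(m+1)}C^{1/(m+1)}$) makes the two terms equal to $2(\eta/C)^{1/(m+1)}$ and $2(\eta/C)^{1/(m+1)}$ times $m$ and $1$ respectively, yielding total $2(m+1)\zeta^{n-1}(\eta/C)^{1/(m+1)}$, which is exactly the desired bound for $|k|=m+1$.

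The main (very mild) obstacle is simply choosing the right split parameter $\delta$ so that the base-case bound and the inductive bound balance; once this optimization is carried out, the constant $2|k|$ propagates cleanly through the induction. No other subtlety arises, since the hypercube of side $\zeta$ is preserved at each step and the regularity $C^{q}$ with $|k|\leq q$ is enough to take one extra derivative when passing from $f$ to $h=\partial_{x_i}f$.
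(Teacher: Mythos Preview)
Your proof is correct and follows essentially the same induction as the paper: split according to the size of a first partial derivative, bound the small-derivative set by the inductive hypothesis, and bound its complement by the $|k|=1$ case. The only cosmetic difference is that you carry generic constants $C,\eta$ and optimize over the threshold $\delta$, while the paper hard-wires the optimal choice by rescaling to $g=c^{-1}\partial_{x_1}f$ and using the threshold $\alpha^{|h|}$ for $g$ (so that the inductive hypothesis applies with the \emph{same} $c,\alpha$); your optimal $\delta=\eta^{m/(m+1)}C^{1/(m+1)}$ coincides exactly with the paper's $c\alpha^{|h|}$.
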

\begin{proof}
 By induction.  If $|k|=1 $ this is a standard argument (which one can develop also  for Lipschitz functions). Without loss of generality let us suppose that $k=e_1$. We consider  the map $F : x \mapsto (f(x),x_2,\ldots,x_n)$  which, since $|\partial_{x_1}f|>0$,  is a diffeomorphism   from $ A_0$   to some set $B $  which is contained in the product  of an $n-1$--dimensional hypercube  with size of length $\zeta$ times a segment of length  $2  \alpha$, hence the volume of  $F(A_0)$ is $\leq {\rm meas}(B)= 2  \alpha\zeta^{n-1}$. 
 
 The  Jacobian of $F^{-1}$  is $  \partial_x  f^{-1}$ so in absolute value is   $\leq  c^{-1} $. Therefore the volume of the set    $A_0=F^{-1}(F(A_0))$    can be estimated by $2  \alpha\zeta^{n-1} c^{-1}$.
 
 Let us now write (again without loss of generality)  $\partial_x^k = \partial_x^h \partial_{x_1}$ (where $|h|= |k|-1$) and set $g(x):= c^{-1}\partial_{x_1} f(x)$. 
 So that we know
 $$
\inf_{x\in A}| \partial_x^h g|\geq  c^{|h|}.
$$  
 Then by the inductive hypothesis
 $$
A_1:= \{x\in A\,:\quad |g(x)|<   \alpha^{|h|}\},\quad {\rm meas}(A_1) \leq 2|h| \zeta^{n-1} \alpha c^{-1}. 
$$
On the region $A \setminus A_1$  we have $c|g(x)|=|\partial_{x_1}   f(x)|\geq    c\alpha^{|h|} $. Then by case $|k|=1$  we have
$$
A_2:=\{x\in A \setminus A_1\,:\quad |f(x)|\leq   \alpha^{|k|}\},\quad {\rm meas}(A_2)\leq 2\zeta^{n-1}\alpha^{|k|}c^{-1}\alpha^{-|h|} =2\zeta^{n-1}\alpha c^{-1}
$$
and this  concludes the proof since $A_2=A_0\setminus A_1$ so  $A_0\subset A_1\cup A_2$ and:
$$
{\rm meas}(A_0)\leq {\rm meas}(A_1)+{\rm meas}(A_2)  \leq 2|h| \zeta^{n-1} \alpha c^{-1}+ 2\zeta^{n-1} \alpha c^{-1}= 2|k| \zeta^{n-1} \alpha c^{-1}.
$$
\qed\end{proof}

We shall often apply this  Lemma as follows, we have our domain $\mathcal O$ in the coordinates $\xi$ and its image $\tilde{\mathcal O}$ in the coordinates  $\omega$. We want perform an estimate  of a set $A_0\subset \mathcal O$ as before. We first perform the estimate for the corresponding $B_0\subset \tilde{\mathcal O}$  which satisfies the hypotheses of Lemma \ref{measd} with $\zeta=  \e^{2q}  $. The dilation factor  of the inverse of  the transformation $\xi\mapsto \omega$  is bounded by  $L^n\e^{-2n(q-1)}$ (Remark \ref{dilf}). In all the estimates which we shall perform we have  $\alpha=\e^{2q}\beta$ where usually $\beta=K^{-c\rho}$.  We deduce
\begin{corollary}\label{stixo} If the measure of $B_0=\omega(A_0)$ is bounded by  $C\zeta^{n-1}\alpha= C\e^{2q(n-1)}\e^{2q}\beta = C\e^{2q n  } \beta$ that of $A_0 $ is bounded by  $C\beta L^n\e^{-2n (q-1)   }  \e^{2q n  }\leq C' \e^{ 2n   }\beta  .   $

\end{corollary}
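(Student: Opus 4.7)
The plan is to reduce the measure estimate for $A_0\subset \mathcal O$ (in $\xi$-coordinates) to the hypothesized estimate for $B_0=\omega(A_0)\subset \tilde{\mathcal O}$ (in $\omega$-coordinates) by the change of variables formula. By Proposition \ref{teo2}\,(i), $\xi\mapsto \omega(\xi)$ is a local diffeomorphism on $\mathcal R_\alpha$, and we are working in a subdomain on which it is injective (cf.\ the discussion preceding \eqref{hamH}), so the inverse $\omega\mapsto \xi(\omega)$ is well-defined on $\tilde{\mathcal O}$. In particular $A_0=\omega^{-1}(B_0)$.

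First, I would write
\[
\mathrm{meas}(A_0)=\int_{A_0} d\xi = \int_{B_0} \bigl|\det\partial_\omega \xi\bigr|\,d\omega.
\]
Second, I would invoke the bound $|\partial_\omega \xi|_\infty\leq L\,\e^{-2(q-1)}$ from Definition \ref{coste} to control the Jacobian. Since every entry of the matrix $\partial_\omega \xi$ is dominated by $L\e^{-2(q-1)}$, Hadamard's inequality (or simply the Leibniz expansion) gives
\[
\bigl|\det\partial_\omega \xi\bigr|\leq n!\, L^n\, \e^{-2n(q-1)},
\]
and the combinatorial factor $n!$ is absorbed into the final constant. Combining this with the hypothesis $\mathrm{meas}(B_0)\leq C\e^{2qn}\beta$ yields
\[
\mathrm{meas}(A_0)\leq n!\,L^n\,\e^{-2n(q-1)}\cdot C\e^{2qn}\beta = C'\,\e^{2n}\,\beta,
\]
with $C':=n!\,C\,L^n$ depending only on the data entering Definition \ref{coste}.

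There is essentially no obstacle: the dilation constant is uniformly bounded both above and below (cf.\ Remark \ref{dilf}), so the pullback of Lebesgue measure costs at most a factor $L^n\e^{-2n(q-1)}$, and the arithmetic $2qn-2n(q-1)=2n$ converts the hypothesized estimate on $B_0$ directly into the claimed estimate on $A_0$. The only minor point to check is that the sup-norm bound on the entries of $\partial_\omega\xi$ suffices to bound the determinant, which is handled by the trivial estimate above.
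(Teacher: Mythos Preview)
Your proof is correct and follows exactly the approach indicated in the paper: the corollary is stated without a separate proof, since it is just the change of variables formula combined with the Jacobian bound $|\det\partial_\omega\xi|\leq L^n\e^{-2n(q-1)}$ already announced in Remark~\ref{dilf}. Your explicit justification via the entry-wise bound from Definition~\ref{coste} and the arithmetic $2qn-2n(q-1)=2n$ is precisely what the paper has in mind.
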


Let $f,g$ be inverse functions between two domains in  $\R^n$, i.e. $g\circ f=1$  and both diffeomorphisms of some class $C^p$, we write in coordinates  $\omega_i=f_i(\xi_1,\ldots,\xi_n)$  and $\xi_i=g_i(\omega_1,\ldots,\omega_n)$. Then there are universal formulas due to Fa\`a di Bruno, to express the   derivatives $b_\alpha:=\partial^\alpha_\omega g$  as polynomials in terms of   $a_\beta:= \partial^\beta_\xi f  $ and  the first derivatives $b_i:=\partial_{\omega_i} g$  (computed at the corresponding points, $\xi,\ \omega=f(\xi)$).

We do not need the explicit formulas but only to know that the nature of such a polynomial is the following. We give a {\em weight}  $|\beta|$ to $a_\beta$ and $-1$ to  $b_i$. Then the polynomial expressing $b_\alpha$ is a linear combination with rational coefficients  of monomials of weight $-1$.

So let $h=|\alpha|$,  in each monomial we have some number $k $  of factors $b_i$ which contribute $-k$ to the weight and other factors  of type $a_\beta$ of total weight  $k-1$, the total number $k$ of factors $b_i$  can be seen to be  bounded by  $2h-1$.

  We assume to have some estimates  $|\partial_\xi^{\beta} \omega|=|a_\beta|\leq \e^{2q-2|\beta|}  {M} ^{|\beta|}$, $|\partial_{\omega_i} \xi|=|b_i|\leq \e^{2 -2q}L$  and $LM<c,\ c>1$. In our setting these estimates   come from Proposition \ref{coste}  and are  justified by the fact that we work on a domain of width $\e^2$ and we start the algorithm from $\omega$ homogeneous of degree $q$. 
  
  Under these estimates we obtain for $b_\alpha=\partial^\alpha_\omega\xi$ an estimate, with $h=|\alpha|$, (and $\e<1$) 
 \begin{equation}\label{sttt}
|b_\alpha|\leq C   \e^2 L \e^{-2hq } (LM)^{2(h-1)}\leq    C  \e^{2(1 - h q)}L  c ^{2h-2}  
\end{equation} where the order comes from estimating the worst monomial  in the polynomial expression of $b_\alpha$. The constant  $C $ is some positive number, sum of the absolute values of the coefficients of this combinatorial expression and so depending only  on $n$ and $\beta$.  Summarising
\begin{lemma} 
 Let $P(\xi)$ be such that $|\partial_\xi^h P(\xi)| \leq \e^{2(q-|h|)}M^{|h|} $  and consider the map $\xi\to \omega(\xi)$ with derivatives
 $|\partial_\xi^{h} \omega|\leq  \e^{2(q-|h|)} M^{|h|}$ and its inverse $\xi(\omega)$ with $|\partial_\omega \xi| \leq \e^{2(1 - q)}L$.
 Then if $LM\leq c$ 
\begin{align}\label{duedis}
|\partial_\omega^j \xi| &\leq C  \e^{2(1 - |j|q)}L  c ^{2|j|-2} \nonumber
\\
 |\partial_\omega^jP(\xi(\omega))|&\leq C'\e^{2q(1 - |j|)}  c ^{2|j|-2}   
\end{align}
 \end{lemma}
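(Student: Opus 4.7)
The plan is to treat each inequality by Faà di Bruno's formula: the first already appears essentially in the paragraph preceding the lemma (culminating in \eqref{sttt}), and for the second I would reduce to it via the chain rule for compositions.

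For the first estimate, the inverse-function version of Faà di Bruno expresses $\partial_\omega^\alpha\xi$ as a finite combination of monomials in $a_\beta=\partial_\xi^\beta\omega$ and $b_i=\partial_{\omega_i}\xi$, each of total weight $-1$ under the weight convention of the preceding paragraph. A short bookkeeping argument, verified by induction on $|\alpha|$ from the identity $\omega(\xi(\omega))=\omega$, shows that in such a monomial the number of $b$-factors equals $h+m$ where $h=|\alpha|$ and $m$ is the number of $a$-factors, with $m\in\{0,1,\dots,h-1\}$ (the upper bound coming from $|\beta_r|\geq 2$ for each $a_{\beta_r}$ and $\sum_r|\beta_r|=k-1$). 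Plugging in the hypotheses $|a_\beta|\leq\e^{2(q-|\beta|)}M^{|\beta|}$ and $|b_i|\leq\e^{2(1-q)}L$ and using $ML\leq c$ bounds each monomial by $\e^{2(1-hq)}L\,c^{h+m-1}$, whose maximum at $m=h-1$ is $\e^{2(1-hq)}L\,c^{2(h-1)}$. Summing finitely many monomials produces the constant $C$.

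For the second estimate, I would apply the classical Faà di Bruno formula to the composition $P\circ\xi$:
\begin{equation*}
\partial_\omega^j(P\circ\xi)=\sum_{\pi}\kappa_\pi\,\partial_\xi^{|\pi|}P\cdot\prod_{B\in\pi}\partial_\omega^{|\alpha_B|}\xi,
\end{equation*}
where $\pi$ runs over set partitions of the derivatives making up $j$, with blocks $B$ of sizes $|\alpha_B|$ summing to $|j|$. Then I bound $|\partial_\xi^{|\pi|}P|\leq\e^{2(q-|\pi|)}M^{|\pi|}$ by the hypothesis on $P$ and each $|\partial_\omega^{|\alpha_B|}\xi|\leq C\e^{2(1-|\alpha_B|q)}L\,c^{2|\alpha_B|-2}$ by the first inequality. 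Multiplying, the $\e$-exponents telescope to $\e^{2q(1-|j|)}$ (the $2q|\pi|$ contribution from the $P$-factor cancelling the $-2|\pi|$ coming from the $\xi$-factors), and the remaining $(ML)^{|\pi|}c^{2|j|-2|\pi|}$ is dominated by $c^{2|j|-|\pi|}\leq c^{2|j|-1}$ using $ML\leq c$ and $|\pi|\geq 1$. Summing over the finitely many partitions absorbs the combinatorial constant into $C'$.

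The main obstacle is purely combinatorial, namely verifying the identity $k=h+m$ (or equivalently, the bound $k\leq 2h-1$) in the inverse-function expansion; this is what forces the $\e$-exponents to collapse to $\e^{2(1-hq)}$ independently of the internal structure of the monomial. Once that identity is settled, the composition estimate becomes a mechanical collection of powers and no analytic subtlety remains.
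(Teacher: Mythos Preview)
Your proposal is correct and follows exactly the paper's approach: the paper's proof is the two-line remark that the first inequality is formula \eqref{sttt} (i.e., the Fa\`a di Bruno bookkeeping you spell out) and the second follows from the chain rule, which you likewise expand via the composition form of Fa\`a di Bruno. Your verification of the identity $k=h+m$ (hence $k\le 2h-1$) and the telescoping of the $\e$-exponents are the only real content, and both are sound; the fact that you end up with $c^{2|j|-1}$ rather than $c^{2|j|-2}$ in the second bound is harmless since $c>1$ is a fixed constant absorbed into $C'$.
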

\begin{proof} The first is formula   \eqref{sttt} and the second follows 
 from the chain rule.
 \qed\end{proof}
\begin{lemma}\label{tecni2}
 Consider a $\pp\times \pp$ matrix $A(\omega)$ and given $\nu\in \Z^n$, set 
 $$f(\omega)={\rm det}( \omega\cdot \nu I + A(\omega) )$$
 and suppose that $|\omega\cdot \nu|\leq \pp  M \e^{2q}$
    and    \begin{equation}\label{stinf}
 |\partial_{\omega_1}^j A|_{\infty} \leq C_1 \e^{2q(1-|j|)}
\end{equation}
   for some $ C_1\geq \pp M$.
   Then  there exists $C_2 $ (depending on $C_1,q,\pp $)  such that if $|\nu_1| > C_2   $  we have 
   $$
  | \partial_{\omega_1}^\pp  f| > C_2^\pp 
   $$
 \end{lemma}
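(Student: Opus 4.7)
\smallskip
\noindent\textbf{Proof plan.} The strategy is to view $f(\omega)$ as a polynomial of degree exactly $\pp$ in $u:=\omega\cdot\nu$, with leading coefficient $1$ and lower coefficients controlled by the entries of $A$. Since $\partial_{\omega_1}u=\nu_1$, applying $\partial_{\omega_1}^{\pp}$ should single out the $u^{\pp}$ term and produce a main contribution of size $\pp!\,\nu_1^{\pp}$, while all remaining pieces will turn out to be polynomials in $|\nu_1|$ of degree at most $\pp-1$ with $\e$-independent coefficients.

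Concretely, I would start from the characteristic-polynomial expansion
\[
f(\omega)=\sum_{j=0}^{\pp}e_j(A(\omega))\,u(\omega)^{\pp-j},\qquad e_0=1,
\]
where $e_j(A)$ is the sum of the $j\times j$ principal minors of $A$, and then apply Leibniz to compute $\partial_{\omega_1}^{\pp}f$. Because $u$ is affine in $\omega_1$, the factor $\partial_{\omega_1}^{\pp-k}u^{\pp-j}$ vanishes unless $k\geq j$, and for $j\geq 1$ one also needs $k\geq 1$ since $\partial_{\omega_1}^{m}1=0$ for $m\geq 1$. Hence the only contribution with $k=0$ comes from $j=0$ and equals exactly $\pp!\,\nu_1^{\pp}$. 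For every other term, the bound $|u|\leq \pp M\e^{2q}$ yields
\[
\bigl|\partial_{\omega_1}^{\pp-k}u^{\pp-j}\bigr|\;\leq\;|\nu_1|^{\pp-k}\,\frac{(\pp-j)!}{(k-j)!}\,(\pp M)^{k-j}\,\e^{2q(k-j)},
\]
while writing each principal $j\times j$ minor as a signed sum of $j!$ products of $j$ entries of $A$ and applying Leibniz termwise, together with hypothesis \eqref{stinf}, gives
\[
\bigl|\partial_{\omega_1}^{k}e_j(A)\bigr|\;\leq\;E_{\pp,j,k}\,C_1^{\,j}\,\e^{2q(j-k)}
\]
for a purely combinatorial constant $E_{\pp,j,k}$. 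Multiplying these two estimates, the powers of $\e$ cancel exactly, and the corresponding summand is bounded by $D_{\pp,j,k}(C_1,M,q)\,|\nu_1|^{\pp-k}$.

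Summing over $1\leq j\leq k\leq \pp$ therefore produces a bound of the form
\[
\bigl|\partial_{\omega_1}^{\pp}f-\pp!\,\nu_1^{\pp}\bigr|\;\leq\;Q(|\nu_1|),
\]
where $Q$ is a polynomial of degree at most $\pp-1$ whose coefficients depend only on $\pp,C_1,q$ (the dependence on $M$ is absorbed via $\pp M\leq C_1$). The conclusion is then obtained by choosing $C_2$ large enough so that $(\pp!-1)\,t^{\pp}\geq Q(t)$ for every $t\geq C_2$, which is possible since $\deg Q<\pp$. The key conceptual point, and the step I would focus on in the detailed write-up, is the exact cancellation of the $\e$-powers in the error estimate: it is what guarantees that the threshold $C_2$ is $\e$-independent and depends only on the combinatorial data $\pp,C_1,q$.
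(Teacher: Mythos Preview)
Your proof is correct and follows essentially the same approach as the paper: both expand $f$ via the characteristic polynomial $f=\sum_j\sigma_j(A)\,u^{\pp-j}$, apply Leibniz in $\omega_1$, observe that the $\e$-powers from $|u|\le \pp M\e^{2q}$ and from \eqref{stinf} cancel exactly in every cross term, and conclude that the remainder is a polynomial in $|\nu_1|$ of degree $\le \pp-1$, allowing the choice of an $\e$-independent $C_2$. The only differences are notational (your $e_j,k$ versus the paper's $\sigma_i,h$).
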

 \begin{proof}
 Clearly
 $$
 f(\omega)= P_A(\omega\cdot \nu)= (\omega\cdot \nu)^\pp  + (\omega\cdot \nu)^{\pp -1}{\rm tr}(A)+\dots+det(A).
 $$
For the derivative we have that $\partial_{\omega_1}^\pp  [ (\omega\cdot \nu)^{\pp -i} \sigma_i(A)]$ is a sum of terms       $\partial_{\omega_1}^h   (\omega\cdot \nu)^{\pp -i} \partial_{\omega_1}^{\pp -h}\sigma_i(A) ,$ with $ |h|\leq \pp -i.$  We estimate
 $$|\partial_{\omega_1}^h   (\omega\cdot \nu)^{\pp -i}|=|h! \nu_1^h   (\omega\cdot \nu)^{\pp -i-h} |\leq h! \nu_1^h (\pp M \e^{2q})^{\pp -i-h}.$$
 As for $\partial_{\omega_1}^{\pp -h}\sigma_i(A) $  we have that $\sigma_i(A) $ is a sum, with signs, of $\binom{\pp }{i}$ products  of $i$ entries of $A$. For  such  a product  $\partial_{\omega_1}^{\pp -h}$ is again a sum of some $C(h,i)$ products  where each factor  has been derived  some $u_i\geq 0$ times with $\sum u_i=\pp -h$. For these  factors we apply the estimate \eqref{stinf} getting an estimate $C_1^i \e^{2q(i-\pp +h)}$ for their product. Thus for each $|\partial_{\omega_1}^h   (\omega\cdot \nu)^{\pp -i} \partial_{\omega_1}^{\pp -h}\sigma_i(A)|  $ we have an estimate $h! \binom{\pp }{i}\nu_1^h (\pp M)   ^{\pp -i-h}C_1^i\leq h! \binom{\pp }{i}\nu_1^h  C_1  ^{\pp  -h}$. Then:
 $$ | \partial_{\omega_1}^\pp  f|  \geq \pp !|\nu_1|^\pp  - \sum_{|j|\leq \pp -1}\kappa_j|\nu_1|^j C_1^{\pp -|j|} \geq (C_2)^\pp  \pp !- \sum_{j}\kappa_jC_2^{|j|}C_1^{\pp -|j|}).
 $$
The coefficients $\kappa_j$ are purely combinatorial so given $C_1$ we just need to  choose $C_1/C_2$ so that 
$$ \sum_{j}\kappa_j(\frac{C_1}{C_2})^{\pp -|j|}<\pp !/2.$$
  \qed\end{proof}
  
   In our compact set $\mathcal O\subset \e^2 \Lambda\subset \R^n$ we now consider the   pointwise $\lambda$ norm associated to the $C^\ell$ maps from $\mathcal O\to \mathbb C^\pp$ with the $L_\infty$ norm, $\Vert b(\xi)\Vert_\lambda:=\sum_{|\alpha|\leq \ell}\lambda^{|\alpha|} |\partial_\xi^{\alpha} b(\xi)|_\infty.$  

\begin{lemma}\label{tecni4}
Take a $\pp\times \pp$ matrix $A(\xi)$  satisfying the bounds:
 $$
 |\partial_\xi^{\alpha} A|_\infty \leq \lambda^{-|\alpha|} \e^{2q} \,,\quad \forall |\alpha|\leq \ell
 $$
 and a $\pp$ vector $b(\xi)$ with finite $\lambda$ norm.
 Suppose now that  $A$ is invertible for all $\xi\in \mathcal O$ and that $|A^{-1}|_2 \leq \e^{-2q} K^\varrho$ then  $y(\xi)=A^{-1}(\xi) b(\xi)$ satisfies the bounds 
 $$
 \|y\|_\lambda \leq  \ell^n 2^{\ell^2}\pp^{3\ell+1} \e^{-2q} K^{(\ell+1) \varrho} \|b\|_\lambda.
 $$  
\end{lemma}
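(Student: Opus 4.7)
\textbf{Proof plan for Lemma \ref{tecni4}.}

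The plan is to derive a recursive formula for $\partial_\xi^\alpha y$ by differentiating the identity $A(\xi)y(\xi) = b(\xi)$ and then close the induction on $|\alpha|$ using the given operator-norm bound on $A^{-1}$ together with the assumed derivative bounds on $A$ and $b$.

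First I would apply the Leibniz rule to $Ay = b$, isolating the top-order derivative of $y$:
\begin{equation*}
A\,\partial_\xi^\alpha y \;=\; \partial_\xi^\alpha b \;-\; \sum_{0<\beta\le\alpha}\binom{\alpha}{\beta}\,\partial_\xi^\beta A\,\partial_\xi^{\alpha-\beta}y.
\end{equation*}
Multiplying by $A^{-1}$ on the left gives an explicit formula for $\partial_\xi^\alpha y$ in terms of $\partial_\xi^\gamma y$ for $|\gamma|<|\alpha|$. To pass from the operator-$L^2$ bound $|A^{-1}|_2\le\e^{-2q}K^\varrho$ to the entrywise/sup bound needed here I would invoke the standard comparison $|M|_2\le \pp|M|_\infty$ for $\pp\times\pp$ matrices (with $|\cdot|_\infty$ the max entry) and $|Mv|_\infty\le \pp|M|_\infty|v|_\infty$; these contribute the powers of $\pp$ in the final estimate.

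Next I would set $M_k:=\max_{|\alpha|\le k}\lambda^{|\alpha|}|\partial_\xi^\alpha y|_\infty$ and, after multiplying the recursion by $\lambda^{|\alpha|}$ and using the hypotheses $|\partial_\xi^\beta A|_\infty\le\lambda^{-|\beta|}\e^{2q}$ and $|\partial_\xi^\gamma b|_\infty\le\lambda^{-|\gamma|}\|b\|_\lambda$, obtain an inequality of the shape
\begin{equation*}
M_k \;\le\; \pp\,\e^{-2q}K^{\varrho}\|b\|_\lambda \;+\; \pp^{2}K^{\varrho}\,2^{k}\,M_{k-1},
\end{equation*}
where the factor $2^k$ comes from $\sum_{0<\beta\le\alpha}\binom{\alpha}{\beta}\le 2^{|\alpha|}$. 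Iterating this recursion from $M_0\le \pp\,\e^{-2q}K^\varrho\|b\|_\lambda$ up to $k=\ell$ produces
\begin{equation*}
M_\ell \;\le\; 2^{\ell^2}\pp^{3\ell}\,K^{(\ell+1)\varrho}\,\e^{-2q}\,\|b\|_\lambda,
\end{equation*}
after absorbing the lower-order geometric series into the top term (the power $3\ell$ in $\pp$ is the dominant contribution, obtained by carefully tracking that each of the $\ell$ iterations generates a factor $\pp^{2}$ from the sandwich $A^{-1}(\partial^\beta A)(\cdot)$ together with the initial factor of $\pp$).

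Finally, to pass from $M_\ell$ to the $\lambda$-norm $\|y\|_\lambda=\sum_{|\alpha|\le\ell}\lambda^{|\alpha|}|\partial_\xi^\alpha y|_\infty$, I would estimate the number of multi-indices $\alpha\in\N^n$ with $|\alpha|\le\ell$ by $\binom{n+\ell}{\ell}\le \ell^{n}$ (up to the implicit absolute constants, which can be absorbed into the $\pp$-power by enlarging the exponent by $1$). This yields precisely $\|y\|_\lambda\le \ell^{n}2^{\ell^{2}}\pp^{3\ell+1}\e^{-2q}K^{(\ell+1)\varrho}\|b\|_\lambda$ as claimed. The only delicate accounting point — and the one I would check most carefully — is the interplay between the matrix-norm conversions and the combinatorial factors in the Leibniz expansion, since that is what pins down the exact power $\pp^{3\ell+1}$; every other ingredient is a direct application of the hypotheses.
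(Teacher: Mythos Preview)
Your approach is essentially identical to the paper's: differentiate $Ay=b$ via Leibniz, isolate $A\,\partial_\xi^\alpha y$, apply the $|A^{-1}|_2$ bound together with the matrix-norm conversions (contributing the powers of $\pp$), and close an induction on $|\alpha|$; the paper phrases the induction directly on $|\partial_\xi^\beta y|$ rather than through your auxiliary quantity $M_k$, but the computations line up term by term. The only cosmetic points are that your tracking of the $\pp$-exponent (you write $\pp^{3\ell}$ though your recursion actually gives the slightly better $\pp^{2\ell+1}$) and the multi-index count $\binom{n+\ell}{n}\le\ell^n$ are a bit loose, but these are harmless since the stated bound absorbs them---and the paper itself is equally casual on the latter point.
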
 
\begin{proof}  We preliminarily note that $$
 |\partial_\xi^{\alpha}b|_\infty \leq \lambda^{-|\alpha|}\|b\|_\lambda.
 $$
  Then we derive $\alpha$ times the equation $Ay=b$ and obtain 
 $$
 A\partial_\xi^\alpha y= \partial_\xi^\alpha b- \sum_{ j_1+j_2=\alpha,\atop j_1\neq 0} c_{j_1,j_2} (\partial_\xi^{j_1} A)\, \partial_\xi^{j_2} y\,,\quad 1+\sum_{j_1+j_2=\alpha,\atop j_1\neq 0} c_{j_1,j_2} = 2^{|\alpha|}.
 $$
 This gives the bound 
 $$
 |\partial_\xi^\alpha y|_\infty \leq  |A^{-1}|_2\Big(\pp |\partial_\xi^\alpha b|_\infty+ \pp^2\sum_{j_1+j_2=\alpha,\; j_1\neq 0} c_{j_1,j_2} |\partial_\xi^{j_1} A|_\infty |\partial_\xi^{j_2} y|_\infty \Big) $$ We now prove by induction that
 $$
 |\partial_\xi^\beta y| \leq \e^{-2q} \lambda^{-|\beta|} 2^{\ell|\beta|}\pp^{3|\beta|+1} K^{(|\beta|+1)\varrho}\|b\|_\lambda
$$ 
 by simply substituting in the right hand side of the formula all our bounds 
 
 \begin{align*}
 &|\partial_\xi^\alpha y|_\infty  \leq 
 \e^{-2q} K^\varrho\Big(\pp \lambda^{-|\alpha|}\|b\|_\lambda+\\ & \pp^{2}\sum_{j_1+j_2=\alpha\atop j_1\neq 0} c_{j_1,j_2}  \lambda^{-|j_1|} \e^{2q}\e^{-2q} \lambda^{-|j_2|} 2^{\ell |j_2|}\pp^{3|j_2|+1} K^{(|j_2|+1)\varrho}\|b\|_\lambda\Big)\stackrel{|j_1|+|j_2|=|\alpha|}{=}
 \\
&\e^{-2q}  \lambda^{-|\alpha|}\|b\|_\lambda K^\varrho \Big(\pp + \sum_{ j_1+j_2=\alpha\atop j_1\neq 0}c_{j_1,j_2}   2^{\ell |j_2|} \pp^{3(|j_2|+1)}K^{(|j_2|+1)\varrho}   \Big) \stackrel{|j_2|\leq |\alpha|-1}{\leq}
 \\
 &\leq \e^{-2q}  \lambda^{-|\alpha|}\|b\|_\lambda K^{(|\alpha|+1)\varrho}  (\pp + \pp^{3|\alpha|}2^{\ell (|\alpha|-1)}\sum_{ j_1+j_2=\alpha\atop j_1\neq 0}c_{j_1,j_2} ) \leq 
 \\
& \leq\e^{-2q}  \lambda^{-|\alpha|}\|b\|_\lambda K^{(|\alpha|+1)\varrho}  (\pp + \pp^{3|\alpha|} (2^{\ell |\alpha|}-1))\leq
\e^{-2q}\lambda^{-|\alpha|}2^{\ell |\alpha|} \pp^{3|\alpha|+1}  K^{(|\alpha|+1)\varrho} \|b\|_\lambda.
 \end{align*}
multiplying by $\lambda^{|\alpha|}$ and summing over $\alpha$ (at most $\ell^n$) we obtain the desired estimate.
 \qed\end{proof}
\subsubsection{NLS estimates}
Consider the   NLS equation restricted to some domain  $ \mathcal R_\alpha\cap \e^2\Lambda$. This defines $\omega$ and $\Omega_\et$ by setting $\tilde\Omega_\et=0$. By \cite{PP3} we know that the perturbation $P$ is quasi--T\"oplitz for some parameters $K_0,\theta_0,\mu_0$.

\begin{lemma}\label{2melquant}
\begin{enumerate}
\item There exist constants $L_0,M_0$ so that formul\ae\ \eqref{omelip} hold.

\item Given $S_0 $,  we can choose the domain $\mathcal O_0$ and a constant $a_0$ so that Formul\ae\ \eqref{piffedue}  hold  for $\ome,\Omega_\et$ defined by $\mathcal N$  with  $\mathcal O= \mathcal O_0$ and $a=a_0$.

\end{enumerate}
\end{lemma}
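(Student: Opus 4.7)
The plan is to address the two items separately, exploiting the explicit structure of the normal form from Proposition~\ref{teo2}.

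For item (1), the key observation is that every function appearing on the left of \eqref{omelip} is homogeneous of degree $q$ in $\xi$ and drawn from a finite list: $\omega^{(1)}=\omega-\mathtt j^{(2)}$ by the first two items of Proposition~\ref{teo2}, the eigenvalues $\theta_\et$ belong to the finite set $\Upsilon$, and the nilpotent blocks $\Omega^{\rm nil}_\et$ come from the finitely many matrix types described in Theorem~\ref{teo1}. At this starting step $\tilde\Omega_\et\equiv 0$. On a compact domain $\mathcal O_0\subset \e^2\Lambda$ one has $\xi_i\sim \e^2$, so any homogeneous polynomial $f$ of degree $q$ satisfies $|\partial_\xi^\alpha f|\leq C_f\,\e^{2(q-|\alpha|)}$, and setting $M_0$ to be the maximum such $C_f$ over this finite list yields \eqref{omelip}. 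For the dilation bound $|\partial_\omega\xi|_\infty\leq L_0\e^{-2(q-1)}$ I would use the non-degeneracy statement of Proposition~\ref{teo2}(i): $\xi\mapsto\omega$ is an algebraic local diffeomorphism outside the hypersurface $\mathfrak A$, hence bi-Lipschitz on any compact subset of $\mathcal R_\alpha\cap\e^2\Lambda$, with the stated $\e$-scaling.

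For item (2), fix $S_0$ and examine the Melnikov conditions \eqref{piffedue}. For each $\nu$ with $|\nu|\leq S_0$, momentum conservation $\pi(\nu)+\sigma\er_\et+\sigma'\er_{\et'}=0$ together with $\theta_\et,\theta_{\et'}\in\Upsilon$ leaves only finitely many candidate pairs $\et,\et'$; the remaining pairs for which $|\omega\cdot\nu+\sigma|\er_\et|^2+\sigma'|\er_{\et'}|^2|\gtrsim M\e^{2q}$ are trivially invertible by a Neumann-series argument identical to the opening of the proof of Lemma~\ref{bababa}. Thus \eqref{piffedue} reduces to \emph{finitely many} conditions on $\xi$. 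Each relevant small divisor is an analytic function of $\xi$ on $\mathcal R_\alpha$, and by Proposition~\ref{secM} applied with $\tilde\Omega_\et\equiv 0$ it cannot vanish identically: the only kernel blocks are the ones of the form $(0,\et,\et,\sigma,-\sigma)$, which are precisely those excluded from \eqref{piffedue}.

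Quantitatively, each non-vanishing small divisor admits a derivative of order at most $\ell=(2d+1)^2$ in $\omega$ bounded below by an explicit power of $\e^{2q}$: a first-order $\omega$-derivative for the scalar case $\omega\cdot\nu$; a self-adjoint eigenvalue argument as in \eqref{reso},\eqref{reso2mel} when $\et,\et'\in\mathfrak T_g$; and a determinant/Cramer argument requiring $d_\et$ (resp.\ $d_\et d_{\et'}$) derivatives when one of $\et,\et'$ lies in $\mathfrak T_f$. Applying Lemma~\ref{tecnico} together with Corollary~\ref{stixo} to each of these finitely many conditions removes from $\mathcal R_\alpha\cap\e^2\Lambda$ a set of measure $\lesssim \e^{2n}a_0^{-c}$; for $a_0$ chosen as a fixed but sufficiently large constant (depending on $S_0$, $n$, $d$, $q$ and $\Upsilon$), the intersection of the good sets is a domain $\mathcal O_0$ of measure still of order $\e^{2n}$ on which \eqref{piffedue} holds with $a=a_0$. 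The main technical obstacle, in my view, is the second Melnikov case with one of $\et,\et'\in\mathfrak T_f$: the relevant matrix is neither self-adjoint nor necessarily diagonalizable, so one must work with determinants rather than individual eigenvalues. Showing that $\det(\omega\cdot\nu+L(\Omega_\et)+\sigma\sigma' R(\Omega_{\et'}))$ is not identically zero in $\xi$ and exhibiting a lower bound on one of its partial derivatives of order $\leq d_\et d_{\et'}$ is the delicate step; this quantitative non-vanishing rests ultimately on the construction of the final graph $\Gamma^{(f)}_S$ in \S\ref{Ga2} and the attendant genericity assumptions on $S$, which were engineered so that the only resonances left in the normal form are those already lying in $\mathcal F_{{\rm ker}}$.
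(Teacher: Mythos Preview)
Your treatment of item~(1) is essentially the paper's: homogeneity of degree $q$ plus finiteness of the list, together with non-degeneracy of $\xi\mapsto\omega$, give the bounds on any compact piece of $\mathcal R_\alpha\cap\e^2\Lambda$.

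For item~(2) you correctly reduce to finitely many conditions (momentum conservation plus the Neumann-series cutoff leave only finitely many $(\nu,\et,\et')$) and correctly identify Proposition~\ref{secM} as the source of the non-vanishing. However, your quantitative step has a gap: you invoke Lemma~\ref{tecnico}, which needs a uniform lower bound on some derivative, and you borrow those derivative bounds from the machinery around \eqref{reso}, \eqref{reso2mel}, \eqref{reso3}. But in the proof of Lemma~\ref{bababa} those bounds are obtained from Lemma~\ref{tecni2} and its self-adjoint analogue, both of which rely on $|\nu_1|>S_0/n$ being large so that the $\nu$-term dominates. Here you are in the opposite regime $|\nu|\le S_0$, and for small $\nu$ the contribution $\partial_{\omega_1}\theta_\et(\xi(\omega))$ is of order one and can kill the $\nu_1$ term; nothing you wrote supplies the required lower bound. (In fact Lemma~\ref{bababa} \emph{uses} Lemma~\ref{2melquant} precisely to dispose of the case $|\nu|\le S_0$, so importing its argument here is circular in spirit.)

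The paper avoids all of this. For the normal form one has $\tilde\Omega_\et=0$, so each matrix in \eqref{piffedue} equals $(\omega\cdot\nu+\sigma|\er_\et|^2+\sigma'|\er_{\et'}|^2+\sigma\theta_\et+\sigma'\theta_{\et'})I+\text{nilpotent}$, and its invertibility reduces to the scalar in front. That scalar splits as an integer plus a function homogeneous of degree $q$ in $\xi$ drawn from a finite list. If the integer is nonzero the bound holds for $\e$ small; if it vanishes, Proposition~\ref{secM} says the homogeneous part is not identically zero, so one simply chooses $\mathcal O_0$ compact inside $\mathcal R_\alpha\cap\e^2\Lambda$ avoiding the finitely many zero-hypersurfaces and takes $a_0$ from the minimum on (the cone over) a compact piece of the unit sphere. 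No measure estimates, no derivative bounds.
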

\begin{proof}
i) We have that the  functions $\omega-\mathtt j^{(2)}$ and its inverse are homogeneous and invertible, moreover the functions $\theta_\et$ and $\Omega_\et^{nil}$ are also homogeneous and from a finite list   so we satisfy these formulas by  taking the appropriate maximum on   a compact domain of the unit sphere contained in some $\mathcal R_\alpha$ and such that its cone from the origin contains the domain $\mathcal O_0$.  

ii) By the Melnikov conditions, Proposition \ref{secM}, the matrices under consideration are all invertible as functions outside the algebraic hypersurfaces where the determinant is 0.  All these matrices evaluated at $\xi=0$ are an integer multiple of the identity, the remaining part is homogeneous and runs  in a finite list. If this integer is non--zero then   the estimates  hold provided $\e$ is small enough. Otherwise we  have to restrict the domain so to avoid the finite number of hypersurfaces where one of these determinants vanish and estimate on the unit sphere.
\qed\end{proof}
As for $S_0$ we need to fix it as $n C_2$ so to apply   lemma \ref{tecni2} at all steps where $A(\omega)$ are the matrices, appearing in Formula\ae\  \eqref{piffe} in the coordinates $\omega$.

\begin{proof}[Conclusion of the proof of Lemma \ref{bababa}] By the non-degeneracy condition insured by Lemma \ref{2melquant}, we may assume $|\nu|> S_0$ so one of the coordinates $\nu_i$ has $|\nu_i|>S_0/n$  and we may assume this happens for $\nu_1$.  We will fix $S_0$ sufficiently large as seen in the course of the proof.

If $\s=\s'=0$ we  claim that we remove from the set $\mathcal O$ a region of order $\e^{2n} K^{-\varrho}$. For this follow the strategy of Remark \ref{dilf}.  Namely, apply Lemma  \ref{tecnico} to $f(\xi)=\omega\cdot \nu $ we then estimate the measure of the set $A_0$ where $|\omega\cdot \nu |<\e^{2q}K^{-\varrho}$ as follows.
 
 First  by  Formula   \eqref{measd}  the  measure of the image $B_0$ of $A_0$ under $\omega$  is bounded by  $2\zeta^{ n-1}\e^{2q}K^{-\varrho} c^{-1}$ where $\zeta$  is the size of the side of a hypercube containing  $B_0$ and $c$  is a lower bound for one of the derivatives    $\nu_i=\partial_{\omega_i}(\omega\cdot \nu)$.   We may take for $c$ the maximum of the absolute values of the $\nu_i$ which is  $>  S_0/n$.  As for $\zeta  $ we have $\zeta\leq M \e^{2q}$  so we have a bound
 $$meas(A_0)\leq L^n\e^{2 n(q-1) } meas(B_0)\leq  L^n\e^{-2 n(q-1) }   (M\e^{2q})^{n-1} K^{-\varrho} \e^{2q} c^{-1}\lessdot \e^{2n} K^{-\varrho}  .$$
If $\et,\et'$ are both in $\mathfrak T_g$ we  set for $\s=0,\pm 1$
$$ 
f(\omega)= \omega\cdot \nu + \lambda^{(i)}_\et+ \s \lambda^{(j)}_{\et'}.
$$

By definition of $M$ we have $|\partial_{\xi} \lambda^{(j)}_{\et}|)\lessdot \;\e^{2q-1} M$ so by applying Lemma A.2.    we know that $|\partial_{\omega_1} f(\omega)|=|\nu_1+ \partial_{\omega_1}(\lambda^{(i)}_\et+ \s \lambda^{(j)}_{\et'})|  \geq |\nu_1|-| \partial_{\omega_1}(\lambda^{(i)}_\et+ \s \lambda^{(j)}_{\et'})|  \geq S_0/n- C\e^{2q-1}    $ for some constant $C$, so taking $S_0$ sufficiently large  the proof is concluded by applying Lemma \ref{tecnico} with $x $ replaced by $ \omega$. 

We get that the resonant set in the variables $\omega$ has measure of order $\e^{ 2q(n-1)+2q} K^{- \varrho}=\e^{ 2q n } K^{- \varrho}$. In order to obtain the estimates in the variables $\xi$ we multiply by the dilation factor $L^n\e^{2n-2nq}$ getting $\lessdot\, \e^{ 2  n } K^{- \varrho}$.\smallskip

When either $\et$ or $\et'$ are  in $\mathfrak T_f$ we need to study
$$
f(\omega)={\rm det}( (\omega\cdot \nu) I + L(\Omega_\et  )+ \s R(\Omega_{\et'} ) ).$$
This is a matrix of dimension $\pp=\d_\et \d_{\et'}$  (resp. of dimension $\pp=\d_\et$ if $\s=0$). By hypothesis setting $F(\xi)=  L(\Omega_\et(\xi)) + \s R(\Omega_{\et'}(\xi))$ we get $|\partial^\alpha_\xi F|_\infty \leq 2\e^{2q-2|\alpha|} M$. 

We have that $A(\omega)= F(\xi(\omega))$ satisfies estimates as in \eqref{stinf} and under the  hypothesis $|\omega\cdot \nu|\leq \pp M \e^{2q}$  we can apply  Lemma A.2.  Since we are assuming  $|\nu_1|>S_0/n$ we only need to choose $S_0$ large enough in order to prove $|\partial_{\ome_1}^p f| \geq C_2$ some positive constant. 

By our definitions the resonance condition is $| f(\omega)|\geq  \e^{2q \pp} K^{-\varrho+1 } $ where $\pp=d_\et d_{\et'}$. Then we apply Lemma \ref{tecnico} with $|k|=\pp$ and $ \alpha= \e^{2q  } K^{-\varrho/\pp } $. 
 We  get that the resonant set in the variables $\omega$ has measure of order $2\pp \zeta^{n-1} \e^{2q} K^{(- \varrho+1)/\pp }$. So by Corollary \ref{stixo}  the resonant set in the variables $\xi$ has measure of order $  \e^{2n-2q} \e^{2q} K^{- \varrho/\pp }= \e^{2n } K^{(- \varrho+1)/\pp }$.\qed\end{proof}

Consider a normed space of sequences $v:=(v_i)$  we say that $v\leq w$ if and only if $v_i\leq w_i,\  \forall i$. 

Given $k$ sequences  $v^{(j)},\ j=1,\leq m$ we have by definition  $\sup_j  v^{(j)}:=(\sup_j  v^{(j)}_i)$. Assume that the norm $|\cdot |$ satisfies
$$0\leq v\leq w\implies |v|\leq|w|.$$
\begin{lemma}\label{scambio}  If $v^{(j)}\geq 0,\ j=1,\ldots, m$ are $m$ positive sequences we have then $v^{(j)}\leq \sup_j  v^{(j)}\leq \sum_j  v^{(j)}$ hence
\begin{equation}\label{scaa}
 \sup_j  |v^{(j)}|\leq |\sup_j  v^{(j)} | \leq \sum_j  |v^{(j)}|\leq m  \sup_j  |v^{(j)}|.
\end{equation}

If $a_i$ are positive numbers 
\begin{equation}\label{pos}
|\sum_ja_jv^{(j)}|\leq  \sum_ja_j|v^{(j)}|\leq  m|\sup_ja_jv^{(j)}|\leq  m|\sum_ja_jv^{(j)}|.
\end{equation}
\end{lemma}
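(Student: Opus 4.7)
The plan is to derive both chains of inequalities directly from the two stated structural assumptions on the norm: (a) monotonicity $0\leq v\leq w\Rightarrow |v|\leq |w|$, and (b) subadditivity/triangle inequality, which the norm enjoys by definition. The lemma is purely a formal manipulation of pointwise bounds together with these two properties, so the whole argument reduces to turning the pointwise inequalities $v^{(j)}\leq \sup_j v^{(j)}\leq \sum_j v^{(j)}$ (which are self-evident because each $v^{(j)}$ is nonnegative) into norm inequalities.

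For \eqref{scaa} I will first fix an index $j$ and apply (a) to the pointwise inequality $v^{(j)}\leq \sup_k v^{(k)}$, obtaining $|v^{(j)}|\leq |\sup_k v^{(k)}|$; taking the supremum over $j$ on the left gives the first inequality $\sup_j|v^{(j)}|\leq |\sup_j v^{(j)}|$. The middle inequality follows by applying (a) to $\sup_j v^{(j)}\leq \sum_j v^{(j)}$ and then using subadditivity to bound $|\sum_j v^{(j)}|\leq \sum_j |v^{(j)}|$. The last inequality $\sum_j|v^{(j)}|\leq m\sup_j|v^{(j)}|$ is trivial since the sum of $m$ nonnegative reals is at most $m$ times the maximum.

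For \eqref{pos}, since the $a_j$ are nonnegative scalars, $a_jv^{(j)}\geq 0$ and one can simply apply \eqref{scaa} to the $m$ sequences $w^{(j)}:=a_jv^{(j)}$. The first inequality is the triangle inequality. The second reads $\sum_j a_j|v^{(j)}|=\sum_j |w^{(j)}|\leq m\sup_j|w^{(j)}|=m|\sup_j a_jv^{(j)}|$, using the equality $|w^{(j)}|=a_j|v^{(j)}|$ (which holds because $a_j\geq 0$ and the norm is homogeneous on nonnegative scalings), together with the already-established bound $\sup_j |w^{(j)}|\leq |\sup_j w^{(j)}|$ from \eqref{scaa}. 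The final inequality is just (a) applied to the pointwise bound $\sup_j a_j v^{(j)}\leq \sum_j a_j v^{(j)}$.

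There is no real obstacle: the statement is a bookkeeping lemma about interchanging $\sup$, $\sum$ and $|\cdot|$ for nonnegative sequences, and everything is implied by the two listed axioms of the norm, so the only thing to be careful about is to use $v^{(j)}\geq 0$ (hence $a_jv^{(j)}\geq 0$) at each step to ensure the monotonicity hypothesis applies. The lemma will then be invoked in the body of the paper to convert componentwise or blockwise bounds on vector fields into bounds in the majorant/weighted norms used throughout.
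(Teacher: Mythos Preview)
Your proof is correct and matches the paper's approach exactly: the paper in fact gives no separate proof, treating the inequalities as immediate consequences of the monotonicity hypothesis $0\leq v\leq w\Rightarrow|v|\leq|w|$ together with the evident pointwise bounds $v^{(j)}\leq\sup_jv^{(j)}\leq\sum_jv^{(j)}$. Your write-up simply fills in the routine steps (triangle inequality, homogeneity, and the reduction of \eqref{pos} to \eqref{scaa} via $w^{(j)}=a_jv^{(j)}$) that the paper leaves implicit.
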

\begin{lemma}\label{domino}
Consider a Hamiltonian $$Q(x,w)= \sum_{|\nu|\leq K,\alpha,\beta\atop
|\alpha|+|\beta|\geq 1}e^{\nu\cdot x}Q_{\nu,\alpha,\beta}z^\alpha\bar z^\beta,$$ (i.e. independent of $y$, of degree at least one in $w$ and in $x$  a trigonomentric polynomial of degree $|\nu|_1\leq K$). Denote  its associated vector field by
$$
X_Q= X_Q^{(y)}+X_Q^{(w)},\quad  X_Q^{(y)}= \partial_xQ(x,w)\cdot \partial_y\,,\quad X_Q^{(w)}=-\ii \partial_{z}Q(x,w)\cdot \partial_{\bar z}+\ii \partial_{\bar z}Q(x,w)\cdot \partial_{ z}\,,
$$
then one has that \begin{equation}\label{stimona}
\| X_Q^{(y)}\|_{s,r}\leq  K \| X_Q^{(w)}\|_{s,r}
\end{equation}
\end{lemma}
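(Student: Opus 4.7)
The plan is to bound the two components of $X_Q$ separately via the majorant construction \eqref{normadueA}, reducing everything to a comparison with a single auxiliary quantity $\sup_{D(s,r)}MQ$, and then to close the loop via Euler's identity applied to $Q$---which is licit because $Q$ has $w$-degree at least one by hypothesis. Throughout, let $MQ(w):=\sum_{\nu,\alpha,\beta}|Q_{\nu,\alpha,\beta}|e^{s|\nu|}|z|^\alpha|\bar z|^\beta$ denote the majorant of $Q$ evaluated at $w=(z,\bar z)\in D(s,r)$.

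First, since $X_Q^{(y_j)}=\partial_{x_j}Q$, the Fourier--Taylor coefficient of $e^{\ii\nu\cdot x}z^\alpha\bar z^\beta$ in $X_Q^{(y_j)}$ equals $\ii\nu_jQ_{\nu,\alpha,\beta}$, so $MX_Q^{(y_j)}(w)=\sum_{\nu,\alpha,\beta}|\nu_j|\,|Q_{\nu,\alpha,\beta}|e^{s|\nu|}|z|^\alpha|\bar z|^\beta$. Summing over $j$ and using $\sum_j|\nu_j|=|\nu|_1\leq K$ yields $|MX_Q^{(y)}(w)|_1\leq K\,MQ(w)$ pointwise on $D(s,r)$. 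Dividing by $r^2$ and taking the supremum gives $\|X_Q^{(y)}\|_{s,r}\leq (K/r^2)\sup_{D(s,r)}MQ$, so it suffices to prove $\sup_{D(s,r)} MQ\leq r^2\|X_Q^{(w)}\|_{s,r}$.

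For this, decompose $Q=\sum_{d\geq 1}Q^{(d)}$ by total $w$-degree. Euler's identity reads $\sum_k(z_k\partial_{z_k}+\bar z_k\partial_{\bar z_k})(z^\alpha\bar z^\beta)=d\,z^\alpha\bar z^\beta$ for $|\alpha|+|\beta|=d$. Since the operators $z_k\partial_{z_k}$ and $\bar z_k\partial_{\bar z_k}$ act on monomials by multiplication by the non-negative integers $\alpha_k,\beta_k$, this identity is preserved when one replaces $Q^{(d)}$ by $MQ^{(d)}$ and $z_k,\bar z_k$ by $|z_k|,|\bar z_k|$. Using $d\geq 1$ together with $MX_Q^{(\bar z_k)}=M\partial_{z_k}Q$ and $MX_Q^{(z_k)}=M\partial_{\bar z_k}Q$, one obtains $MQ(w)\leq \sum_k\bigl(|z_k|\,MX_Q^{(\bar z_k)}(w)+|\bar z_k|\,MX_Q^{(z_k)}(w)\bigr)$ on $D(s,r)$. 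A Cauchy--Schwarz argument with the weights $e^{a|k|}|k|^p$ defining $\ell^{(a,p)}$ gives $\sum_k|z_k|u_k\leq\|z\|_{a,p}\|u\|_{a,p}$ for any non-negative sequence $u$ (since $a,p>0$ makes $e^{-2a|k|}|k|^{-2p}\leq e^{2a|k|}|k|^{2p}$ term-by-term for $|k|\geq 1$). Combined with $\|z\|_{a,p},\|\bar z\|_{a,p}\leq r$ on $D(s,r)$ and the definition \eqref{normaEsr}, this yields $MQ(w)\leq r\bigl(\|MX_Q^{(\bar z)}\|_{a,p}+\|MX_Q^{(z)}\|_{a,p}\bigr)=r^2\|MX_Q^{(w)}\|_{V,s,r}$. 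Taking the supremum and combining with the first step gives $\|X_Q^{(y)}\|_{s,r}\leq K\|X_Q^{(w)}\|_{s,r}$.

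The main obstacle is the verification in the third step that the scalar Euler identity for $Q^{(d)}$ passes to an inequality for the majorant $MQ^{(d)}$, which hinges on the non-negativity of the multipliers $\alpha_k,\beta_k$ so that absolute values do not destroy any cancellation. The rest is routine majorant calculus; notably, the asymmetry between the $y$-weight $1/r^2$ and the $w$-weight $1/r$ in \eqref{normaEsr} is precisely compensated by the factor of $r$ coming from $|z_k|\leq\|z\|_{a,p}$ in the Cauchy--Schwarz step, which is why the final bound depends only on $K$.
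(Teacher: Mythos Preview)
Your proof is correct and follows essentially the same path as the paper's: extract the factor $K$ from the $y$-component via $\sum_j|\nu_j|=|\nu|_1\leq K$, then control $MQ$ by $M\partial_w Q$ using that $Q$ has $w$-degree $\geq 1$, and close with Cauchy--Schwarz plus $\|\cdot\|_{\ell^2}\leq\|\cdot\|_{a,p}$. The only cosmetic difference is in the middle step: the paper writes $Q=\sum_k A_k z_k+B_k\bar z_k$ by arbitrarily assigning each monomial to one of its $w$-factors and then observes $|MA_k|\leq|M\partial_{z_k}Q|$, whereas you use Euler's identity to distribute each monomial canonically over all its factors---same inequality, different bookkeeping.
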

\begin{proof}
We can write (collecting is some arbitrary way the monomials)
$$
Q=\sum_{k} A_k(x,w) z_k + B_k(x,w)\bar z_k.
$$ 
Then, by definition
$$
\| X_Q^{(y)}\|_{s,r}= r^{-2}\sup_{\|w\|_{a,p}\leq r}\sum_{j=1}^n |\sum_k|M\partial_{x_j}A_k(x,w)|z_k+ |M\partial_{x_j}B_k(x,w)|\bar z_k|
$$
now we know that $\sum_{j=1}^n |M\partial_{x_j}A_k(x,w)|\leq K |M A_k(x,w)|$ then by Cauchy-Schwartz we have
$$
\| X_Q^{(y)}\|_{s,r}\leq r^{-2}K\sup_{\|w\|_{a,p}\leq r}\sqrt{\sum_k |z_k|^2+|\bar z_k|^2}\sqrt{\sum_k |M A_k|^2+|M B_k|^2}
$$ 
and we have that $|M A_k|\leq |M \partial_{z_k} Q|$ (same for $B$)
since $\sum_k |z_k|^2+|\bar z_k|^2\leq r^2$ and the $\ell^2$ norm is dominated by the $\|\cdot\|_{a,p}$ norm the claim follows.
\qed\end{proof}
Now  in $\ell_{a,p}$ we can define the norm
$$v= \{v_k\}_{k\in \Z^d}\qquad   | v|^2_{a,p}= \sum_{\et} e^{2a|\er_\et|}|\er_\et|^{2p} \sup_{k\in \dodo_\et}|v_k|^2.
$$
\begin{remark}\label{normeeq}
This norm is built so that if for $v,w\in \ell_{a,p}$ we have for all $\et\in \mathfrak T$ that  $\sup_{k\in\dodo_\et}|v_k|\leq \sup_{k\in\dodo_\et}|w_k|$ then $|v|_{a,p}\leq |w|_{a,p}$.
One sees that the norms $\|\cdot\|_{a,p}$ and $|\cdot|_{a,p}$ are equivalent, namely  $c_{a,p} | v|_{a,p}\leq \| v\|_{a,p}\leq C_{a,p} | v|_{a,p}$, with:
$$
C_{a,p}:= \sup_{\et,k\in \dodo_\et} \sqrt{d_\et} e^{a( |k|-|\er_\et| )}\frac{|k|^{p}}{|\er_\et|^{p}}\,,\quad c_{a,p}= \inf_{\et,k\in \dodo_\et} e^{a( |k|-|\er_\et| )}\frac{|k|^{p}}{|\er_\et|^{p}}.
$$
\end{remark}
  \begin{lemma}\label{orrore}
 Given a hamiltonian $H=N+P$ with $P$ a  quasi--T\"oplitz  function with parameters $(K,\theta,\mu)$,  and $N$ as in \eqref{notazioni}. Consider the linear operator
 $$
L:= {\rm ad}(N )+ \Pi_{{\rm rg},\leq K} {\rm ad}(\Pi_{> 2}P ) 
 $$
 and the equation $L F= \Pi_{{\rm rg},\leq K}P$. In the set of $\xi$ for which formul\ae \eqref{piffe} hold, for all $|\nu|<K$, $\et,\et'\in \mathfrak T$, 
 such equation admits a unique solution $F=\Pi_{{\rm rg}}F$ which is a quasi--T\"oplitz  function  with parameters $(K,\theta,\mu)$ and satisfies the bounds
 $$
 \|X_F\|^T_{\overrightarrow p^+}\lessdot \delta^{-2} K^{3(\ell+4)\varrho}\|X_{P_{{\rm rg}}}\|^T_{\overrightarrow p}\,,$$
 where $ \overrightarrow p^+=( s-2\delta s ,r-2\delta r,K,\theta+2\delta\theta,\mu-2\delta\mu)$ and  $\overrightarrow p=(s,r,K,\theta,\mu)$.
\end{lemma}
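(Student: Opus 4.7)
The plan is to reduce everything to inverting $\mathrm{ad}(N)$ on $\mathcal F_{{\rm rg},\leq K}$, since the extra term $\Pi_{{\rm rg},\leq K}\mathrm{ad}(\Pi_{>2}P)$ is nilpotent of order three. Indeed, by Remark \ref{nilp}, $\mathrm{ad}(\Pi_{>2}P)$ has positive degree ($\geq 1$) on $\mathcal F$ and hence $A:=\mathrm{ad}(N)^{-1}\Pi_{{\rm rg},\leq K}\mathrm{ad}(\Pi_{>2}P)$ satisfies $A^3=0$ on $\mathcal F$. Thus $L=\mathrm{ad}(N)(1+A)$ and the unique solution to $LF=\Pi_{{\rm rg},\leq K}P$ is
\begin{equation*}
F=(1-A+A^{2})\,\mathrm{ad}(N)^{-1}\Pi_{{\rm rg},\leq K}P,
\end{equation*}
which automatically lies in $\mathcal F_{{\rm rg}}$. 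The desired bound will follow once we control $\mathrm{ad}(N)^{-1}$ on $\mathcal F_{{\rm rg},\leq K}$ and use the Cauchy estimates from Remark \ref{graa} (and Lemma 2.17 of \cite{BBP1}) for the Poisson brackets in $A$.

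First I would treat $\mathrm{ad}(N)^{-1}$ block by block along the decomposition $F^0\oplus F^{0,1}\oplus F^{0,2}$ given at the end of Section 3. On the block $(\nu,\mathfrak t)_\sigma\subset F^{0,1}$ the operator acts by the matrix $\sigma(\omega\cdot\nu+\Omega_\et)$ of \eqref{scoccia1}, and on $(\nu,\mathfrak t,\mathfrak t')_{\sigma,\sigma'}\subset F^{0,2}$ by $\sigma(\omega\cdot\nu+L(\Omega_\et)+\sigma\sigma'R(\Omega_{\et'}))$ of \eqref{scoccia2}. The operator norms of the inverses of these finite matrices (of size at most $(2d+1)^2$) are bounded by $\varepsilon^{-2q}K^{\varrho}$ precisely by hypothesis \eqref{piffe}. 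Lemma \ref{tecni4}, applied with $\pp\leq (2d+1)^2$ and $\lambda=\varepsilon^2 M^{-1}$, converts these $L^2$ estimates on $\mathcal O$ into bounds in the $\lambda$-weighted $C^\ell$ norm with a loss of at most $K^{(\ell+1)\varrho}$ per inversion. To pass from the $\partial_y$ component of the vector field back to the full majorant norm on $F^{1,0}$ one uses Lemma \ref{domino}, picking up an extra factor $K$ from the ultraviolet cut $|\nu|\leq K$. The scalar block $F^{0,0}\oplus F^{1,0}$ is handled identically using only the first bound of \eqref{piffe}.

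For the quasi--T\"oplitz norm I would invoke the machinery of \cite{PP3} (Part 2, Section 8), which only uses the structural properties of $N$ already proved in Proposition \ref{teo2}: namely items \emph{iii)}--\emph{v)} (translation invariance, affine stratification and piecewise T\"oplitz character of $\theta_\et$). These ensure that the small divisors on each T\"oplitz stratum are constant on the stratum, so that $\mathrm{ad}(N)^{-1}$ maps quasi--T\"oplitz functions to quasi--T\"oplitz functions, after shrinking the parameters to $\vartheta+2\delta\vartheta$, $\mu-2\delta\mu$ as needed to apply the projection estimates. This is exactly the step where the block--diagonal structure of the present paper replaces the diagonal structure of \cite{PP3} but without structural change to the argument.

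The main quantitative source of the power $K^{3(\ell+4)\varrho}$ is then that three applications of $\mathrm{ad}(N)^{-1}$ enter the Neumann expansion $1-A+A^{2}$; each contributes roughly $K^{(\ell+1)\varrho}$ from Lemma \ref{tecni4}, plus additional $K^{O(\varrho)}$ losses from the quasi--T\"oplitz projection, for a net $K^{3(\ell+4)\varrho}$. The factor $\delta^{-2}$ comes from the two Cauchy estimates needed to bound the Poisson brackets $\mathrm{ad}(\Pi_{>2}P)$ in the passage from $(s,r,\vartheta,\mu)$ to $(s-2\delta s,r-2\delta r,\vartheta+2\delta\vartheta,\mu-2\delta\mu)$; this is made quantitative by Lemma 2.17 of \cite{BBP1} and its quasi--T\"oplitz counterpart, Proposition 9.2 of \cite{PP3}. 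The main obstacle — keeping track of the quasi--T\"oplitz character through three inversions and two Poisson brackets while losing only a bounded power of $K^\varrho$ — is by now standard once the piecewise T\"oplitz property \emph{v)} of Proposition \ref{teo2} is in hand, so the argument reduces, up to bookkeeping, to the corresponding statement in \cite{PP3}.
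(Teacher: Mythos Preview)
Your approach is essentially the paper's: Neumann series $L^{-1}=D^{-1}(1-A+A^2)$ from Remark \ref{nilp}, block-wise inversion of $D=\mathrm{ad}(N)$ via \eqref{piffe} and Lemma \ref{tecni4}, Lemma \ref{domino} for the $\partial_y$ component, Cauchy estimates for the two Poisson brackets in $A$, and a deferral to \cite{PP3} for the T\"oplitz part.

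The one point you gloss over that the paper works out carefully is how to pass from the \emph{block-wise} bound of Lemma \ref{tecni4} (which controls $\sup_{k\in\dodo_\et}|\partial_\xi^\alpha Y_k|$ in terms of $\sup_{k\in\dodo_\et}|\partial_\xi^\alpha b_k|$) to a bound on the full majorant norm $\|X_Y\|^\lambda_{s,r}$. This is not automatic: the norm $\|\cdot\|_{a,p}$ sums over all $k\in S^c$, while Lemma \ref{tecni4} only gives a sup over each block. The paper handles this by introducing an equivalent norm $|\cdot|_{a,p}$ built from $\sup_{k\in\dodo_\et}$ (Remark \ref{normeeq}), and then repeatedly interchanging sups and sums via Lemma \ref{scambio}; the constants $C_{a,p},c_{a,p}$ and the factors of $d_\et\leq\ell$ picked up in these exchanges account for part of the exponent $(\ell+3)$ in the intermediate bound \eqref{alleluia}. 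Your phrase ``up to bookkeeping'' is fair, but this is the bookkeeping, and it is where the block-diagonal structure (as opposed to the scalar one in \cite{PP3}) actually enters the $\lambda$-norm estimate.
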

\begin{proof}
 We discuss in detail the estimates on the $\lambda$ norm, the ones on the T\"oplitz norm follow verbatim from Proposition 10.16 of \cite{PP3}.
 
  We recall that by remark \ref{nilp},
setting $D=  {\rm ad}(\Pi_{{\rm ker}}P )$ and $A= D^{-1} \Pi_{{\rm rg},\leq K} {\rm ad}(\Pi_{> 2}P ) $ we have
$$
L=D(1+A)\quad\longrightarrow\quad L^{-1}= D^{-1}( 1- A +A^2).
$$
Let us now work in the set of $\xi$ where \eqref{piffe} hold. We first prove that for any quadratic Hamiltonian $b\in \mathcal F_{{{\rm rg}},\leq K}$, setting $Y= D^{-1}b$, we have
\begin{equation}\label{alleluia}
\|X_Y\|^\lambda_{s,r}\leq \e^{-2q}K^{(\ell+3) \varrho}\|X_b\|^\lambda_{s,r}.
\end{equation}

%
%

 As a first step we notice that if we divide $Y= Y^{(0)}+Y^{(1)}+Y^{(2)}$ w.r.t the degree in $w=z,\bar z$ then it is sufficient to bound each $\|X_{Y^{(i)}}\|_{s,r}^\lambda$ in terms of the corresponding $\|X_{b^{(i)}}\|_{s,r}^\lambda$.

The term $Y^{(0)}$ is trivial. 
We now study $Y,b$ of degree one.

In studying linear real Hamiltonians of the form $h= \sum_{\s,k,\nu} h_{\s,k}(\nu)e^{\ii \nu\cdot x} {z_k^\s}$, by Lemma \ref{domino}, we only need to bound the $w$ component of the vector field, namely
$$
 \| X_{h}\|_{s,r} \leq 2(K+1) r^{-1}\big( \sum_{k} (\sum_\nu | h_{+,k}(\nu)| e^{s|\nu|} )^2 e^{2a |k|}|k|^{2p}\big)^{\frac12}
$$
Now we use this formulas with $ h=\partial_\xi^\alpha Y^{(1)}$ , $|\alpha|\leq \ell$.
Then, by using \eqref{pos} with $a_j=\lambda^{|\alpha|}, m\leq (\ell+1)^n, v_j= \{\sum_\nu e^{s|\nu|}|\partial_\xi^\alpha h_{s(k),k}(\nu)|\}_{k\in \Z^d}$ and $|\cdot|=\|\cdot\|_{a,p}$, we have  that
\begin{eqnarray}\label{prima}
\| X_{Y}\|^\lambda_{s,r}&\leq& (2   K +2)(\ell+1)^n  r^{-1}  \| M^\lambda Y\|_{a,p}, \nonumber\\ M^\lambda h &:=& \{\sum_\nu e^{s|\nu|}\sum_{|\alpha|\leq \ell} \lambda^{|\alpha|} |\partial_\xi^\alpha h_{s(k),k}(\nu)|\}_{k\in \Z^d}.
\end{eqnarray}

Now we have  by Lemma \ref{tecni4} that for each $\et$ and for each $k\in \dodo_\et$
$$
\sum_{|\alpha|\leq \ell} \lambda^{|\alpha|}\sup_{k\in \dodo_\et} |\partial_\xi^\alpha Y_{s(k),k}(\nu)| \leq \e^{-2q} K^{(\ell+2)\varrho}\sum_{|\alpha|\leq \ell} \lambda^{|\alpha|} \sup_{k\in \dodo_\et}|\partial_\xi^\alpha b_{s(k),k}(\nu)| 
$$
We exchange the $\sup$ with the sum over $\nu$ using  \eqref{scaa} with $|\{v_\nu\}|= \sum_\nu e^{s|\nu|}|v_\nu|$, then we have that
$$
\sup_{k\in \dodo_\et}|(M^\lambda Y)_k|\leq d_\et \e^{-2q} K^{(\ell+2)\varrho} \sup_{k\in \dodo_\et}|(M^\lambda b)_k|
$$

 By Remark \ref{normeeq} we conclude that
$| M^\lambda Y|_{a,p} \leq \ell \e^{-2q} K^{(\ell+2)\varrho} | M^\lambda b|_{a,p}$
and
\begin{equation}\label{ultima}
\| M^\lambda Y\|_{a,p} \leq \ell C_{a,p}c_{a,p}^{-1}\e^{-2q} K^{(\ell+2)\varrho} \| M^\lambda b\|_{a,p}.
\end{equation}
Then by \eqref{pos} $$\| M^\lambda b\|_{a,p}\leq \sum_{|\alpha|\leq \ell} \lambda^{|\alpha|} \| \{\sum_{\nu}e^{s|\nu|}|\partial_\xi^\alpha b_{s(k),k}(\nu)|\}_{k\in \Z^d} \|_{a,p}\leq r \|X_{b}\|^\lambda_{s,r}. $$
Substituting  \eqref{ultima}  in \eqref{prima} and then using this last bound we obtain 
$$
\| X_{Y}\|^\lambda_{s,r}\leq (2   K +2)(\ell+1)^n \ell C_{a,p}c_{a,p}^{-1}\e^{-2q} K^{(\ell+2)\varrho}\|X_{b}\|^\lambda_{s,r}\leq \e^{-2q}K^{(\ell+3)\varrho}\|X_{b}\|^\lambda_{s,r},
$$
provided $K$ is large (note that $\varrho>1$).\bigskip

We need to treat now the case of (regular) quadratic Hamiltonians 
 (with conservation of momentum)  which belong to  the range. 
To  any quadratic hamiltonian $Q(x,w)$ we associate an $x$ dependent linear operator $\tilde Q(x)$  on $\ell_{a,p}$ by Poisson bracket. By Formula \eqref{represQ} this amounts to writing $Q(x,w)=-\frac12 w \tilde Q(x) J w^t$. 
Then in the basis $z_k,\bar z_k$ we can define the majorant (of the operator ad$(Q)$ ) in matrix terms as
$$
\widehat Q_{k,h}^{\s,\s'}= \sum_{\nu}e^{s|\nu|} |\tilde Q_{k,h}^{\s,\s'}(\nu)|\,
$$
Similarly to the linear case we only need to control $$\|X^{(w)}_Q\|_{s,r}= \sup_{\|w\|_{a,p}\leq 1}\| \widehat Q w\|_{a,p}\,,\quad w=\{z_k,\bar z_k\}_{k\in\Z^d}.$$
Then  passing to the equivalent norm $|\cdot|_{a,p}$ of Remark \ref{normeeq}, we have
$$
\|X_Q\|_{s,r}\leq (K+1)c_{a,p}^{-1}C_{a,p} \sup_{|w|_{a,p}\leq 1}| Q w|_{a,p}.
$$
 Now as in the linear case we pass to the $\lambda$ norm and get
 \begin{eqnarray}\label{parto}
& \|X_Q\|^\lambda_{s,r}\leq (\ell+1)^{n}(K+1)c_{a,p}^{-1}C_{a,p} \sup_{|w|_{a,p}\leq 1}| \widehat Q^\lambda w|_{a,p}\,, \nonumber\\ & (\widehat Q^\lambda)_{k,h}^{\s,\s'}:=\sum_{\alpha}\lambda^{|\alpha|}\sum_{\nu}e^{s|\nu|} |\partial_\xi^\alpha\tilde Q_{k,h}^{\s,\s'}(\nu)|
\end{eqnarray} 
 We  consider  $Y=D^{-1}b$  where $b$  is a quadratic Hamiltonian in the range.  Since $D$ preserves the range  we have that also $Y$ is in the range.
 Now we claim that for all $w$ and for all $\et$ we have 
 \begin{equation}\label{assurdo}
\sup_{k\in \dodo_\et}|(\widehat Y^\lambda w)_k|\leq \ell \e^{-2q} K^{(\ell+2)\varrho} \sup_{k\in \dodo_\et}|(\hat b^\lambda \tilde w)_k|\,,\quad \tilde w_h= \sup_{k\in \dodo_\et}|w_{k}|\;\forall h\in \dodo_\et.
\end{equation}
If this holds true by remark \ref{normeeq} we have 
$$
|\widehat Y^\lambda w|_{a,p}\leq \ell \e^{-2q} K^{(\ell+2)\varrho}|\hat b^\lambda \tilde w|_{a,p}
$$
then since $|\tilde w|_{a,p}=|w|_{a,p}$ (by definition) we have
$$
\sup_{|w|_{a,p}\leq 1} |\widehat Y^\lambda w|_{a,p}\leq \ell \e^{-2q} K^{(\ell+2)\varrho}\sup_{|u|_{a,p}\leq 1}|\hat b^\lambda u|_{a,p}.
$$
Now we can apply \eqref{pos} to take out the sum over $\alpha$ from the norm $|\cdot|_{a,p}$ and we get
$$
|\hat b^\lambda u|_{a,p}\leq (\ell+1)^n \sum_{\alpha}\lambda^{|\alpha|}|\widehat{ \partial_\xi^\alpha b}\; u|_{a,p}\leq (\ell+1)^n  \|X_b\|_{s,r}^\lambda
$$
$$
\|X_Y\|^\lambda_{s,r}\leq (\ell+1)^{n}(K+1)c_{a,p}^{-1}C_{a,p}\ell \e^{-2q} K^{(\ell+2)\varrho}(\ell+1)^n  \|X_b\|_{s,r}^\lambda\leq 
K^{(\ell+3)\varrho}\e^{-2q}  \|X_b\|_{s,r}^\lambda. $$
For the proof of \eqref{assurdo} we first use Lemma \ref{tecni4} to deduce
$$
\sum_\al \lambda^{|\alpha|}\sup_{k\in \dodo_\et\atop h\in \dodo_{\et'}}|\partial_\xi^\alpha \tilde Y_{k,h}^{s(k),s(h)}(\nu)|\leq \e^{-2q} K^{(\ell+2)\varrho}\sum_\al \lambda^{|\alpha|}\sup_{k\in \dodo_\et\atop h\in \dodo_{\et'}}|\partial_\xi^\alpha \tilde b_{k,h}^{s(k),s(h)}(\nu)|.
$$
By repeated use of \eqref{scaa} and \eqref{pos} and using the fact that $d_\et d_{\et'}\leq \ell$ the formula follows.
This completes the proof of \eqref{alleluia}.
Now we turn to  $F= L^{-1}P_{{\rm rg}}$. For any regular hamiltonian $h$ and for all $s'<s,r'<r$, we can bound $\|X_{\{P, h\}}\|_{s',r'}^\lambda$ by Cauchy estimates recalling that
$\|X_P\|_{s,r}\leq \Theta	\leq \frac32 \Theta_0$ (here $\Theta$ is one of the telescopic parameters defined in section \ref{quaes}). Then by using  \eqref{alleluia} at most three times we get
\begin{equation}\label{alle2}
\|X_F\|^\lambda_{s-2\delta s ,r-2\delta r}\lessdot \delta^{-2} K^{3(\ell+3)\varrho}\|X_{P_{{\rm rg}}}\|^\lambda_{s,r}.
\end{equation}
Then passing to the T\"oplitz norm we follow word by word \cite{PP3}, Proposition 10.16.  

By definition the quasi--T\"oplitz property for $F$ depends only on the quadratic part  $F^{(2)}$.  Then proving that $F$ is quasi-T\"oplitz requires that we produce a {\em piece-wise T\"oplitz} approximation and an error. As in Proposition 10.16 for the part of $F$ with $\s\s'=1$ we  take  zero as piece-wise T\"oplitz approximation.

For given $\nu$  we treat the quadratic part as sum of blocks  $F_{\nu,\et,\et'}$.  When  $\s\s'=-1$ by conservation of momentum  $\pi(\nu)+\er_\et-\er_{\et'}=0$  so only $\theta_{\et'}=\theta'$ must be fixed (and must belong to a finite list) then the couple $(\pi(\nu)+\er_\et, \theta')$ identifies (at most) one $\et'$ and hence the block $\dodo_{\et'}$. Then  we identify the parallel strata $Z$ to which $\dodo_\et$ belong and the same for $\dodo_{\et'}$. We define  the quasi-T\"oplitz approximation as follows.  Recall that  $F_{\nu,\et,\et'}$ solves the equation  discussed in remark \eqref{nilp}  inverting the operator $L_m$. This solution is a     3 step algorithm in each step we either perform a Poisson bracket, which preserves the Quasi--T\"oplitz property  or  we solve an equation of type
\begin{equation}\label{sullos}
\Big((\omega\cdot\nu)-2  \pi(\nu)\cdot \er_\et   -|\pi(\nu)|^2+\theta_\et+\tilde \Omega_\et-\theta_{\et'}- \tilde \Omega_{\et'}\Big) F_{\nu,\et,\et'}=P_{\nu,\et,\et'},
\end{equation} by induction $P_{\nu,\et,\et'}$ has a  quasi--T\"oplitz  approximation  $P_{\nu,\et,\et'}^{qt}$ same for $\tilde \Omega_\et$ and $\tilde \Omega_{\et'}$.  We distinguish two cases, if $   \pi(\nu)\cdot x   $ is not constant on $\Sigma_\et$  we have seen   that $  \pi(\nu)\cdot \er_\et   $  is large hence $F_{\nu,\et,\et'}$ is quite small and   we can again take zero as its piece-wise T\"oplitz approximation.  

Otherwise  in Formula \eqref{sullos}, the only terms  which are not constant  on the stratum  are  $\tilde \Omega_\et- \tilde \Omega_{\et'}$. By induction they have a quasi--T\"oplitz  approximation  $\tilde \Omega_\et^{qt}- \tilde \Omega_{\et'}^{qt}$ and the final error will produce the error of the quasi-T\"oplitz approximation.

That is  we take the solution $ \bar F_{\nu,\et,\et'}^{qt}$ of  
\begin{equation}\label{sullos1}
\Big((\omega\cdot\nu)-2  \pi(\nu)\cdot \er_\et   -|\pi(\nu)|^2+ \theta_\et+\tilde \Omega^{qt}_\et-\theta_{\et'}- \tilde \Omega^{qt}_{\et'} \Big) \bar F_{\nu,\et,\et'}^{qt}=P_{\nu,\et,\et'}^{qt}.
\end{equation} as   quasi-T\"oplitz approximation of $F_{\nu,\et,\et'}$ on this stratum.

The final point is now to show that  we can bound the inverse of the matrix on the left-hand side of \eqref{sullos1} by $K^{\rho}$.
on this stratum the constraint \eqref{reson} on a single point is sufficient to constraint  on all points of the stratum. For this one has to compute  the value of $\varrho$  to be used on the stratum  and compare it with the error term  in the   quasi--T\"oplitz  approximation.
\begin{eqnarray}\label{alle3}
& \|X_F\|^T_{\overrightarrow p^+}\lessdot \delta^{-2} K^{3(\ell+4)\varrho}\|X_{P_{{\rm rg}}}\|^T_{\overrightarrow p}\,,\nonumber \\ & \overrightarrow p^+=( s-2\delta s ,r-2\delta r,K,\theta+2\delta\theta,\mu-2\delta\mu)\,,\; \overrightarrow p=(s,r,K,\theta,\mu)
\end{eqnarray}
\qed\end{proof}
\section{Quasi T\"oplitz structure}\label{prilone}
{\em We group here an informal description of the main properties of quasi-T\"oplitz functions needed through the KAM algorithm. Recall that the notion of quasi-T\"oplitz is given through some parameters $K,\vartheta,\mu$.

First one has to identify,  for each $N\geq K$ a natural affine structure. This is done by introducing the notion of optimal presentation for a  point $m$ and the notion of cut.}
\paragraph{The stratification by cuts\label{bycuts}.}  The optimal presentation of  a point $m\in \Z^d$ is a combinatorial notion dependent on   $N$. One presents the point $m$ as the intersection of  $d$ hyperplanes    $(v_i,x)=p_i$ with $v_i$ of norm $\leq N$ and such  that the list $p_i,v_i$ is {\em minimal} in an explicit lexicographic order. This presentation, called {\em optimal}  is unique and denoted   $m\frec{N}[v_i;p_i]$.

Then, given an open interval $I=(a,b)$,  one says that a point $m\frec{N}[v_i;p_i]$ has a cut at $I$ if no coordinate $p_i$ lies in $I$.  If it has such a cut  then we have some $p_\ell<a,\ p_{\ell+1}>b$  and the first $\ell$ equations of the optimal presentation of $p$  define some affine space of codimension $\ell$, denoted $ [v_i;p_i]_\ell$.  This defines a linear stratification at least on the set of points which have such a cut.

If one gives  $d+1$ disjoint increasing intervals $I_j,\ j=1,\ldots,d+1$   clearly for any point $m$ there is a minimum $j$ such that $m$ has a cut for $I_j$.  To this cut is then associated an affine space and   in this way one constructs the desired stratification as explained in remark \ref{lst}.  
We introduce a parameter $\varrho_0$ which will be fixed at the end.  We start from   intervals $(\varrho_i,\varrho_{i+1})$   in the scale $\log_N$   with
$$\varrho_1=4d\varrho_0,\quad \varrho_{i+1}=4d \varrho_i$$  and impose for the condition that $m\frec{N}[v_i;p_i]$ has a cut at $\ell$ if there exists $j$ such  that,  $\log_N 2 p_\ell< \varrho_j,\ \log_N \frac{p_{\ell+1}}{4}\geq \varrho_{j+1}$. 
This indeed is a {\em cut} associated to $d+1$ disjoint intervals and hence provides a stratification $\Sigma^{N,\varrho}$, see section 3.1 of \cite{PX}.  We shall usually drop the symbol $ \varrho $ and write just $\Sigma^{N }$.

 Thus given any point $m$ this construction determines the stratum $\Sigma^N_m$ to which it belongs to and the linear space, of some codimension $\ell$, spanned by the stratum.  If $v$ is a vector with $|v|_1\leq N$  consider the scalar product $(v,x),\ x\in \Sigma^N_m$. If this is not constant on the stratum by the definitions it is {\em large with $4N^{\varrho_{j+1}}$}.   
 \begin{lemma}\label{ecchec}
 The number of strata with a cut at $\varrho_j$  is less than $N^{ (2d-1)\varrho_j}$.
\end{lemma}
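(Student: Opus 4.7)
\textbf{Proof proposal for Lemma \ref{ecchec}.}
The plan is to bound the cardinality of the set of strata at level $\varrho_j$ by counting the data which defines a stratum, namely the first $\ell$ equations of the optimal presentation of any of its points. Given a stratum $\Sigma$ with a cut at $\varrho_j$, pick any $m\in\Sigma$ with optimal presentation $m\frec{N}[v_i;p_i]$. By the definition of the cut at $\varrho_j$ there is some codimension $\ell\in\{1,\dots,d\}$ such that $\log_N 2p_\ell<\varrho_j$ and the stratum equals the affine subspace $[v_i;p_i]_\ell$, cut out by the first $\ell$ equations $(v_i,x)=p_i$. Different strata correspond to different such data $(\ell;v_1,\dots,v_\ell;p_1,\dots,p_\ell)$, so it suffices to count these tuples.

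First I would bound the $v_i$'s: by the definition of optimal presentation each $v_i$ satisfies $|v_i|_1\leq N$, so the number of admissible choices of a single $v_i$ is at most $(2N+1)^d\lessdot N^d$. Next, the scalars $p_i$: by the lexicographic minimality built into the definition of the optimal presentation, the $p_i$ are non-decreasing, so $p_i\leq p_\ell<N^{\varrho_j}/2$, while also being bounded below (essentially by $0$ in absolute value up to the norm of $v_i$ times $N$, which is absorbed). Hence each $p_i$ admits at most $\lessdot N^{\varrho_j}$ integer values. Summing over the $\ell\leq d$ possible codimensions and multiplying,
\begin{equation*}
\#\{\text{strata with cut at }\varrho_j\}\;\lessdot\;\sum_{\ell=1}^{d}\bigl(N^{d}\bigr)^{\ell}\bigl(N^{\varrho_j}\bigr)^{\ell}\;\lessdot\;N^{d^2+d\varrho_j}.
\end{equation*}

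The conclusion follows provided the exponent $d^2+d\varrho_j$ is bounded by $(2d-1)\varrho_j$, which amounts to $\varrho_j\geq d^2/(d-1)$. Since $\varrho_j\geq\varrho_1=4d\varrho_0$ and $\varrho_0$ is a free parameter which we are allowed to take large at the end of the construction, this inequality is satisfied, and absorbs the constant arising from the $\lessdot$'s as well (for $N$ and $\varrho_0$ large enough).

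The step I expect to require the most care is justifying the bound $p_i\leq p_\ell$ for $i\leq \ell$: this uses the precise lexicographic order used to define optimal presentations in \cite{PX}, which I would cite verbatim rather than reprove. Beyond this, the argument is elementary book-keeping and the gap between $d\varrho_j$ and $(2d-1)\varrho_j$ (i.e.\ a factor of $\varrho_j(d-1)$ in the exponent) comfortably swallows the overhead $N^{d^2}$, so no delicate estimate is needed at the end.
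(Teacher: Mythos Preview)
Your proposal is correct and follows essentially the same approach as the paper: both count strata by counting the defining data $(\ell;v_1,\dots,v_\ell;p_1,\dots,p_\ell)$, using $|v_i|_1\leq N$ and $p_i\leq p_\ell\lessdot N^{\varrho_j}$, and then absorb the overhead $N^{O(d^2)}$ into the gap between $d\varrho_j$ and $(2d-1)\varrho_j$ by taking $\varrho_0$ large. The paper simply cites this counting as Remark~7.7 of \cite{PP3} (with an extra constant $\kappa$ in the bound on the $v_i$'s coming from the refined stratification) and obtains the slightly sharper exponent $(2d-2)\varrho_j$, but otherwise the argument is the same.
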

\begin{proof}
 This is Remark 7.7 of \cite{PP3} with $p= 4 N^{\varrho_j}$, indeed summing over the codimension
 $$\sum_{\ell=0}^d(2 \kappa N)^{\ell d}(4N^{ \varrho_j})^\ell \leq N^{d(\varrho_j+d+1)} \leq N^{(2d-2)\varrho_j}$$ provided that
 $ \varrho_0> d(d+1)/(d-2)$.
\qed\end{proof}
The stratification $\Sigma^N$, provided that $N$ is large enough, refines the  stratification  given by Definition \ref{basstra} (cf.Theorem 5 of \cite{PP3}).  In fact  given  any element $\et\in \mathfrak T_s$, this determines the set $\dodo_\et$ and the root $\er_\et$ (which in general is NOT in the set $\dodo_\et$).

We then have that, the stratum through  $\er_\et$ and the ones passing through each individual  point of  $\dodo_\et$ are all parallel to each other and  the union of these strata is also a union of  some family  of sets $\{\dodo_\et,\er_\et\}$.\footnote{strictly speaking one needs a slight refinement  of  the definitions adding some more strata but with the same estimate on the number of strata.}

\begin{remark}\label{strT}
In other words  we have a decomposition of  the set  $\mathfrak T_s$ into    sets  $\mathfrak T_s(i)$ (which are the ones introduced in Proposition \ref{prinor}), so that   each stratum   $\mathfrak T_s(i)$ is the indexing set of the  strata through the points in $\dodo_\et$ and $\er_\et$ for all  $\et\in \mathfrak T_s(i)$.
\end{remark}  \paragraph{$\tau$--bilinear and quasi--T\"oplitz functions}  The quasi-T\"oplitz property can not be given only on the quadratic terms of a Hamiltonian since we need  a class of functions closed w.r.t. Poisson brackets, so we first relax the notion of quadratic Hamiltonian. \smallskip

Given $\frac12\leq \mu,\vartheta\leq 4$ and $\varrho_1\leq 4 d \tau\leq \varrho_{d+1}$ we define (cf. 8.6 of \cite{PP3}) the {\em $(N,\tet1,\mu,\tau)$--bilinear  functions} to be  the    functions which are bilinear in the {\em high variables} $z_m^\s,z_n^{\s'}$. By this  we mean that $|m|,|n| > \vartheta N^{a_{d+1}}$ and both $m$ and $n$ have a cut at $(\mu N^\tau,\tet1 N^{4d \tau})$. These functions may depend on  $x,y$ and on the   {\em small  variables} $z_j^\s$ with $|j|<\mu N^3$ in a possibly complicated way but with  the constraint that the coefficients have {\em low momentum} (cf. 8.2 of \cite{PP3}).

Finally we define  the {\em piecewise T\"oplitz} functions as those   $(N,\tet1,\mu,\tau)$--bilinear functions which are constant when restricted to  each stratum (cf. 8.10 of \cite{PP3}).

We can now define the  $(K,\tet1,\mu)$--quasi--T\"oplitz
functions.  Informally speaking given a function    $f$, for all $N>K, \varrho_1\leq 4d \tau\leq \varrho_{d+1}$, we project it   on the   $(N,\tet1,\mu,\tau)$--bilinear functions   and  we say that $f$ is quasi--T\"oplitz if all these projections are {\em well approximated} by a piecewise T\"oplitz function.
To be more precise, $\tau$ controls the size of the error function, namely    the $(N,\tet1,\mu,\tau)$--bilinear  part of $f$ is approximated by a piecewise T\"oplitz function with an error of the order $N^{-4d \tau}$,  for all $N\geq K$.

The role of the parameters $K,\tet1,\mu$ is to ensure  that if $f,g$ are quasi--T\"oplitz with parameters $K,\tet1,\mu$ then $\{f,g\}$ is  quasi--T\"oplitz  for all $\vartheta'>\vartheta$ and $ \mu'<\mu$ provided  $K'>K$ is large enough.

The stratification   $\Sigma^N$ described above is connected to the quasi-T\"oplitz structure, indeed if $m$ belongs to a stratum $\Sigma_m^N$  of codimension $\ell$ then, by construction, $m$ has a cut  at $(\mu N^{\varrho_j},\tet1 N^{4d\varrho_{j}})$ for all $\frac12\leq \mu,\vartheta\leq 4$ where $\varrho_j$ is fixed by $\Sigma^N$ and we shall denote it by $\varrho_{_\Sigma}$.  Then one can show that all points $n$ close to $m$ have a similar cut, see Lemmata 7.21 to 7.24 of \cite{PP3}. 
In \S  8.12  of \cite{PP3}  there are several properties of  diagonal quasi-T\"oplitz functions which are needed in the KAM algorithm. These properties are in fact  also valid  with the obvious modifications for the block diagonal  quasi-T\"oplitz functions  in the subalgebra $\mathcal F_{{\rm ker}}$ of Definition \ref{kerrg}.

This is due to the fact that,   provided we start  from $N\geq K_0$ and $K_0$ is large enough    the corresponding stratifications refine the affine stratification  of  Proposition  \ref{teo2}, part iii). 

When we apply it to  the  hamiltonians in the KAM algorithm we exploit the fact that,   by part iv) of the same proposition the corresponding function $\theta_\et$  is the same on all points of the stratum.
Moreover we need the following
\begin{lemma}\label{eccec0}
 Take a quasi--T\"oplitz  function $F$ quadratic in $w$ and its projection $\Pi_{{\rm ker}}F=\sum_{\et} F_\et. $ Then for each  $N\geq K$  consider the stratification $\Sigma(N)$. If $\er_\et,\er_{\et'}$ belong to the same stratum $Y$ (of codimension $\ell$) and $\dodo_\et= \dodo_{\et'}+ T_Y$ (as in Proposition \ref{teo2} {\it iii}.) then  $\theta_\et=\theta_{\et'}$ and
 $$| F_\et-F_{\et'}|^\lambda_\infty \leq N^{-4d \varrho_{_{\Sigma}}} \|X_F\|_{\overrightarrow p}^T $$
\end{lemma}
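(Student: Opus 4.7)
The plan is to invoke directly the defining property of the $(K,\vartheta,\mu)$--quasi--T\"oplitz class, combined with the compatibility between the stratification $\Sigma(N)$ and the basic affine stratification of Proposition \ref{teo2}. First, the equality $\theta_\et=\theta_{\et'}$ is immediate from item v) of Proposition \ref{teo2}: for $N\geq K_0$ large enough, $\Sigma(N)$ refines the basic affine stratification of items iii)--iv), so two roots lying in a common stratum of $\Sigma(N)$ automatically belong to the same component $\mathfrak T_s(i)$, on which the eigenvalue function $\theta$ is constant.

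Next, the hypothesis that $Y$ has codimension $\ell$ in $\Sigma(N)$ means every point of $Y$ possesses a cut at scale $(\mu N^{\varrho_{_{\Sigma}}},\vartheta N^{4d\varrho_{_{\Sigma}}})$. Choosing the cut exponent $\tau=\varrho_{_{\Sigma}}$, the definition of $(K,\vartheta,\mu)$--quasi--T\"oplitz applied to the $(N,\vartheta,\mu,\tau)$--bilinear part of $F$ provides a splitting $F=F^{qt}+R$, with $F^{qt}$ piecewise T\"oplitz along $\Sigma(N)$ and $\|X_R\|^\lambda_{s,r}\lessdot N^{-4d\varrho_{_{\Sigma}}}\|X_F\|^T_{\overrightarrow p}$. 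Since the block--diagonal projection $\Pi_{{\rm ker}}$ is continuous with respect to the majorant norm (cf.\ \S 8.12 of \cite{PP3}), one obtains block by block $F_\et=F^{qt}_\et+R_\et$ and similarly for $\et'$, with the analogous bound on the $R_\et$'s.

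Now the assumption $\dodo_\et=\dodo_{\et'}+T_Y$ provides a canonical identification of the two $d_\et$--dimensional Lagrangian blocks via translation inside $T_Y$. Representing $F_\et,F_{\et'}$ by matrices $M_\et,M_{\et'}$ on this common block, the piecewise T\"oplitz property of $F^{qt}$ states exactly that its matrix entries are invariant under translations lying in $Y$, so $M^{qt}_\et=M^{qt}_{\et'}$. Subtracting yields $F_\et-F_{\et'}=R_\et-R_{\et'}$, and hence
$$
|F_\et-F_{\et'}|^\lambda_\infty\;\leq\;2\,|R|^\lambda_\infty\;\lessdot\;N^{-4d\varrho_{_{\Sigma}}}\|X_F\|^T_{\overrightarrow p},
$$
the factor $2$ being absorbed into the implicit constant (alternatively, into a harmless adjustment of $\varrho_0$).

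The main technical obstacle is purely bookkeeping: one must verify that the pointwise norm $|\cdot|^\lambda_\infty$ on a single block is indeed controlled by the total $\lambda$--norm of the vector field of $R$ (routine, since $d_\et\leq 2d+1$ is uniformly bounded), and that the T\"oplitz approximation commutes in the required sense with the block--diagonal projection $\Pi_{{\rm ker}}$. Both facts are standard manipulations already codified in \S 8.12 of \cite{PP3}, so no new conceptual input is needed beyond a careful unpacking of definitions.
\qed
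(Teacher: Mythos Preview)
Your proposal is correct and is essentially the argument the paper invokes by citing Lemma 8.20--Formula (92) of \cite{PP3}: the paper's own proof is a one-line reference, and what you have written is a faithful unpacking of that reference adapted to the block-diagonal setting. The only remark is that your factor of $2$ at the end is indeed harmless (absorbed into the constants implicit in the T\"oplitz norm definition), so no adjustment of $\varrho_0$ is actually required.
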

\begin{proof}
This follows word by word as the proof of Lemma 8.20--Formula (92) of \cite{PP3}.
 \qed
\end{proof}

%
\end{document}